\newtheorem{theo}{Theorem}[section]
\newtheorem{coro}[theo]{Corollary}
\newtheorem{lemm}[theo]{Lemma}
\def\R{\mathbb{R}}
\def \N{\mathbb{N}}
\def \P{\mathbb{P}} 
\def \E{\mathbb{E}} 
\newcommand{ \un }{\mathds{1}}
\def\w{\omega}
\def\T{\mathbb{T}}
\newenvironment{merci}{\textbf{Acknowledgments.}}{ }
\newtheorem{theorem}{Theorem}
\newtheorem{proposition}[theorem]{Proposition}
\newtheorem{Ass}{Assumption}
\newtheorem{remark}{Remark}
\newtheorem{example}[remark]{Example}
\renewcommand{\T}{\mathbb{T}}
\newcommand{\X}{\mathbb{X}}
\def \Eb{\mathbf{E}}
\def \Pb{\mathbf{P}}
\renewcommand{\P}{\mathbb{P}}
\newtheorem{postita}{Post-it}
\renewcommand{\P}{\mathbb{P}}
\def\T{\mathbb{T}}
\newcommand*\bigcdot{\mathpalette\bigcdot@{.5}}
\newcommand*\bigcdot@[2]{\mathbin{\vcenter{\hbox{\scalebox{#2}{$\m@th#1\bullet$}}}}}
\newcommand{\Vect}[1]{\bm{#1}}
\newcommand{\VectCoord}[2]{#1^{(#2)}}
\newcommand{\fInf}{\|f\|_{\infty}}
\newcommand{\sqrtBis}[1]{#1^{1/2}}
\newcommand{\ProdSet}[2]{#1^{\times #2}}
\begin{document}

\begin{frontmatter}


\title{Coalescence in small generations for the diffusive randomly biased walk on Galton-Watson trees}

\author{\fnms{Alexis} \snm{Kagan}\ead[label=e2]{alexis.kagan@auckland.co.nz}}
\address{Institut Denis Poisson,   UMR CNRS 7013,
Universit\'e d'Orl\'eans, Orl\'eans, France. \printead{e2}} \vspace{0.5cm}

\runauthor{Kagan}


\runtitle{Coalescence in small generations for the diffusive randomly biased walk on Galton-Watson trees}

\begin{abstract}
We investigate the range $\mathcal{R}_T$ of the diffusive biased walk $\X$ on a Galton-Watson tree $\T$ in random environment, that is to say the sub-tree of $\T$ of all distinct vertices visited by this walk up to the time $T$. We study the volume of the range with constraints and more precisely the number of $k$-tuples ($k\geq 2$) of distinct vertices in this sub-tree, in small generations and satisfying an hereditary condition. A special attention is paid to the vertices visited during distinct excursions of $\X$ above the root of the Galton-Watson tree as we observe they give the major contribution to this range. As an application, we study the genealogy of $k\geq 2$ distinct vertices of the tree $\mathcal{R}_T$ picked uniformly from those in small generations. It turns out that two or more vertices among them share a common ancestor for the last time in the remote past. We also point out an hereditary character in their genealogical tree due to the random environment. 

\end{abstract}

 \begin{keyword}[class=AenMS]
 \kwd[MSC2020 :  ] {60K37}
   \kwd{60J80}
 \end{keyword}

\begin{keyword}
\kwd{Galton-Watson trees}
\kwd{branching random walks}
\kwd{randomly biased random walks}
\kwd{coalescence}
\kwd{genealogy}
\end{keyword}

\end{frontmatter}


\section{Introduction}

\subsection{Randomly biased random walk on trees}\label{RBRWT}
Given, under a probability measure $\Pb$, a $\bigcup_{j\in\N}\R^j$-valued random variable $\mathcal{P}$ ($\mathbb{R}^0$ only contains the sequence with length $0$) with $N:=\#\mathcal{P}$ denoting the cardinal of $\mathcal{P}$, we consider the following Galton-Watson marked tree $(\T,(A_x;x\in\T))$ rooted at $e$: the generation $0$ contains one marked individual $(e,A_e)=(e,0)$. For any $n\in\N^*$, assume the generation $n-1$ has been built. If it is empty, then the generation $n$ is also empty. Otherwise, for any vertex $x$ in the generation $n-1$, let $\mathcal{P}^x:=\{A_{x^{1}},\ldots,A_{x^{N(x)}}\}$ be a random variable distributed as $\mathcal{P}$ where $N(x):=\#\mathcal{P}^x$. The vertex $x$ gives progeny to $N(x)$ marked children $(x^{1},A_{x^{1}}),\ldots,(x^{N(x)},A_{x^{N(x)}})$ independently of the other vertices in generation $n-1$, thus forming the generation $n$, denoted by $\T_n$. We assume $\Eb[N]>1$ so that $\T$ is a supercritical Galton-Watson tree, that is $\Pb(\textrm{non-extinction of }\T)>0$ and we define $\Pb^*(\cdot):=\Pb(\cdot|\textrm{non-extinction of }\T)$, $\Eb$ (resp. $\Eb^*$) denotes the expectation with respect to $\Pb$ (resp. $\Pb^*$). \\
For any vertex $x\in\T$, we denote by $|x|$ the generation of $x$, by $x_i$ its ancestor in generation $i\in\{0,\ldots,|x|\}$ and $x^*:=x_{|x|-1}$ stands for the parent of $x$. In particular, $x_0=e$ and $x_{|x|}=x$. For any $x,y\in\T$, we write $x\leq y$ if $x$ is an ancestor of $y$ ($y$ is said to be a descendent of $x$) and $x<y$ if $x\leq y$ and $x\not=y$. We then write $\llbracket x_i,x\rrbracket:=\{x_j; j\in\{i,\ldots,|x|\}\}$.  Finally, for any $x,y\in\T$, we denote by $x\land y$ the most recent common ancestor of $x$ and $y$, that is the ancestor $u$ of $x$ and $y$ such that $\max\{|z|;\; z\in\llbracket e,x\rrbracket\cap\llbracket e,y\rrbracket\}=|u|$. \\
Let us introduce the branching potential $V$: let $V(e)=A_e=0$ and for any $x\in\T\setminus{\{e}\}$
$$ V(x):=\sum_{e<z\leq x}A_z=\sum_{i=1}^{|x|} A_{x_i}. $$
Under $\Pb$, $\mathcal{E}:=(\T,(V(x);x\in\T))$ is a real valued branching random walk such that $(V(x)-V(x^*))_{|x|=1}$ is distributed as $\mathcal{P}$. We will then refer to $\mathcal{E}$ as the random environment. \\
For convenience, we add a parent $e^*$ to the root $e$ and we introduce the $\T\cup\{e^*\}$-valued random walk $\X:=(X_j)_{j\in\N}$ reflected in $e^*$ such that under the quenched probabilities $\{\P^{\mathcal{E}}_z; z\in\T\cup\{e^*\}\}$ (that is $\P^{\mathcal{E}}_z(X_0=z)=1$), the transition probabilities are given by: for any $x\in\T$
\begin{align*}
    p^{\mathcal{E}}(x,x^*)=\frac{e^{-V(x)}}{e^{-V(x)}+\sum_{i=1}^{N_x}e^{-V(x^i)}}\;\;\textrm{ and for all } 1\leq j\leq N_x,\;\; p^{\mathcal{E}}(x,x^j)=\frac{e^{-V(x^j)}}{e^{-V(x)}+\sum_{i=1}^{N_x}e^{-V(x^i)}}.
\end{align*}
Otherwise, $p^{\mathcal{E}}(x,u)=0$ and $p^{\mathcal{E}}(e^*,e)=1$. Let $\P^{\mathcal{E}}:=\P^{\mathcal{E}}_e$ and we finally define the following annealed probabilities
$$ \P(\cdot):=\Eb[\P^{\mathcal{E}}(\cdot)]\;\;\textrm{ and }\;\;\P^*(\cdot):=\Eb^*[\P^{\mathcal{E}}(\cdot)]. $$ 
R. Lyons and R. Pemantle \cite{LyonPema} initiated the study of the randomly biased random walk $\X$. \\
When, for all $x\in\T$, $V(x)=|x|\log\lambda$ for a some constant $\lambda>0$, the walk $\X$ is known as the $\lambda$-biased random walk on $\T\cup\{e^*\}$ and was first introduced by R. Lyons (see \cite{Lyons} and \cite{Lyons2}). The $\lambda$-biased random walk is transient unless the bias is strong enough: if $\lambda\geq\Eb[N]$ then, $\Pb^*$-almost surely, $\X$ is recurrent (positive recurrent if $\lambda>\Eb[N]$). It is known since Y. Peres and O. Zeitouni \cite{PeresZeitouni}, G. Faraud \cite{Faraud} and E. Aïdékon and L. de Raphélis \cite{AidRap} that when $\lambda=\Eb[N]$, $\X$ is diffusive: there exists $\bm{\sigma}^2\in(0,\infty)$ such that $(|X_{\lfloor nt\rfloor}|/\sqrt{\bm{\sigma}^2n})_{t\geq 0}$ converges in law to a standard reflected Brownian motion. R. Lyons, R. Pemantle and Y. Peres (see \cite{LyonsRussellPemantle1} and \cite{LyonsRussellPemantle2}), later joined by E. Aïdékon \cite{AidekonSpeed} and G. Ben Arous, A. Fribergh, N. Gantert, A. Hammond \cite{BA_F_G_H} for example, studied the transient case and showed that $\X$ has a deterministic and explicit speed $v_{\lambda}:=\lim_{n\to\infty}|X_n|/n$ (see \cite{AidekonSpeed} for the expression of $v_{\lambda}$ in the case of positive speed and \cite{BA_F_G_H} for details about the behaviour of $|X_n|$ when $v_{\lambda}=0$). \\
When the bias is random, the behavior of $\X$ depends on the fluctuations of the following $\log$-Laplace transform $\psi(t):=\log\Eb[\sum_{|x|=1}e^{-tV(x)}]$ which we assume to be well defined on $[0,1]$: as stated by R. Lyons and R. Pemantle \cite{LyonPema}, if $\inf_{t\in[0,1]}\psi(t)>0$, then $\Pb^*$-almost surely, $\X$ is transient and we refer to the work of E. Aïdékon \cite{Aidekon2008} for this case. Otherwise, it is recurrent. More specifically, G. Faraud \cite{Faraud} proved that the random walk $\X$ is $\Pb^*$-almost surely positive recurrent either if $\inf_{t\in[0,1]}\psi(t)<0$ or if $\inf_{t\in[0,1]}\psi(t)=0$ and $\psi'(1)>0$. It is null recurrent if $\inf_{t\in[0,1]}\psi(t)=0$ and $\psi'(1)\leq 0$. When $\psi'(1)=0$, the largest generation reached by the walk $\X$ up to time $n$ is of order $(\log n)^3$ (and it is usually referred to as the slow regime for the random random $\X$, see \cite{HuShi10a} and \cite{HuShi10b}) but surprisingly, the generation of the vertex $X_n$ is of order $(\log n)^2$ as $n\to\infty$, see \cite{HuShi15} .\\
In the present paper, we focus on the null recurrent randomly biased walk $\X$ and assume
\begin{Ass}\label{Assumption1}
\begin{align}\label{DiffCase}
    \inf_{t\in[0,1]}\psi(t)=\psi(1)=0\;\;\textrm{ and }\;\;\psi'(1)<0.
\end{align}
\end{Ass}
\noindent Let us introduce
\begin{align}\label{DefKappa}
    \kappa:=\inf\{t>1;\; \psi(t)=0\},
\end{align}
and assume $\kappa\in(1,\infty)$. Under \eqref{DiffCase} and some integrability conditions, it has been proven that $|X_n|$ and $\max_{1\leq j\leq n}|X_j|$ are of order $n^{1-1/\min(\kappa,2)}$(see \cite{HuShi10}, \cite{Faraud}, \cite{AidRap} and \cite{deRaph1}). In other words, the random walk $\X$ is sub-diffusive for $\kappa\in(1,2]$ and diffusive for $\kappa>2$. In this paper, we put ourselves in the latter case. \\
We now define the range of the random walk $\X$. Let $T\in\N^*$. The range $\mathcal{R}_T$ of the random walk $\X$ is the set of distinct vertices of $\T$ visited by $\X$ up to time $T$: if $\mathcal{L}_u^T:=\sum_{j=1}^T\un_{\{X_j=u\}}$ denotes the local time of a vertex $u\in\T$ at time $T$ then
\begin{align}
    \mathcal{R}_T=\{u\in\T;\; \mathcal{L}_u^T\geq 1\},
\end{align}
its cardinal is denoted by $R_T$ and we also called it range. It has been proved by E. Aïdékon and L. de Raphélis that $R_n$ is of order $n$ (see the proofs of Theorem
1.1 and Theorem 6.1 in \cite{AidRap}). Moreover, $(\mathcal{R}_n)$ is a sequence of finite sub-trees of $\T$ and still according to E. Aïdékon and L. de Raphélis (Theorem 6.1 in \cite{AidRap}), after being properly renormalized, this sequence converges in law under both annealed and quenched probabilities to a random real tree when $n$ goes to infinity. \\
Introduce $T^j$, the $j$-th return time to $e^*$: $T^0=0$ and for any $j\geq 1$, $T^j=\inf\{i>T^{j-1};\; X_i=e^*\}$. Thanks to a result of Y. Hu (\cite{Hu2017}, Corollary 1.2), we know that $T^{\sqrtBis{n}}$ is of order $n$. We will be focusing our attention on the range $\mathcal{R}_{T^{\sqrtBis{n}}}$ and we shall finally present an extension of the range $\mathcal{R}_n$. For this purpose, it is convenient to split the tree $\mathcal{R}_{T^{\sqrtBis{n}}}$ in three: the vertices located in what we call the tiny generations, that is those smaller than $\Bar{\gamma}\log n$ for some constant $\Bar{\gamma}>0$ defined below (see the subsection \ref{FurtherDiscusion}), the critical generations, that is to say of order $\sqrtBis{n}$ and corresponding to the typical generations but also to the largest reached by the diffusive random walk $\X$ up to the time $T^{\sqrtBis{n}}$ and finally, the vertices located in what we are going to be calling the small generations. Let $(\mathfrak{L}_n)$ be a sequence of positive integers such that $\mathfrak{L}_n\geq\delta_0^{-1}\log n$ (see Lemma \ref{MinPotential} for the definition of $\delta_0$). A vertex $x\in\mathcal{R}_{T^{\sqrtBis{n}}}$ is said to be in a small generation if it is located above the tiny generations but below the critical generations of the diffusive random walk $\X$, that is if $|x|=\mathfrak{L}_n$ and satisfies
\begin{Ass}[The small generations]\label{AssumptionSmallGenerations}
Let $(\Lambda_i)_{i\in\N}$ be the sequence of functions defined recursively by: for all $t>0$, $\Lambda_0(t)=t$ and for any $i\in\N^*$, $\Lambda_{i-1}(t)=e^{\Lambda_i(t)}$. There exists $l_0\in\N$ such that 
\begin{align}\label{SmallGenerations}
    \lim_{n\to\infty}\frac{\mathfrak{L}_n}{\sqrtBis{n}}\Lambda_{l_0}(\mathfrak{L}_n)=0.
\end{align}
\end{Ass}
\noindent Assumption \ref{AssumptionSmallGenerations} ensures that $\mathfrak{L}_n/\sqrtBis{n}$, renormalized by a sequence that grows very slowly, goes to $0$ when $n$ goes to $\infty$.

\vspace{0.2cm}

\noindent Let us now define an extension of the volume $R_{T^{\sqrtBis{n}}}$: for any integer $k\geq 2$ and any subset $\mathfrak{D}$ of $\T$, let $\ProdSet{\mathfrak{D}}{k}:=\mathfrak{D}\times\cdots\times\mathfrak{D}$ ($k$ times) and $|\mathfrak{D}|$ stands for the cardinal of $\mathfrak{D}$. Introduce, on the set of non-extinction, the subset $\Delta^k$ of $\ProdSet{\T}{k}$ such that a $k$-tuple $(\VectCoord{x}{1},\ldots,\VectCoord{x}{k})$ belongs to $\Delta^k$ if and only if for any $i_1,i_2\in\{1,\ldots,k\}$, $i_1\not=i_2$, we have $\VectCoord{x}{i_1}\not\in\llbracket e,\VectCoord{x}{i_2}\rrbracket$ and $\VectCoord{x}{i_2}\not\in\llbracket e,\VectCoord{x}{i_1}\rrbracket$. In other words, neither $\VectCoord{x}{i_1}$ is an ancestor of $\VectCoord{x}{i_2}$, nor $\VectCoord{x}{i_2}$ is an ancestor of $\VectCoord{x}{i_1}$. Also introduce the set $\Delta^k(\mathfrak{D}):=\Delta^k\cap\ProdSet{\mathfrak{D}}{k}$. For any $n\in\N^*$, any subset $\mathcal{D}_n$ of $\mathcal{R}_{T^{\sqrtBis{n}}}$ and for any bounded function $f:\Delta^k\longrightarrow\R^+$, if $|\Delta^k(\mathcal{D}_n)|>0$, we define the range $\mathcal{A}^k(\mathcal{D}_n,f)$ by
\begin{align}\label{GenTrace}
    \mathcal{A}^k(\mathcal{D}_n,f):=\sum_{\Vect{x}\in\Delta^k(\mathcal{D}_n)}f(\Vect{x}).
\end{align}
Otherwise, $\mathcal{A}^k(\mathcal{D}_n,f)$ is equal to $0$. The aim of studying the range $\mathcal{A}^k(\mathcal{D}_n,f)$ is to understand the interactions between the vertices in the tree $\mathcal{R}_{T^{\sqrtBis{n}}}$ and to give a description of the genealogy of the vertices in $\mathcal{R}_{T^{\sqrtBis{n}}}$. Note that the range we investigate here differs from the range studied in \cite{AndAKHightPotential}, where authors focus on the interactions between the trajectories of the random walk $\X$ and on the trajectories of the underlying branching potential $V$.

\subsection{Genealogy of uniformly chosen vertices in the range}\label{GenVertices}

The range $\mathcal{A}^k(\mathcal{D}_n,f)$ has a very natural interpretation in terms of vertices picked at random in the set $\mathcal{D}_n$. If $\P^*(|\Delta^k(\mathcal{D}_n)|>0)>0$, introduce, on the set of non-extinction 
\begin{align}\label{LawVertices2}
    \bm{\mu}^n(f):=\frac{1}{\P^*(|\Delta^k(\mathcal{D}_n)|>0)}\E^{\mathcal{E}}\Big[\frac{\mathcal{A}^k(\mathcal{D}_n,f)}{\mathcal{A}^k(\mathcal{D}_n,1)}\un_{\{|\Delta^k(\mathcal{D}_n)|>0\}}\Big],
\end{align}
and $\bm{\mu}^n(f)=0$ otherwise. In particular, if $\P^*(|\Delta^k(\mathcal{D}_n)|>0)>0$, then
\begin{align}\label{LawVertices1}
    \Eb^*\big[\bm{\mu}^n(\un_{\{\Vect{x}\}})\big]=\frac{1}{\P^*(|\Delta^k(\mathcal{D}_n)|>0)}\E^*\Big[\frac{\un_{\{\Vect{x}\in\Delta^k(\mathcal{D}_n)\}}}{|\Delta^k(\mathcal{D}_n)|}\un_{\{|\Delta^k(\mathcal{D}_n)|>0\}}\Big],
\end{align}
and $\Eb^*[\bm{\mu}^n(\un_{\{\cdot\}})]$ can be seen as the (annealed) law of a $k$-tuple $\VectCoord{\mathcal{X}}{n}=(\VectCoord{\mathcal{X}}{1,n},\ldots,\VectCoord{\mathcal{X}}{k,n})$ picked uniformly in the set $\Delta^k(\mathcal{D}_n)$, conditionally on the event $\{|\Delta^k(\mathcal{D}_n)|>0\}$. In the particular case $\mathcal{D}_n=\{x\in\mathcal{R}_{T^{\sqrtBis{n}}};\; |x|=\ell_n\}$ for some sequence $(\ell_n)$, we have $|\Delta^k(\mathcal{D}_n)|=|\mathcal{D}_n|(|\mathcal{D}_n|-1)\times\cdots\times(|\mathcal{D}_n|-k+1)$ and the vertices $\VectCoord{\mathcal{X}}{1,n},\ldots,\VectCoord{\mathcal{X}}{k,n}$ are nothing but $k$ vertices picked uniformly and without replacement in the generation $\ell_n$ of the tree $\mathcal{R}_{T^{\sqrtBis{n}}}$, conditionally on the event $\{|\mathcal{D}_n|\geq k\}$. \\
Our main interest is the genealogy of the $k$ vertices $\VectCoord{\mathcal{X}}{1,n},\ldots,\VectCoord{\mathcal{X}}{k,n}$ so let us define the genealogical tree of these $k$ vertices. First, introduce the largest generation $M_n:=\max_{x\in\mathcal{D}_n}|x|$ of the set $\mathcal{D}_n$. Recall that in the diffusive regime (see \eqref{DiffCase} and \eqref{DefKappa} with $\kappa>2$), $\max_{x\in\mathcal{R}_{T^{\sqrtBis{n}}}}|x|$, the largest generation of the tree $\mathcal{R}_{T^{\sqrtBis{n}}}$, is of order $\sqrtBis{n}$ when $n\to\infty$. \\
If $|\Delta^k(\mathcal{D}_n)|>0$, then we define for any $m\in\{0,\ldots,M_n\}$ the equivalence relation $\sim_m$ on $\{1,\ldots,k\}$ by: $i_1\sim_m i_2$ if and only if $\VectCoord{\mathcal{X}}{i_1,n}$\textrm{ and }\; $\VectCoord{\mathcal{X}}{i_2,n}$\textrm{ share a common ancestor in generation }$m$. We denote by $\pi^{k,n}_m$ the partition of $\{1,\ldots,k\}$ whose blocks are given by the equivalent classes of the relation $\sim_m$. The process $\pi^{k,n}:=(\pi^{k,n}_m)_{0\leq m\leq M_n}$ is called the genealogical tree of $\VectCoord{\mathcal{X}}{1,n},\ldots,\VectCoord{\mathcal{X}}{k,n}$. Let $\VectCoord{\mathcal{G}}{i,n}=|\VectCoord{\mathcal{X}}{i,n}|$ be the generation of $\VectCoord{\mathcal{X}}{i,n}$. By definition, 
\begin{align*}
    \pi^{k,n}_0=\{\{1,\ldots,k\}\}\;\textrm{ and }\;\pi^{k,n}_{m}=\{\{1\},\ldots,\{k\}\}\;\textrm{ for any }\;m\in\Big\{\max_{1\leq i\leq k}\VectCoord{\mathcal{G}}{i,n},\ldots,M_n\Big\}.
\end{align*}
The generations at which the vertices $\VectCoord{\mathcal{X}}{1,n},\ldots, \VectCoord{\mathcal{X}}{k,n}$ are chosen have a major influence on their genealogical structure. The next three subsections are dedicated to the three regimes we observe: the tiny generations, the small generations, on which we spend most of our time and the critical generations. For the second regime, we provide a general result for the range $\mathcal{A}^k(\mathcal{D}_n,f)$ and we show that we can easily extend our results on $\mathcal{R}_{T^{\sqrtBis{n}}}$ to the range up to the time $n$, see subsection \ref{SmallGenerationsGR}. As corollaries, we then give a quite full description of the genealogy of $\VectCoord{\mathcal{X}}{1,n},\ldots, \VectCoord{\mathcal{X}}{k,n}$, displaying five examples we believe to be relevant, see subsection \ref{SmallGenerationsEx}. In particular, Corollary \ref{GenTh3} is dedicated to the study of the genealogical tree $\pi^{k,n}$ of the vertices $\VectCoord{\mathcal{X}}{1,n},\ldots,\VectCoord{\mathcal{X}}{k,n}$.

\vspace{0.2cm}

\noindent We end this subsection by introducing some related works on the genealogical tree of $k$ vertices sampled in random trees. Replacing $\mathcal{R}_{T^{\sqrtBis{n}}}$ by a regular Galton-Watson tree $\mathfrak{T}$ and $\mathcal{D}_n$ by $\{x\in\mathfrak{T};\; |x|=T\}$ (the $T$-th generation of $\mathfrak{T}$), the genealogy of $k$ vertices $\VectCoord{\mathcal{X}}{1}_T,\ldots,\VectCoord{\mathcal{X}}{k}_T$ uniformly chosen in $\{x\in\mathfrak{T};\; |x|=T\}$ has been deeply studied for fixed $T$ as well as for $T\to\infty$. First, when $k=2$, K.B. Athreya \cite{Athreya_SUPER_CT} proved that when $\mathfrak{T}$ is supercritical (the mean of the reproduction law in larger than $1$) $\VectCoord{\mathcal{X}}{1}_T$ and $\VectCoord{\mathcal{X}}{2}_T$ share a common ancestor for the last time in the remote past: if $\mathcal{M}_T:=|\VectCoord{\mathcal{X}}{1}_T\land\VectCoord{\mathcal{X}}{2}_T|$ denotes the generation of the most recent common ancestor of $\VectCoord{\mathcal{X}}{1}_T$ and $\VectCoord{\mathcal{X}}{2}_T$ then $(\mathcal{M}_T)$ converges in law to a non-negative random variable depending on the reproduction law $N$ when $T$ goes to $\infty$. However, when $\mathfrak{T}$ is critical (the mean of the reproduction law is equal to 1), $\VectCoord{\mathcal{X}}{1}_T$ and $\VectCoord{\mathcal{X}}{2}_T$ share a common ancestor for the last time in the recent past: $(\mathcal{M}_T/T)$ converges in law to a $[0,1]$-valued random variable which does not depend on the reproduction law $N$ when $T$ goes to $\infty$, see \cite{Athreya_SUB_CT}.  K.B. Athreya also dealt with the sub-critical case (the mean of the reproduction law is smaller than $1$) in the latter paper and it is quite similar to the critical case. More recently S. Harris, S. Johnston and M. Roberts gave a full description of the genealogy of the vertices $\VectCoord{\mathcal{X}}{1}_T,\ldots,\VectCoord{\mathcal{X}}{k}_T$ for a given integer $k\geq 2$ for both fixed $T$ and $T\to\infty$, when the underlying process is a continuous-time Galton-Watson process (see \cite{HarrisJohnstonRoberts1} and \cite{Johnston1}). See also \cite{AbrDel1} for a study of the genealogy of randomly chosen individuals when the underlying process is a continuous-state branching process. \\
Hence, the most notable differences between our model and regular Galton-Watson trees are: first the influence of the underlying random environment and second the influence of the generations where vertices are sampled.

\subsection{The small generations: general results}\label{SmallGenerationsGR}
In this section, we present results for the range $\mathcal{A}^k(\mathcal{D}_n,f)$ with $f$ non-negative and bounded satisfying a very natural heredity condition we will discuss later. First, let $(\mathfrak{L}_n)$ be a sequence of positive integers as in \eqref{SmallGenerations}, let $(\ell_n)$ be a sequence of positive integers such that $\delta_0^{-1}\log n\leq\ell_n\leq \mathfrak{L}_n$ and introduce the set
$$ \mathcal{D}_n:=\{x\in\mathcal{R}_{T^{\sqrtBis{n}}};\; \ell_n\leq |x|\leq \mathfrak{L}_n\}, $$
with height $\bm{L}_n-1$ where $\bm{L}_n:=\mathfrak{L}_n-\ell_n+1$. Note that $\lim_{n\to\infty}\P^*(|\Delta^k(\mathcal{D}_n)|>0)=1$ (see for instance \eqref{ConvA}) so we will refer to the event $\{|\Delta^k(\mathcal{D}_n)|>0\}$ only if necessary. For any $m\in\N$, recall that $\T_m=\{x\in\T;\; |x|=m\}$ be the $m$-th generation of the tree $\T$ and let $\Delta^j_m:=\Delta^j(\T_m)$. We make the following technical assumption.
\begin{Ass}\label{Assumption2}
There exists $\delta_1>0$ such that $\psi(t)<\infty$ for all $t\in[1-\delta_1,\lceil\kappa+\delta_1\rceil]$ and for all $1\leq j\leq\lceil\kappa+\delta_1\rceil$, for all $\bm{\beta}=(\beta_1,\ldots,\beta_j)\in\ProdSet{(\N^*)}{j}$ such that $\sum_{i=1}^j\beta_i\leq\lceil\kappa+\delta_1\rceil$
\begin{align}\label{MomentsJoints}
    c_j(\bm{\beta}):=\Eb\Big[\sum_{\Vect{x}\in\Delta_1^j}e^{-\langle\Vect{\beta},V(\Vect{x})\rangle_j}\Big]<\infty,
\end{align}
where $\langle\Vect{\beta},V(\Vect{x})\rangle_j:=\sum_{i=1}^j\beta_iV(\VectCoord{x}{i})$.
\end{Ass}
\noindent In addition, we also require the following ellipticity condition.
\begin{Ass}\label{AssumptionIncrements}
There exists $\mathfrak{h}>0$ such that
\begin{align}
    \Pb\big(\inf_{x\in\T}(V(x)-V(x^*))\geq -\mathfrak{h}\big)=1.
\end{align}
\end{Ass}
\noindent Although we obtain quite general results, we however require an assumption on $f$. For any $\Vect{x}=(\VectCoord{x}{1},\ldots,\VectCoord{x}{k})\in\Delta^k$, let $\mathcal{S}^k(\Vect{x})$ be the first generation at which none of $\VectCoord{x}{1},\ldots,\VectCoord{x}{k}$ share a common ancestor:
\begin{align}\label{MultiMRCA}
    \mathcal{S}^k(\Vect{x}):=\min\{m\geq 1;\; \forall\;i_1\not=i_2, \; |\VectCoord{x}{i_1}\land\VectCoord{x}{i_2}|<m\},
\end{align}
where we recall that $\VectCoord{x}{i_1}\land\VectCoord{x}{i_2}$ is the most recent common ancestor of $\VectCoord{x}{i_1}$ and $\VectCoord{x}{i_2}$. For any $m\in\N^*$, introduce $\mathcal{C}^k_m:=\{\Vect{x}\in\Delta^k;\; \mathcal{S}^k(\Vect{x})\leq m\}$ (see Figure \ref{Coalescence times}). Assume the following:
\begin{Ass}\label{Assumption3}
There exists $\mathfrak{g}\in\N^*$ such that for all integer $p\geq\mathfrak{g}$ and all $\Vect{x}=(\VectCoord{x}{1},\ldots,\VectCoord{x}{k})\in\Delta^k$, if $\min_{1\leq i\leq k}|\VectCoord{x}{i}|\geq p$ and $\Vect{x}\in\mathcal{C}^k_{p}$ then
\begin{align}
    f((\VectCoord{x}{1},\ldots,\VectCoord{x}{k}))=f\big((\VectCoord{x}{1})_p,\ldots,(\VectCoord{x}{k})_p\big),
\end{align}
\end{Ass}
\noindent where we recall that $(\VectCoord{x}{i})_{p}$ is the ancestor of $\VectCoord{x}{i}$ in the generation $p$. In other words, we ask the constraint $f$ to be hereditary from a given generation $\mathfrak{g}$. By convention, a function $f$ satisfying Assumption \ref{Assumption3} is assumed to be non-negative and bounded.

\vspace{0.2cm}

\noindent Now, introduce $\mathcal{A}^k_l(f,\Vect{\beta}):=\sum_{\Vect{x}\in\Delta^k_l}f(\Vect{x})e^{-\langle\Vect{\beta},V(\Vect{x})\rangle_{k}}$, $\mathcal{A}_l(f):=\mathcal{A}_l(f,\Vect{1})$ and $\Vect{1}:=(1,\ldots,1)\in\N^{\times k}$. Note, by Assumption \ref{Assumption2}, that $(\mathcal{A}_l(f))_{l\geq 1}$ is bounded in $L^2(\Pb^*)$. Then, define
\begin{align}\label{Def_A_Inf_Th}
    \mathcal{A}^k_{\infty}(f):=\lim_{l\to\infty}\mathcal{A}^k_l(f),
\end{align}
where the convergence holds in $L^2(\Pb^*)$. We prove that this limit exists for any function $f$ satisfying Assumption \ref{Assumption3} in section \ref{QuasiMartingale}. \\
Let us introduce a few more definitions. Let $(S_{j}-S_{j-1})_{j\in\N^*}$ be a sequence of\textrm{ i.i.d } real-valued random variables under $\Pb$ such that $S_0=0$ and for any bounded and measurable function $\mathfrak{t}:\R\longrightarrow\R$
\begin{align}\label{RW}
    \Eb[\mathfrak{t}(S_1)]=\Eb\Big[\sum_{|x|=1}\mathfrak{t}(V(x))e^{-V(x)}\Big].
\end{align}
Note that by Assumption \ref{Assumption1}, $\Eb[S_1]>0$. Introduce $c_{\infty}:=\Eb[(\sum_{j\geq 0}e^{-S_j})^{-1}]$. We are now ready to state our first result.
\begin{theo}\label{GenTh5}
Let $k\geq 2$ and assume $\kappa>2k$. Under the Assumptions \ref{Assumption1}, \ref{AssumptionSmallGenerations}, \ref{Assumption2} and \ref{AssumptionIncrements}, if $f$ satisfies the hereditary Assumption \ref{Assumption3} then, in $\P^*$-probability 
\begin{align}\label{ConvA}
    \frac{\mathcal{A}^k(\mathcal{D}_n,f)}{(n^{1/2}\bm{L}_n)^k}&\underset{n\to\infty}{\longrightarrow}(c_{\infty})^k\mathcal{A}^k_{\infty}(f),
\end{align}
and if $g\not\equiv 0$ also satisfies Assumption \ref{Assumption3} then, in $\P^*$-probability 
\begin{align}\label{ConvAQuotient}
    \frac{\mathcal{A}^k(\mathcal{D}_n,f)}{\mathcal{A}^k(\mathcal{D}_n,g)}\un_{\{|\Delta^k(\mathcal{D}_n)|>0\}}&\underset{n\to\infty}{\longrightarrow}\frac{\mathcal{A}^k_{\infty}(f)}{\mathcal{A}^k_{\infty}(g)},
\end{align}
where $\bm{L}_n=\mathfrak{L}_n-\ell_n+1$. Note that a constraint satisfying Assumption \ref{Assumption3} does not have any influence on the normalization of the range. Moreover, $\mathcal{A}^k(\mathcal{D}_n,f)$ behaves like $(\bm{L}_n\max_{x\in\mathcal{R}_{T^{\sqrtBis{n}}}}|x|)^k$ and the limiting value $\mathcal{A}_{\infty}(f)$ contains all the information about the interactions between the vertices of the tree.
\end{theo}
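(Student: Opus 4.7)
The proof reduces to combining Propositions \ref{GENPROPCONV2} and \ref{GENPROPCONV1} with the observation that, by definition, $\mathcal{D}_n = \mathcal{D}_{n, T^{\sqrtBis{n}}}$. My plan is to split
\begin{equation*}
    \mathcal{A}^k(\mathcal{D}_n, f) = \mathcal{A}^k\big(\mathcal{D}_{n, T^{\sqrtBis{n}}}, f \un_{\mathfrak{E}^{k, \sqrtBis{n}}}\big) + \mathcal{A}^k\big(\mathcal{D}_{n, T^{\sqrtBis{n}}}, f \un_{\Delta^k \setminus \mathfrak{E}^{k, \sqrtBis{n}}}\big),
\end{equation*}
isolating the $k$-tuples made of vertices discovered during $k$ distinct excursions above $e^*$ before $T^{\sqrtBis{n}}$ from the remaining ones.

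Applying Proposition \ref{GENPROPCONV2} with $s = \sqrtBis{n}$ and any $\varepsilon_1 \in (0,1]$ handles the first piece: divided by $(\sqrtBis{n}\bm{L}_n)^k$, it converges in $\P^*$-probability to $(c_\infty)^k \mathcal{A}^k_\infty(f)$. For the second piece, boundedness of $f$ yields the crude domination $\mathcal{A}^k(\mathcal{D}_{n, T^{\sqrtBis{n}}}, f \un_{\Delta^k \setminus \mathfrak{E}^{k, \sqrtBis{n}}}) \leq \|f\|_\infty\, \mathcal{A}^k(\mathcal{D}_{n, T^{\sqrtBis{n}}}, \un_{\Delta^k \setminus \mathfrak{E}^{k, \sqrtBis{n}}})$, and Proposition \ref{GENPROPCONV1} (applied with $\varepsilon_1 = 1$, so that $s = \sqrtBis{n}$ lies in the allowed range) shows this second piece is $o((\sqrtBis{n}\bm{L}_n)^k)$ in $\P^*$-probability. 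Combining the two contributions gives \eqref{ConvA}.

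For the ratio \eqref{ConvAQuotient}, I apply \eqref{ConvA} to both the numerator and denominator; the factors $(c_\infty)^k$ and the normalizations cancel, leaving $\mathcal{A}^k_\infty(f)/\mathcal{A}^k_\infty(g)$. Two points require attention: first, the indicator $\un_{\{D_n \geq k\}}$ does not interfere because $\P^*(D_n \geq k) \to 1$ as a consequence of Theorem \ref{GenTh1}, which ensures $D_n/(\sqrtBis{n}\bm{L}_n) \to c_\infty W_\infty > 0$ in $\P^*$-probability on non-extinction; second, the ratio of limits is well defined only if $\mathcal{A}^k_\infty(g) > 0$ $\Pb^*$-almost surely on non-extinction. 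This positivity is the only non-routine point: pick $\Vect{x}^0 \in \Delta^k$ with $g(\Vect{x}^0) > 0$ and $\Vect{x}^0 \in \mathcal{C}^k_{\mathfrak{g}}$; by Assumption \ref{Assumption3} every $k$-tuple of descendants of $\Vect{x}^0$ inherits the same positive value of $g$, so $\mathcal{A}^k_\infty(g)$ is bounded below by $g(\Vect{x}^0)$ times a product of $k$ independent Biggins-type additive martingale limits rooted at the coordinates of $\Vect{x}^0$. A standard branching-property argument (each sub-tree survives with positive probability and on survival the corresponding martingale limit is strictly positive) then yields $\mathcal{A}^k_\infty(g) > 0$ $\Pb^*$-a.s., and \eqref{ConvAQuotient} follows by Slutsky's theorem.

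The main obstacle is thus not the quotient step itself but securing Propositions \ref{GENPROPCONV2} and \ref{GENPROPCONV1}; once these are in hand, Theorem \ref{GenTh5} is, as outlined above, a clean decomposition-and-domination argument.
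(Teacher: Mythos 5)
Your proof follows the same route as the paper: decompose $\mathcal{A}^k(\mathcal{D}_n,f)=\mathcal{A}^k(\mathcal{D}_{n,T^{\sqrtBis{n}}},f\un_{\mathfrak{E}^{k,\sqrtBis{n}}})+\mathcal{A}^k(\mathcal{D}_{n,T^{\sqrtBis{n}}},f\un_{\Delta^k\setminus\mathfrak{E}^{k,\sqrtBis{n}}})$, identify the first piece via Proposition~\ref{GENPROPCONV2} with $s=\sqrtBis{n}$, dominate the second via $\|f\|_\infty$ and Proposition~\ref{GENPROPCONV1}, and then treat the ratio by applying \eqref{ConvA} to numerator and denominator on $\{D_n\geq k\}$ while controlling $\P^*(D_n<k)\to0$. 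The only substantive difference is that you explicitly flag the $\Pb^*$-a.s.\ positivity of $\mathcal{A}^k_\infty(g)$ as a requisite for \eqref{ConvAQuotient} to make sense and sketch an argument; the paper simply uses the ratio without comment. Your sketch is on the right track but incomplete as stated: fixing a single tuple $\Vect{x}^0$ with $g(\Vect{x}^0)>0$ and all $k$ subtrees surviving gives positivity only on an event of positive probability, not $\Pb^*$-almost surely. One would need to push the generation $m$ to infinity (via Lemma~\ref{CauchyCarre} and the monotone limit $\mathcal{A}^k_\infty(g)=\sup_m\mathcal{A}^k_\infty(g\un_{\mathcal{C}^k_m})$) and invoke the branching structure to upgrade to a.s.\ positivity on non-extinction. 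A minor nit: Proposition~\ref{GENPROPCONV1} is stated for $\varepsilon_1\in(0,1)$, so "$\varepsilon_1=1$" is formally excluded; any $\varepsilon_1<1$ works equally well since $\sqrtBis{n}\le\sqrtBis{n}/\varepsilon_1$.
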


\vspace{0.2cm}

\noindent Let us state an extension of Theorem \ref{GenTh5} to the range $\mathcal{R}_n$, the sub-tree of vertices visited by the random walk $\X$ up to time $n$. Before that, introduce $\Tilde{\mathcal{D}}_n:=\{x\in\mathcal{R}_n;\; \ell_n\leq|x|\leq\mathfrak{L}_n\}$. Let us now define $W_n:=\sum_{|x|=n}e^{-V(x)}$ and $\mathcal{F}_n:=\sigma((x,V(x));|x|\leq n)$. The random process $(W_n)_{n\geq 0}$ is a non-negative $(\mathcal{F}_n)_{n\geq 0}$-martingale, usually referred to as the additive martingale. It is known that under the Assumptions \ref{Assumption1} and \ref{Assumption2}, $\Pb(W_{\infty}>0)>0$, see \cite{Biggins1977}, \cite{Lyons1997}, \cite{Liu1} or \cite{Alsmeyer_Iksanov} for instance. Moreover, it is claimed in \cite{Biggins1977} that $\Pb$-almost surely, the event $\{W_{\infty}>0\}$ coincides with the event of non extinction of the underlying Galton-Watson tree $\T$. In particular, $\Pb^*(W_{\infty}>0)=1$.

\begin{theo}\label{GenTh7}
Let $k\geq 2$. There exists a non-increasing sequence of positive integers $(\mathfrak{q}(j))_j$, satisfying $\mathfrak{q}(j)\in(0,1/2)$ and $\mathfrak{q}(j)\to 0$ when $j\to\infty$ such that if $\kappa>2\xi k$ for some integer $\xi\geq 2$ and $\mathfrak{L}_n=o(n^{1/2-\mathfrak{q}({\xi})})$, then, in law, under $\P^*$
\begin{align}\label{ConvAbis}
    \frac{\mathcal{A}^k(\Tilde{\mathcal{D}}_n,f)}{(n^{1/2}\bm{L}_n)^k}&\underset{n\to\infty}{\longrightarrow}\frac{\mathcal{A}^k_{\infty}(f)}{(W_{\infty})^k}\big(c_{\infty}c_{0}^{1/2}|\mathcal{N}|\big)^k,
\end{align}
where $c_0:=\Eb[\sum_{x\not=y;|x|=|y|=1}e^{-V(x)-V(y)}]/(1-e^{\psi(2)})$ and $\mathcal{N}$ is a standard Gaussian random variable independent of the environment. Moreover, if $g\not\equiv 0$ also satisfies Assumption \ref{Assumption3} then in, $\P^*$-probability 
\begin{align}\label{ConvAQuotientbis}
    \frac{\mathcal{A}^k(\Tilde{\mathcal{D}}_n,f)}{\mathcal{A}^k(\Tilde{\mathcal{D}}_n,g)}\un_{\{|\Delta^k(\Tilde{\mathcal{D}}_n)|>0\}}&\underset{n\to\infty}{\longrightarrow}\frac{\mathcal{A}^k_{\infty}(f)}{\mathcal{A}^k_{\infty}(g)}.
\end{align}
\end{theo}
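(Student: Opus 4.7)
The plan is to reduce to the setting of Theorem \ref{GenTh5} by comparing $\Tilde{\mathcal{D}}_n$ with $\mathcal{D}_{n, T^{\mathcal{L}^n}}$ and then to incorporate the limit law of $\mathcal{L}^n/n^{1/2}$. Since $T^{\mathcal{L}^n} \le n < T^{\mathcal{L}^n + 1}$, one has the sandwich
\begin{align*}
    \mathcal{A}^k(\mathcal{D}_{n, T^{\mathcal{L}^n}}, f) \le \mathcal{A}^k(\Tilde{\mathcal{D}}_n, f) \le \mathcal{A}^k(\mathcal{D}_{n, T^{\mathcal{L}^n + 1}}, f),
\end{align*}
so the discrepancy between $\mathcal{A}^k(\Tilde{\mathcal{D}}_n, f)$ and $\mathcal{A}^k(\mathcal{D}_{n, T^{\mathcal{L}^n}}, f)$ is entirely governed by the single excursion between $T^{\mathcal{L}^n}$ and $T^{\mathcal{L}^n + 1}$, restricted to $\{\ell_n \le |x| \le \mathfrak{L}_n\}$.

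Second, one needs the convergence in law, under $\P^*$,
\begin{align*}
    \frac{\mathcal{L}^n}{n^{1/2}} \underset{n \to \infty}{\cvl} c_0^{1/2} W_{\infty}^{-1} |\mathcal{N}|,
\end{align*}
where $\mathcal{N}$ is a standard Gaussian independent of the environment $\mathcal{E}$. The diffusive invariance principle for the randomly biased walk under Assumption \ref{Assumption1} (as developed in the works cited in the introduction) identifies the scaling limit of $(|X_{\lfloor nt \rfloor}|/n^{1/2})_{t \ge 0}$ with a reflected Brownian motion, whose local time at the origin is accounted for by $\mathcal{L}^n/n^{1/2}$. The environmental constant $c_0^{1/2} W_{\infty}^{-1}$ comes out of the quenched second-moment asymptotics of the return time: the geometric factor $(1-e^{\psi(2)})^{-1}$ in the definition of $c_0$ is exactly the sum over generations of $e^{l\psi(2)}$ produced by the quenched variance computation, while $W_{\infty}$ enters via the many-to-one formula at exponent $2$. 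The fact that $\mathcal{L}^n/n^{1/2}$ must converge with this particular constant is also \emph{forced} by the statement: dividing \eqref{ConvAbis} by Proposition \ref{GENPROPCONV2} applied at $s=\mathcal{L}^n$ gives precisely $(c_0^{1/2}|\mathcal{N}|/W_{\infty})^k$ as the limit of $(\mathcal{L}^n/n^{1/2})^k$.

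On the event $\{\varepsilon_1 n^{1/2} \le \mathcal{L}^n \le n^{1/2}/\varepsilon_1\}$, whose probability tends to $1$ as $\varepsilon_1 \to 0$ by the previous step, Proposition \ref{GENPROPCONV2} applied with $s = \mathcal{L}^n$, combined with Proposition \ref{GENPROPCONV1} to discard $k$-tuples not visited in $k$ distinct excursions, yields
\begin{align*}
    \frac{\mathcal{A}^k(\mathcal{D}_{n, T^{\mathcal{L}^n}}, f)}{(\mathcal{L}^n \bm{L}_n)^k} \underset{n \to \infty}{\cvp} (c_{\infty})^k \mathcal{A}^k_{\infty}(f).
\end{align*}
A Slutsky-type argument then combines this with the limit law of $\mathcal{L}^n/n^{1/2}$ and the sandwich above to produce \eqref{ConvAbis}. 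The quotient form \eqref{ConvAQuotientbis} follows immediately since the common factor $(\mathcal{L}^n/n^{1/2})^k$ cancels between numerator and denominator, leaving a convergence in $\P^*$-probability.

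The main obstacle is controlling the partial last excursion. A single excursion can visit order $n^{1/2}$ new vertices in the strip $\{\ell_n \le |x| \le \mathfrak{L}_n\}$, so its contribution to $\mathcal{A}^k$ is a priori of the same order $(n^{1/2}\bm{L}_n)^k$ as the full answer; one cannot afford a crude bound. The fix is a moment estimate of order $\xi \ge 2$ on the single-excursion restricted range, which requires enough integrability of the environment: the condition $\kappa > 2\xi k$ ensures that the relevant many-to-one expansions (mixing joint moments of order up to $\xi k$, as in Assumption \ref{Assumption2}) are finite, while the constraint $\mathfrak{L}_n = o(n^{1/2-\mathfrak{q}(\xi)})$ supplies the polynomial slack in $n$ needed so that Markov's inequality at level $\xi$ beats the normalisation $(n^{1/2}\bm{L}_n)^k$. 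The sequence $\mathfrak{q}(\xi)\in(0,1/2)$ is the worst exponent produced by this bookkeeping; it is non-increasing and tends to $0$ as $\xi \to \infty$, so $\mathfrak{L}_n$ may approach $n^{1/2}$ provided $\kappa$ is taken large enough.
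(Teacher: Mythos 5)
Your high-level plan — compare $\mathcal{A}^k(\Tilde{\mathcal{D}}_n,f)$ with $\mathcal{A}^k(\mathcal{D}_{n,T^{\mathcal{L}^n}},f)$, invoke the convergence in law of $\mathcal{L}^n/n^{1/2}$ to $c_0^{1/2}|\mathcal{N}|/W_\infty$, and conclude by a Slutsky argument — is the right one and matches the paper. Your sandwich $\mathcal{A}^k(\mathcal{D}_{n,T^{\mathcal{L}^n}},f)\le\mathcal{A}^k(\Tilde{\mathcal{D}}_n,f)\le\mathcal{A}^k(\mathcal{D}_{n,T^{\mathcal{L}^n+1}},f)$ is a valid observation, and the paper's version (bounding the difference directly) is essentially the same. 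However, there are two genuine problems with the proposal.

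The first, and central, gap: you write ``Proposition \ref{GENPROPCONV2} applied with $s=\mathcal{L}^n$,'' but Proposition \ref{GENPROPCONV2} is a statement about a \emph{fixed} $s$ in the window $[\varepsilon_1 n^{1/2}, n^{1/2}/\varepsilon_1]$, whereas $\mathcal{L}^n$ is a random variable depending on the walk. You cannot substitute a random index into a convergence-in-probability statement that holds for each fixed index; you need the convergence to hold \emph{uniformly} over the window, i.e. a union bound over $s$. That is precisely where the condition $\kappa>2\xi k$ and the constraint $\mathfrak{L}_n=o(n^{1/2-\mathfrak{q}(\xi)})$ enter: one applies Markov at level $2\xi k$ (via Lemma \ref{GenVariance} with $\mathfrak{a}=\xi$), obtains a bound of the form $(\mathfrak{L}_n/s)^{\tilde{\mathfrak{q}}(\xi)}$ for each $s$, and then sums over roughly $n^{1/2}$ values of $s$. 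For the sum to vanish one needs $\tilde{\mathfrak{q}}(\xi)\ge 2$ (hence $\xi\ge 2$) and the polynomial slack $\mathfrak{L}_n=o(n^{1/2-\mathfrak{q}(\xi)})$. Your proposal never articulates this uniform step, which is the actual reason the theorem needs stronger hypotheses than Theorem \ref{GenTh5}.

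The second issue is a misattribution: you claim the $\xi$-moment estimate is needed to control the contribution of the last, partial excursion between $T^{\mathcal{L}^n}$ and $n$, on the grounds that this excursion could a priori contribute order $(n^{1/2}\bm{L}_n)^k$. In fact, the paper controls that discrepancy with a \emph{second-moment} bound only — conditioning on $\mathcal{L}^n=s$, the quenched variance of the number of $k$-tuples first completed between $T^s$ and $T^{s+1}$ is of order $(\bm{L}_n a_n)^{2k}s^{2k-2}$, which after dividing by $(s\bm{L}_n)^{2k}$ and summing over $s$ gives an error $\sim (a_n)^{2k}/n^{1/2}\to 0$, with no appeal to higher moments or to the size of $\mathfrak{L}_n$ beyond what Theorem \ref{GenTh5} already requires. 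So the role you assign to $\kappa>2\xi k$ is pointed at the wrong target. Finally, your remark that the constant $c_0^{1/2}/W_\infty$ is ``forced by the statement'' is only a consistency check, not a derivation; the actual input (as in the paper) is Hu's result on the local time at the root, adapted to $e^*$.
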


\subsection{The small generations: examples}\label{SmallGenerationsEx}

As promised, we now present five explicit examples that illustrate our general results. The two first examples are simple. Corollary \ref{GenTh1} is an asymptotic of the volume of small generations and Corollary \ref{GenTh1Bis} aims to convince ourselves that the hereditary Assumption \ref{Assumption3} is not really restrictive. The last three examples, Corollaries \ref{GenTh2}-\ref{GenTh4}, are definitely more important since they give a good description of the genealogical tree of vertices picked uniformly in the range $\mathcal{R}_{T^{\sqrtBis{n}}}$. 

\vspace{0.2cm}

\noindent Let $R_n(l):=\sum_{|z|=l}\un_{\{z\in\mathcal{R}_{T^{\sqrtBis{n}}}\}}$ be the volume of the $\ell$-th generation of the range $\mathcal{R}_{T^{\sqrtBis{n}}}$.

\begin{coro}[\textbf{Volume of small generations}]\label{GenTh1}
Let $\kappa>2$. Under the Assumptions \ref{Assumption1}, \ref{AssumptionSmallGenerations}, \ref{Assumption2} and \ref{AssumptionIncrements}, in $\P^*$-probability
\begin{align*}
    \frac{1}{\sqrtBis{n}\bm{L}_n}\sum_{l=\ell_n}^{\mathfrak{L}_n}R_n(l)\underset{n\to\infty}{\longrightarrow}c_{\infty}W_{\infty},
\end{align*}
As a consequence of Corollary \ref{GenTh1}, we have that if $\log n=o(\mathfrak{L}_n)$, then both $(R_n(\mathfrak{L}_n)/\sqrtBis{n})$ and $(\sum_{l=\ell_n}^{\mathfrak{L}_n}R_n(l)/(\sqrtBis{n}\mathfrak{L}_n))$ converge in $ \P^*$-probability to the same limit $c_{\infty}W_{\infty}$.
\end{coro}
\begin{remark}
    Although we assumed $k\geq 2$, the case $k=1$ is interesting. The convergence of $(\sum_{l=\ell_n}^{\mathfrak{L}_n}R_n(l)/(\sqrtBis{n}\bm{L}_n))_n$ does not require all the previous assumptions and holds for $\kappa>2$. Corollary \ref{GenTh1} can be seen as an easy consequence of Theorem \ref{GenTh5} with $k=2$ and $f=1$. 
\end{remark}

\vspace{0.2cm}

\noindent In view of Corollary \ref{GenTh1}, we deduce that whenever $\mathfrak{L}_n$ is large enough but not to close to the largest generation of the tree $\mathcal{R}_{T^{\sqrtBis{n}}}$, the range $R_n(\mathfrak{L}_n)$ is of order $n^{1/2}$. Moreover, $\bm{L}_n-1$ denotes the height of the set $\mathcal{D}_n$ in the tree $\mathcal{R}_{T^{\sqrtBis{n}}}$ and the volume of $\mathcal{D}_n$ behaves like $\bm{L}_n\times R_n(\mathfrak{L}_n)$.

\vspace{0.2cm}

\noindent The following corollaries are composed of two parts: the first part will be a convergence of the range $\mathcal{A}^k(\mathcal{D}_n,f)$ for a given function $f$ and the second part will be an application of this convergence to the genealogy of the vertices $\VectCoord{\mathcal{X}}{1,n},\ldots,\VectCoord{\mathcal{X}}{k,n}$. 

\vspace{0.2cm}

\noindent In the second example, we present a range such that for a $k$-tuple $\Vect{x}\in\Delta^k$, some of the vertices are free while others are obliged to interact with each other. Let $\bm{\lambda}=(\lambda_2,\ldots,\lambda_k)\in\ProdSet{(\N^*)}{(k-1)}$ and introduce
\begin{align*}
    f_{\bm{\lambda}}(\VectCoord{x}{1},\ldots,\VectCoord{x}{k}):=\prod_{i=2}^k\un_{\{|\VectCoord{x}{i-1}\land\VectCoord{x}{i}|<\lambda_i\}}.
\end{align*}
Note that there is no constraint between $\VectCoord{x}{i_1}$ and $\VectCoord{x}{i_2}$ if $i_2\not\in\{i_1-1,i_1+1\}$, $i_1\geq 2$.
\begin{coro}[\textbf{A constraint for consecutive vertices}]\label{GenTh1Bis}
    Let $k\geq 2$ and assume $\kappa>2k$. Under the Assumptions \ref{Assumption1}, \ref{AssumptionSmallGenerations}, \ref{Assumption2} and \ref{AssumptionIncrements}, in $\P^*$-probability
    \begin{align*}
        \frac{1}{(\sqrtBis{n}\bm{L}_n)^k}\mathcal{A}^k(\mathcal{D}_n,f_{\bm{\lambda}})\underset{n\to\infty}{\longrightarrow}(c_{\infty})^k\mathcal{A}^k_{\infty}(f_{\bm{\lambda}}),
    \end{align*}
    where $\mathcal{A}^k_{\infty}(f_{\bm{\lambda}})=\lim_{l\to\infty}\sum_{\Vect{x}\in\Delta^k_l}e^{-V(\VectCoord{x}{1})}\prod_{i=2}^ke^{-V(\VectCoord{x}{i})}\un_{\{|\VectCoord{x}{i-1}\land\VectCoord{x}{i}|<\lambda_i\}}$ and this limit holds in $L^2(\Pb^*)$, see \eqref{Def_A_Inf_Th}.
\end{coro}
\noindent In the next example, we are interested in the number of $k$-tuples of distinct vertices of $\mathcal{D}_n$ such that any most recent common ancestor of two vertices among them is located close to the root of $\mathcal{R}_{T^{\sqrtBis{n}}}$. Recall that for all $k\geq 2$, $\Vect{x}=(\VectCoord{x}{1},\ldots,\VectCoord{x}{k})\in\Delta^k$, $\mathcal{C}^k_m=\{\Vect{x}\in\Delta^k;\; \mathcal{S}^k(\Vect{x})\leq m\}$ where $\mathcal{S}^k(\Vect{x})-1$ denotes the last generation at which two or more vertices among $\VectCoord{x}{1},\ldots,\VectCoord{x}{k}$ share a  common ancestor, see \eqref{MultiMRCA}. It turns out that the number of vertices visited by the random walk $\X$ belonging to $\mathcal{C}^k_m$ for any $m\in\N^*$ is large and as a consequence, the sequence of random times $(\mathcal{S}^k(\VectCoord{\mathcal{X}}{n})$ converges in law.
\begin{coro}[\textbf{First coalescent time}]\label{GenTh2}
Let $k\geq 2$. Assume that $\kappa>2k$ and for any $m\in\N^*$, $\Vect{x}\in\Delta^k$, $f_m(\Vect{x})=\un_{\mathcal{C}^k_m}(\Vect{x})$. Recall that $\mathcal{A}^k(\mathcal{D}_n,f_m)$ is the number of $k$-tuples $\Vect{x}$ of distinct vertices of $\mathcal{D}_n$ such that $\mathcal{S}^k(\Vect{x})\leq m$. Under the Assumptions \ref{Assumption1}, \ref{AssumptionSmallGenerations}, \ref{Assumption2} and \ref{AssumptionIncrements}
\begin{enumerate}
    \item in $\P^*$-probability
        \begin{align*}
            \frac{1}{(\sqrtBis{n}\bm{L}_n)^k}\mathcal{A}^k(\mathcal{D}_n,f_m)\underset{n\to\infty}{\longrightarrow}(c_{\infty})^k\mathcal{A}^k_{\infty}(f_m),
        \end{align*}
        where $\mathcal{A}^k_{\infty}(f_m)$ is an explicit random variable (see \eqref{Def_A_Inf_Th}) such that $\lim_{m\to\infty}\mathcal{A}_{\infty}^k(f_m)=(W_{\infty})^k$ in $L^2(\Pb^*)$. \\
    \item Moreover, the sequence of random times $(\mathcal{S}^k(\VectCoord{\mathcal{X}}{n}))$ converges in law, under $\P^*$: for any $m\in\N^*$
    \begin{align}\label{ConvMRCA}
        \P^*\big(\mathcal{S}^k(\VectCoord{\mathcal{X}}{n})\leq m\big)\underset{n\to\infty}{\longrightarrow}\Eb^*\Big[\frac{\mathcal{A}^k_{\infty}(f_m)}{(W_{\infty})^k}\Big].
    \end{align}
\end{enumerate}
\end{coro}
\noindent The convergence in \eqref{ConvMRCA} is somewhat reminiscent of the result of K.B Athreya (\cite{Athreya_SUPER_CT}, Theorem 2) for a supercritical Galton-Watson tree stated earlier: each coalescence occurs in a generation close to the root.

\vspace{0.2cm}

\noindent In the following result, we compute the law of $\pi^{k,n}$. Recall that if $|\Delta^k(\mathcal{D}_n)|>0$, then for any $m\in\{0,\ldots,M_n\}$ (where $M_n=\max_{x\in\mathcal{D}_n}|x|$), $\pi^{k,n}_m$ is the partition of $\{1,\ldots,k\}$ whose blocks are given by the equivalent classes of the relation $\sim_m$ defined by: $i_1\sim_m i_2$ if and only if $\VectCoord{\mathcal{X}}{i_1,n}$\textrm{ and }\; $\VectCoord{\mathcal{X}}{i_2,n}$\textrm{ share a common ancestor in generation }$m$. We have $\pi^{k,n}=(\pi^{k,n}_m)_{0\leq m\leq M_n}$. Also recall that, by definition
\begin{align*}
    \pi^{k,n}_0=\{\{1,\ldots,k\}\}\;\textrm{ and }\;\pi^{k,n}_{m}=\{\{1\},\ldots,\{k\}\}\;\textrm{ for any }\;m\in\Big\{\max_{1\leq i\leq k}\big|\VectCoord{\mathcal{X}}{i,n}\big|,\ldots,M_n\Big\}.
\end{align*}
Before stating our next result, we add, for convenience, a collection $\{\VectCoord{e}{i};\; i\in\N^*\}$ of distinct leafs in the generation $0$. Let $q\geq 2$ be an integer and $\bm{\pi}$ be a partition of $\{1,\ldots,q\}$. We denote by $|\bm{\pi}|$ the total number of blocks of $\bm{\pi}$. For any $m\in\N^*$, define the set $\Upsilon_{m,\bm{\pi}}$ by: $\Vect{x}=(\VectCoord{x}{1},\ldots,\VectCoord{x}{q})\in\Upsilon_{m,\bm{\pi}}$ if and only if $\Vect{x}\in\Delta^q$ and 
\begin{align*}
   \forall\bm{B}\in\bm{\pi}, \forall i_1,i_2\in\bm{B}:(\VectCoord{x}{i_1})_{m}=(\VectCoord{x}{i_2})_{m},
\end{align*}
and if $|\bm{\pi}|\geq 2$
\begin{align*}
    \forall\bm{B}\not=\tilde{\bm{B}}\in\bm{\pi}, \forall i_1\in\bm{B}, i_2\in\tilde{\bm{B}}:(\VectCoord{x}{i_1})_{m}\not=(\VectCoord{x}{i_2})_{m},
\end{align*}
where we recall that, when $|\VectCoord{x}{i}|\geq m$, $(\VectCoord{x}{i})_m$ denotes the ancestor of $\VectCoord{x}{i}$ in generation $m$. Otherwise, if $|\VectCoord{x}{i}|<m$, we set $(\VectCoord{x}{i})_m:=\VectCoord{e}{i}$ so $\Upsilon_{m,\bm{\pi}}$ is well defined. 

\vspace{0.2cm}

\begin{figure}[h]
\centering

   \begin{tikzpicture}

        \draw[black,fill] (-1,0) circle [radius=0.02];
        \filldraw (-1,0) node[anchor=west] {$e$};

        \draw[black,fill] (-2,0) circle [radius=0.02];
        \filldraw (-2,0) node[anchor=west] {$\VectCoord{e}{1}$};

        \draw[black,fill] (-3,0) circle [radius=0.02];
        \filldraw (-3,0) node[anchor=west] {$\VectCoord{e}{2}$};

        \draw[black,fill] (-4,0) circle [radius=0.02];
        \filldraw (-4,0) node[anchor=west] {$\VectCoord{e}{3}$};

        \draw[DarkOrchid,fill] (-5,0) circle [radius=0.02];
        \filldraw[DarkOrchid] (-6.54,-0.2) node[anchor=west] {$(\VectCoord{x}{4})_{m'}=\VectCoord{e}{4}$};
        
            
        
        

        \draw (-1,0) -- (-1,2);
        \draw[black,fill] (-1,2) circle [radius=0.02];
        

        \draw (-1,2) -- (0,3.5);
        \draw[black,fill] (0,3.5) circle [radius=0.02];


        \draw (-1,2) -- (-2.5,4.5);
        \draw[BurntOrange,fill] (-2.5,4.5) circle [radius=0.02];

        \draw[BurntOrange] (-2.5,4.5) -- (-1.42,2.7) ;
        \draw[BurntOrange,fill] (-2.5,4.5) circle [radius=0.02];

        \draw[DarkOrchid] (0,3.5) -- (-0.53,2.7);
        \draw[DarkOrchid,fill] (0,3.5) circle [radius=0.02];

        \draw[DarkOrchid,fill] (-0.53,2.7) circle [radius=0.02];
        \draw[BurntOrange,fill] (-1.42,2.7) circle [radius=0.02];


        \draw[BurntOrange] (-2.5,4.5) -- (-3.7,7);
        \draw[BurntOrange,fill] (-3.7,7) circle [radius=0.02];

        \draw[BurntOrange] (-2.5,4.5) -- (-2.2,6);
        \draw[BurntOrange,fill] (-2.2,6) circle [radius=0.02];

        \draw[DarkOrchid] (0,3.5) -- (-0.5,4.2);
        \draw[DarkOrchid,fill] (-0.5,4.2) circle [radius=0.02];

        \draw[DarkOrchid] (0,3.5) -- (2.1,6.85);
        \draw[DarkOrchid,fill] (2.1 ,6.85) circle [radius=0.02];


        \filldraw (3.5,0) node[anchor=west] {$0$};
        
        \draw[dotted] (-3.7,2.7) -- (3.5,2.7);
        \filldraw (3.5,2.7) node[anchor=west] {$m$};

        \draw[dotted] (-3.7,4.3) -- (3.5,4.3);
        \filldraw (3.5,4.3) node[anchor=west] {$m'$};

        \draw[DarkOrchid,fill] (-0.53,2.7) circle [radius=0.02];
        \draw[BurntOrange,fill] (-1.42,2.7) circle [radius=0.02];

        \draw[latex-] (3.5,8) -- (3.5,0);

        \filldraw (3.5,8) node[anchor=west] {\textrm{ generation }};

        

        \filldraw[BurntOrange] (-3.7,7.2) node[anchor=west] {$\VectCoord{x}{3}$};

        \filldraw[BurntOrange] (-2.2,6.2) node[anchor=west] {$\VectCoord{x}{1}$};

        \filldraw[DarkOrchid] (-0.5,4.5) node[anchor=west] {$\VectCoord{x}{4}$};
        
        \filldraw[DarkOrchid] (2.1,7.05) node[anchor=west] {$\VectCoord{x}{2}$};

        \filldraw[BurntOrange] (-1.42,2.9) node[anchor=west] {$\VectCoord{z}{1}$};
        
        \filldraw[DarkOrchid] (-0.53,2.9) node[anchor=west] {$\VectCoord{z}{2}$};
        

        \end{tikzpicture}

\label{Coalescence times 1}
\caption{In the present illustration, the $4$-tuple of vertices $(\textcolor{BurntOrange}{\VectCoord{x}{1}},\textcolor{DarkOrchid}{\VectCoord{x}{2}},\textcolor{BurntOrange}{\VectCoord{x}{3}}\textcolor{DarkOrchid}{\VectCoord{x}{4}})$ belongs to $\Upsilon_{m,\bm{\pi}}$ with $\bm{\pi}=\{\{1,3\},\{2,4\}\}$, since $\textcolor{BurntOrange}{\VectCoord{z}{1}}=\textcolor{BurntOrange}{(\VectCoord{x}{1})_m}=\textcolor{BurntOrange}{(\VectCoord{x}{3})_m}, \textcolor{DarkOrchid}{\VectCoord{z}{2}}=\textcolor{DarkOrchid}{(\VectCoord{x}{2})_m}=\textcolor{DarkOrchid}{(\VectCoord{x}{4})_m}$ and $\textcolor{BurntOrange}{\VectCoord{z}{1}}\not=\textcolor{DarkOrchid}{\VectCoord{z}{2}}$. However, it does not belong to $\Upsilon_{m',\bm{\pi}}.$}
\end{figure}

\newpage

Now, let $1\leq d<q$ be two integers. A collection $\Xi:=(\bm{\Xi}_i)_{0\leq i\leq d}$ of partitions of $\{1,\ldots,q\}$ is said to be increasing if it satisfies $\bm{\Xi}_0=\{\{1,\ldots,q\}\}$, $\bm{\Xi}_d=\{\{1\},\ldots,\{q\}\}$ and for all $i\in\{1,\ldots,d\}$, $|\bm{\Xi}_{i-1}|<|\bm{\Xi}_i|$, where we recall that $|\bm{\Xi}_i|$ is the total number of blocks of the partition $\bm{\Xi}_i$. \\
Let $\Xi:=(\bm{\Xi}_i)_{0\leq i\leq d}$ be an increasing collection of partitions of $\{1,\ldots,q\}$ and let $\bm{t}=(t_1,\ldots,t_{d})\in\ProdSet{\N}{d}$ such that $t_1<\cdots<t_{d}$. Introduce the set $\Gamma^i_{\bm{t},\Xi}:=\Upsilon_{t_i-1,\bm{\Xi}_{i-1}}\cap\Upsilon_{t_i,\bm{\Xi}_i}$. We then define the function $f^{d}_{\bm{t},\Xi}$ by: for all $\Vect{x}\in\Delta^q$
\begin{align}\label{Def_f_partition}
    f^{d}_{\bm{t},\Xi}(\Vect{x})=\prod_{i=1}^{d}\un_{\Gamma^i_{\bm{t},\Xi}}(\Vect{x}).
\end{align}
The function defined in \eqref{Def_f_partition} plays a key role in our study: $f^{d}_{\bm{t},\Xi}(\Vect{x})$ characterizes the genealogy of $\Vect{x}:=(\VectCoord{x}{1},\ldots,\VectCoord{x}{q})$. Indeed, for any $i\in\{1,\ldots,d\}$, the partition $\bm{\Xi}_i$ corresponds to the $i$-th generation of the genealogical tree of $\VectCoord{x}{1},\ldots,\VectCoord{x}{q}$ while $t_i-1$ denotes the $i$-th generation at which at least two branches of this genealogical tree split ($t_i-1$ therefore corresponds to a coalescent/split time, see Figure \ref{Genealogical tree 2} for instance). \\
Before stating our result, we need a few more notation and definitions. Let $\Xi:=(\bm{\Xi}_i)_{0\leq i\leq d}$ be an increasing collection of partitions of $\{1,\ldots,q\}$. For $p\in\{1,\ldots, d\}$ , the $j$-th block $\bm{B}^{p-1}_j$ of the partition $\bm{\Xi}_{p-1}$ (blocks are ordered by their least element) is the union of $b_{p-1}(\bm{B}^{p-1}_j)\geq 1$ (we will write $b_{p-1}(\bm{B}_j)$ instead) block(s) $\bm{B}^{p}_{l_1},\ldots,\bm{B}^{p}_{l_{b_{p-1}(\bm{B}_j)}}$, $1\leq l_1<\ldots<l_{b_{p-1}(\bm{B}_j)}\leq|\bm{\Xi}_{p}|$, of the partition $\bm{\Xi}_{p}$ and for any $i\in\{1,\ldots,{b_{p-1}(\bm{B}_j)}\}$, define
\begin{align}\label{betaDef}
    \beta^{p-1}_{j,i}:=|\bm{B}^{p}_{l_i}|,   
\end{align}
to be the cardinal of the block $\bm{B}^{p}_{l_i}$. We are now ready to state our result:
\begin{coro}[\textbf{Full genealogy}]\label{GenTh3}
Let $k\geq 2$ and assume that $\kappa>2k$. Under the Assumptions \ref{Assumption1}, \ref{AssumptionSmallGenerations}, \ref{Assumption2} and \ref{AssumptionIncrements}, for any $\ell\in\N^*$ such that $\ell<k$, any $\bm{s}=(s_1,\ldots,s_{\ell})\in\ProdSet{\N}{\ell}$ such that $s_1<\cdots<s_{\ell}$ and any increasing collection $\Pi=(\bm{\pi}_i)_{0\leq i\leq\ell}$ of partitions of $\{1,\ldots,k\}$
\begin{enumerate}
    \item in $\P^*$-probability
    \begin{align}\label{Th3Trace}
        \frac{1}{(\sqrtBis{n}\bm{L}_n)^k}\mathcal{A}^k(\mathcal{D}_n,f^{\ell}_{\bm{s},\Pi})\underset{n\to\infty}{\longrightarrow}(c_{\infty})^k\mathcal{A}^k_{\infty}(f^{\ell}_{\bm{s},\Pi}),
        \end{align}
        where $(\mathcal{A}^k_{\infty}(f^{d}_{\bm{t},\Xi});\; 1\leq d<k,\; \Xi\textrm{ increasing },\; \bm{t}=(t_1,\ldots,t_d))$ is a collection of random variables satisfying
        \begin{align*}
            \sum_{d=1}^{k-1}\;\;\sum_{\Xi\textrm{ increasing }}\sum_{m_0<\cdots<m_d}\underset{m_{i-1}<t_i\leq m_i}{\sum_{\bm{t}=(t_1,\ldots,t_{d})}}\mathcal{A}^k_{\infty}(f^{d}_{\bm{t},\Xi})=(W_{\infty})^k,
        \end{align*}
    where $\Xi\textrm{ increasing}$ means here that $\Xi=(\bm{\Xi}_i)_{0\leq i\leq d}$ is an increasing collection of partitions of $\{1,\ldots,k\}$. Moreover
    \begin{align}\label{EspPartionLaw}
         \Eb\big[\mathcal{A}_{\infty}^k(f^{\ell}_{\bm{s},\Pi})\big]=e^{\psi(k)}\prod_{i=1}^{\ell}\prod_{j=1}^{|\bm{\pi}_{i-1}|}c_{b_{i-1}(\bm{B}_j)}(\bm{\beta}^{i-1}_j)\underset{|\mathfrak{B}|\geq 2}{\prod_{\mathfrak{B}\in\bm{\pi}_{i}}}e^{s_{i+1}^*\psi(|\mathfrak{B}|)},
    \end{align}
    with $s_{i+1}^*=s_{i+1}-s_i-1$, $s^*_{\ell+1}=1$, $\bm{\beta}^p_j:=(\beta^p_{j,1},\ldots,\beta^p_{j,b_p(\bm{B}_j)})$, see \eqref{betaDef}. We also use the convention $\prod_{\varnothing}=1$ and see the Assumption \ref{Assumption2} for the definition of $c_l(\bm{\beta})$. \\
    \item Moreover, for any non-negative integers $m_0<m_1<\cdots<m_{\ell}$
    \begin{align}\label{Th3partition}
       \P^*(\pi^{k,n}_{m_0}=\bm{\pi}_0,\ldots,\pi^{k,n}_{m_{\ell}}=\bm{\pi}_{\ell})
       \underset{n\to\infty}{\longrightarrow}\Eb^*\Big[\frac{1}{(W_{\infty})^k}\underset{m_{i-1}<s_i\leq m_i}{\sum_{\bm{s}=(s_1,\ldots,s_{\ell})}}\mathcal{A}^k_{\infty}(f^{\ell}_{\bm{s},\Pi})\Big]. 
    \end{align}

\end{enumerate}
\end{coro}
\begin{remark}[An hereditary character]
There is an hereditary character hidden in the previous formula \eqref{EspPartionLaw} due to the random environment. The fact is, unlike the case of regular supercritical Galton-Watson trees depending on $(b_i(\bm{B});\; \bm{B}\in\bm{\pi}_i,\; 0\leq i\leq\ell-1)$ (see \cite{Johnston1}, Theorem 3.5), the limit law of the present genealogical tree depends on the collection $(\bm{\beta}^i_j;\; 0\leq i\leq\ell-1,\; 1\leq j\leq|\bm{\pi}_{i-1}|)$ and on $(|\mathfrak{B}|;\;\mathfrak{B}\in\bm{\pi}_i,\; 1\leq i\leq\ell)$, making a huge difference. Indeed, by definition, the latter take more account of the genealogical structure than $(b_i(\bm{B});\; \bm{B}\in\bm{\pi}_i,\; 0\leq i\leq\ell-1)$. For instance, let $k=4$, $\ell=3$ and define the increasing collection of partitions $\Pi=(\bm{\pi}_i)_{0\leq i\leq\ell}$ by $\bm{\pi}_3=\{\{1\},\{2\},\{3\},\{4\}\}$, $\bm{\pi}_2=\{\{1,3\},\{2\},\{4\}\}$, $\bm{\pi}_1=\{\{1,3\},\{2,4\}\}$ and $\bm{\pi}_0=\{1,2,3,4\}$. We have $\bm{\beta}^2_1=(1,1)$, $\bm{\beta}^2_2=1$, $\bm{\beta}^2_1=1$; $\bm{\beta}^1_1=2$, $\bm{\beta}^1_2=(1,1)$; $\bm{\beta}^0_1=(2,2)$ and thanks to \eqref{EspPartionLaw}, for any $\bm{t}=(t_1,t_2,t_3)\in\ProdSet{\N}{3}$ such that $t_1<t_2<t_3$
\begin{align*}
     &\Eb[\mathcal{A}^4_{\infty}(f^{3}_{\bm{t},\Pi})] \\ & =\Eb\Big[\sum_{|x|=1}e^{-2V(x)}\Big]\Eb\Big[\underset{|x|=|y|=1}{\sum_{x\not=y}}e^{-V(x)-V(y)}\Big]^2\Eb\Big[\underset{|x|=|y|=1}{\sum_{x\not=y}}e^{-2V(x)-2V(y)}\Big]e^{t_3^*\psi(2)+2t_2^*\psi(2)+\psi(4)}.
\end{align*}
Also introduce the increasing collection of partitions $\Pi'=(\bm{\pi}'_i)_{1\leq i\leq\ell}$ such that $\bm{\pi}'_3=\bm{\pi}_3$, $\bm{\pi}'_2=\bm{\pi}_2$, $\bm{\pi}'_1=\{\{1,3,4\},\{2\}\}$ and $\bm{\pi}'_0=\bm{\pi}_0$. We have $\bm{\beta}^2_1=(1,1)$, $\bm{\beta}^2_2=1$, $\bm{\beta}^2_3=1$; $\bm{\beta}^1_1=(2,1)$, $\bm{\beta}^1_2=1$; $\bm{\beta}^0_1=(3,1)$ and thanks to \eqref{EspPartionLaw}, for any $\bm{t}=(t_1,t_2,t_3)\in\ProdSet{\N}{3}$ such that $t_1<t_2<t_3$ 
\begin{align*}
     &\Eb[\mathcal{A}^4_{\infty}(f^{3}_{\bm{s},\Pi'})] \\ & =\Eb\Big[\underset{|x|=|y|=1}{\sum_{x\not=y}}e^{-V(x)-V(y)}\Big]\Eb\Big[\underset{|x|=|y|=1}{\sum_{x\not=y}}e^{-2V(x)-V(y)}\Big]\Eb\Big[\underset{|x|=|y|=1}{\sum_{x\not=y}}e^{-3V(x)-V(y)}\Big]e^{s_3^*\psi(2)+s_2^*\psi(3)+\psi(4)}.
\end{align*}
The difference between these two examples is that in the second one, we ask $(\VectCoord{\mathcal{X}}{4,n})_{t_2-1}$ (the ancestor of $\VectCoord{\mathcal{X}}{4,n}$ of in generation $t_2-1$) to belong to both genealogical line $\llbracket (\VectCoord{\mathcal{X}}{1,n})_{t_1-1},\VectCoord{\mathcal{X}}{1,n}\rrbracket$ and $\llbracket (\VectCoord{\mathcal{X}}{3,n})_{t_1-1},\VectCoord{\mathcal{X}}{3,n}\rrbracket$. This constraint can be satisfied only if the vertex $(\VectCoord{\mathcal{X}}{4,n})_{t_1-1}$ is often visited by the random walk $\X$, inducing more dependence in the trajectories of $\X$ thus giving the factor $t_2^*\psi(3)$ instead of $2t_2^*\psi(2)=t_2^*\psi(2)+t_2^*\psi(2)$. \\
However, in the case of regular supercritical Galton-Watson trees, the events $\cap_{i=0}^{3}\{\bm{\pi}_i\}$ and $\cap_{i=0}^{3}\{\Tilde{\bm{\pi}}_i\}$ have the same probability under the limit law of the genealogical tree. Indeed, one can notice (see Figure \ref{Genealogical tree 2}) that for all $i\in\{1,2,3\}$ and all $j\in\{1,\ldots,|\bm{\pi}_i|\}$ ($|\bm{\pi}_i|=|\bm{\pi}'_i|$ by definition), $b_{i,\Pi}(\bm{B}_{j})=b_{i,\Pi'}(\bm{B}_{\mathfrak{p}(j)})$ for some permutation $\mathfrak{p}$ on $\llbracket 1,|\bm{\pi}_i|\rrbracket$, but this not the case when replacing $b_{i,\Pi}(\bm{B}_{\cdot})$ by $\bm{\beta}^{i,\Pi}_{\cdot}$ and $b_{i,\Pi'}(\bm{B}_{\cdot})$ by $\bm{\beta}^{i,\Pi'}_{\cdot}$.

\begin{figure}[h]
        \centering
       \begin{tikzpicture}

        \draw[black,fill] (-1,0) circle [radius=0.02];
        \filldraw (-1,0) node[anchor=west] {$e$};
            
        
        

        \draw (-1,0) -- (-1,2);
        \draw[black,fill] (-1,2) circle [radius=0.02];

        \filldraw (-2.5,2.2) node[anchor=west] {$[2,(2,2)]$};

        \filldraw (3.8,2.2) node[anchor=west] {$[2,(3,1)]$};
        

        \draw (-1,2) -- (0,3.5);

        \filldraw (-2.9,3.7) node[anchor=west] {\textcolor{BrickRed}{$[\textcolor{Peach}{1},2]$}};

        \filldraw (0.05,3.7) node[anchor=west] {\textcolor{BrickRed}{$[\textcolor{Peach}{2},(1,1)]$}};

        \filldraw (2.9,3.7) node[anchor=west] {\textcolor{BrickRed}{$[\textcolor{Peach}{2},(2,1)]$}};

        \filldraw (5.9,3.7) node[anchor=west] {\textcolor{BrickRed}{$[\textcolor{Peach}{1},1]$}};


        \draw (-1,2) -- (-2.5,4.5);
        \draw[black,fill] (-2.5,4.5) circle [radius=0.02];

        \filldraw (-4,4.7) node[anchor=west] {$[2,(1,1)]$};

        \filldraw (-1.1,4.7) node[anchor=west] {$[1,1]$};

        \filldraw (0.25,4.7) node[anchor=west] {$[1,1]$};

        \filldraw (2.3,4.7) node[anchor=west] {$[2,(1,1)]$};

        \filldraw (4.9,4.7) node[anchor=west] {$[1,1]$};

        \filldraw (6.25,4.7) node[anchor=west] {$[1,1]$};
        

        \draw (-2.5,4.5) -- (-3.7,7);
        \draw[black,fill] (-3.7,7) circle [radius=0.02];

        \draw (-2.5,4.5) -- (-2.2,6);
        \draw[black,fill] (-2.2,6) circle [radius=0.02];

        \draw (0,3.5) -- (-0.7,6.5);
        \draw[black,fill] (-0.7,6.5) circle [radius=0.02];

        \draw (0,3.5) -- (0.7,6.85);
        \draw[black,fill] (0.7,6.85) circle [radius=0.02];


        \filldraw (-3.7,7.2) node[anchor=west] {$\VectCoord{\mathcal{X}}{3,n}$};

        \filldraw (-2.2,6.2) node[anchor=west] {$\VectCoord{\mathcal{X}}{1,n}$};

        \filldraw (-0.7,6.7) node[anchor=west] {$\VectCoord{\mathcal{X}}{4,n}$};
        
        \filldraw (0.7,7.05) node[anchor=west] {$\VectCoord{\mathcal{X}}{2,n}$}; 
        


        \draw[black,fill] (5.3,0) circle [radius=0.02];
        \filldraw (5.3,0) node[anchor=west] {$e$};
            
        
        

        \draw (5.3,0) -- (5.3,2);
        \draw[black,fill] (5.3,2) circle [radius=0.02];
        



        \draw (5.3,2) -- (3.8,4.5);
        \draw[black,fill] (3.8,4.5) circle [radius=0.02];

        \draw (3.8,4.5) -- (2.6,7);
        \draw[black,fill] (2.6,7) circle [radius=0.02];

        \draw (3.8,4.5) -- (4.1,6);
        \draw[black,fill] (4.1,6) circle [radius=0.02];

        \draw[black,fill] (5.6,6.5) circle [radius=0.02];

        \draw[black,fill] (7,6.85) circle [radius=0.02];

        \draw (4.4,3.5) -- (5.6,6.5);

        \draw (5.3,2) -- (7,6.85);


        \filldraw (9.3,0) node[anchor=west] {$0$};

        \draw[latex-] (9.3,8) -- (9.3,0);

        \filldraw (9.3,8) node[anchor=west] {\textrm{ generation }};

        \draw[dotted] (-3.7,2) -- (9.3,2);
        \filldraw (9.3,2) node[anchor=west] {$t_1-1$};
        

        \draw[dotted] (-3.7,4.5) -- (9.3,4.5);
        \filldraw (9.3,4.5) node[anchor=west] {$t_3-1$};


        \filldraw (2.6,7.2) node[anchor=west] {$\VectCoord{\mathcal{X}}{3,n}$};

        \filldraw (4.1,6.2) node[anchor=west] {$\VectCoord{\mathcal{X}}{1,n}$};

        \filldraw (5.6,6.7) node[anchor=west] {$\VectCoord{\mathcal{X}}{4,n}$};
        
        \filldraw (7,7.05) node[anchor=west] {$\VectCoord{\mathcal{X}}{2,n}$}; 
        


        \draw[BrickRed,dotted] (-3.7,3.5) -- (9.3,3.5);
        \filldraw (9.3,3.5) node[anchor=west] {\textcolor{BrickRed}{$t_2-1$}};
        \draw[BrickRed,fill] (0,3.5) circle [radius=0.02];
        \draw[BrickRed,fill] (4.4,3.5) circle [radius=0.02];
    
        
\end{tikzpicture}
\caption{An example of a genealogical tree of the four vertices $\VectCoord{\mathcal{X}}{1,n}$, $\VectCoord{\mathcal{X}}{2,n}$, $\VectCoord{\mathcal{X}}{3,n}$, $\VectCoord{\mathcal{X}}{4,n}$ associated to $\Pi$ (left) and associated to $\Pi'$ (right). 
\textcolor{BrickRed}{$[\textcolor{Peach}{1},2]$} means that $b_{1,\Pi}(\{1,3\})=\textcolor{Peach}{1}$ and $\bm{\beta}^{2,\Pi}_1=\textcolor{BrickRed}{2}$, \textcolor{BrickRed}{$[\textcolor{Peach}{2},(1,1)]$} means that $b_{2,\Pi}(\{2,4\})=\textcolor{Peach}{2}$ and $\bm{\beta}^{2,\Pi}_2=\textcolor{BrickRed}{(1,1)}$. In the same way, \textcolor{BrickRed}{$[\textcolor{Peach}{2},(2,1)]$} means that $b_{1,\Pi'}(\{1,3\})=\textcolor{Peach}{2}$ and $\bm{\beta}^{2,\Pi'}_1=\textcolor{BrickRed}{(2,1)}$, \textcolor{BrickRed}{$[\textcolor{Peach}{1},1]$} means that $b_{2,\Pi'}(\{2\})=\textcolor{Peach}{1}$ and $\bm{\beta}^{2,\Pi'}_2=\textcolor{BrickRed}{1}$.}\label{Genealogical tree 2}
\end{figure}
\end{remark}

\noindent Since all the coalescences  of the genealogical lines of $\VectCoord{\mathcal{X}}{1,n},\ldots,\VectCoord{\mathcal{X}}{k,n}$ occur in the remote past with large probability, one could focus on these particular vertices of the tree $\mathcal{R}_{T^{\sqrtBis{n}}}$. To do that, we pick a $k$-tuple $\VectCoord{\mathcal{Y}}{n}=(\VectCoord{\mathcal{Y}}{1,n},\ldots,\VectCoord{\mathcal{Y}}{k,n})$ uniformly in the set $\ProdSet{\mathcal{D}_n}{k}\cap\mathcal{C}^k_{\mathfrak{s}}$ for $\mathfrak{s}\in\N^*$ where we recall that $\mathcal{C}^k_{\mathfrak{s}}=\{\Vect{x}\in\Delta^k;\; \mathcal{S}^k(\Vect{x})\leq\mathfrak{s}\}$, see \eqref{MultiMRCA} for the definition of $ \mathcal{S}^k(\Vect{x})$. In other words, the law of $\VectCoord{\mathcal{Y}}{n}$ is given in \eqref{LawVertices2} and \eqref{LawVertices1} by replacing $\Delta^k(\mathcal{D}_n)$ with $\Delta^k(\mathcal{D}_n)\cap\mathcal{C}^k_{\mathfrak{s}}$. We keep the same notation for $\VectCoord{\mathcal{Y}}{n}$ as for $\VectCoord{\mathcal{X}}{n}$. In particular, $\pi^{k,n}$ denotes here the genealogical tree of the vertices $\VectCoord{\mathcal{Y}}{1,n},\ldots,\VectCoord{\mathcal{Y}}{k,n}$. \\
Let us also introduce the coalescent times (or split times) of the vertices $\VectCoord{\mathcal{Y}}{1,n},\ldots,\VectCoord{\mathcal{Y}}{k,n}$. Define the coalescent times by: $\mathcal{S}^{k,n}_0:=0$ and for all $j\in\N^*$, $k\geq 2$
\begin{align}\label{CoaTimes}
    \mathcal{S}^{k,n}_j:=\min\big\{m\geq\mathcal{S}^{k,n}_{j-1};\; |\pi^{k,n}_{m}|>|\pi^{k,n}_{\mathcal{S}^{k,n}_{j-1}}|\land(k-1)\big\}.
\end{align}
Note that there exists $\mathcal{J}^{k,n}\in\N$ such that for any $j\geq\mathcal{J}^{k,n}$, $\mathcal{S}^{k,n}_{j}=\mathcal{S}^k(\VectCoord{\mathcal{Y}}{n})$ and by definition, $2\leq|\{\mathcal{S}^n_j;\; j\in\N\}|\leq k$. One can notice that seen backwards in time, each random time $\mathcal{S}^{k,n}_j-1$ with $0<j\leq\mathcal{J}^{k,n}$ corresponds to a generation at which two or more vertices among $\VectCoord{\mathcal{Y}}{1,n},\ldots,\VectCoord{\mathcal{Y}}{k,n}$ share a common ancestor for the first time. $\mathcal{S}^{k,n}_j$ is usually referred to as the $j$-th split time while $\mathcal{S}^{k,n}_{\mathcal{J}^{k,n}-j+1}$ is the $j$-th coalescent time.

\begin{figure}[h]
\centering
    \begin{tikzpicture}

        \draw[black,fill] (-1,0) circle [radius=0.02];
        \filldraw (-1,0) node[anchor=west] {$e$};
            
        
        

        \draw (-1,0) -- (-1,2);
        \draw[black,fill] (-1,2) circle [radius=0.02];
        

        \draw (-1,2) -- (0,3.5);
        \draw[black,fill] (0,3.5) circle [radius=0.02];


        \draw (-1,2) -- (-2.5,4.5);
        \draw[black,fill] (-2.5,4.5) circle [radius=0.02];

        \draw (-2.5,4.5) -- (-3.7,7);
        \draw[black,fill] (-3.7,7) circle [radius=0.02];

        \draw (-2.5,4.5) -- (-2.2,6);
        \draw[black,fill] (-2.2,6) circle [radius=0.02];

        \draw (0,3.5) -- (-0.5,4.2);
        \draw[black,fill] (-0.5,4.2) circle [radius=0.02];

        \draw (0,3.5) -- (2.1,6.85);
        \draw[black,fill] (2.1,6.85) circle [radius=0.02];


        \draw[dotted] (-3.7,2) -- (3.5,2);

        \draw[dotted] (-3.7,3.5) -- (3.5,3.5);

        \draw[dotted] (-3.7,4.5) -- (3.5,4.5);
        
        \draw[dotted] (-3.7,5.5) -- (3.5,5.5);
        
        \draw[latex-] (3.5,8) -- (3.5,0);
        \filldraw (3.5,8) node[anchor=west] {\textrm{ generation }};

        
        \filldraw (3.5,0) node[anchor=west] {$0$};

        \filldraw (3.5,2) node[anchor=west] {$\mathcal{S}^{4,n}_1-1$};
        
        \filldraw (3.5,3.5) node[anchor=west] {$\mathcal{S}^{4,n}_2-1$};

        \filldraw (3.5,4.5) node[anchor=west] {$\mathcal{S}^{4,n}_3-1=\mathcal{S}^{4,n}(\VectCoord{\mathcal{X}}{n})-1$};

        \filldraw (3.5,5.5) node[anchor=west] {$m$};
        
        

        \filldraw (-3.7,7.2) node[anchor=west] {$\VectCoord{\mathcal{X}}{3,n}$};

        \filldraw (-2.2,6.2) node[anchor=west] {$\VectCoord{\mathcal{X}}{1,n}$};

        \filldraw (-0.5,4.4) node[anchor=west] {$\VectCoord{\mathcal{X}}{4,n}$};
        
        \filldraw (2.1,7.05) node[anchor=west] {$\VectCoord{\mathcal{X}}{2,n}$}; 
        

        \end{tikzpicture}
\caption{An example of four vertices belonging to $\mathcal{C}^4_m$ together with their three coalescent times.}\label{Coalescence times}
\end{figure}

\noindent The last example gives the law of the coalescent times $(\mathcal{S}^{k,n})_{1\leq j\leq\mathcal{J}^{k,n}}$ of $\VectCoord{\mathcal{Y}}{1,n},\ldots,\VectCoord{\mathcal{Y}}{k,n}$:
\begin{coro}[\textbf{Coalescent times}]\label{GenTh4}
Let $k\geq 2$ and assume that $\kappa>2k$. Let $1\leq\ell<k$, $\mathfrak{s}\in\N^*$ be two integers, and $\bm{s}=(s_1,\ldots,s_{\ell})\in\ProdSet{\N}{\ell}$ such that $s_1<\ldots<s_{\ell}\leq\mathfrak{s}$. Assume that for all $\Vect{x}\in\Delta^k$,
$$F^{\ell}_{\bm{s}}(\Vect{x})=\sum_{\Xi\textrm{ increasing }}f^{\ell}_{\bm{s},\Xi}(\Vect{x}), $$ \\
Under the Assumptions \ref{Assumption1}, \ref{AssumptionSmallGenerations}, \ref{Assumption2} and \ref{AssumptionIncrements}, 
\begin{enumerate}
    \item in $\P^*$-probability
        \begin{align}\label{Th4Trace}
        \frac{\mathcal{A}^k(\mathcal{D}_n,F^{\ell}_{\bm{s}})}{\mathcal{A}^k(\mathcal{D}_n,\un_{\mathcal{C}^k_{\mathfrak{s}}})}\un_{\{|\Delta^k(\mathcal{D}_n)|>0\}}\underset{n\to\infty}{\longrightarrow}\frac{\mathcal{A}^k_{\infty}(F^{\ell}_{\bm{s}})}{\mathcal{A}^k_{\infty}(\un_{\mathcal{C}^k_{\mathfrak{s}}})},
    \end{align}
    where $\mathcal{A}^k_{\infty}(\un_{\mathcal{C}^k_{\mathfrak{s}}})$ is defined in Corollary \ref{GenTh2} and $(\mathcal{A}^k_{\infty}(F^{d}_{\bm{t}});\; 1\leq d<k,\; \bm{t}=(t_1,\cdots,t_d))$ is a collection of random variables satisfying
    \begin{align*}
        \sum_{d=1}^{k-1}\;\underset{t_1<\cdots<t_{d}\leq\mathfrak{s}}{\sum_{\bm{t}=(t_1,\ldots,t_{d})}}\mathcal{A}^k_{\infty}(F^{d}_{\bm{t}})= \mathcal{A}^k_{\infty}(\un_{\mathcal{C}^k_{\mathfrak{s}}}).
    \end{align*}
    \item Moreover
    \begin{align}
        \P^*(\mathcal{S}^{k,n}_1=s_1,\ldots,\mathcal{S}^{k,n}_{\ell}=s_{\ell},\mathcal{J}^{k,n}=\ell)\underset{n\to\infty}{\longrightarrow}\Eb^*\Big[\frac{\mathcal{A}^k_{\infty}(F^{\ell}_{\bm{s}})}{\mathcal{A}^k_{\infty}(\un_{\mathcal{C}^k_{\mathfrak{s}}})}\Big].
    \end{align}
\end{enumerate}
\end{coro}

\begin{remark}
    Note, by Theorem \ref{GenTh7}, that all the previous results on $\mathcal{D}_n=\{x\in\mathcal{R}_{T^{\sqrtBis{n}}};\; \ell_n\leq|x|\leq\mathfrak{L}_n\}$ hold for $\Tilde{\mathcal{D}}_n=\{x\in\mathcal{R}_n;\; \ell_n\leq|x|\leq\mathfrak{L}_n\}$ with $\mathfrak{L}_n=o(n^{1/2-\mathfrak{q}({\xi})})$. In particular, we are able to obtain information about the genealogical structure of a $k$-tuple $\VectCoord{\Tilde{\mathcal{X}}}{n}$ picked uniformly in the set $\Delta^k(\Tilde{\mathcal{D}}_n)$, conditionally on the event $|\Delta^k(\Tilde{\mathcal{D}}_n)|>0$. 
\end{remark}

\subsection{The tiny and the critical generations}\label{FurtherDiscusion}

Recall that $\psi(t)=\log\Eb[\sum_{|x|=1}e^{-tV(x)}]$ and introduce $\tilde{\gamma}:=\sup\{a\in\R;\; \inf_{t\geq 0}(\psi(-t)-at)>0\}$. By tiny generations, we mean those of order $\ell_n$ where $\ell_n\to\infty$ when $n\to\infty$ and $\ell_n\leq G\log n$ with $G\in(0,(2\tilde{\gamma})^{-1})$. The fact is that for these generations, the random environment has a uniform impact. Indeed, P. Andreoletti and P. Debs proved in \cite{AndDeb1} that with high probability, $\{x\in\mathcal{R}_n;\; |x|\leq G\log n\}=\{x\in\T;\; |x|\leq G\log n\}$ for all $G\in(0,(2\tilde{\gamma})^{-1})$. Moreover, the value $(2\tilde{\gamma})^{-1}$ is optimal: if $G_n$ denotes the largest generation entirely visited by the random walk $\X$ up to the time $n$, then $\P^*$-almost surely
$$ \frac{G_n}{\log n}\underset{n\to\infty}{\longrightarrow}\frac{1}{2\tilde{\gamma}}. $$
For this case, we are therefore capable of giving a description of the genealogy of $k\geq 2$ vertices uniformly chosen by adapting the results on the genealogical structure of continuous-time Galton-Watson trees of S. Harris, S. Johnston and M. Roberts (see \cite{HarrisJohnstonRoberts1} and \cite{Johnston1}) to discrete supercritical Galton-Watson trees.

\noindent The critical generations, that is to say of order $\sqrtBis{n}$, correspond to the typical generations but also to the largest reached by the diffusive random walk $\X$ up to the time $n$. E. Aïdékon and L. de Raphélis \cite{AidRap} showed that $\sqrtBis{n}$ is also the right normalization for the tree $\mathcal{R}_n$: in law, under $\P^*$
$$ \frac{\sqrtBis{c_0}}{\sqrtBis{n}}\mathcal{R}_n\underset{n\to\infty}{\longrightarrow}\mathcal{T}_{|B|},$$
where for any $\mathfrak{c}>0$, $\mathfrak{c}\mathcal{R}_T$ is tree $\mathcal{R}_T$ with edge lengths equal to $\mathfrak{c}$ and $\mathcal{T}_{|B|}$ is the real tree coded by the standard reflected Brownian motion $|B|=(|B_t|)_{t\in[0,1]}$ on $[0,1]$ (see \cite{JFLeGall_RT}). $\mathcal{T}_{|B|}$ is what we can call a Brownian forest thus suggesting that two vertices $\VectCoord{\mathcal{X}}{1,n}$ and $\VectCoord{\mathcal{X}}{2,n}$ chosen uniformly in the range $\mathcal{R}_n$ at a generation of order $\sqrtBis{n}$ can share a common ancestor in both remote past and recent past. That is actually what is happening when considering two vertices $\VectCoord{\tilde{\mathcal{X}}}{1,n}$ and $\VectCoord{\tilde{\mathcal{X}}}{2,n}$ picked uniformly at generation $\sqrtBis{n}$ in the tree $\mathcal{R}_{T^{\sqrtBis{n}}}$, where we recall that $T^{\sqrtBis{n}}$ is the $\sqrtBis{n}$-th return time of $\X$ to $e^*$ (which is quite similar to $\mathcal{R}_n$): let $\tilde{\mathcal{M}}_n$ be the most recent common ancestor of $\VectCoord{\tilde{\mathcal{X}}}{1,n}$ and $\VectCoord{\tilde{\mathcal{X}}}{2,n}$. First observe that 
\begin{align}\label{CoaCritical}
    \lim_{\varepsilon\to 0}\liminf_{n\to\infty}\P^*(\tilde{\mathcal{M}}_n< 1/\varepsilon)>0\;\;\textnormal{ and }\;\;  \lim_{\varepsilon\to 0}\limsup_{n\to\infty}\P^*(\varepsilon\sqrtBis{n}\leq\tilde{\mathcal{M}}_n<\sqrtBis{n})>0.
\end{align}
Moreover, coalescence can not occur anywhere else:
\begin{align*}
    \lim_{\varepsilon\to 0}\limsup_{n\to\infty}\E^*\Big[\frac{1}{n}\underset{|x|=|y|=\sqrtBis{n}}{\sum_{x\not=y}}\un_{\{x,y\in\mathcal{R}_{T^{\sqrtBis{n}}},\; 1/\varepsilon\leq|x\land y|<\varepsilon\sqrtBis{n}\}}\Big]=0. 
\end{align*}
Although $\T$ is a supercritical Galton-Watson tree, the genealogy of $\mathcal{R}_{T^{\sqrtBis{n}}}$ (or $\mathcal{R}_n$) is a mix of the supercritical case and the critical case for a regular Galton-Watson trees (see subsection \ref{GenVertices}). \\
The fact is using standard techniques for randomly biased random walks and branching random walks, we are able to deal with the quenched mean of $(D_{T^{\sqrtBis{n}}})^{p_1}$ for $p_1\leq\lfloor\kappa\rfloor$ and $(\mathcal{A}^2(\mathcal{D}_{T^{\sqrtBis{n}}},f))^{p_2}$ with $p_2\leq\lfloor\kappa/2\rfloor$ but not with the actual random variables. \\
The computation for any $m>0$ and any $0<a<b<1$ of $\P^*(\tilde{\mathcal{M}}_n<m)$ and $\P^*(a\sqrtBis{n}\leq\tilde{\mathcal{M}}_n<b\sqrtBis{n})$ is part of an ongoing work. \\
The present paper aims in some way to describe the interaction between the vertices of the tree $\mathcal{R}_{T^{\sqrtBis{n}}}$ in the set of generations $\textrm{\guillemotleft squashed\guillemotright}$ when rescaling the tree by $\sqrtBis{n}$.

\begin{remark}\label{RemarqueRange}
    The curiosity here is the fact that critical generations and small generations equally contributed to the range. Indeed, whether $\mathfrak{L}_n$ is negligible with respect to $\sqrtBis{n}$ (with $\mathfrak{L}_n\geq \delta_0^{-1}\log n$) or not, $\sum_{|u|=\mathfrak{L}_n}\un_{\{u\in\mathcal{R}_{T^{\sqrtBis{n}}}\}}$ is of order $\sqrtBis{n}$. This fact makes a deep difference with the slow regime in which only the critical generations (that is typical generations, of order $(\log n)^2$) contribute significantly to the range (see \cite{AndChen}, Theorem 1.2 and Proposition 1.4).
\end{remark}

\vspace{0.3cm}

\begin{remark}[The sub-diffusive and the slow regimes]
    In the sub-diffusive case for the random $\mathbb{X}$, that is when $\kappa\in(1,2]$, there is no reason to believe that the genealogical structure of the range is different from the diffusive case. Indeed, as in the case $\kappa>2$, we have the convergence of the rescaled range, no longer to a Brownian forest but towards a L\'evy forest (see Theorem 1 in \cite{deRaph1}), suggesting that if we sample two vertices uniformly in a critical generation (that is a generation of order $n^{1-1/\kappa}$ for $\kappa\in(1,2)$ and $(n/\log n)^{1/2}$ for $\kappa=2$) in the range up to $n$, the coalescence happens either in the recent past or in the remote past. When we sample two vertices uniformly in a small generation, again, the coalescence should happen close to the root. However, in the slow regime for the random walk $\mathbb{X}$, that is when $\psi(1)=\psi'(1)=0$, it surprisingly turns out that the most recent common ancestor of two vertices sampled uniformly in a generation of order $(\log n)^2$ in the range up to the time $n$ (the critical generation in the slow regime, see Remark \ref{RemarqueRange}) is located close to the root of the Galton-Watson tree $\T$. These results are part of an ongoing work.
\end{remark}

\vspace{0.2cm}

\noindent The rest of the paper is organized as follows: we first prove Theorem \ref{GenTh5} and Theorem \ref{GenTh7}, see subsection \ref{ProofsTH}. Proofs are mostly based on two important propositions: Proposition \ref{GENPROPCONV2} and Proposition \ref{GENPROPCONV1}. Roughly speaking, we claim in these propositions that only $k$-tuples of vertices visited during $k$ distinct excursions above the root of $\T$ up to time $T^{\sqrtBis{n}}$ give a significant contribution to $\mathcal{A}^k(\mathcal{D}_n,f)$ and this will be a key ingredient to show that the range $\mathcal{A}^k(\mathcal{D}_n,f)$ concentrates around the quenched mean of this latter range restricted to $k$-tuples of vertices visited during $k$ distinct excursions. We then prove our five examples, that is Corollaries \ref{GenTh1} -\ref{GenTh4}, see subsection \ref{ProofsExamples}. Finally, section \ref{ProofsProp} is devoted to the proof of Propositions \ref{GENPROPCONV2} and \ref{GENPROPCONV1} .

\section{Proofs of our theorems and corollaries}

In this section, we prove Theorem \ref{GenTh5} and Theorem \ref{GenTh7} and end it with the proofs of our corollaries presented as examples.

\subsection{Proofs of Theorem \ref{GenTh5} and Theorem \ref{GenTh7}}\label{ProofsTH}

Before proving our theorems, let us state two very important propositions. Recall that $T^j$ is the $j$-th return time  to $e^*$: $T^0=0$ and for any $j\geq 1$, $T^j=\inf\{i>T^{j-1};\; X_i=e^*\}$. \\
Let $s\in\N^*$ and introduce $\mathcal{D}_{n,T^s}:=\{x\in\mathcal{R}_{T^s};\;\ell_n\leq|x|\leq\mathfrak{L}_n\}$. We denote by $\mathfrak{E}^{k,s}$ the set defined by: for a given $\Vect{x}=(\VectCoord{x}{1},\ldots,\VectCoord{x}{k})\in\Delta^k$, $\Vect{x}\in\mathfrak{E}^{k,s}$ if and only if the vertices of $\VectCoord{x}{1},\ldots,\VectCoord{x}{k}$ are visited during $k$ distinct excursions before the instant $T^s$:
\begin{align}\label{DistinctExcur}
    \mathfrak{E}^{k,s}:=\bigcup_{\Vect{j}\in\llbracket 1,s\rrbracket_k}\bigcap_{i=1}^k\{\Vect{x}=(\VectCoord{x}{1},\ldots,\VectCoord{x}{k})\in\Delta^k;\;  \mathcal{L}^{T^{j_i}}_{\VectCoord{x}{i}}-\mathcal{L}^{T^{j_i-1}}_{\VectCoord{x}{i}}\geq 1\},
\end{align}
where we denote by $\llbracket 1,s\rrbracket_k$ the set of $k$-tuples $\bm{j}$ of $\{1,\ldots,s\}$ such that for all $i_1\not=i_2\in\{1,\ldots,s\}$, $j_{i_1}\not=j_{i_2}$. Our first proposition is a convergence of the range $\mathcal{A}^k(\mathcal{D}_{n,T^s},f\un_{\mathfrak{E}^{k,s}})$ for any $\varepsilon_1\sqrtBis{n}\leq s\leq\sqrtBis{n}/\varepsilon_1$, $\varepsilon_1\in(0,1)$.
\begin{proposition}\label{GENPROPCONV2}
Let $k\geq 2$ and assume $\kappa>2k$. Under the Assumptions \ref{Assumption1}, \ref{AssumptionSmallGenerations}, \ref{Assumption2} and \ref{AssumptionIncrements}, if $f$ satisfies the hereditary Assumption \ref{Assumption3} then for all $\varepsilon,\varepsilon_1\in(0,1)$, $\varepsilon_1\sqrtBis{n}\leq s\leq\sqrtBis{n}/\varepsilon_1$
\begin{align*}
    \P^*\Big(\Big|\frac{1}{(s\bm{L}_n)^k}\mathcal{A}^k(\mathcal{D}_{n,T^s},f\un_{\mathfrak{E}^{k,s}})-(c_{\infty})^k\mathcal{A}^k_{\infty}(f)\Big|>\varepsilon\Big)\underset{n\to\infty}{\longrightarrow} 0,
\end{align*}
where we recall that in $L^2(\Pb^*)$, $\mathcal{A}^k_{\infty}(f)=\lim_{l\to\infty}\mathcal{A}^k_l(f)$ with $\mathcal{A}^k_l(f,\Vect{\beta})=\sum_{\Vect{x}\in\Delta^k_l}f(\Vect{x})e^{-\langle\Vect{\beta},V(\Vect{x})\rangle_{k}}$, $\mathcal{A}_l(f)=\mathcal{A}_l(f,\Vect{1})$ and $\Vect{1}=(1,\ldots,1)\in\N^{\times k}$.
\end{proposition}

\vspace{0.2cm}

\noindent In the next proposition, we claim $k$-tuples in $\Delta^k\setminus\mathfrak{E}^{k,s}$ with $s\leq\sqrtBis{n}/\varepsilon_1$ and $\varepsilon_1\in(0,1)$, that is $k$-tuples of vertices such that at least two among them are visited during the same excursion above $e^*$ and before $T^s$, have a minor contribution to the range $\mathcal{A}^k(\mathcal{D}_n,1)$.

\begin{proposition}\label{GENPROPCONV1}
Let $\varepsilon\in(0,1)$, $k\geq 2$ and assume $\kappa>2k$. Under the Assumptions \ref{Assumption1}, \ref{AssumptionSmallGenerations}, \ref{Assumption2} and \ref{AssumptionIncrements}
\begin{align}
    \P^*\big(\sup_{s\leq\sqrtBis{n}/\varepsilon_1}\mathcal{A}^k(\mathcal{D}_{n,T^{s}},\un_{\Delta^k\setminus\mathfrak{E}^{k,s}})>\varepsilon(\sqrtBis{n}\bm{L}_n)^k\big)\underset{n\to\infty}{\longrightarrow} 0.
\end{align}
\end{proposition}

\vspace{0.2cm}

\noindent We are now ready to prove Theorem \ref{GenTh5}.

\begin{proof}[Proof of Theorem \ref{GenTh5}]
    First, $\mathcal{A}^k(\mathcal{D}_{n,T^{\sqrtBis{n}}},f)=\mathcal{A}^k(\mathcal{D}_{n},f\un_{\mathfrak{E}^{k,\sqrtBis{n}}})+\mathcal{A}^k(\mathcal{D}_{n},f\un_{\Delta^k\setminus\mathfrak{E}^{k,\sqrtBis{n}}})$ and then for any $\varepsilon\in(0,1)$
    \begin{align*}
    &\P^*\Big(\Big|\frac{1}{(\sqrtBis{n}\bm{L}_n)^k}\mathcal{A}^k(\mathcal{D}_{n},f)-(c_{\infty})^k\mathcal{A}^k_{\infty}(f)\Big|>\varepsilon\Big) \\ & \leq\P^*\Big(\Big|\frac{1}{(\sqrtBis{n}\bm{L}_n)^k}\mathcal{A}^k(\mathcal{D}_{n},f\un_{\mathfrak{E}^{k,\sqrtBis{n}}})-(c_{\infty})^k\mathcal{A}^k_{\infty}(f)\Big|>\frac{\varepsilon}{2}\Big) \\ & +\P^*\big(\mathcal{A}^k(\mathcal{D}_{n},\un_{\Delta^k\setminus\mathfrak{E}^{k,\sqrtBis{n}}})>\frac{\varepsilon}{2}(\sqrtBis{n}\bm{L}_n)^k\big).
\end{align*}
Noticing that $\mathcal{D}_n=\mathcal{D}_{n,T^{\sqrtBis{n}}}$, the first probability in this sum goes to $0$ when $n\to\infty$ thanks to Proposition \ref{GENPROPCONV2} with $s=\sqrtBis{n}$ and the second one also goes to $0$ thanks to Proposition \ref{GENPROPCONV1} thus giving \eqref{ConvA}. For the convergence in $\P^*$-probability \eqref{ConvAQuotient}, note that 
\begin{align*}
    &\P^*\Big(\Big|\frac{\mathcal{A}^k(\mathcal{D}_{n},f)}{\mathcal{A}^k(\mathcal{D}_{n},g)}\un_{\{|\Delta^k(\mathcal{D}_n)|>0\}}-\frac{\mathcal{A}^k_{\infty}(f)}{\mathcal{A}^k_{\infty}(g)}\Big|>\varepsilon\Big) \\ & \leq\P^*\Big(\Big|\frac{\mathcal{A}^k(\mathcal{D}_{n},f)}{\mathcal{A}^k(\mathcal{D}_{n},g)}-\frac{\mathcal{A}^k_{\infty}(f)}{\mathcal{A}^k_{\infty}(g)}\Big|>\varepsilon,\;|\Delta^k(\mathcal{D}_n)|>0\Big)+\P^*(|\Delta^k(\mathcal{D}_n)|=0),
\end{align*}
these two probabilities go to $0$ when $n\to\infty$ and the proof is completed.
\end{proof}

\noindent We now prove Theorem \ref{GenTh7}. Recall that $\Tilde{\mathcal{D}}_n=\{x\in\mathcal{R}_n;\; \ell_n\leq|x|\leq\mathfrak{L}_n\}$. The main idea of the proof is to show that, when $\kappa>2\xi k$, $\xi\geq 2$, and $\mathfrak{L}_n=o(n^{1/2-\mathfrak{q}({\xi})})$ for some non-increasing sequence $\mathfrak{q}$ such that $\mathfrak{q}(j)\to 0$ when $j\to\infty$, the volume $\Tilde{D}_{n}$ of the range $\Tilde{\mathcal{D}}_n$ behaves like the volume of the range up to the last complete excursion of $(X)_{j\leq n}$ above the parent $e^*$ of the root $e$. \\
For that, one can notice that for this choice of $\kappa$, Proposition \ref{GENPROPCONV2} holds uniformly in $s$ (in the sense of \eqref{ConvRangeUnif}): there exists a non-increasing sequence of positive integers $(\mathfrak{q}(j))_j$, satisfying $\mathfrak{q}(j)\in(0,1/2)$ and $\mathfrak{q}(j)\to 0$ when $j\to\infty$ such that if $\kappa>2\xi k$ for some integer $\xi\geq 2$ and $\mathfrak{L}_n=o(n^{1/2-\mathfrak{q}({\xi})})$ then, for any $\varepsilon_1\in(0,1)$
\begin{align}\label{ConvRangeUnif}
    \P^*\Big(\bigcup_{s=\varepsilon_1\sqrtBis{n}}^{\sqrtBis{n}/\varepsilon_1}\Big|\frac{1}{(s\bm{L}_n)^k}\mathcal{A}^k(\mathcal{D}_{n,T^s},f\un_{\mathfrak{E}^{k,s}})-(c_{\infty})^k\mathcal{A}^k_{\infty}(f)\Big|>\varepsilon\Big)\underset{n\to\infty}{\longrightarrow} 0.
\end{align}
The proof of \eqref{ConvRangeUnif} is the same as the proof of Proposition \ref{GENPROPCONV2} but for any $\varepsilon,\varepsilon_1\in(0,1)$, by Markov inequality
\begin{align*}
    &\P\Big(\bigcup_{s=\varepsilon_1\sqrtBis{n}}^{\sqrtBis{n}/\varepsilon_1}\Big\{\Big|\sum_{\bm{j}\in\llbracket 1,s\rrbracket_k}\mathcal{A}^{k,n}(\Vect{j},f\un_{\mathcal{C}_{a_n}^k})-\E^{\mathcal{E}}\big[\sum_{\bm{j}\in\llbracket 1,s\rrbracket_k}\mathcal{A}^{k,n}(\Vect{j},f\un_{\mathcal{C}_{a_n}^k})\big]\Big|>\varepsilon(s\bm{L}_n)^k/16\Big\}\Big) \\ & \leq\sum_{s=\varepsilon_1\sqrtBis{n}}^{\sqrtBis{n}/\varepsilon_1}\frac{16^{2\xi k}}{\varepsilon^{2\xi k}(s\bm{L}_n)^{2\xi k}}\E\Big[\Big(\sum_{\bm{j}\in\llbracket 1,s\rrbracket_k}\mathcal{A}^{k,n}(\Vect{j},f\un_{\mathcal{C}_{a_n}^k})-\E^{\mathcal{E}}\big[\sum_{\bm{j}\in\llbracket 1,s\rrbracket_k}\mathcal{A}^{k,n}(\Vect{j},f\un_{\mathcal{C}_{a_n}^k})\big]\Big)^{2\xi k}\Big] \\ & \leq 16^{2\xi k}\mathfrak{C}_{\ref{GenVariance}}\sum_{s=\varepsilon_1\sqrtBis{n}}^{\sqrtBis{n}/\varepsilon_1}\Big(\frac{\mathfrak{L}_n}{s}\Big)^{\Tilde{\mathfrak{q}}({\xi})}\leq\mathfrak{C}_{\ref{GENPROPCONV2}}\frac{(\mathfrak{L}_n)^{\Tilde{\mathfrak{q}}({\xi})}}{n^{(\Tilde{\mathfrak{q}}({\xi})-1)/2}},
\end{align*}
where we have used Lemma \ref{GenVariance} with $\mathfrak{a}=\xi$ for second inequality. Note that $\Tilde{\mathfrak{q}}({\xi})\geq 2$ since $\xi\geq 2$ so, as in the proof of Proposition \ref{GENPROPCONV2}, we obtain \eqref{ConvRangeUnif} by taking $\mathfrak{q}(j):=(2\Tilde{\mathfrak{q}}(j))^{-1}$.

\begin{proof}[Proof of Theorem \ref{GenTh7}]
\noindent First, let us state the following fact, proved by Y. Hu (\cite{Hu2017}, Corollary 1.2): in law, under $\P^*$
\begin{align*}
    \frac{1}{\sqrtBis{n}}\sum_{j=1}^n\un_{\{X_k=e\}}\underset{n\to\infty}{\longrightarrow}\frac{1}{p^{\mathcal{E}}(e,e^*)}\frac{c_0^{1/2}}{W_{\infty}}|\mathcal{N}|.
\end{align*}
We can actually adapt this result to the local time $\mathcal{L}^n:=\mathcal{L}^n_{e^*}=\sum_{j=1}^n\un_{\{X_j=e^*\}}$ of the parent $e^*$ of the root $e$: in law, under $\P^*$
\begin{align}\label{LocalTime}
    \frac{1}{\sqrtBis{n}}\mathcal{L}^n\underset{n\to\infty}{\longrightarrow}\frac{c_0^{1/2}}{W_{\infty}}|\mathcal{N}|,
\end{align}
where $c_{0}$ is defined in \eqref{ConvAbis}. Moreover, recall that $\mathcal{N}$ denotes a standard Gaussian variable. Then, we show that $\mathcal{A}^k(\mathcal{D}_{n,T^{\mathcal{L}^n}},f)$ and $\mathcal{A}^k(\Tilde{\mathcal{D}}_n,f)$ are close in the following:
\begin{align}\label{DiffRange}
    \P^*\Big(\frac{1}{(\mathcal{L}^n\bm{L}_n)^k}\big|\mathcal{A}^k(\mathcal{D}_{n,T^{\mathcal{L}^n}},f)-\mathcal{A}^k(\Tilde{\mathcal{D}}_n,f)\big|>\varepsilon\Big)\underset{n\to\infty}{\longrightarrow} 0.
\end{align}
For that, introduce $T_z:=\inf\{i\geq 1\; X_i=z\}$, the hitting time of the vertex $z\in\T$ and for any $\Vect{x}=(\VectCoord{x}{1},\ldots,\VectCoord{x}{k})\in\Delta^k$, $T_{\Vect{x}}:=\max_{1\leq i\leq k}T_{\VectCoord{x}{i}}$. Assume $\mathcal{L}^n=s\in\{\varepsilon_1\sqrtBis{n},\ldots,\sqrtBis{n}/\varepsilon_1\}$.  By definition, $\mathcal{L}^n=\sup\{j\geq 1;\; T^j\leq n\}$ so on the set $\{D_{n,T^s}\geq k\}$, where $D_{n,T^s}$ is the cardinal of $\mathcal{D}_{n,T^s}$, both $\mathcal{D}_{n,T^s}$ and $\Tilde{\mathcal{D}}_n$ are nonempty and note that
\begin{align*}
    \mathcal{A}^k(\Tilde{\mathcal{D}}_n,f)-\mathcal{A}^k(\mathcal{D}_{n,T^{s}},f)&=\sum_{\Vect{p}\in\ProdSet{\{\ell_n,\ldots,\mathfrak{L}_n\}}{k}}\sum_{\Vect{x}\in\Delta^k}f(\Vect{x})\un_{\{|\Vect{x}|=\bm{p},\;T^s<T_{\Vect{x}}\leq n\}} \\ & \leq\|f\|_{\infty}\sum_{\Vect{p}\in\ProdSet{\{\ell_n,\ldots,\mathfrak{L}_n\}}{k}}\sum_{\Vect{x}\in\Delta^k}\un_{\{|\Vect{x}|=\bm{p},\; T^s<T_{\Vect{x}}<T^{s+1}\}},
\end{align*}
where $\|f\|_{\infty}:=\sup_{\Vect{z}\in\Delta^k}f(\Vect{z})$ and $|\Vect{x}|=|\bm{p}|$ means that for all $1\leq i\leq k$, $\VectCoord{x}{i}=p_i$. We now aim to provide an upper bound to $\E[(\sum_{\Vect{p}\in\ProdSet{\{\ell_n,\ldots,\mathfrak{L}_n\}}{k}}\sum_{\Vect{x}\in\Delta^k}\un_{\{|\Vect{x}|=\bm{p},\; T^s<T_{\Vect{x}}<T^{s+1}\}})^2]$. We have
\begin{align*}
    &\E^{\mathcal{E}}\Big[\Big(\sum_{\Vect{p}\in\ProdSet{\{\ell_n,\ldots,\mathfrak{L}_n\}}{k}}\sum_{\Vect{x}\in\Delta^k}\un_{\{|\Vect{x}|=\bm{p},\; T^s<T_{\Vect{x}}<T^{s+1}\}}\Big)^2\Big] \\ & =\underset{\Vect{p}'\in\ProdSet{\{\ell_n,\ldots,\mathfrak{L}_n\}}{k}}{\sum_{\Vect{p}\in\ProdSet{\{\ell_n,\ldots,\mathfrak{L}_n\}}{k}}}\;\underset{\Vect{y}\in\Delta^k}{\sum_{\Vect{x}\in\Delta^k}}\un_{\{|\Vect{x}|=\Vect{p},\; |\Vect{y}|=\Vect{p}'\}}\P^{\mathcal{E}}\big(T^s<T_{\Vect{x}}<T^{s+1},\; T^s<T_{\Vect{y}}<T^{s+1}\big).
\end{align*}
Without loss of generality, we only deal with the case $\VectCoord{x}{i}\not=\VectCoord{y}{i}$ for all $i\in\{1,\ldots,k\}$, that is the case such that the concatenation $\Vect{x}\Vect{y}$ of $\Vect{x}$ and $\Vect{y}$ belongs to $\Delta^{2k}$. One can see that for any $k$-tuple $\Vect{u}=(\VectCoord{u}{1},\ldots,\VectCoord{u}{k})\in\Delta^k$ such that $T^s<T_{\Vect{u}}<T^{s+1}$, we have, for any $i\in\{1,\ldots,k\}$, that either $\VectCoord{u}{i}$ is visited during that $s$-th excursion or $T_{\VectCoord{u}{i}}<T^s$ (at least one vertex among $(\VectCoord{u}{1},\ldots,\VectCoord{u}{k})$ must be visited during the $s$-th excursion). Hence ${\sum_{\Vect{x},\Vect{y}\in\Delta^k,\Vect{x}\Vect{y}\in\Delta^{2k},|\Vect{x}|=\Vect{p},|\Vect{y}|=\Vect{p}'}}\P^{\mathcal{E}}\big(T^s<T_{\Vect{x}}<T^{s+1},\; T^s<T_{\Vect{y}}<T^{s+1}\big)$ is equal to 
\begin{align*}
    &\underset{|I|\leq k-1}{\sum_{I\subset\{1,\ldots,k\}}}\;\;\underset{|J|\leq k-1}{\sum_{J\subset\{1,\ldots,k\}}}\;\;\underset{|\Vect{x}|=\Vect{p},|\Vect{y}|=\Vect{p}'}{\sum_{\Vect{x},\Vect{y}\in\Delta^k,\Vect{x}\Vect{y}\in\Delta^{2k}}} \\ & \times\P^{\mathcal{E}}\Big(\max_{i\in I,j\in J}T_{\VectCoord{x}{i}}\lor T_{\VectCoord{y}{j}}<T^s,\;T^s<T_{\VectCoord{x}{i'}}<T^{s+1},T^s<T_{\VectCoord{y}{j'}}<T^{s+1}\;\forall\;i'\not\in I,\forall\;j'\not\in J\Big)
\end{align*}
where $t\lor s=\max(t,s)$ and $i'\not\in I$ (resp. $j'\not\in J$) means $i'\in\{1,\ldots,k\}\setminus I$ (resp. $j'\in\{1,\ldots,k\}\setminus J$), with $I$ and $J$ possibly empty. Thanks to the strong Markov property at time $T^s$, the latter probability is smaller than 
\begin{align*}
    \P^{\mathcal{E}}\Big(\max_{i\in I,j\in J}T_{\VectCoord{x}{i}}\lor T_{\VectCoord{y}{j}}<T^s\Big)\prod_{i'\not\in I}\P^{\mathcal{E}}\big(T_{\VectCoord{x}{i'}}<T^1\big)\prod_{j'\not\in J}\P^{\mathcal{E}}\big(T_{\VectCoord{y}{j'}}<T^1\big).
\end{align*}
By Lemma \ref{LemmeExc1}, we can restrict to vertices visited during a single excursion before $T^s$. Moreover, for any $i\in I$ and $j\in J$, $\VectCoord{x}{i}$ and $\VectCoord{y}{j}$ are possibly visited during the same excursion. Hence
\begin{align*}
    \P^{\mathcal{E}}\Big(\max_{i\in I,j\in J}T_{\VectCoord{x}{i}}\lor T_{\VectCoord{y}{j}}<T^s,\; &\Vect{x},\Vect{y}\in\mathfrak{S}^{k,s}\Big)\leq\sum_{e_1,\ldots,e_{|I|}=1}^s\;\sum_{e'_1,\ldots,e'_{|J|}=1}^s \\ & \times\P^{\mathcal{E}}\Big(T^{e_i-1}<T_{\VectCoord{x}{i}}<T^{e_i},\;T^{e'_j-1}<T_{\VectCoord{y}{j}}<T^{e'_j}\;\forall\; i\in I, j\in J\Big).
\end{align*}
where $|I|$ (resp. $|J|$) denotes the cardinal of $I$ (resp. $J$), we use the convention $\sum_{\varnothing}=0$ and see \eqref{SingleExcur} for the definition of $\mathfrak{S}^{k,s}$. Note that, if two distinct vertices $u$ and $v$ among $((\VectCoord{x}{i})_{i\in I},(\VectCoord{y}{j})_{j\in J})$ are visited during the same excursion, then we can assume that $|u\land v|<a_n$ (see for example the proof of Lemma \ref{CauchyCarre}). Hence, thanks to Lemma \ref{EspProbaTAPartition} and the fact that $\mathfrak{L}_n\leq s$ for $n$ large enough, we have that
\begin{align*}
    \Eb\Big[&\underset{|\Vect{x}|=\Vect{p},|\Vect{y}|=\Vect{p}'}{\sum_{\Vect{x},\Vect{y}\in\Delta^k,\Vect{x}\Vect{y}\in\Delta^{2k}}} \\ & \times\P^{\mathcal{E}}\Big(\max_{i\in I,j\in J}T_{\VectCoord{x}{i}}\lor T_{\VectCoord{y}{j}}<T^s,\;T^s<T_{\VectCoord{x}{i'}}<T^{s+1},T^s<T_{\VectCoord{y}{j'}}<T^{s+1}\;\forall\;i'\not\in I,\forall\;j'\not\in J\Big)\Big]
\end{align*}
is smaller than $\mathfrak{C}_{\ref{GenTh7}}(a_n)^{2k}s^{|I|+|J|}$ for some constant $\mathfrak{C}_{\ref{GenTh7}}$ only depending on $k$. Since $|I|$ and $|J|$ are smaller than $k-1$, we finally obtain for $n$ large enough and any $s\in\{\varepsilon_1\sqrtBis{n},\ldots,\sqrtBis{n}/\varepsilon_1\}$
\begin{align*}
    \E^{\mathcal{E}}\Big[\Big(\sum_{\Vect{p}\in\ProdSet{\{\ell_n,\ldots,\mathfrak{L}_n\}}{k}}\sum_{\Vect{x}\in\Delta^k}\un_{\{|\Vect{x}|=\bm{p},\; T^s<T_{\Vect{x}}<T^{s+1}\}}\Big)^2\Big]\leq\mathfrak{C}_{\ref{GenTh7},1}(\bm{L}_na_n)^{2k}s^{2k-2},
\end{align*}
for some constant $\mathfrak{C}_{\ref{GenTh7},1}>0$, thus giving, thanks to Markov inequality 
\begin{align*}
    \P^*\Big(\frac{1}{(s\bm{L}_n)^k}\big|\mathcal{A}^k(\Tilde{\mathcal{D}}_n,f)-\mathcal{A}^k(\mathcal{D}_{n,T^{s}},f)\big|>\varepsilon,D_{n,T^s}\geq k,\mathcal{L}^n=s\Big)\leq\|f\|_{\infty}^2\mathfrak{C}_{\ref{GenTh7},1}(a_n)^{2k}s^{-2}\varepsilon^{-2}.
\end{align*}
Hence, for all $\varepsilon_1\in(0,1)$ and $n$ large enough, $\P^*(\{\frac{1}{(\mathcal{L}^n\bm{L}_n)^k}|\mathcal{A}^k(\Tilde{\mathcal{D}}_n,f)-\mathcal{A}^k(\mathcal{D}_{n,T^{\mathcal{L}^n}},f)\big|>\varepsilon\})$ is smaller than
\begin{align*}
     &\P^*\big(D_{n,T^{\mathcal{L}^n}}<k\big)+\P^*\big(\mathcal{L}^n<\varepsilon_1\sqrtBis{n}\big)+\P^*\big(\mathcal{L}^n>\sqrtBis{n}/\varepsilon_1\big) \\ & + \sum_{s=\varepsilon_1\sqrtBis{n}}^{\sqrtBis{n}/\varepsilon_1}\P^*\Big(\frac{1}{(s\bm{L}_n)^k}\big|\mathcal{A}^k(\Tilde{\mathcal{D}}_n,f)-\mathcal{A}^k(\mathcal{D}_{n,T^{s}},f)\big|>\varepsilon,D_{n,T^s}\geq k,\mathcal{L}^n=s\Big) \\ & \leq \P^*\big(D_{n,T^{\mathcal{L}^n}}<k\big)+\P^*\big(\mathcal{L}^n<\varepsilon_1\sqrtBis{n}\big)+\P^*\big(\mathcal{L}^n>\sqrtBis{n}/\varepsilon_1\big)+\|f\|_{\infty}^2\mathfrak{C}_{\ref{GenTh7},1}\sum_{s=\varepsilon_1\sqrtBis{n}}^{\sqrtBis{n}/\varepsilon_1}\frac{(a_n)^{2k}}{s^2\varepsilon^2}.
\end{align*}
$\|f\|_{\infty}^2\mathfrak{C}_{\ref{GenTh7},1}\sum_{s=\varepsilon_1\sqrtBis{n}}^{\sqrtBis{n}/\varepsilon_1}\frac{(a_n)^{2k}}{s^2\varepsilon^2}$ is smaller than $\mathfrak{C}_{\ref{GenTh7},2}(a_n)^{2k}/\sqrtBis{n}$ for some constant $\mathfrak{C}_{\ref{GenTh7},2}>0$. Note that $\lim_{n\to\infty}\P^*(D_{n,T^{\mathcal{L}^n}}<k)=0$ and using \eqref{LocalTime} with the definition of $a_n=(2\delta_0)^{-1}\log n$, we have $\lim_{\varepsilon_1\to 0}\limsup_{n\to\infty}((a_n)^{2k}/\sqrtBis{n}+\P^*(\mathcal{L}^n<\varepsilon_1\sqrtBis{n})+\P^*(\mathcal{L}^n>\sqrtBis{n}/\varepsilon_1))=0$, which yields \eqref{DiffRange}. \\
Now, since $\mathcal{A}^k(\mathcal{D}_{n,T^{\mathcal{L}^n}},f\un_{\Delta^k\setminus\mathfrak{E}^{k,\mathcal{L}^n}})/(\mathcal{L}^n\bm{L}_n)^k\to 0$ when $n\to\infty$, in $\P^*$-probability and thanks to \eqref{DiffRange}, we can focus our attention on $\mathcal{A}^k(\mathcal{D}_{n,T^{\mathcal{L}^n}},f\un_{\mathfrak{E}^{k,\mathcal{L}^n}})/(\mathcal{L}^n\bm{L}_n)^k$. \\
Note the $\mathcal{A}^k(\mathcal{D}_{n,T^{\mathcal{L}^n}},f\un_{\mathfrak{E}^{k,\mathcal{L}^n}})$ concentrates around $(c_{\infty})^k\mathcal{A}^k_{\infty}(f)$. Indeed, for any $\varepsilon,\varepsilon_1\in(0,1)$
\begin{align*}
    &\P^*\Big(\Big|\frac{1}{(\mathcal{L}^n\bm{L}_n)^k}\mathcal{A}^k(\mathcal{D}_{n,T^{\mathcal{L}^n}},f\un_{\mathfrak{E}^{k,\mathcal{L}^n}})-(c_{\infty})^k\mathcal{A}^k_{\infty}(f)\Big|>\varepsilon\Big) \\ & \leq\P^*\Big(\bigcup_{s=\varepsilon_1\sqrtBis{n}}^{\sqrtBis{n}/\varepsilon_1}\Big\{\Big|\frac{1}{(s\bm{L}_n)^k}\mathcal{A}^k(\mathcal{D}_{n,T^s},f\un_{\mathfrak{E}^{k,s}})-(c_{\infty})^k\mathcal{A}^k_{\infty}(f)\Big|>\varepsilon\Big\}\Big)+\P^*(\mathcal{L}^n<\varepsilon_1\sqrtBis{n}) \\ & +\P^*(\mathcal{L}^n>\sqrtBis{n}/\varepsilon_1).
\end{align*}
Thanks to equation \eqref{ConvRangeUnif}, the first probability above goes to $0$ when $n$ goes to $\infty$ and by \eqref{LocalTime}, $\lim_{\varepsilon_1\to 0}\lim_{n\to\infty}(\P^*(\mathcal{L}^n<\varepsilon_1\sqrtBis{n})+\P^*(\mathcal{L}^n>\sqrtBis{n}/\varepsilon_1))=0$ thus giving
\begin{align}\label{RangeLocalTime}
    \lim_{n\to\infty}\P^*\Big(\Big|\frac{1}{(\mathcal{L}^n\bm{L}_n)^k}\mathcal{A}^k(\mathcal{D}_{n,T^{\mathcal{L}^n}},f\un_{\mathfrak{E}^{k,\mathcal{L}^n}})-(c_{\infty})^k\mathcal{A}^k_{\infty}(f)\Big|>\varepsilon\Big)=0.
\end{align}
We obtain from \eqref{RangeLocalTime}, together with \eqref{DiffRange} that 
\begin{align*}
    \lim_{n\to\infty}\P^*\Big(\Big|\frac{1}{(\mathcal{L}^n\bm{L}_n)^k}\mathcal{A}^k(\Tilde{\mathcal{D}}_{n},f)-(c_{\infty})^k\mathcal{A}^k_{\infty}(f)\Big|>\varepsilon\Big)=0,
\end{align*}
which gives \eqref{ConvAbis} by using \eqref{LocalTime}. For the convergence in $\P^*$-probability \eqref{ConvAQuotientbis}, note that 
\begin{align*}
    &\P^*\Big(\Big|\frac{\mathcal{A}^k(\Tilde{\mathcal{D}}_{n},f)}{\mathcal{A}^k(\Tilde{\mathcal{D}}_{n},g)}\un_{\{|\Delta^k(\mathcal{D}_n)|>0\}}-\frac{\mathcal{A}^k_{\infty}(f)}{\mathcal{A}^k_{\infty}(g)}\Big|>\varepsilon\Big) \\ & \leq\P^*\Big(\Big|\frac{\mathcal{A}^k(\Tilde{\mathcal{D}}_{n},f)}{\mathcal{A}^k(\Tilde{\mathcal{D}}_{n},g)}-\frac{\mathcal{A}^k_{\infty}(f)}{\mathcal{A}^k_{\infty}(g)}\Big|>\varepsilon,\; |\Delta^k(\mathcal{D}_n)|>0\Big)+\P^*(|\Delta^k(\mathcal{D}_n)|=0),
\end{align*}
which goes to $0$ when $n$ goes to $\infty$ and the proof is completed.
\end{proof}

\subsection{Proofs of Corollaries \ref{GenTh1} to \ref{GenTh4}}\label{ProofsExamples}

In this subsection, we give a proof of each example stated above except for the Corollary \ref{GenTh1} which is the simplest application of Theorem \ref{GenTh5}, taking $f=1$. For each example, the procedure is as follows: we first prove the function $f$ we consider satisfies the hereditary Assumption \ref{Assumption3} and we then give useful precisions on $\mathcal{A}_{\infty}^k(f)$ for the description of the genealogy of the vertices $\VectCoord{\mathcal{X}}{1,n},\ldots,\VectCoord{\mathcal{X}}{k,n}$.

\vspace{0.5cm} 

\begin{proof}[Proof of Corollary \ref{GenTh1Bis}]
Recall that for $\bm{\lambda}=(\lambda_2,\ldots,\lambda_k)\in\ProdSet{(\N^*)}{(k-1)}$ and $\Vect{x}=(\VectCoord{x}{1},\ldots,\VectCoord{x}{k})\in\Delta^k$ such that $\min_{1\leq i\leq k}|\VectCoord{x}{i}|\geq\max_{2\leq i\leq k}\lambda_i$
\begin{align*}
    f_{\bm{\lambda}}(\VectCoord{x}{1},\ldots,\VectCoord{x}{k}):=\prod_{i=2}^k\un_{\{|\VectCoord{x}{i-1}\land\VectCoord{x}{i}|<\lambda_i\}}.
\end{align*}
Let us prove that the hereditary Assumption \ref{Assumption3} is satisfied by $f_{\bm{\lambda}}$. Recall that for $\Vect{x}=(\VectCoord{x}{1},\ldots,\VectCoord{x}{k})\in\Delta^k$, $\mathcal{S}^k(\Vect{x})-1$ denotes the last generation at which two or more vertices among $\VectCoord{x}{1},\ldots,\VectCoord{x}{k}$ share a common ancestor. Let $p\geq\max_{2\leq i\leq k}\lambda_i$ and $\Vect{x}\in\Delta^k$ such that $p\leq\min_{1\leq i\leq k}|\VectCoord{x}{i}|$. If $\mathcal{S}^k(\Vect{x})\leq p$ then, for any $\Vect{z}\in\llbracket(\VectCoord{x}{1})_{p},\VectCoord{x}{1}\rrbracket\times\cdots\times\llbracket(\VectCoord{x}{k})_{p},\VectCoord{x}{k}\rrbracket$ and $i\in\{2,\ldots,k\}$, $|\VectCoord{x}{i-1}\land\VectCoord{x}{i}|<\lambda_i$ if and only if $|\VectCoord{z}{i-1}\land\VectCoord{z}{i}|<\lambda_i$, meaning that $f_{\lambda}(\Vect{x})=f_{\lambda}(\Vect{z})$. Consequently, Assumption \ref{Assumption3} holds for $\mathfrak{g}=\max_{2\leq i\leq k}\lambda_i$. We conclude using Theorem \ref{GenTh5}. 
\end{proof}

\noindent We now prove Corollary \ref{GenTh2}:

\begin{proof}[Proof of Corollary \ref{GenTh2}]
Recall that for $\Vect{x}=(\VectCoord{x}{1},\ldots,\VectCoord{x}{k})\in\Delta^k$, $\mathcal{S}^k(\Vect{x})-1$ denotes the last generation at which two or more vertices among $\VectCoord{x}{1},\ldots,\VectCoord{x}{k}$ share a common ancestor and for $m\in\N^*$, recall that
$$ f_m(\Vect{x})=\un_{\{\mathcal{S}^k(\Vect{x})\leq m\}}. $$ \\
First, note that the hereditary Assumption \ref{Assumption3} is satisfied by $f_m$. Indeed, if $p\geq m$ and $\Vect{x}\in\Delta^k$ such that $p\leq\min_{1\leq i\leq k}|\VectCoord{x}{i}|$, then $\mathcal{S}^k(\Vect{x})\leq p$ implies that for any $\Vect{z}\in\llbracket(\VectCoord{x}{1})_{p},\VectCoord{x}{1}\rrbracket\times\cdots\times\llbracket(\VectCoord{x}{k})_{p},\VectCoord{x}{k}\rrbracket$, we have $\mathcal{S}^k(\Vect{z})=\mathcal{S}^k(\Vect{x})$. Thus, $\mathcal{S}^k((\VectCoord{x}{1})_p,\ldots,(\VectCoord{x}{k})_p)\leq m$. Moreover, by definition, $\mathcal{S}^k((\VectCoord{x}{1})_p,\ldots,(\VectCoord{x}{k})_p)\leq m$ implies $\mathcal{S}^k(\Vect{x})\leq m$. Consequently, Assumption \ref{Assumption3} holds for $\mathfrak{g}=m$. \\
We then deduce the converge of the trace in \eqref{Th3Trace} by using Theorem \ref{GenTh5}. \\
We now move to the limit law of $(\mathcal{S}^k(\VectCoord{\mathcal{X}}{n}))$ in \eqref{ConvMRCA}. Note, by definition, that
\begin{align*}
    \P^*\big(\mathcal{S}^k(\VectCoord{\mathcal{X}}{n})\leq m\big)=\frac{1}{\P^*(|\Delta^k(\mathcal{D}_n)|>0)}\E^*\Big[\frac{\mathcal{A}^k(\mathcal{D}_n,f_m)}{\mathcal{A}^k(\mathcal{D}_n,1)}\un_{\{|\Delta^k(\mathcal{D}_n)|>0\}}\Big],
\end{align*}
so $\P^*(\mathcal{S}^k(\VectCoord{\mathcal{X}}{n})\leq m)$ goes to $\Eb^*[\mathcal{A}_{\infty}^k(f_m)/(W_{\infty})^k]$ when $n$ goes to $\infty$ thanks to Theorem \ref{GenTh5} with $f=f_m$ and $g=1$ together with the fact that $\lim_{n\to\infty}\P^*(|\Delta^k(\mathcal{D}_n)|>0)=1$. It is left to show that $\lim_{m\to\infty}\mathcal{A}_{\infty}^k(f_m)=(W_{\infty})^k$. For that, we use Lemma \ref{CauchyCarre} with $f=1$ and $\Vect{p}=(l,\ldots,l)\in\ProdSet{(\N^*)}{k}$
\begin{align*}
    \sup_{l>m}\Eb^*\big[\big|\mathcal{A}^k_{l}(f_m)-\mathcal{A}_l^k(1)\big|^2\big]\underset{m\to\infty}{\longrightarrow}0.
\end{align*}
Moreover, $\lim_{l\to\infty}\mathcal{A}^k_{l}(1)=(W_{\infty})^k$ and $\lim_{l\to\infty}\mathcal{A}^k_{l}(f_m)=\mathcal{A}_{\infty}^k(f_m)$ so $(\mathcal{A}_{\infty}^k(f_m))_m$ converges to $(W_{\infty})^k$ in $L^2(\Pb^*)$, which allows to end the proof.
\end{proof}
\noindent We now turn to the proof of Corollary \ref{GenTh3}.

\begin{proof}[Proof of Corollary \ref{GenTh3}]
Recall that for any $1\leq d<q\in\N^*$, for an increasing collection $\Xi=(\bm{\Xi}_i)_{0\leq i\leq d}$ of partitions of $\{1,\ldots,q\}$, for all $\Vect{x}=(\VectCoord{x}{1},\ldots,\VectCoord{x}{q})\in\Delta^q$ and all $\bm{t}=(t_1,\ldots,t_{d})\in\ProdSet{\N}{d}$ such that $t_1<t_2<\cdots<t_d$,  
$$f^{d}_{\bm{t},\Xi}(\Vect{x})=\prod_{i=1}^{d}\un_{\Gamma^i_{\bm{t},\Xi}}(\Vect{x}), $$
where $\Gamma^i_{\bm{t},\Xi}=\Upsilon_{t_i-1,\bm{\Xi}_{i-1}}\cap\Upsilon_{t_i,\bm{\Xi}_i}$ and for any $r\in\{1,\ldots,d\}$ and any $m\in\N^*$, $\Vect{x}$ belongs to $\Upsilon_{m,\bm{\Xi}_{r}}$ if and only if
\begin{align*}
   \forall\bm{B}\in\bm{\Xi}_r, \forall i_1,i_2\in\bm{B}:(\VectCoord{x}{i_1})_{m}=(\VectCoord{x}{i_2})_{m},
\end{align*}
and for $r\not=0$
\begin{align*}
    \forall\bm{B}\not=\tilde{\bm{B}}\in\bm{\Xi}_r, \forall i_1\in\bm{B}, i_2\in\tilde{\bm{B}}:(\VectCoord{x}{i_1})_{m}\not=(\VectCoord{x}{i_2})_{m},
\end{align*}
where we recall that $(\VectCoord{x}{i})_m$ denotes the ancestor of $\VectCoord{x}{i}$ in generation $m$ if exists, $(\VectCoord{x}{i})_m=\VectCoord{e}{i}$ otherwise. Recall that $\mathcal{C}^k_{\mathfrak{g}}=\{\Vect{y}\in\Delta^q;\; \mathcal{S}^q(\Vect{y})\leq\mathfrak{g}\}$ where $\mathcal{S}^q(\Vect{y})-1$ is the last generation at which two or more vertices among $\VectCoord{y}{1},\ldots,\VectCoord{y}{q}$ share a common ancestor. Let $p\geq t_d$ such that $\min_{1\leq i\leq q}\VectCoord{x}{i}\geq p$ and $\Vect{x}\in\mathcal{C}^k_{p}$. If $\Vect{x}\in\cap_{j=1}^d\Gamma^j_{\bm{t},\Xi}$, then $(\VectCoord{z}{i})_{t}=(\VectCoord{x}{i})_{t}$ for all $\Vect{z}\in\llbracket(\VectCoord{x}{1})_{p},\VectCoord{x}{1}\rrbracket\times\cdots\times\llbracket(\VectCoord{x}{q})_{p},\VectCoord{x}{q}\rrbracket$, $1\leq i\leq q$ and $t\in\{0,\ldots,p\}$ thus giving $
((\VectCoord{x}{1})_p,\ldots,(\VectCoord{x}{q})_p)\in\cap_{j=1}^d\Gamma^j_{\bm{t},\Xi}$. Moreover, by definition, $((\VectCoord{x}{1})_p,\ldots,(\VectCoord{x}{q})_p))\in\cap_{j=1}^d\Gamma^j_{\bm{t},\Xi}$ implies $\Vect{x}\in\cap_{j=1}^d\Gamma^j_{\bm{t},\Xi}$. Consequently, $f^{d}_{\bm{t},\Xi}$ satisfies Assumption \ref{Assumption3} with $\mathfrak{g}=t_d$ and this prove that the convergence in \eqref{Th3Trace} holds. \\
We move to the limit law of $(\pi^{k,n})$ in \eqref{Th3partition}. Recall the definition of $\mathcal{S}^{k,n}_i$ in \eqref{CoaTimes}. First, note that
\begin{align*}
    \P^*(\pi^{k,n}_{m_0}=\bm{\pi}_0,\ldots,\pi^{k,n}_{m_{\ell}}=\bm{\pi}_{\ell}) =\P^*\Big(\bigcap_{i=1}^{\ell}\big\{\pi^{k,n}_{m_{i-1}}=\bm{\pi}_{i-1},\pi^{k,n}_{m_{i}}=\bm{\pi}_{i},m_{i-1}<\mathcal{S}^{k,n}_i\leq m_i\big\}\Big).
\end{align*}
Indeed, for all $1\leq i\leq\ell$, $|\bm{\pi}_{i-1}|<|\bm{\pi}_i|$ so the interval $(m_{i-1},m_i]$ necessarily contains at least one coalescent time. But since $\bm{\pi}_{0}=\{\{1,\ldots,k\}\}$ and $\bm{\pi}_{\ell}=\{\{1\},\ldots,\{k\}\}$, $\cup_{i=1}^{\ell}(m_{i-1},m_i]$ can not contain more than $\ell$ coalescent times so $\mathcal{S}^{k,n}_{i}$ is the only one belonging to $(m_{i-1},m_i]$. We now write
\begin{align*}
    \P^*\Big(\bigcap_{i=1}^{\ell}\big\{\pi^{k,n}_{m_{i-1}}&=\bm{\pi}_{i-1},m_{i-1}<\mathcal{S}^{k,n}_i\leq m_i\big\}\Big) \\ & =\sum_{s_1=m_0+1}^{m_1}\cdots\sum_{s_{\ell}=m_{\ell-1}+1}^{m_{\ell}}\P^*\Big(\bigcap_{i=1}^{\ell}\big\{\pi^{k,n}_{m_{i-1}}=\bm{\pi}_{i-1},\pi^{k,n}_{m_{i}}=\bm{\pi}_{i},\mathcal{S}^{k,n}_i=s_i\big\}\Big) \\ & = \sum_{s_1=m_0+1}^{m_1}\cdots\sum_{s_{\ell}=m_{\ell-1}+1}^{m_{\ell}}\P^*\Big(\bigcap_{i=1}^{\ell}\big\{\pi^{k,n}_{s_{i}-1}=\bm{\pi}_{i-1},\pi^{k,n}_{s_{i}}=\bm{\pi}_{i}\big\}\Big),
\end{align*}
Moreover, $\pi^{k,n}_{s_{i}-1}=\bm{\pi}_{i-1},\pi^{k,n}_{s_{i}}=\bm{\pi}_{i}$ means nothing but $\VectCoord{\mathcal{X}}{n}\in \Gamma^i_{\bm{s},\Pi}$ and it follows that
\begin{align*}
    \P^*\Big(\bigcap_{i=1}^{\ell}\big\{\pi^{k,n}_{s_{i}-1}=\bm{\pi}_{i-1},\pi^{k,n}_{s_{i}}=\bm{\pi}_{i}\big\}\Big)&=\E^*\big[f^{\ell}_{\bm{s},\Pi}(\VectCoord{\mathcal{X}}{n})\big] \\ & =\frac{1}{\P^*(|\Delta^k(\mathcal{D}_n)|>0)}\E^*\Big[\frac{\mathcal{A}^k(\mathcal{D}_n,f^{\ell}_{\bm{s},\Pi})}{\mathcal{A}^k(\mathcal{D}_n,1)}\un_{\{|\Delta^k(\mathcal{D}_n)|\geq k\}}\Big],
\end{align*}
where we have used the definition of $\VectCoord{\mathcal{X}}{n}$ (see \eqref{LawVertices2} and \eqref{LawVertices1}) in the last equality. Since $f^{\ell}_{\bm{s},\Pi}$ satisfies the hereditary Assumption \ref{Assumption3}, we finally get \eqref{Th3Trace} from \eqref{ConvA} with $f=f^{\ell}_{\bm{s},\Pi}$ and by \eqref{ConvAQuotient} with $g=1$
\begin{align*}
    \lim_{n\to\infty}\P^*(\pi^{k,n}_{m_0}=\bm{\pi}_0,\ldots,\pi^{k,n}_{m_{\ell}}=\bm{\pi}_{\ell})=\sum_{s_1=m_0+1}^{m_1}\cdots\sum_{s_{\ell}=m_{\ell-1}+1}^{m_{\ell}}\Eb^*\Big[\frac{\mathcal{A}_{\infty}^k(f^{\ell}_{\bm{s},\Pi})}{(W_{\infty})^k}\Big].
\end{align*}
We now compute the conditional expectation of $\mathcal{A}_{\infty}^k(f^{\ell}_{\bm{s},\Pi})$ conditionally given the sigma-algebra $\mathcal{F}_{s_p-1}=\sigma((x,V(x));|x|<s_p)$. Start with $p=\ell$. Let $s_{i}\in\{m_{i-1}+1,\ldots,m_{i}\}$ for all $i\in\{1,\ldots,\ell\}$. Using the definition of $\mathcal{A}_{\infty}^k(f^{\ell}_{\bm{s},\Pi})$ and the fact that $\Vect{x}\in\Delta^k_l\cap\Gamma^{\ell}_{\bm{s},\Pi}$ for $l>s_{\ell}$ implies $\mathcal{S}^k(\Vect{x})\leq s_{\ell}$, we obtain, on the set of non-extinction
\begin{align*}
    \Eb\big[\mathcal{A}_{\infty}^k(f^{\ell}_{\bm{s},\Pi})|\mathcal{F}_{s_{\ell}}\big]=\lim_{l\to\infty}\Eb\Big[\sum_{\Vect{x}\in\Delta^k_l}f^{\ell}_{\bm{s},\Pi}(\Vect{x})e^{-\langle\Vect{1},V(\Vect{x})\rangle_{k}}|\mathcal{F}_{s_{\ell}}\Big] =\sum_{\Vect{x}\in\Delta^k_{s_{\ell}}}f^{\ell}_{\bm{s},\Pi}(\Vect{x})e^{-\langle\Vect{1},V(\Vect{x})\rangle_{k}},
\end{align*}
since $s_{\ell}-1$ corresponds to the last generation at which two or more vertices among $\VectCoord{x}{1},\ldots,\VectCoord{x}{l}$ share a common ancestor and we recall that $\langle\Vect{1},V(\Vect{x})\rangle_{k}=\sum_{i=1}^kV(\VectCoord{x}{i})$. In particular, these vertices don't share any common ancestor in generation $s_{\ell}$ and last equality comes from independence of the increments of the branching random walk $(\T,(V(x),x\in\T))$ together with the fact that $\psi(1)=0$. Before going any further, let us define a transformation of the increasing collection $\Pi=(\bm{\pi}_i)_{0\leq i\leq\ell}$ of partitions of $\{1,\ldots,k\}$. We build from $\Pi$ (which is by definition a collection of partitions of the set $\{1,\ldots,k\}$) a new collection $\Pi^{\ell-1}=(\tilde{\bm{\pi}}_i)_{0\leq i\leq\ell-1}$ of partitions of the set $\{1,\ldots,|\bm{\pi}_{\ell-1}|\}$ as follows:
\begin{itemize}
    \item[$\bullet$]  $\tilde{\bm{\pi}}_{\ell-1}=\{\{1\},\ldots,\{|\bm{\pi}_{\ell-1}|\}\}$; \\
    \item[$\bullet$]  for any $1\leq i\leq\ell-2$ and any $1\leq j\leq|\bm{\pi}_i|$, the $j$-th block $\bm{B}_j^i$ of the partition $\bm{\pi}_i$ is the union of $\mathfrak{b}_{\ell-1}(\bm{B}_j^i)\geq 1$ block(s) of the partition $\bm{\pi}_{\ell-1}$. We then denote by $\tilde{\bm{B}}^i_j$ the subset of $\{1,\ldots,|\bm{\pi}_{\ell-1}|\}$ composed of all indices of these $\mathfrak{b}_{\ell-1}(\bm{B}_j^i)$ block(s) and let $\tilde{\bm{\pi}}_{i}=\{\tilde{\bm{B}}^i_1,\ldots,\tilde{\bm{B}}^i_{|\bm{\pi}_i|}\}$. By definition, $\tilde{\bm{\pi}}_0$ remains a one-block partition: $\tilde{\bm{\pi}}_0=\{\{1,\ldots,|\bm{\pi}_{\ell-1}|\}\}$.
\end{itemize}
Note that for any $0\leq i\leq\ell-1$, $|\tilde{\bm{\pi}}_{i}|=|\bm{\pi}_{i}|$ and for any $0\leq i\leq\ell-2$, $1\leq j\leq |\bm{\pi}_i|$, $b_i(\bm{B}_j)=\tilde{b}_i(\tilde{\bm{B}}_j)$, where $\tilde{\bm{B}}_j\in\tilde{\bm{\pi}_i}$ is the union of $\tilde{b}_i(\tilde{\bm{B}}_j)\geq 1$ block(s) of $\tilde{\pi}_{i+1}$. \\

\vspace{0.1cm}

\begin{example}\label{ProcedurePartition} 
If $\Pi$ is defined by $\bm{\pi}_4=\{\{1\},\{2\},\{3\},\{4\},\{5\}\}$,$\bm{\pi}_3=\{\{1,3\},\{2\},\{4\},\{5\}\}$, $\bm{\pi}_2=\{\{1,3\},\{2,5\},\{4\}\}$, $\bm{\pi}_1=\{\{1,3,4\},\{2,5\}\}$ and $\bm{\pi}_0=\{\{1,2,3,4,5\}\}$ then we have: \\
$\tilde{\bm{\pi}}_3=\{\{1\},\{2\},\{3\},\{4\}\}$, $\tilde{\bm{\pi}}_2=\{\{1\},\{2,4\},\{3\}\}$,$\tilde{\bm{\pi}}_1=\{\{1,3\},\{2,4\}\}$, and $\tilde{\bm{\pi}}_0=\{\{1,2,3,4\}\}$.
\end{example}
\noindent If we set $\Pi^{\ell}:=\Pi$, then for any $i\in\{0,\ldots,\ell-1\}$, let $\Pi^{i}$ be the collection of partitions of $\{1,\ldots,|\bm{\pi}_i|\}$ resulting from the previous procedure applied to $\Pi^{i+1}$. Note that $\Pi^i$ is an increasing collection of partitions of $\{1,\ldots,|\bm{\pi}_i|\}$. This construction is a way of preserving the genealogical information through the generations. \\
Let $\bm{s}^{\ell-1}=(s_1,\ldots,s_{\ell-1})$ and recall the definitions regarding partitions in \eqref{betaDef}. One can now notice that, since that the number of vertices of the $k$-tuple $\Vect{x}\in\Delta^k_{s_{\ell}}$ sharing the same parent $\VectCoord{u}{j}$ is $b_{\ell-1}(\bm{B}_j)$ (where we recall that $b_{\ell-1}(\bm{B}_j)$ stands for $b_{\ell-1}(\bm{B}^{\ell-1}_j)$), we have 
\begin{align*}
    \sum_{\Vect{x}\in\Delta^k_{s_{\ell}}}f^{\ell}_{\bm{s},\Pi}(\Vect{x})e^{-\langle\Vect{1},V(\Vect{x})\rangle_{k}}=\sum_{\bm{u}\in\Delta^{|\bm{\pi}_{\ell-1}|}_{s_{\ell}-1}}f^{\ell-1}_{\bm{s}^{\ell-1},\Pi^{\ell-1}}(\Vect{u})\prod_{j=1}^{|\bm{\pi}_{\ell-1}|}\sum_{\VectCoord{\Vect{x}}{j}\in\Delta^{b_{\ell-1}(\bm{B}_j)}_{s_{\ell}}}\prod_{i=1}^{b_{\ell-1}(\bm{B}_j)}&\un_{\{(\VectCoord{x}{j,i})^*=\VectCoord{u}{j}\}} \\ & \times e^{-V(\VectCoord{x}{j,i})},
\end{align*}
where $\VectCoord{\Vect{x}}{j}=(\VectCoord{x}{j,1},\ldots,\VectCoord{x}{j,b_{\ell-1}(\bm{B}_j)})$ and $(\VectCoord{x}{j,i})^*$ is the parent of $\VectCoord{x}{j,i}$. Moreover, by definition, $b_{\ell-1}(\bm{B}_j)=|\bm{B}^{\ell-1}_j|$ (it comes from the fact that $\bm{\pi}_{\ell}=\{\{1\},\ldots,\{k\}\}$) so
\begin{align*}
    \prod_{j=1}^{|\bm{\pi}_{\ell-1}|}\sum_{\VectCoord{\Vect{x}}{j}\in\Delta^{b_{\ell-1}(\bm{B}_j)}_{s_{\ell}}}\prod_{i=1}^{b_{\ell-1}(\bm{B}_j)}\un_{\{(\VectCoord{x}{j,i})^*=\VectCoord{u}{j}\}}e^{-V(\VectCoord{x}{j,i})}&= e^{-\langle\bm{\beta}^{\ell-1},V(\Vect{u})\rangle_{|\bm{\pi}_{\ell-1}|}}\prod_{j=1}^{|\bm{\pi}_{\ell-1}|}\sum_{\VectCoord{\Vect{x}}{j}\in\Delta^{b_{\ell-1}(\bm{B}_j)}_{s_{\ell}}} \\ & \times \prod_{i=1}^{b_{\ell-1}(\bm{B}_j)}\un_{\{(\VectCoord{x}{j,i})^*=\VectCoord{u}{j}\}}e^{-V_{\VectCoord{u}{j}}(\VectCoord{x}{j,i})},
\end{align*}
where $\bm{\beta}^{\ell-1}=(|\bm{B}^{\ell-1}_1|,\ldots,|\bm{B}^{\ell-1}_{|\bm{\pi}_{\ell-1}|}|)$ and $V_{\VectCoord{u}{j}}(\VectCoord{x}{j,i})=V(\VectCoord{x}{j,i})-V(\VectCoord{u}{j})$. By independence of the increments of the branching random walk $(\T,(V(x),x\in\T))$, since $\psi(1)=0$
\begin{align*}
    \Eb\Big[\sum_{\Vect{x}\in\Delta^k_{s_{\ell}}}f^{\ell}_{\bm{s},\Pi}(\Vect{x})e^{-\langle\Vect{1},V(\Vect{x})\rangle_{k}}|\mathcal{F}_{s_{\ell}-1}\Big]&= \mathcal{A}^{|\bm{\pi}_{\ell-1}|}_{s_{\ell}-1}\big(f^{\ell-1}_{\bm{s}^{\ell-1},\Pi^{\ell-1}},\bm{\beta}^{\ell-1}\big)\prod_{j=1}^{|\bm{\pi}_{\ell-1}|}c_{b_{\ell-1}(\bm{B}_j)}(\bm{1})\underset{|\mathfrak{B}|\geq 2}{\prod_{\mathfrak{B}\in\bm{\pi}_{\ell}}}e^{\psi(|\mathfrak{B}|)} \\ & = \mathcal{A}^{|\bm{\pi}_{\ell-1}|}_{s_{\ell}-1}\big(f^{\ell-1}_{\bm{s}^{\ell-1},\Pi^{\ell-1}},\bm{\beta}^{\ell-1}\big)\prod_{j=1}^{|\bm{\pi}_{\ell-1}|}c_{b_{\ell-1}(\bm{B}_j)}(\bm{\beta}^{\ell-1}_j),   
\end{align*}
where $\bm{\beta}^{\ell-1}_j:=(\beta^{\ell-1}_{j,1},\ldots,\beta^{\ell-1}_{j,b_{\ell-1}(\bm{B}_j)})=(1,\ldots,1)$, see \eqref{betaDef}. We also recall that $\mathcal{A}^m_l(g,\bm{\beta})=\sum_{\Vect{x}\in\Delta^m_l}g(\Vect{x})e^{-\langle\bm{\beta},V(\bm{x})\rangle_m}$ and see Assumption \ref{Assumption2} for the definition of $c_l(\bm{\beta})$. Now recall that $\Pi^{\ell-2}$ is the collection of partitions of $\{1,\ldots,|\bm{\pi}_{\ell-2}|\}$ obtain from $\Pi^{\ell-1}$ with the same procedure as above (see Example \ref{ProcedurePartition}). Let $\bm{s}^{\ell-2}=(s_1,\ldots,s_{\ell-2})$. Again, exactly $b_{\ell-2}(\bm{B}_j)$ vertices in generation $s_{\ell-1}$ are sharing the same parent $\VectCoord{z}{j}$ so
\begin{align*}
    \mathcal{A}^{|\bm{\pi}_{\ell-1}|}_{s_{\ell}-1}\big(f^{\ell-1}_{\bm{s}^{\ell-1},\Pi^{\ell-1}},\bm{\beta}^{\ell-1}\big)=&\sum_{\bm{z}\in\Delta^{|\bm{\pi}_{\ell-2}|}_{s_{\ell-1}-1}}f^{\ell-2}_{\bm{s}^{\ell-2},\Pi^{\ell-2}}(\Vect{z})\prod_{j=1}^{|\bm{\pi}_{\ell-2}|}\sum_{\VectCoord{\Vect{u}}{j}\in\Delta^{b_{\ell-2}(\bm{B}_j)}_{s_{\ell-1}}}\prod_{i=1}^{b_{\ell-2}(\bm{B}_j)}\un_{\{(\VectCoord{u}{j,i})^*=\VectCoord{z}{j}\}} \\ & \times e^{-\beta^{\ell-2}_{j,i}V(\VectCoord{u}{j,i})}\sum_{\VectCoord{\Vect{x}}{j}\in\Delta^{b_{\ell-2}(\bm{B}_j)}_{s_{\ell}-1}}\un_{\{\VectCoord{x}{j,i}\geq\VectCoord{u}{j,i}\}}e^{-\beta^{\ell-2}_{j,i}V_{\VectCoord{u}{j,i}}(\VectCoord{x}{j,i})},
\end{align*}
where $\VectCoord{\Vect{u}}{j}=(\VectCoord{u}{j,1},\ldots,\VectCoord{u}{j,b_{\ell-2}(\bm{B}_j)})$, $\VectCoord{\Vect{x}}{j}=(\VectCoord{x}{j,1},\ldots,\VectCoord{x}{j,b_{\ell-2}(\bm{B}_j)})$ and $V_{\VectCoord{u}{j,i}}(\VectCoord{x}{j,i})$ is the increment $V(\VectCoord{x}{j,i})-V(\VectCoord{u}{j,i})$. Then, by independence of the increments of the branching random walk $(\T,(V(x),x\in\T))$, denoting $s_{\ell}^*=s_{\ell}-s_{\ell-1}-1$
\begin{align*}
    \Eb\Big[\mathcal{A}^{|\bm{\pi}_{\ell-1}|}_{s_{\ell}-1}\big(f^{\ell-1}_{\bm{s}^{\ell-1},\Pi^{\ell-1}},\bm{\beta}^{\ell-1}\big)|\mathcal{F}_{s_{\ell-1}}\Big]=\sum_{\bm{z}\in\Delta^{|\bm{\pi}_{\ell-2}|}_{s_{\ell-1}-1}}&f^{\ell-2}_{\bm{s}^{\ell-2},\Pi^{\ell-2}}(\Vect{z})\prod_{j=1}^{|\bm{\pi}_{\ell-2}|}\sum_{\VectCoord{\Vect{u}}{j}\in\Delta^{b_{\ell-2}(\bm{B}_j)}_{s_{\ell-1}}}\prod_{i=1}^{b_{\ell-2}(\bm{B}_j)} \\ & \times\un_{\{(\VectCoord{u}{j,i})^*=\VectCoord{z}{j}\}} e^{-\beta^{\ell-2}_{j,i}V(\VectCoord{u}{j,i})}e^{s_{\ell}^*\psi(\beta^{\ell-2}_{j,i})},
\end{align*}
which is also equal to
\begin{align*}
    \sum_{\bm{z}\in\Delta^{|\bm{\pi}_{\ell-2}|}_{s_{\ell-1}-1}}f^{\ell-2}_{\bm{s}^{\ell-2},\Pi^{\ell-2}}(\Vect{z})\prod_{j=1}^{|\bm{\pi}_{\ell-2}|}\sum_{\VectCoord{\Vect{u}}{j}\in\Delta^{b_{\ell-2}(\bm{B}_j)}_{s_{\ell-1}}}\prod_{i=1}^{b_{\ell-2}(\bm{B}_j)}&\un_{\{(\VectCoord{u}{j,i})^*=\VectCoord{z}{j}\}} e^{-\beta^{\ell-2}_{j,i}V(\VectCoord{u}{j,i})} \\ & \times\prod_{j=1}^{|\bm{\pi}_{\ell-2}|}\prod_{i=1}^{b_{\ell-2}(\bm{B}_j)}e^{s_{\ell}^*\psi(\beta^{\ell-2}_{j,i})}.
\end{align*}
Moreover, since $\sum_{i=1}^{b_{\ell-2}(\bm{B}_j)}\beta^{\ell-2}_{j,i}=|\bm{B}^{\ell-2}_j|$ (see \eqref{betaDef}), we have
\begin{align*}
    &\prod_{j=1}^{|\bm{\pi}_{\ell-2}|}\sum_{\VectCoord{\Vect{u}}{j}\in\Delta^{b_{\ell-2}(\bm{B}_j)}_{s_{\ell-1}}}\prod_{i=1}^{b_{\ell-2}(\bm{B}_j)}\un_{\{(\VectCoord{u}{j,i})^*=\VectCoord{z}{j}\}} e^{-\beta^{\ell-2}_{j,i}V(\VectCoord{u}{j,i})} \\ & =e^{-\langle\Vect{\bm{\beta}^{\ell-2}},V(\Vect{z})\rangle_{|\bm{\pi}_{\ell-2}|}}\prod_{j=1}^{|\bm{\pi}_{\ell-2}|}\sum_{\VectCoord{\Vect{u}}{j}\in\Delta^{b_{\ell-2}(\bm{B}_j)}_{s_{\ell-1}}}\prod_{i=1}^{b_{\ell-2}(\bm{B}_j)}\un_{\{(\VectCoord{u}{j,i})^*=\VectCoord{z}{j}\}} e^{-\beta^{\ell-2}_{j,i}V_{\VectCoord{z}{j}}(\VectCoord{u}{j,i})},
\end{align*}
with $\bm{\beta}^{\ell-2}=(|\bm{B}^{\ell-2}_1|,\ldots,|\bm{B}^{\ell-2}_{|\bm{\pi}_{\ell-2}|}|)$ and again, by independence of the increments of the branching random walk $(\T,(V(x),x\in\T))$, using that $\prod_{j=1}^{|\bm{\pi}_{\ell-2}|}\prod_{i=1}^{b_{\ell-2}(\bm{B}_j)}e^{s_{\ell}^*\psi(\beta^{\ell-2}_{j,i})}=\prod_{\mathfrak{B}\in\bm{\pi}_{\ell-1}}e^{s_{\ell}^*\psi(|\mathfrak{B}|)}=\prod_{\mathfrak{B}\in\bm{\pi}_{\ell-1}, |\mathfrak{B}|\geq 2}e^{s_{\ell}^*\psi(|\mathfrak{B}|)}$, we have
\begin{align*}
    \Eb\Big[\mathcal{A}^{|\bm{\pi}_{\ell-1}|}_{s_{\ell}-1}\big(f^{\ell-1}_{\bm{s}^{\ell-1},\Pi^{\ell-1}},\bm{\beta}^{\ell-1}\big)|\mathcal{F}_{s_{\ell-1}-1}\Big]= \mathcal{A}^{|\bm{\pi}_{\ell-2}|}_{s_{\ell-1}-1}\big(f^{\ell-2}_{\bm{s}^{\ell-2},\Pi^{\ell-2}},\bm{\beta}^{\ell-2}\big)&\prod_{j=1}^{|\bm{\pi}_{\ell-2}|}c_{b_{\ell-2}(\bm{B}_j)}(\bm{\beta}^{\ell-2}_j) \\ & \times\underset{|\mathfrak{B}|\geq 2}{\prod_{\mathfrak{B}\in\bm{\pi}_{\ell-1}}}e^{s_{\ell}^*\psi(|\mathfrak{B}|)},
\end{align*}
where $\bm{\beta}^{\ell-2}_j=(\beta^{\ell-2}_{j,1},\ldots,\beta^{\ell-2}_{j,b_{\ell-2}(\bm{B}_j)})$. Thus, we obtain
\begin{align*}
    \Eb\Big[\sum_{\Vect{x}\in\Delta^k_{s_{\ell}}}f^{\ell}_{\bm{s},\Pi}(\Vect{x})e^{-\langle\Vect{1},V(\Vect{x})\rangle_{k}}|\mathcal{F}_{s_{\ell-1}-1}\Big]=\mathcal{A}^{|\bm{\pi}_{\ell-2}|}_{s_{\ell-1}-1}\big(f^{\ell-2}_{\bm{s}^{\ell-2},\Pi^{\ell-2}},\bm{\beta}^{\ell-2}\big)\prod_{i=\ell-1}^{\ell}&\prod_{j=1}^{|\bm{\pi}_{i-1}|}c_{b_{i-1}(\bm{B}_j)}(\bm{\beta}^{i-1}_j) \\ & \times\underset{|\mathfrak{B}|\geq 2}{\prod_{\mathfrak{B}\in\bm{\pi}_{i}}}e^{s_{\ell}^*\psi(|\mathfrak{B}|)}. 
\end{align*}
By induction on $2\leq p\leq\ell$, we finally get, on the set of non-extinction
\begin{align*}
     \Eb\big[\mathcal{A}_{\infty}^k(f^{\ell}_{\bm{s},\Pi})|\mathcal{F}_{s_{p}-1}\big]=\mathcal{A}^{|\bm{\pi}_{p-1}|}_{s_{p}-1}\big(f^{p-1}_{\bm{s}^{p-1},\Pi^{p-1}},\bm{\beta}^{p-1}\big)\prod_{i=p}^{\ell}\prod_{j=1}^{|\bm{\pi}_{i-1}|}c_{b_{i-1}(\bm{B}_j)}(\bm{\beta}^{i-1}_j)\underset{|\mathfrak{B}|\geq 2}{\prod_{\mathfrak{B}\in\bm{\pi}_{i}}}e^{s_{i+1}^*\psi(|\mathfrak{B}|)}.
\end{align*}
Taking $p=2$ in the above formula, we have, on the set of non-extinction
\begin{align*}
    \Eb\big[\mathcal{A}_{\infty}^k(f^{\ell}_{\bm{s},\Pi})|\mathcal{F}_{s_{2}-1}\big]=\mathcal{A}^{|\bm{\pi}_{1}|}_{s_{2}-1}\big(f^{1}_{\bm{s}^{1},\Pi^{1}},\bm{\beta}^{1}\big)\prod_{i=2}^{\ell}\prod_{j=1}^{|\bm{\pi}_{i-1}|}c_{b_{i-1}(\bm{B}_j)}(\bm{\beta}^{i-1}_j)\underset{|\mathfrak{B}|\geq 2}{\prod_{\mathfrak{B}\in\bm{\pi}_{i}}}e^{s_{i+1}^*\psi(|\mathfrak{B}|)},
\end{align*}
where for any $i\in\{2,\ldots,\ell\}$, $s_i^*=s_i-s_{i-1}-1$ and $s_{\ell+1}^*=1$. Since $\sum_{j=1}^{b_0(\bm{B}_1)}|\bm{B}_j^1|=k$ (it comes from the fact that $\bm{\pi}_0=\{\{1,\ldots,k\}\}$), we have
\begin{align*}
     \Eb\big[\mathcal{A}^{|\bm{\pi}_{1}|}_{s_{2}-1}\big(f^{1}_{\bm{s}^{1},\Pi^{1}},\bm{\beta}^{1}\big)|\mathcal{F}_{s_{1}-1}\big]&=\sum_{|z|=s_1-1}e^{-kV(z)}c_{b_0(\bm{B}_1)}(\bm{\beta}^{1})\underset{|\mathfrak{B}|\geq 2}{\prod_{\mathfrak{B}\in\bm{\pi}_{1}}}e^{s_{2}^*\psi(|\mathfrak{B}|)} \\ & =\sum_{|z|=s_1-1}e^{-kV(z)}\prod_{j=1}^{|\bm{\pi}_0|}c_{b_0(\bm{B}_j)}(\bm{\beta}^0_j)\underset{|\mathfrak{B}|\geq 2}{\prod_{\mathfrak{B}\in\bm{\pi}_{1}}}e^{s_{2}^*\psi(|\mathfrak{B}|)},
\end{align*}
the last equality coming from the fact $\bm{\beta}^0_j=\bm{\beta}^1=(|\bm{B}^1_1|,\ldots,|\bm{B}^1_{|\bm{\pi}_1|}|)$. Finally, \begin{align*}
     \Eb\big[\mathcal{A}_{\infty}^k(f^{\ell}_{\bm{s},\Pi})\big]=e^{\psi(k)}\prod_{i=1}^{\ell}\prod_{j=1}^{|\bm{\pi}_{i-1}|}c_{b_{i-1}(\bm{B}_j)}(\bm{\beta}^{i-1}_j)\underset{|\mathfrak{B}|\geq 2}{\prod_{\mathfrak{B}\in\bm{\pi}_{i}}}e^{s_{i+1}^*\psi(|\mathfrak{B}|)},
\end{align*}
thus completing to proof.
\end{proof}
\noindent We end this subsection with the proof of Corollary \ref{GenTh4}.
\begin{proof}[Proof of Corollary \ref{GenTh4}]
First recall that for $1\leq\ell<k$, $\mathfrak{s}\in\N^*$ and $\bm{s}=(s_1,\ldots,s_{\ell})\in\N^{\times\ell}$ such that $s_1<\cdots<s_{\ell}\leq\mathfrak{s}$, for all $\Vect{x}\in\Delta^k$ such that $\min_{1\leq j\leq k}|\VectCoord{x}{j}|\geq\mathfrak{s}$,
$$F^{\ell}_{\bm{s}}(\Vect{x})=\sum_{\Xi\textrm{ increasing }}f^{\ell}_{\bm{s},\Xi}(\Vect{x}). $$
Recall that, by $\Xi\textrm{ increasing}$, we mean here that $\Xi=(\bm{\Xi}_i)_{0\leq i\leq\ell}$ is an increasing collection of partitions of $\{1,\ldots,k\}$. Since $f^{\ell}_{\bm{s},\Xi}$ satisfies the hereditary Assumption \ref{Assumption3}, the same goes for $F^{\ell}_{\bm{s}}$ by taking $\mathfrak{g}=\mathfrak{s}$. Using the linearity of $g\mapsto \mathcal{A}^k_l(g)$, we get \eqref{Th4Trace} thanks to Theorem \ref{GenTh5}. \\
Similarly as in the proof of Corollary \ref{GenTh3}, one can see that
\begin{align*}
    \big\{\mathcal{S}^{k,n}_1=s_1,\ldots,\mathcal{S}^{k,n}_{\ell}=s_{\ell},\mathcal{J}^{k,n}=\ell\big\}=\bigcup_{\Xi\textrm{ increasing }}\bigcap_{i=1}^{\ell}\big\{\pi^{k,n}_{s_i-1}=\bm{\Xi}_{i-1}, \pi^{k,n}_{s_i}=\bm{\Xi}_{i}\big\},
\end{align*}
and by definition of $\VectCoord{\mathcal{Y}}{n}$, we have
\begin{align*}
    \P^*\big(\mathcal{S}^{k,n}_1=s_1,\ldots,\mathcal{S}^{k,n}_{\ell}=s_{\ell},\mathcal{J}^{k,n}=\ell\big)&=\sum_{\Xi\textrm{ increasing }}\E^*\big[f^{\ell}_{\bm{s},\Xi}\big(\VectCoord{\mathcal{Y}}{n}\big)\big] \\ & =\frac{1}{\P^*(|\Delta^k(\mathcal{D}_n)|>0)}\E^*\Big[\frac{\mathcal{A}^k(\mathcal{D}_n,F^{\ell}_{\bm{s}})}{\mathcal{A}^k(\mathcal{D}_n,\un_{\mathcal{C}^k_{\mathfrak{s}}})}\un_{\{|\Delta^k(\mathcal{D}_n)|\geq k\}}\Big].
\end{align*}
We conclude using Theorem \ref{GenTh5}.
\end{proof}

\section{Proofs of Propositions \ref{GENPROPCONV2} and \ref{GENPROPCONV1}}\label{ProofsProp}

This section is devoted to the proofs of our two propositions. We show that relevant $k$-tuples of visited vertices are those in the set $\mathfrak{E}^{k,\cdot}$, see \eqref{DistinctExcur}.

\vspace{0.2cm}

\noindent Let us recall the well-known many-to-one lemma: 
\begin{lemm}[many-to-one]\label{many-to-one}
For any $p\in\N^*$ and any bounded function $\bm{h}:\R^k\to\R$
\begin{align*}
    \Eb[\bm{h}(S_1,\ldots,S_p)]=\Eb\Big[\sum_{|x|=p}e^{-V(x)}\bm{h}(V(x_1),\ldots,V(x_p))\Big],
\end{align*}
where $(S_i)_{i\in\N}$ is the real valued random walk defined in \eqref{RW}.
\end{lemm}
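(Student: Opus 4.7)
The plan is to establish this classical identity by induction on $p$, using only the branching property and the fact that $\psi(1)=0$ (assumption~\ref{Assumption1}).

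First I would verify that the definition in \eqref{RW} is consistent. Taking $\mathfrak{t}\equiv 1$ gives $1=\Eb[\sum_{|x|=1}e^{-V(x)}]=e^{\psi(1)}$, which holds thanks to assumption~\ref{Assumption1}, so the formula $\Eb[\mathfrak{t}(S_1)]=\Eb[\sum_{|x|=1}\mathfrak{t}(V(x))e^{-V(x)}]$ does define a bona fide probability distribution for $S_1$, and hence for the i.i.d.\ increments $(S_{j}-S_{j-1})_{j\geq 1}$. The base case $p=1$ then becomes tautological since $x_1=x$ whenever $|x|=1$.

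For the inductive step, assuming the identity holds at rank $p-1$, I would decompose every vertex $x$ with $|x|=p$ according to its parent $u=x^*$ with $|u|=p-1$:
\begin{align*}
\Eb\Big[\sum_{|x|=p}e^{-V(x)}\bm{h}\big(V(x_1),\ldots,V(x_p)\big)\Big]
=\Eb\Big[\sum_{|u|=p-1}e^{-V(u)}\sum_{x:x^*=u}e^{-(V(x)-V(u))}\bm{h}\big(V(u_1),\ldots,V(u),V(x)\big)\Big].
\end{align*}
Conditionally on $\mathcal{F}_{p-1}:=\sigma(\T;V(y),|y|\leq p-1)$, the branching property tells us that the point process $\{V(x)-V(u);\,x^*=u\}$ is an independent copy of $\{V(x);\,|x|=1\}$, so the inner conditional expectation equals $\Eb[\bm{h}(V(u_1),\ldots,V(u),V(u)+S_1')]$, where $S_1'$ is an independent copy of $S_1$ driven by the defining identity \eqref{RW}. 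Setting $\tilde{\bm{h}}(y_1,\ldots,y_{p-1}):=\Eb[\bm{h}(y_1,\ldots,y_{p-1},y_{p-1}+S_1')]$ (which is bounded) and applying the induction hypothesis yields
\begin{align*}
\Eb\Big[\sum_{|x|=p}e^{-V(x)}\bm{h}\big(V(x_1),\ldots,V(x_p)\big)\Big]
=\Eb[\tilde{\bm{h}}(S_1,\ldots,S_{p-1})]
=\Eb[\bm{h}(S_1,\ldots,S_{p-1},S_{p-1}+S_1')],
\end{align*}
and the last expression equals $\Eb[\bm{h}(S_1,\ldots,S_p)]$ because $S_p-S_{p-1}$ is independent of $(S_1,\ldots,S_{p-1})$ and distributed as $S_1$.

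There is no real obstacle here: the only subtlety is keeping the conditioning clean when separating the contribution of generation $p-1$ from that of the children, and checking that $\tilde{\bm{h}}$ stays bounded so that the induction hypothesis applies. Assumptions~\ref{Assumption2} and \ref{AssumptionIncrements} are not needed for this lemma, which relies solely on $\psi(1)=0$ and the standard branching property.
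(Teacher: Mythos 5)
The paper states this lemma without proof, describing it as ``well-known,'' so there is no in-paper argument to compare against. Your induction on $p$ is the standard textbook proof (the one-step decomposition over parents $u$ in generation $p-1$, the branching property to make the children's increments conditionally independent of $\mathcal{F}_{p-1}$ with law governed by \eqref{RW}, and then the induction hypothesis applied to the bounded auxiliary function $\tilde{\bm{h}}$), and it is correct; you are also right that only $\psi(1)=0$ and the branching property are needed, not Assumptions~\ref{Assumption2} or~\ref{AssumptionIncrements}.
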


\noindent We now state and prove a lemma that will be useful all along this section. For any vertex $z\in\T$, recall that $T_z=\inf\{i\geq 1\; X_i=z\}$, the hitting time of $z$ and for any $\Vect{x}=(\VectCoord{x}{1},\ldots,\VectCoord{x}{q})\in\Delta^q$, $T_{\Vect{x}}=\max_{1\leq i\leq q}T_{\VectCoord{x}{i}}$. Recall that for any $j\in\N^*$, $T^j$ denotes the $j$-th return time to the parent $e^*$ of the root $e$. For $1\leq\ell<q$ two integers, $\bm{m}=(m_1,\ldots,m_{\ell})\in\ProdSet{\N}{\ell}$ such that $m_1<\cdots<m_{\ell}$ and $\Pi=(\bm{\pi}_i)_{0\leq i\leq\ell}$ an increasing collection of partitions of $\{1,\ldots,q\}$ that is to say $|\bm{\pi}_{i-1}|<|\bm{\pi}_i|$ with $\pi_{0}=\{\{1,\ldots,q\}\}$ and $\bm{\pi}_{\ell}=\{\{1\},\ldots,\{q\}\}$, recall the definition of $f^{\ell}_{\bm{m},\Pi}$ in \eqref{Def_f_partition}.

\begin{lemm}\label{EspProbaTAPartition}
Let $k\geq 2$ and $\mathfrak{a}\geq 1$ be two integers and assume $\kappa>2\mathfrak{a}k$. Let $q\in\{k,\ldots,2\mathfrak{a}k\}$ and $\bm{p}=(p_1,\ldots,p_q)\in\ProdSet{\N}{q}$. Under the Assumptions \ref{Assumption1} and \ref{Assumption2}, there exists a constant $\mathfrak{C}>0$ does not depending neither on $\bm{p}$, nor on $\bm{m}$ such that
\begin{align*}
    \Eb\Big[\underset{|\Vect{x}|=\bm{p}}{\sum_{\Vect{x}\in\Delta^q}}f^{\ell}_{\bm{m},\Pi}(\Vect{x})\P^{\mathcal{E}}(T_{\Vect{x}}<T^1)\Big]\leq\mathfrak{C},
\end{align*}
where $|\Vect{x}|=\Vect{p}$ means that $|\VectCoord{x}{i}|=p_i$ for any $i\in\{1,\ldots,q\}$. In particular, for any integer $m\in\N^*$, $q'\leq q$ and any distinct $i_1,\ldots,i_{q'}\in\{1,\ldots, k\}$, there exists a constant $C_{\ref{EspProbaTAPartition}}>0$ does not depending on $\Vect{p}$ such that
\begin{align}\label{EqLemmaProbaTA}
    \Eb\Big[\underset{|\Vect{x}|=\bm{p}}{\sum_{\Vect{x}\in\Delta^q}}\un_{\mathcal{C}^{q'}_m}(\Vect{x}_{q'})\un_{\mathcal{C}^{q-q'}_m}(\Bar{\Vect{x}}_{q'})\P^{\mathcal{E}}(T_{\Vect{x}}<T^1)\Big]\leq C_{\ref{EspProbaTAPartition}}\Big(\max_{1\leq i\leq q}p_i\Big)^{q'\land(q-q')}m^{q'\lor(q-q')-1}, 
\end{align}
where $\Vect{x}_{q'}:=(\VectCoord{x}{i_1},\ldots,\VectCoord{x}{i_{q'}})$ and $\Bar{\Vect{x}}_{q'}:=(\VectCoord{x}{i})_{i\in\{1,\ldots,k\}\setminus\{i_1,\ldots,i_{q'}\}}$.
\end{lemm}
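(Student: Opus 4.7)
The plan is to exploit the tree structure of $\Vect{x}$ prescribed by $(\Pi,\bm{m})$ and reduce the estimate to moment computations via the many-to-one Lemma~\ref{many-to-one}. On the event $\{f^{\ell}_{\bm{m},\Pi}(\Vect{x})=1\}$, the subtree of $\T$ spanned by $e$ and the vertices of $\Vect{x}$ is a combinatorial tree whose internal branch points lie at generations $m_1-1,\ldots,m_{\ell}-1$ and whose branching pattern is read off from $(\bm{\pi}_i)_{0\leq i\leq\ell}$. Decomposing the quenched probability along this reduced subtree using the strong Markov property at the hitting times of the successive common ancestors, and using the classical ray bound
\[
\P_y^{\mathcal{E}}(T_z<T_{y^*})\leq \frac{C}{\sum_{y<u\leq z}e^{V(u)-V(y)}}\leq C\,e^{-V(z)+V(y)}
\]
valid along the ray $\llbracket y,z\rrbracket$ (which, under Assumption~\ref{AssumptionIncrements}, follows from the effective-resistance representation of $\X$; see e.g.~\cite{LyonPema,Hu2017}), one obtains a product decomposition of the form
\[
\P^{\mathcal{E}}(T_{\Vect{x}}<T^1)\leq C^{q}\prod_{u\in R(\Vect{x})}e^{-V(u)+V(\mathfrak{p}(u))},
\]
where $R(\Vect{x})$ denotes the set of non-root vertices of the reduced subtree and $\mathfrak{p}(u)$ is the parent of $u$ in that reduced tree.

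Second, I would substitute this bound into the sum over $\Vect{x}$ with $|\Vect{x}|=\bm{p}$ and apply many-to-one once per branch of the reduced tree. Each branch of length $L$ carrying $b$ descendant leaves contributes a factor $\Eb[e^{-bS_L}]=e^{L\psi(b)}$, and each branch point of multiplicity $b'$ with block-size vector $\bm{\beta}=(\beta_1,\ldots,\beta_{b'})$ contributes the factor $c_{b'}(\bm{\beta})$ from Assumption~\ref{Assumption2}. Since $\sum_i\beta_i\leq q\leq 2\mathfrak{a}k<\kappa\leq\lceil\kappa+\delta_1\rceil$, each $c_{b'}(\bm{\beta})$ is finite; and since every such $b$ lies in $\{1,\ldots,q\}\subset[1,\kappa)$, the convexity of $\psi$ on $[0,\kappa]$ together with $\psi(1)=\psi(\kappa)=0$ gives $\psi(b)\leq 0$, so the branch-length factors are $\leq 1$ uniformly in $L$. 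The number of branch points is at most $q-1$ and only finitely many combinatorial patterns for $(\bm{\beta},b')$ can occur (the bound depending only on $q$), yielding the uniform constant $\mathfrak{C}$ independent of $\bm{p}$ and $\bm{m}$.

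For the refinement~\eqref{EqLemmaProbaTA}, the indicator $\un_{\mathcal{C}^{q'}_m}(\Vect{x}_{q'})\,\un_{\mathcal{C}^{q-q'}_m}(\Bar{\Vect{x}}_{q'})$ is decomposed as a sum over all increasing collections $(\Pi,\bm{m})$ of partitions compatible with these constraints, and the first assertion is applied to each term. The constraint forces all internal split generations within each of the two sub-tuples to lie in $\{1,\ldots,m\}$, whereas the attachment of the smaller sub-tuple to the larger one and the length of its terminal branches remain unconstrained. Counting yields at most $m^{q'\lor(q-q')-1}$ choices of internal split generations within the larger sub-tuple and at most $(\max_i p_i)^{q'\land(q-q')}$ choices for the smaller sub-tuple's branches together with its attachment to the larger one, which produces the announced bound after multiplying by $\mathfrak{C}$. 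The main obstacle is the first step: obtaining a product decomposition of $\P^{\mathcal{E}}(T_{\Vect{x}}<T^1)$ sharp enough for the many-to-one reduction to close; once that is established, the remainder consists of the convexity argument for $\psi$ and careful combinatorial bookkeeping of the branch patterns.
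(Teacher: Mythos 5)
The proposal breaks down at its central step. You claim the product decomposition
\[
\P^{\mathcal{E}}(T_{\Vect{x}}<T^1)\leq C^{q}\prod_{u\in R(\Vect{x})}e^{-V(u)+V(\mathfrak{p}(u))},
\]
but this is false. The correct exit formula, used in the paper, is
\[
\P^{\mathcal{E}}_z(T_x<T^1)=\frac{\sum_{e\leq w\leq z}e^{V(w)}}{\sum_{e\leq w\leq x}e^{V(w)}}
=\frac{e^{V(z)}H_z}{e^{V(x)}H_x},\qquad H_u:=\sum_{e\leq w\leq u}e^{V(w)-V(u)},
\]
and when you decompose $\P^{\mathcal{E}}(T_{\Vect{x}}<T^1)$ via the strong Markov property at the internal branch points $z$ of the reduced tree, each branch point of degree $d$ in that tree produces a factor $(H_z)^{d-1}$ that does \emph{not} telescope away. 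Already for $q=2$ with MRCA $z$, the decomposition gives $\P^{\mathcal{E}}(T_x<T^1,T_y<T^1)\lesssim e^{-V(x)-V(y)+V(z)}\,H_z/(H_xH_y)$, and $H_z$ can be arbitrarily large (take the potential flat to $z$ and then rising sharply), so your bound $C\,e^{-V(x)-V(y)+V(z)}$ can be violated. A closely related confusion is in the ``classical ray bound'' you quote: $\P_y^{\mathcal{E}}(T_z<T_{y^*})\leq C/\sum_{y<u\leq z}e^{V(u)-V(y)}$ concerns the event $\{T_z<T_{y^*}\}$, i.e.\ reaching $z$ before \emph{retreating one step}, which is a much rarer event than $\{T_z<T^1\}$ (reaching $z$ before escaping to $e^*$). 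Substituting the former for the latter is exactly what makes the $H$-factors vanish from your bound.

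These missing $H$-factors are not a technicality: they are what the paper's proof actually controls. After the many-to-one reduction, the accumulated powers of $H$ at a branch point turn into moments of $H^S_m=\sum_{j\le m}e^{S_j-S_m}$, and the proof closes by invoking $\sup_m\Eb[(H^S_m)^{\kappa-1-\varepsilon}]<\infty$ from \cite{AndDiel3}. Since the power of $H$ can be as large as $q-1\leq 2\mathfrak{a}k-1$, this is precisely where the hypothesis $\kappa>2\mathfrak{a}k$ is used; your argument (if the false bound were granted) would yield the conclusion from the weaker hypothesis $\kappa>q$, which is a sign that something has been dropped. Relatedly, the step where you compute $\Eb[e^{-bS_L}]=e^{L\psi(b)}$ and invoke the convexity of $\psi$ to get $\psi(b)\leq 0$ is correct but not the decisive ingredient; the decisive ingredient is the uniform moment bound on $H^S$. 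To repair the argument you would need to keep the $H$-factors in the Markov decomposition, propagate them through the inductive many-to-one step as powers $(H_{\VectCoord{z}{j}})^{|\bm{B}^{\cdot}_j|}$ (as the paper does), use $H_{\VectCoord{x}{i}}\leq H_{\VectCoord{z}{j}}(e^{-V_{\VectCoord{z}{j}}(\VectCoord{u}{j,i})}+1)(e^{-V_{\VectCoord{u}{j,i}}(\VectCoord{x}{j,i})}+\tilde H_{\cdots})$ to separate increments across generations, and appeal to the $H^S$ moment bound at the end; the combinatorial bookkeeping of the refinement~\eqref{EqLemmaProbaTA} can then be carried out as you sketched.
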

\begin{proof}[Proof in the case $\cap_{j=1}^{\ell}\Gamma^j_{\bm{m},\Pi}\subset\{\Vect{x}\in\Delta^q;\; \mathcal{C}^q(\Vect{x})<\min_{1\leq i\leq q}p_i\}$] First recall that $\Pi^i$ is the partition of $\{1,\ldots,|\bm{\pi}_i|\}$ obtained via the procedure defined above Example \ref{ProcedurePartition} and for any $i\in\{1,\ldots,\ell\}$, any $j\in\{1,\ldots,|\bm{\pi}_{i-1}|\}$, the $j$-th block $\bm{B}^{i}_j$ of the partition $\bm{\pi}_{i-1}$ is the union of $b_{i-1}(\bm{B}_j)\geq 1$ block(s) of the partition $\bm{\pi}_i$. Note (see the proof of Corollary \ref{GenTh3}) that 
\begin{align*}
    \underset{|\Vect{x}|=\bm{p}}{\sum_{\Vect{x}\in\Delta^q}}f^{\ell}_{\bm{m},\Pi}(\Vect{x})\P^{\mathcal{E}}(T_{\Vect{x}}<T^1)=\sum_{\bm{z}\in\Delta^{|\bm{\pi}_{\ell-1}|}_{m_{\ell}-1}}f^{\ell-1}_{\bm{m}^{\ell-1},\Pi^{\ell-1}}(\Vect{z})\prod_{j=1}^{|\bm{\pi}_{\ell-1}|}&\sum_{\VectCoord{\Vect{u}}{j}\in\Delta^{b_{\ell-1}(\bm{B}_j)}_{m_{\ell}}}\prod_{i=1}^{b_{\ell-1}(\bm{B}_j)}\un_{\{(\VectCoord{u}{j,i})^*=\VectCoord{z}{j}\}} \\ &  \times\sum_{\Vect{x}\in\Delta^q}\un_{\{|\Vect{x}|=\Vect{p},\; \Vect{x}\geq\mathfrak{u}\}}\P^{\mathcal{E}}(T_{\Vect{x}}<T^1),
\end{align*}
where  $\bm{m}^{\ell-1}=(m_1,\ldots,m_{\ell-1})$, $\mathfrak{u}$ is the concatenation of $\VectCoord{\Vect{u}}{1},\ldots,\VectCoord{\Vect{u}}{|\bm{\pi}_{\ell-1}|}$ and $\Vect{x}\geq\mathfrak{u}$ means that $\VectCoord{x}{p}\geq\VectCoord{\mathfrak{u}}{p}$. Thanks to the strong Markov property at time $T_{\VectCoord{z}{i}}$, there exists a constant $C_q\geq 1$ such that 
\begin{align*}
    \underset{|\Vect{x}|=\bm{p}}{\sum_{\Vect{x}\in\Delta^q}}f^{\ell}_{\bm{m},\Pi}(\Vect{x})\P^{\mathcal{E}}(T_{\Vect{x}}<T^1)\leq C_q&\sum_{\bm{z}\in\Delta^{|\bm{\pi}_{\ell-1}|}_{m_{\ell}-1}}f^{\ell-1}_{\bm{m}^{\ell-1},\Pi^{\ell-1}}(\Vect{z})\P^{\mathcal{E}}(T_{\Vect{z}}<T^1)\prod_{j=1}^{|\bm{\pi}_{\ell-1}|}\sum_{\VectCoord{\Vect{u}}{j}\in\Delta^{b_{\ell-1}(\bm{B}_j)}_{m_{\ell}}}\prod_{i=1}^{b_{\ell-1}(\bm{B}_j)} \\ & \times\un_{\{(\VectCoord{u}{j,i})^*=\VectCoord{z}{j}\}}\underset{|\VectCoord{\Vect{x}}{j}|=\VectCoord{\bm{p}}{j}}{\sum_{\VectCoord{\Vect{x}}{j}\in\Delta^{b_{\ell-1}(\bm{B}_j)}}}\un_{\{\VectCoord{x}{j,i}\geq\VectCoord{u}{j,i}\}}\P^{\mathcal{E}}_{\VectCoord{z}{j}}(T_{\VectCoord{x}{j,i}}<T^1),
\end{align*}
where $\Vect{p}$ is now seen as the concatenation of $\VectCoord{\Vect{p}}{1},\ldots,\VectCoord{\Vect{p}}{|\bm{\pi}_{\ell-1}|}$. Moreover, it is known that for all $z\leq x$ in $\T$, 
\begin{align}\label{ProbaTA}
   \P^{\mathcal{E}}_z(T_x<T^1)=\frac{\sum_{e\leq w\leq z}e^{V(w)}}{\sum_{e\leq w\leq x}e^{V(w)}}\;\;\textrm{ if } z\not=e,\;\;\P^{\mathcal{E}}(T_x<T^1)=\frac{1}{\sum_{e\leq w\leq x}e^{V(w)}}\;\;\textrm{ otherwise},
\end{align}
so $\P^{\mathcal{E}}_z(T_x<T^1)\leq e^{-V(x)}\sum_{e\leq w\leq z}e^{V(w)}$. By independence of the increments of the branching random walk $(\T,(V(x),x\in\T))$, using that $b_{\ell-1}(\bm{B}_j)=|\bm{B}^{\ell-1}_j|$ and $\psi(1)=0$
\begin{align*}
    \Eb\Big[\underset{|\Vect{x}|=\bm{p}}{\sum_{\Vect{x}\in\Delta^q}}f^{\ell}_{\bm{m},\Pi}(\Vect{x})\P^{\mathcal{E}}(T_{\Vect{z}}<T^1)\Big]\leq\mathfrak{C}_{\ell-1}\Eb\Big[\sum_{\bm{z}\in\Delta^{|\bm{\pi}_{\ell-1}|}_{m_{\ell}-1}}f^{\ell-1}_{\bm{m}^{\ell-1},\Pi^{\ell-1}}(\Vect{z})\P^{\mathcal{E}}(T_{\Vect{z}}<T^1)\prod_{j=1}^{|\bm{\pi}_{\ell-1}|}(H_{\VectCoord{z}{j}})^{|\bm{B}^{\ell-1}_j|}\Big],
\end{align*}
with $H_z=\sum_{e\leq w\leq z}e^{V(w)-V(z)}$ and $\mathfrak{C}_{\ell-1}=C_q\prod_{\bm{B}\in\bm{\pi}_{\ell-1}}c_{|\bm{B}|}(\Vect{1})\in(0,\infty)$ thanks to Assumption \ref{Assumption2} since for any $\bm{B}\in\pi_{\ell-1}$, $|\bm{B}|<q\leq 4k<\kappa$. Again, thanks to the strong Markov property at time $T_{\VectCoord{w}{i}}$
\begin{align*}
    &\sum_{\bm{z}\in\Delta^{|\bm{\pi}_{\ell-1}|}_{m_{\ell}-1}}f^{\ell-1}_{\bm{m}^{\ell-1},\Pi^{\ell-1}}(\Vect{z})\P^{\mathcal{E}}(T_{\Vect{z}}<T^1)\prod_{j=1}^{|\bm{\pi}_{\ell-1}|}(H_{\VectCoord{z}{j}})^{|\bm{B}^{\ell-1}_j|} \\ & \leq C_{\ell-1}\sum_{\bm{w}\in\Delta^{|\bm{\pi}_{\ell-2}|}_{m_{\ell-1}-1}}f^{\ell-2}_{\bm{m}^{\ell-2},\Pi^{\ell-2}}(\Vect{w})\P^{\mathcal{E}}(T_{\Vect{w}}<T^1)\prod_{j=1}^{|\bm{\pi}_{\ell-2}|}\sum_{\VectCoord{\Vect{v}}{j}\in\Delta^{b_{\ell-2}(\bm{B}_{j})}_{m_{\ell-1}}}\prod_{i=1}^{b_{\ell-2}(\bm{B}_{j})}\un_{\{(\VectCoord{v}{j,i})^*=\VectCoord{w}{j}\}} \\ & \times\sum_{\VectCoord{\Vect{z}}{j}\in\Delta^{|\bm{\pi}_{\ell-1}|}_{s_{\ell}-1}}\un_{\{\VectCoord{z}{j,i}\geq\VectCoord{v}{j,i}\}}(H_{\VectCoord{z}{j,i}})^{\beta^{\ell-2}_{j,i}}\P^{\mathcal{E}}_{\VectCoord{w}{j}}(T_{\VectCoord{z}{j,i}}<T^1),
\end{align*}
for some constant $C_{\ell-1}\geq 1$, where $\VectCoord{\Vect{v}}{j}=(\VectCoord{u}{j,1},\ldots,\VectCoord{u}{j,b_{\ell-2}(\bm{B}_j)})$ and recall the definition of $\beta^{\ell-2}_{j,i}$ in \eqref{betaDef}. Thanks to \eqref{ProbaTA}
\begin{align*}
    (H_{\VectCoord{z}{j,i}})^{\beta^{\ell-2}_{j,i}}\P^{\mathcal{E}}_{\VectCoord{w}{j}}(T_{\VectCoord{z}{j,i}}<T^1)\leq H_{\VectCoord{w}{j}}e^{-V_{\VectCoord{w}{j}}(\VectCoord{z}{j,i})}(H_{\VectCoord{z}{j,i}})^{\beta^{\ell-2}_{j,i}-1},
\end{align*}
and $H_{\VectCoord{z}{j,i}}=H_{\VectCoord{v}{j,i}}e^{-V_{\VectCoord{v}{j,i}}(\VectCoord{z}{j,i})}+\Tilde{H}_{\VectCoord{v}{j,i},\VectCoord{z}{j,i}}$ where, for any $u<x$, $\Tilde{H}_{u,x}:=\sum_{u<w\leq x}e^{V(w)-V(x)}$. Since $H_u\geq 1$ for all $u\in\T$, we have
\begin{align*}
    H_{\VectCoord{z}{j,i}}\leq H_{\VectCoord{w}{j}}\big(e^{-V_{\VectCoord{w}{i}}(\VectCoord{v}{j,i})}+1\big)\big(e^{-V_{\VectCoord{v}{j,i}}(\VectCoord{z}{j,i})}+ \Tilde{H}_{\VectCoord{v}{j,i},\VectCoord{z}{j,i}}\big),
\end{align*} 
thus giving that $(H_{\VectCoord{z}{j,i}})^{\beta^{\ell-2}_{j,i}}\P^{\mathcal{E}}_{\VectCoord{w}{j}}(T_{\VectCoord{z}{j,i}}<T^1)$ is smaller than
\begin{align*}
    (H_{\VectCoord{w}{j}})^{\beta^{\ell-2}_{j,i}}e^{-V_{\VectCoord{w}{j}}(\VectCoord{v}{j,i})}\big(e^{-V_{\VectCoord{w}{i}}(\VectCoord{v}{j,i})}+&1\big)^{\beta^{\ell-2}_{j,i}-1}e^{-V_{\VectCoord{v}{j,i}}(\VectCoord{z}{j,i})} \\ & \times\big(e^{-V_{\VectCoord{v}{j,i}}(\VectCoord{z}{j,i})}+ \Tilde{H}_{\VectCoord{v}{j,i},\VectCoord{z}{j,i}}\big)^{\beta^{\ell-2}_{j,i}-1}.
\end{align*}
By independence of the increments of the branching random walk $(\T,(V(x),x\in\T))$, using that $\sum_{i=1}^{b_{\ell-2}(\bm{B}_j)}\beta^{\ell-2}_{j,i}=|\bm{B}^{\ell-2}_j|$
\begin{align*}
    &\Eb\Big[\sum_{\bm{z}\in\Delta^{|\bm{\pi}_{\ell-1}|}_{m_{\ell}-1}}f^{\ell-1}_{\bm{m}^{\ell-1},\Pi^{\ell-1}}(\Vect{z})\P^{\mathcal{E}}(T_{\Vect{z}}<T^1)\prod_{j=1}^{|\bm{\pi}_{\ell-1}|}(H_{\VectCoord{z}{j}})^{|\bm{B}^{\ell-1}_j|}\Big] \\ & \leq \mathfrak{C}_{\ell-2}\Eb\Big[\sum_{\bm{w}\in\Delta^{|\bm{\pi}_{\ell-2}|}_{m_{\ell-1}-1}}f^{\ell-2}_{\bm{m}^{\ell-2},\Pi^{\ell-2}}(\Vect{w})\P^{\mathcal{E}}(T_{\Vect{w}}<T^1)\prod_{j=1}^{|\bm{\pi}_{\ell-2}|}(H_{\VectCoord{w}{j}})^{|\bm{B}^{\ell-2}_j|}\Big],
\end{align*}
where, thanks to the many-to-one Lemma \ref{many-to-one} 
\begin{align*}
    \mathfrak{C}_{\ell-2}=\prod_{j=1}^{|\bm{\pi}_{\ell-2}|}\Eb\Big[\sum_{\Vect{v}\in\Delta^{b_{\ell-2}(\bm{B}_j)}_1}\prod_{i=1}^{b_{\ell-2}(\bm{B}_{j})}e^{-V(\VectCoord{v}{i})}(e^{-V(\VectCoord{v}{i})}+1)^{\beta^{\ell-2}_{j,i}}\Big]\prod_{\bm{B}\in\pi_{\ell-1}}\Eb\big[(e^{-S_{m_{\ell}^*}}+H^S_{m_{\ell}^*})^{|\bm{B}|-1}\big],
\end{align*}
$m_{\ell}^*=m_{\ell}-m_{\ell-1}-1$, $H^S_m:=\sum_{p=0}^m e^{S_p-S_m}$ (the random walk $(S_p)$ is defined in \eqref{RW}). Note that $\mathfrak{C}_{\ell-2}\in(0,\infty)$. Indeed, the first mean in the definition of $\mathfrak{C}_{\ell-2}$ belongs to $(0,\infty)$ thanks to Assumption \ref{Assumption2} since for any $1\leq j\leq |\bm{\pi}_{\ell-2}|$, $b_{\ell-2}(\bm{B}_j)<q\leq 2\mathfrak{a}k<\kappa$ and $\sum_{i=1}^{b_{\ell-2}(\bm{B}_j)}\beta^{\ell-2}_{j,i}=|\bm{B}^{\ell-2}_j|<q$. The second one also belongs to $(0,\infty)$ since for all $\bm{B}\in\pi_{\ell-1}$, $|\bm{B}|-1\leq q-2<\kappa-2$ and as it is proved in \cite{AndDiel3} that $\sup_{m\in\N^*}\Eb[(H^S_{m})^{\kappa-1-\varepsilon}]<\infty$
for any $\varepsilon>0$. We also deduce from this, together with the fact that $\psi'(1)<0$ and $m_{\ell}^*\geq 0$ that $\mathfrak{C}_{\ell-2}$ is bounded by a positive constant does not depending on $\bm{m}$. By induction, there exists a constant $\mathfrak{C}_2\in(0,\infty)$ (still not depending on $\bm{m}$) such that
\begin{align*}
    \Eb\Big[\underset{|\Vect{x}|=\bm{p}}{\sum_{\Vect{x}\in\Delta^q}}f^{\ell}_{\bm{m},\Pi}(\Vect{x})\P^{\mathcal{E}}(T_{\Vect{x}}<T^1)\Big]\leq\mathfrak{C}_2\Eb\Big[\sum_{|z|=m_1-1}\sum_{\Vect{u}\in\Delta^{|\bm{\pi}_1|}_{m_2-1}}\P^{\mathcal{E}}(T_{\Vect{u}}<T^1)\prod_{i=1}^{|\bm{\pi}_{1}|}(H_{\VectCoord{u}{i}})^{|\bm{B}^{1}_i|}\un_{\{\VectCoord{u}{i}>z\}}\Big].
\end{align*}
Thanks to the strong Markov property, $\P^{\mathcal{E}}(T_{\Vect{u}}<T^1)\leq C_{|\bm{\pi}_1|}\P^{\mathcal{E}}(T_z<T^1)\prod_{i=1}^{\bm{|\pi_1|}}\P^{\mathcal{E}}_z(T_{\VectCoord{u}{i}}<T^1)=C_{|\bm{\pi}_1|}e^{-V(z)}(H_z)^{|\bm{\pi}_1|-1}\prod_{i=1}^{\bm{|\pi_1|}}e^{-V_z(\VectCoord{u}{i})}/H_{\VectCoord{u}{i}}$ for some constant $C_{|\bm{\pi}_1|}\geq 1$ and the last equality comes from \eqref{ProbaTA}. Then, using the many-to-one Lemma \ref{many-to-one}
\begin{align*}
    \Eb\Big[\underset{|\Vect{x}|=\bm{p}}{\sum_{\Vect{x}\in\Delta^q}}f^{\ell}_{\bm{m},\Pi}(\Vect{x})\P^{\mathcal{E}}(T_{\Vect{x}}<T^1)\Big]\leq\mathfrak{C}_1\Eb\Big[\sum_{|z|=m_1-1}e^{-V(z)}(H_z)^{|\bm{\pi}_1|-1}\Big]=\mathfrak{C}_1\Eb\big[(H_{m_1-1}^S)^{|\bm{\pi}_1|-1}\big].
\end{align*}
Again, $|\bm{\pi}_1|-1\leq q-1\leq 2\mathfrak{a}k-1<\kappa-1$ so $\Eb[(H_{m_1-1}^S)^{|\bm{\pi}_1|-1}]\leq\sup_{m\in\N^*}\Eb[(H_{m-1}^S)^{|\bm{\pi}_1|-1}]\in(0,\infty)$ which ends the proof.
\end{proof}

\subsection{The range on $\mathfrak{E}^{k,\cdot}$}

This section is dedicated to the proof of Proposition \ref{GENPROPCONV2} in which the range is restricted to the $k$-tuples of vertices belonging to the set $\mathfrak{E}^{k,\cdot}$, that is such that the vertices are visited during $k$ distinct excursions, see \eqref{DistinctExcur} for the definition of $\mathfrak{E}^{k,\cdot}$.

\subsubsection{The relevant vertices: the set $\mathcal{C}^k_{a_n}$}

First recall that $\mathcal{C}^k_{m}=\{\Vect{x}\in\Delta^k;\; \mathcal{S}^k(\Vect{x})\leq m\}$ where, for any $\Vect{x}=(\VectCoord{x}{1},\ldots,\VectCoord{x}{k})\in\Delta^k$ and $\mathcal{S}^k(\Vect{x})-1$ is the last generation at which two or more vertices among $\VectCoord{x}{1},\ldots,\VectCoord{x}{k}$ share a common ancestor (see \eqref{MultiMRCA}). In this subsection, we focus on the range on $\mathfrak{E}^{k,\cdot}\cap\mathcal{C}^k_{a_n}$ with $a_n=(2\delta_0)^{-1}\log n$, which is the set of relevant $k$-tuples of vertices in the case of small generations. Before going any further, let us state and prove the following lemma. Recall that $H_u=\sum_{e\leq z\leq u}e^{V(z)-V(u)}$.
\begin{lemm}\label{HighDownfalls}
Let $k\geq 2$ and $\mathfrak{a}\geq 1$ be two integers and assume $\kappa>2\mathfrak{a}k$. Under the Assumptions \ref{Assumption1}, \ref{Assumption2} and \ref{AssumptionIncrements}
\begin{enumerate}[label=(\roman*)]
    \item\label{HighDownfalls1} for any integer $q\in\{k,\ldots,2\mathfrak{a}k\}$ and any $\bm{\beta}=(\beta_1,\ldots,\beta_q)\in\ProdSet{(\N^*)}{q}$ such that $\sum_{j=1}^q\beta_j\leq 2\mathfrak{a}k$, there exists a constant $\mathfrak{C}_{\ref{HighDownfalls},1}>0$ such that 
        \begin{align*}
            \sup_{\Vect{p}\in\ProdSet{(\N^*)}{q}}\Eb\Big[\underset{|\Vect{x}|=\Vect{p}}{\sum_{\Vect{x}\in\Delta^q}}e^{-\langle\bm{\beta},V(\Vect{x})\rangle_q}\Big]\leq\mathfrak{C}_{\ref{HighDownfalls},1};
        \end{align*}
        \item\label{HighDownfalls2} for any integer $q\in\{k,\ldots,2\mathfrak{a}k\}$ there exists a constant $\mathfrak{C}_{\ref{HighDownfalls},2}>0$ such that for $n$ large enough and any $h>0$
        \begin{align*}
            \Eb\Big[\sum_{\Vect{x}\in\Delta^q_{a_n}}\un_{\{\max_{1\leq i\leq q}H_{\VectCoord{x}{i}}>h\}}e^{-\langle\bm{1},V(\Vect{x})\rangle_q}\Big]\leq\frac{\mathfrak{C}_{\ref{HighDownfalls},2}}{h^{\kappa-1}}+o(1).
        \end{align*}
\end{enumerate}
\end{lemm}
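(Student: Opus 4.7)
For part \ref{HighDownfalls1}, I would adapt the iterative many-to-one computation carried out in the proof of Theorem~\ref{GenTh3}. The idea is to decompose the sum over $\Vect{x}\in\Delta^q$ with $|\Vect{x}|=\Vect{p}$ according to its branching structure, namely an increasing collection of partitions $\Pi=(\bm{\pi}_i)_{0\leq i\leq \ell}$ of $\{1,\ldots,q\}$ together with branching generations $0<s_1<\cdots<s_\ell\leq\min_i p_i$. For each fixed structure, applying many-to-one from the leaves downward produces, at each reduction step, a constant of the form $c_j(\bm{\beta}')$ from Assumption~\ref{Assumption2}---finite because every partial sum of the $\beta_i$'s lies in $[1,\sum_j\beta_j]\subseteq[1,2\mathfrak{a}k]\subset[1,\kappa)$---together with exponential factors $e^{\Delta s\,\psi(\alpha)}$, where each exponent $\alpha$ is a sum $\sum_{j\in\bm{B}}\beta_j$ over a block $\bm{B}$ of some $\bm{\pi}_i$, so again $\alpha\in\{1,\ldots,2\mathfrak{a}k\}$. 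Since $\psi(1)=\psi(\kappa)=0$, $\psi'(1)<0$ and $\psi$ is convex, $\psi$ is strictly negative on $(1,\kappa)$; in particular $\rho:=\max\{\psi(\alpha):2\leq\alpha\leq 2\mathfrak{a}k\}<0$, which produces a decay factor $e^{\rho\Delta s}$ in every gap whose associated block has size at least two. This makes the sum over $\bm{s}$ geometrically convergent, and since the number of branching structures $\Pi$ depends only on $q$, the total is bounded uniformly in $\Vect{p}$.

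For part \ref{HighDownfalls2}, I would start from the union bound $\un_{\max_i H_{\VectCoord{x}{i}}>h}\leq\sum_{i=1}^q\un_{H_{\VectCoord{x}{i}}>h}$ and, by relabeling symmetry, reduce to controlling the contribution of $i=1$, namely $\Eb\bigl[\sum_{\Vect{x}\in\Delta^q_{a_n}}\un_{H_{\VectCoord{x}{1}}>h}\,e^{-\langle\bm{1},V(\Vect{x})\rangle_q}\bigr]$. Summing out the first coordinate by means of Lemma~\ref{many-to-one} turns the path potential $(V(\VectCoord{x}{1}_j))_{0\leq j\leq a_n}$ into the random walk $(S_j)_{0\leq j\leq a_n}$, and hence $H_{\VectCoord{x}{1}}$ into $H^S_{a_n}$. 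Given this spine, the sum over $(\VectCoord{x}{2},\ldots,\VectCoord{x}{q})\in\Delta^{q-1}$ at generation $a_n$ takes place in the subtrees branching off the spine, which are independent of the spine potential; an adaptation of part \ref{HighDownfalls1} to this conditional setting with $q-1$ coordinates bounds its conditional expectation, uniformly in the spine, by some constant $\mathfrak{C}'$. The whole expression is therefore bounded by $\mathfrak{C}'\Pb(H^S_{a_n}>h)$. The tail estimate $\Pb(H^S_\infty>h)\leq C\,h^{-(\kappa-1)}$ then follows from a Goldie--Kesten implicit renewal theorem applied to the affine recursion satisfied by the perpetuity $H^S_\infty$ (using $\Eb[e^{-\kappa S_1}]=e^{\psi(\kappa)}=1$ and the positive drift $\Eb[S_1]=-\psi'(1)>0$), while the $o(1)$ absorbs the difference $\Pb(H^S_{a_n}>h)-\Pb(H^S_\infty>h)$, which vanishes as $a_n\to\infty$.

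The main obstacle will be the sharp tail estimate at the exact exponent $\kappa-1$ used in \ref{HighDownfalls2}: the $L^{\kappa-1-\varepsilon}$ moment bound $\sup_m\Eb[(H^S_m)^{\kappa-1-\varepsilon}]<\infty$ recalled from \cite{AndDiel3} only yields $C_\varepsilon h^{-(\kappa-1-\varepsilon)}$ via Markov's inequality, which is strictly weaker than what the statement requires; recovering the full $h^{-(\kappa-1)}$ decay genuinely calls for renewal-theoretic machinery. Everything else reduces to careful bookkeeping of branching structures, in the spirit of the computations already carried out for Theorem~\ref{GenTh3} and Lemma~\ref{EspProbaTAPartition}.
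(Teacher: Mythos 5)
For part \ref{HighDownfalls1}, your proposal follows the same iterative many-to-one scheme the paper uses (and indeed, the paper simply states that the proof is ``similar to the proof of Theorem~\ref{GenTh3} and Lemma~\ref{EspProbaTAPartition}'' and moves on). Your identification of the sharp tail estimate $\Pb(H^S_m>h)\lesssim h^{-(\kappa-1)}$ as the crucial ingredient for part \ref{HighDownfalls2} is also correct; the paper obtains it by citing \cite{AndDiel3}, Lemma 2.2, rather than by re-deriving it via Goldie--Kesten, but that is a matter of presentation.

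The genuine problem is the central step in your proof of \ref{HighDownfalls2}: the claim that, after applying the spinal change of measure to the first coordinate, the conditional expectation of the residual sum $\Phi:=\sum_{(\VectCoord{x}{2},\ldots,\VectCoord{x}{q})}e^{-\sum_{i\geq 2}V(\VectCoord{x}{i})}$ given the spine is bounded by a constant $\mathfrak{C}'$ uniformly over spine trajectories. This is false. Already for $q=2$, the subtree at each non-spine child of $\mathrm{spine}_g$ contributes, in expectation, a factor $e^{-S_g}$ times the mean of the additive martingale, so that
\begin{align*}
\Eb^{(1)}\bigl[\Phi\mid (S_j)_{j\leq a_n}\bigr]\;\asymp\; c\sum_{g=0}^{a_n-1}e^{-S_g},
\end{align*}
which is not uniformly bounded in the spine. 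What \emph{is} uniformly bounded is the unconditional expectation $\Eb^{(1)}[\Phi]$ (because $\Eb[\sum_g e^{-S_g}]=\sum_g e^{g\psi(2)}<\infty$), but that does not let you pull $\un_{\{H^S_{a_n}>h\}}$ out as a product: the indicator and $\sum_g e^{-S_g}$ are both spine functionals and are correlated. A naive Hölder bound $\Eb[(\sum_g e^{-S_g})^p]^{1/p}\Pb(H^S_{a_n}>h)^{1/p'}$ only yields $h^{-(\kappa-1)/p'}$ with $p<\kappa-1$, which is strictly weaker than the target $h^{-(\kappa-1)}$. The paper sidesteps this by not isolating one spine at all: it iterates downward through the coalescence generations and, at each step, uses the ellipticity Assumption~\ref{AssumptionIncrements} to split $\{H_{\VectCoord{x}{j',i'}}>h\}$ into the event that the parent's $H_{\VectCoord{z}{j'}}$ is already $>h$ (pushing the indicator one generation earlier, enabling induction), plus a term where a \emph{fresh, independent} segment satisfies $H_{\VectCoord{u}{j',i'},\VectCoord{x}{j',i'}}>h/2$ (which factors off and yields $\Pb(H^S_{a_n-s_\ell}>h/2)$), plus an exponentially small event. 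Without such a decoupling device, your step from $\Eb^{(1)}[\un_{\{H^S_{a_n}>h\}}\Phi]$ to $\mathfrak{C}'\Pb(H^S_{a_n}>h)$ is unsubstantiated.
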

\begin{proof}[Proof in the case $\cap_{j=1}^{\ell}\Gamma^j_{\bm{m},\Pi}\subset\{\Vect{x}\in\Delta^q;\; \mathcal{C}^q(\Vect{x})<\min_{1\leq i\leq q}p_i\}$]
Not that, since $H_u\geq 1$, we have $\Eb[\sum_{\Vect{x}\in\Delta^q_{a_n}}e^{-\langle\bm{1},V(\Vect{x})\rangle_q}]=\Eb[\sum_{\Vect{x}\in\Delta^q_{a_n}}\un_{\{\max_{1\leq i\leq q}H_{\VectCoord{x}{i}}>h\}}e^{-\langle\bm{1},V(\Vect{x})\rangle_q}]$ for all $h\leq 1$. The proof of \textit{\ref{HighDownfalls1}} is similar to the proof of Corollary \ref{GenTh3} and Lemma \ref{EspProbaTAPartition} so we focus on \textit{\ref{HighDownfalls2}}. In order to avoid unnecessary technical difficulties, we prove it for any $\mathfrak{a}\geq 2$. Recall the definition of $f^{\ell}_{\bm{s},\Pi}$ in \eqref{Def_f_partition} for $\ell\in\{1,\ldots,q-1\}$, $\bm{s}=(s_1,\ldots,s_{\ell})\in\ProdSet{\N}{\ell}$ such that $s_1<\cdots<s_{\ell}$ and $\Pi=(\bm{\pi}_i)_{0\leq i\leq\ell}$ an increasing collection of partitions of $\{1,\ldots,q\}$. Note that \begin{align*}
    \sum_{\Vect{x}\in\Delta^q_{a_n}}\sum_{j=1}^q\un_{\{\max_{1\leq i\leq q}H_{\VectCoord{x}{i}}>h\}}e^{-\langle\bm{1},V(\Vect{x})\rangle_q}=\sum_{\ell=1}^{q-1}\;\sum_{\bm{s};s_1<\ldots<s_{\ell}\leq a_n}\;\sum_{\Pi\textrm{ increasing}}\;&\sum_{\Vect{x}\in\Delta^q_{a_n}}f^{\ell}_{\bm{s},\Pi}(\Vect{x})e^{-\langle\bm{1},V(\Vect{x})\rangle_q} \\ & \times\un_{\{\max_{1\leq i\leq q}H_{\VectCoord{x}{i}}>h\}},
\end{align*}
and $\sum_{\Vect{x}\in\Delta^q_{a_n}}f^{\ell}_{\bm{s},\Pi}(\Vect{x})\un_{\{\max_{1\leq i\leq q}H_{\VectCoord{x}{j}}>h\}}e^{-\langle\bm{1},V(\Vect{x})\rangle_q}$ is equal to
\begin{align*}
   \sum_{\bm{z}\in\Delta^{|\bm{\pi}_{\ell-1}|}_{s_{\ell}-1}}f^{\ell-1}_{\bm{s}^{\ell-1},\Pi^{\ell-1}}(\Vect{z})\prod_{j=1}^{|\bm{\pi}_{\ell-1}|}\sum_{\VectCoord{\Vect{u}}{j}\in\Delta^{b_{\ell-1}(\bm{B}_j)}_{s_{\ell}}}&\prod_{i=1}^{b_{\ell-1}(\bm{B}_j)}\un_{\{(\VectCoord{u}{j,i})^*=\VectCoord{z}{j}\}}\sum_{\VectCoord{\Vect{x}}{j}\in\Delta^{b_{\ell-1}(\bm{B}_j)}_{a_n}}\un_{\{\VectCoord{x}{j,i}\geq\VectCoord{u}{j,i}\}} \\ &\times e^{-V(\VectCoord{x}{j,i})}\un_{\{\max_{1\leq j'\leq|\bm{\pi}_{\ell-1}|}\max_{1\leq i'\leq b_{\ell-1}(\bm{B}_{j'})}H_{\VectCoord{x}{j',i'}}>h\}}.
\end{align*}
For any $u\leq x$, introduce $H_{u,x}:=\sum_{u\leq z\leq x}e^{V(z)-V(x)}$. Thanks to Assumption \ref{AssumptionIncrements} together with the fact that $H_{\VectCoord{z}{j'}}\geq 1$
\begin{align*}
    H_{\VectCoord{x}{j',i'}}\leq H_{\VectCoord{z}{j'}}e^{\mathfrak{h}}e^{-V_{\VectCoord{u}{j',i'}}(\VectCoord{x}{j',i'})}+H_{\VectCoord{u}{j',i'},\VectCoord{x}{j',i'}},
\end{align*}
so $H_{\VectCoord{x}{j',i'}}>h$ implies that $H_{\VectCoord{z}{j'}}e^{\mathfrak{h}}e^{-V_{\VectCoord{u}{j',i'}}(\VectCoord{x}{j',i'})}>h/2$ or $H_{\VectCoord{u}{j',i'},\VectCoord{x}{j',i'}}>h/2$. We also decompose according to the values of $H_{\VectCoord{z}{j'}}$:
\begin{align*}
    &\un_{\big\{\max_{1\leq j'\leq|\bm{\pi}_{\ell-1}|}\max_{1\leq i'\leq b_{\ell-1}(\bm{B}_{j'})}H_{\VectCoord{z}{j'}}e^{\mathfrak{h}}e^{-V_{\VectCoord{u}{j',i'}}(\VectCoord{x}{j',i'})}>h/2\big\}} \\ & \leq \un_{\{\max_{1\leq j'\leq|\bm{\pi}_{\ell-1}|}H_{\VectCoord{z}{j'}}>h\}}+\un_{\big\{\max_{1\leq j'\leq|\bm{\pi}_{\ell-1}|}\max_{1\leq i'\leq b_{\ell-1}(\bm{B}_{j'})}2e^{\mathfrak{h}}e^{-V_{\VectCoord{u}{j',i'}}(\VectCoord{x}{j',i'})}>1\big\}}.
\end{align*}
We therefore deduce that $\un_{\{\max_{1\leq j'\leq|\bm{\pi}_{\ell-1}|}\max_{1\leq i'\leq b_{\ell-1}(\bm{B}_{j'})}H_{\VectCoord{x}{j',i'}}>h\}}$ is smaller than
\begin{align*}
     \un_{\{\max_{1\leq j'\leq|\bm{\pi}_{\ell-1}|}H_{\VectCoord{z}{j'}}>h\}}+\sum_{j'=1}^{|\bm{\pi}_{\ell-1}|}\sum_{i'=1}^{b_{\ell-1}(\bm{B}_{j'})}\Big(\un_{\{H_{\VectCoord{u}{j',i'},\VectCoord{x}{j',i'}}>h/2\}}+ \un_{\big\{2e^{\mathfrak{h}}e^{-V_{\VectCoord{u}{j',i'}}(\VectCoord{x}{j',i'})}>1\big\}}\Big).
\end{align*}
By independence of the increments of the branching random walk $(\T,(V(x),x\in\T))$, since $\psi(1)=0$
\begin{align*}
    &\Eb\Big[\sum_{\Vect{x}\in\Delta^q_{a_n}}f^{\ell}_{\bm{s},\Pi}(\Vect{x})\un_{\{\max_{1\leq i\leq q}H_{\VectCoord{x}{i}}>h\}}e^{-\langle\bm{1},V(\Vect{x})\rangle_q}\Big] \\ & \leq\Eb\Big[\sum_{\bm{z}\in\Delta^{|\bm{\pi}_{\ell-1}|}_{s_{\ell}-1}}f^{\ell-1}_{\bm{s}^{\ell-1},\Pi^{\ell-1}}(\Vect{z})\un_{\{\max_{1\leq j'\leq|\bm{\pi}_{\ell-1}|}H_{\VectCoord{z}{j'}}>h\}}e^{-\langle\bm{\beta}^{\ell-1},V(\bm{z})\rangle_{|\bm{\pi}_{\ell-1}|}}\Big]\prod_{j=1}^{|\bm{\pi}_{\ell-1}|}c_{b_{\ell-1}(\bm{B}_j)}(\bm{1}) \\ & +q(\mathfrak{z}_{1,n}+\mathfrak{z}_{2,n})\prod_{j=1}^{|\bm{\pi}_{\ell-1}|}c_{b_{\ell-1}(\bm{B}_j)}(\bm{1})\Eb\Big[\mathcal{A}^{|\bm{\pi}_{\ell-1}|}_{s_{\ell}-1}\big(f^{\ell-1}_{\bm{s}^{\ell-1},\Pi^{\ell-1}},\bm{\beta}^{\ell-1}\big)\Big],
\end{align*}
where we recall that $\mathcal{A}^q_l(g,\beta)=\sum_{\Vect{x}\in\Delta^q_l}e^{-\langle \beta,V(\Vect{x})\rangle_q}$,
\begin{align*}
    \mathfrak{z}_{1,n}=\Eb[\sum_{|x|=a_n-s_{\ell}}e^{-V(x)}\un_{\{H_x>h/2\}}]\;\;\textrm{ and }\;\;\mathfrak{z}_{2,n}=\Eb[\sum_{|x|=a_n-s_{\ell}}e^{-V(x)}\un_{\{2e^{\mathfrak{h}}e^{-V(x)}>1\}}].
\end{align*}
Thanks to the many-to-one Lemma \ref{many-to-one}
\begin{align*}
    \mathfrak{z}_{1,n}=\Pb(H^S_{a_n-s_{\ell}}>h/2)\leq\mathfrak{C}_{\ref{HighDownfalls},3}/h^{\kappa-1},
\end{align*}
for some constant $\mathfrak{C}_{\ref{HighDownfalls},3}>0$, the last inequality coming from (\cite{AndDiel3}, Lemma 2.2). We now turn to $\mathfrak{z}_{n,2}$. If $s_{\ell}\leq a_n/2$ then, for any $\rho\in(0,\kappa-1)$
\begin{align*}
    \mathfrak{z}_{2,n}\leq 2^{\rho}(1+e^{\mathfrak{h}})^{\rho}e^{a_n\psi(1+\rho)/2}.
\end{align*}
Otherwise $s_{\ell}-1\geq a_n/2$ and thanks to the Cauchy–Schwarz inequality
\begin{align*}
    \Eb\Big[\mathcal{A}^{|\bm{\pi}_{\ell-1}|}_{s_{\ell}-1}\big(f^{\ell-1}_{\bm{s}^{\ell-1},\Pi^{\ell-1}},\bm{\beta}^{\ell-1}\big)\Big]\leq&\Eb\Big[\mathcal{A}^{|\bm{\pi}_{\ell-1}|}_{s_{\ell}-1}\big(f^{\ell-1}_{\bm{s}^{\ell-1},\Pi^{\ell-1}},\bm{\beta}^{\ell-1}\big)\un_{\tilde{\mathcal{V}}_n}\Big]+ \\ & +\big(1-\Pb(\tilde{\mathcal{V}}_n)\big)^{1/2}\Eb\Big[\Big(\mathcal{A}^{|\bm{\pi}_{\ell-1}|}_{s_{\ell}-1}\big(f^{\ell-1}_{\bm{s}^{\ell-1},\Pi^{\ell-1}},\bm{\beta}^{\ell-1}\big)\Big)^2\Big]^{1/2},
\end{align*}
where $\tilde{\mathcal{V}}_n:=\{\min_{a_n/2\leq|z|\leq a_n}V(z)>3/2\log n\}$ (recall that $a_n=(2\delta_0)^{-1}\log n$). On the one hand, by definition, there exists $i_{\alpha}\in\{1,\ldots,|\bm{\pi}_{\ell-1}|\}$ such that $|\bm{B}^{\ell-1}_{i_{\alpha}}|\geq 2$. It follows that
\begin{align*}
    \Eb\Big[\mathcal{A}^{|\bm{\pi}_{\ell-1}|}_{s_{\ell}-1}\big(f^{\ell-1}_{\bm{s}^{\ell-1},\Pi^{\ell-1}},\bm{\beta}^{\ell-1}\big)\un_{\tilde{\mathcal{V}}_n}\Big]\leq n^{-3/2}\Eb\Big[\mathcal{A}^{|\bm{\pi}_{\ell-1}|}_{s_{\ell}-1}\big(f^{\ell-1}_{\bm{s}^{\ell-1},\Pi^{\ell-1}},\tilde{\bm{\beta}}^{\ell-1}\big)\Big],
\end{align*}
where $\tilde{\bm{\beta}}^{\ell-1}_j=|\bm{B}^{\ell-1}_j|$ for all $j\not=i_{\alpha}$ and $\tilde{\bm{\beta}}^{\ell-1}_{i_{\alpha}}=|\bm{B}^{\ell-1}_{i_{\alpha}}|-1\geq 1$. One the other hand, $1-\Pb(\tilde{\mathcal{V}}_n)\leq n^{-\rho_5}$ with $\rho_5>0$ thanks to Lemma \ref{MinPotential}. Moreover, both $\sum_{j=1}^{|\bm{\pi}_{\ell-1}|}\bm{\beta}^{\ell-1}_j$ and $\sum_{j=1}^{|\bm{\pi}_{\ell-1}|}\tilde{\bm{\beta}}^{\ell-1}_j$ are smaller than $2k$ since $q\leq 2k$. Hence, thanks to \textit{\ref{HighDownfalls1}}
\begin{align*}
    \Eb\Big[\mathcal{A}^{|\bm{\pi}_{\ell-1}|}_{s_{\ell}-1}\big(f^{\ell-1}_{\bm{s}^{\ell-1},\Pi^{\ell-1}},\tilde{\bm{\beta}}^{\ell-1}\big)\Big]+\Eb\Big[\Big(\mathcal{A}^{|\bm{\pi}_{\ell-1}|}_{s_{\ell}-1}\big(f^{\ell-1}_{\bm{s}^{\ell-1},\Pi^{\ell-1}},\bm{\beta}^{\ell-1}\big)\Big)^2\Big]^{1/2}\leq\mathfrak{C}_{\ref{HighDownfalls},4},
\end{align*}
for some constant $\mathfrak{C}_{\ref{HighDownfalls},4}>0$. We obtain
\begin{align*}
    &\Eb\Big[\sum_{\Vect{x}\in\Delta^q_{a_n}}f^{\ell}_{\bm{s},\Pi}(\Vect{x})\un_{\{\max_{1\leq i\leq q}H_{\VectCoord{x}{i}}>h\}}e^{-\langle\bm{1},V(\Vect{x})\rangle_q}\Big] \\ & \leq\Eb\Big[\sum_{\bm{z}\in\Delta^{|\bm{\pi}_{\ell-1}|}_{s_{\ell}-1}}f^{\ell-1}_{\bm{s}^{\ell-1},\Pi^{\ell-1}}(\Vect{z})\un_{\{\max_{1\leq j'\leq|\bm{\pi}_{\ell-1}|}H_{\VectCoord{z}{j'}}>h\}}e^{-\langle\bm{\beta}^{\ell-1},V(\bm{z})\rangle_{|\bm{\pi}_{\ell-1}|}}\Big]\prod_{j=1}^{|\bm{\pi}_{\ell-1}|}c_{b_{\ell-1}(\bm{B}_j)}(\bm{1}) \\ & +q\mathfrak{z}_{1,n}\prod_{j=1}^{|\bm{\pi}_{\ell-1}|}c_{b_{\ell-1}(\bm{B}_j)}(\bm{1})\Eb\Big[\mathcal{A}^{|\bm{\pi}_{\ell-1}|}_{s_{\ell}-1}\big(f^{\ell-1}_{\bm{s}^{\ell-1},\Pi^{\ell-1}},\bm{\beta}^{\ell-1}\big)\Big]+n^{-\rho_6},
\end{align*}
thanks to the Assumption \ref{Assumption2} and for $\rho_6>0$. Note (see the proof of Corollary \ref{GenTh3}) that 
\begin{align*}
    \Eb\Big[\mathcal{A}^{|\bm{\pi}_{\ell-1}|}_{s_{\ell}-1}\big(f^{\ell-1}_{\bm{s}^{\ell-1},\Pi^{\ell-1}},\bm{\beta}^{\ell-1}\big)\Big]=\mathcal{A}^{|\bm{\pi}_{\ell-2}|}_{s_{\ell-1}-1}\big(f^{\ell-2}_{\bm{s}^{\ell-2},\Pi^{\ell-2}},\bm{\beta}^{\ell-2}\big)\prod_{j=1}^{|\bm{\pi}_{\ell-2}|}c_{b_{\ell-2}(\bm{B}_j)}(\bm{\beta}^{\ell-2}_j)\underset{|\mathfrak{B}|\geq 2}{\prod_{\mathfrak{B}\in\bm{\pi}_{\ell-1}}}e^{s_{\ell}^*\psi(|\mathfrak{B}|)},
\end{align*}
with $\bm{\beta}^{\ell-2}_j=(\beta^{\ell-2}_{j,1},\ldots,\beta^{\ell-2}_{j,b_{\ell-2}(\bm{B}_j)})$ and $s_{\ell}^*=s_{\ell}-s_{\ell-1}-1$. Since for any $\mathfrak{B}\in\bm{\pi}_{\ell-1}$ such that $|\mathfrak{B}|\geq 2$, $\psi(|\mathfrak{B}|)<0$, we have
\begin{align*}
    \sum_{s_{\ell}=s_{\ell-1}+1}^{a_n}\Eb\Big[\mathcal{A}^{|\bm{\pi}_{\ell-1}|}_{s_{\ell}-1}\big(f^{\ell-1}_{\bm{s}^{\ell-1},\Pi^{\ell-1}},\bm{\beta}^{\ell-1}\big)\Big]\leq\Eb\Big[\mathcal{A}^{|\bm{\pi}_{\ell-2}|}_{s_{\ell-1}-1}\big(f^{\ell-2}_{\bm{s}^{\ell-2},\Pi^{\ell-2}},\bm{\beta}^{\ell-2}\big)\Big]&\prod_{\bm{B}\in\bm{\pi}_{\ell-2}}c_{b_{\ell-2}(\bm{B})}(\bm{\beta}^{\ell-1}) \\ & \Big(1-\underset{|\mathfrak{B}|\geq 2}{\prod_{\mathfrak{B}\in\bm{\pi}_{\ell-1}}}e^{\psi(|\mathfrak{B}|)}\Big)^{-1}.
\end{align*}
Doing the same for $\Eb[\sum_{\bm{z}\in\Delta^{|\bm{\pi}_{\ell-1}|}_{s_{\ell}-1}}f^{\ell-1}_{\bm{s}^{\ell-1},\Pi^{\ell-1}}(\Vect{z})\un_{\{\max_{1\leq j'\leq|\bm{\pi}_{\ell-1}|}H_{\VectCoord{z}{j'}}>h\}}e^{-\langle\bm{\beta}^{\ell-1},V(\bm{z})\rangle_{|\bm{\pi}_{\ell-1}|}}]$, we obtain, thanks to Assumption \ref{Assumption2} 
\begin{align*}
    &\sum_{\bm{s};s_1<\ldots<s_{\ell}\leq a_n}\Eb\Big[\sum_{\Vect{x}\in\Delta^q_{a_n}}f^{\ell}_{\bm{s},\Pi}(\Vect{x})\un_{\{\max_{1\leq i\leq q}H_{\VectCoord{x}{i}}>h\}}e^{-\langle\bm{1},V(\Vect{x})\rangle_q}\Big] \\ & \leq\sum_{s_1<\ldots<s_{\ell-1}\leq a_n}\Eb\Big[\sum_{\bm{z}\in\Delta^{|\bm{\pi}_{\ell-2}|}_{s_{\ell-1}-1}}f^{\ell-2}_{\bm{s}^{\ell-2},\Pi^{\ell-2}}(\Vect{z})\un_{\{\max_{1\leq j'\leq|\bm{\pi}_{\ell-2}|}H_{\VectCoord{z}{j'}}>h\}}e^{-\langle\bm{\beta}^{\ell-2},V(\bm{z})\rangle_{|\bm{\pi}_{\ell-2}|}}\Big] \\ & + \frac{\mathfrak{C}_{\ref{HighDownfalls},5}}{h^{\kappa-1}} + n^{-\rho_7},
\end{align*}
for some constant $\mathfrak{C}_{\ref{HighDownfalls},5}>0$ and $\rho_7>0$. We conclude by induction together with Assumption \ref{Assumption2}.
\end{proof}

\noindent We remind the definition of the range $\mathcal{A}^k(\mathcal{D}_{n,T^s},g)$
\begin{align*}
    \mathcal{A}^k(\mathcal{D}_{n,T^s},g)=\underset{\ell_n\leq|\Vect{x}|\leq\mathfrak{L}_n}{\sum_{\Vect{x}\in\Delta^k}}g(\Vect{x})\un_{\{T_{\Vect{x}}<T^s\}},
\end{align*}
where $T_{\Vect{x}}=\max_{1\leq i\leq k}T_{\VectCoord{x}{i}}$ and $\ell_n\leq|\Vect{x}|\leq\mathfrak{L}_n$ means that $\ell_n\leq|\VectCoord{x}{i}|\leq\mathfrak{L}_n$ for all $i\in\{1,\ldots,k\}$. \\
\noindent Vertices with high potential have a major contribution to the range. One can note that under the Assumption \ref{Assumption1}, the potential $V(u)$ of the vertex $u\in\T$ behaves like $|u|$ when $|u|$ is large (see \cite{Biggins} and \cite{HuShi24} for instance). It allows to say that 

\vspace{0.1cm}

\noindent\textbf{Fact 1.} For all $\varepsilon\in(0,1)$, there exists $a_{\varepsilon}>0$ such that 
\begin{align}\label{Fact1}
    \Pb^*\big(\inf_{z\in\T}V(z)\geq -a_{\varepsilon}\big)\geq 1-\varepsilon.
\end{align}
Moreover,
\begin{lemm}\label{MinPotential}
Under the Assumption \ref{Assumption1}, there exists $\delta_0>0$ and $\rho_1>1/2$ such that for any positive integer $\zeta$
\begin{align*}
    \Pb\big(\min_{|z|=\delta_0^{-1}\zeta}V(z)\geq 3\zeta\big)\geq 1-e^{-\rho_1\zeta},
\end{align*}
\end{lemm}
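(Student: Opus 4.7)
The plan is to apply a first-moment / Chernoff bound on $\min_{|x|=n}V(x)$, using the many-to-one identity $\Eb[\sum_{|x|=n}e^{-tV(x)}]=e^{n\psi(t)}$ which follows by induction on $n$ from the definition $\psi(t)=\log\Eb[\sum_{|x|=1}e^{-tV(x)}]$ and the branching property. For any $t\geq 0$, any $n\in\N^*$ and any $c\in\R$, Markov's inequality together with the pointwise bound $\un_{\{V(x)<c\}}\leq e^{t(c-V(x))}$ yields
$$
\Pb\Big(\min_{|x|=n}V(x)<c\Big)\leq\Eb\Big[\sum_{|x|=n}\un_{\{V(x)<c\}}\Big]\leq e^{tc}\,\Eb\Big[\sum_{|x|=n}e^{-tV(x)}\Big]=e^{tc+n\psi(t)}.
$$
Substituting $n=\delta_0^{-1}\zeta$ and $c=3\zeta$, the right-hand side becomes $\exp(\zeta(3t+\delta_0^{-1}\psi(t)))$, so it suffices to exhibit $t_0>0$ and $\delta_0>0$ with $3t_0+\delta_0^{-1}\psi(t_0)<-1/2$, and then take $\rho_1:=|3t_0+\delta_0^{-1}\psi(t_0)|>1/2$.

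The only input needed is the existence of some $t_0>1$ with $\psi(t_0)<0$, and this is a direct consequence of assumption \ref{Assumption1}: since $\psi(1)=0$ and $\psi'(1)<0$ (as a right derivative, which is what forces $\psi$ to be finite on a right-neighborhood of $1$), for $\varepsilon>0$ small enough one has $\psi(1+\varepsilon)\leq\varepsilon\psi'(1)/2<0$, so set $t_0:=1+\varepsilon$. Equivalently, by convexity of $\psi$, together with $\psi(\kappa)=0$ and $\psi'(1)<0$, $\psi$ is strictly negative on the whole of $(1,\kappa)$. Once $t_0$ is fixed, I will choose $\delta_0\in(0,1]$ of the form $1/m$, $m\in\N^*$ (so that $\delta_0^{-1}\zeta$ is always a positive integer) and small enough that $\delta_0<|\psi(t_0)|/(3t_0+1)$, which immediately gives
$$
3t_0+\delta_0^{-1}\psi(t_0)=3t_0-\delta_0^{-1}|\psi(t_0)|<3t_0-(3t_0+1)=-1,
$$
so that $\rho_1>1>1/2$, as required.

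There is no real obstacle: the proof is entirely routine, and only the crude linear lower bound provided by the Chernoff estimate with exponent $t_0>1$ is needed — the much finer asymptotics of the minimum of the branching random walk (which in particular would not give a linear lower bound in the near-boundary case) are not required for the statement.
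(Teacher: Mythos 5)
Your proof is correct, and it is the standard first-moment/Chernoff argument that the paper implicitly relies on (the paper states Lemma~\ref{MinPotential} without proof, treating it as a routine consequence of Assumption~\ref{Assumption1}). The only point worth flagging is your aside that $\psi'(1)<0$ "as a right derivative... forces $\psi$ to be finite on a right-neighborhood of $1$": logically, finiteness of $\psi$ just to the right of $1$ is what makes the right derivative meaningful, not the other way around. In the paper this is supplied by the standing assumption $\kappa\in(1,\infty)$ introduced right after \eqref{DefKappa}, which guarantees $\psi$ is finite on $(1,\kappa]$ and, by convexity together with $\psi(1)=0$, $\psi'(1)<0$, gives $\psi<0$ on $(1,\kappa)$ — exactly the $t_0>1$ with $\psi(t_0)<0$ you need. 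Once that is in hand, the choice $\delta_0=1/m$ with $m$ large enough that $\delta_0^{-1}|\psi(t_0)|>3t_0+1$ makes the exponent $3t_0+\delta_0^{-1}\psi(t_0)<-1$, so you even get $\rho_1>1$, comfortably above the required $1/2$.
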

\noindent Using Lemma \ref{MinPotential}, we are able to prove that any vertex $x\in\T$ in a generation between $\delta_0^{-1}\log n$ and $\sqrtBis{n}$ is visited during a single excursion above the parent $e^*$ of the root $e$. For that, let us define the edge local time $N_u^T:=\sum_{j=1}^T\un_{\{X_{j-1}=u^*,X_j=u\}}$ of the vertex $u\in\T$ and introduce 
\begin{align*}
    E^{s}_u:=\sum_{j=1}^{s}\un_{\{N_u^{T^j}-N_u^{T^{j-1}}\geq 1\}},
\end{align*}
the number of excursions during which the vertex $x$ is visited by the random walk $\X$.
\begin{lemm}\label{LemmeExc1}
Under the Assumption \ref{Assumption1}, for all $\varepsilon_1\in(0,1)$, there exists $\rho_2:=\rho_2(\varepsilon_1)>0$ such that for $n$ large enough
\begin{align*}
    \P^*\Big(\bigcup_{s=\varepsilon_1\sqrtBis{n}}^{\sqrtBis{n}/\varepsilon_1}\;\bigcup_{|z|=\delta_0^{-1}\log n}^{\sqrtBis{n}}\big\{E_z^{s}\geq 2\big\}\Big)\leq n^{-\rho_2}.
\end{align*}
\end{lemm}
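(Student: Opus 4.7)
The strategy rests on three elementary observations: $(E_z^s)_{s\geq 1}$ is non-decreasing in $s$; under the quenched measure the successive excursions above $e^*$ are i.i.d.; and Lemma \ref{MinPotential} forces the potential to be uniformly large at generation $\delta_0^{-1}\log n$, and hence at every deeper vertex on any descending branch. Monotonicity in $s$ collapses the union over $\varepsilon_1\sqrtBis{n}\leq s\leq\sqrtBis{n}/\varepsilon_1$ to the single event at $s_{\max}:=\lceil\sqrtBis{n}/\varepsilon_1\rceil$, so it is enough to control $\P^*\big(\bigcup_{\delta_0^{-1}\log n\leq|z|\leq\sqrtBis{n}}\{E_z^{s_{\max}}\geq 2\}\big)$.

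I would first apply the strong Markov property at the return times $T^1<T^2<\cdots$: under $\P^{\mathcal{E}}$, the indicators $\un_{\{N_z^{T^j}-N_z^{T^{j-1}}\geq 1\}}$ for $j=1,\ldots,s_{\max}$ are i.i.d.\ Bernoulli with parameter $p_z:=\P^{\mathcal{E}}(T_z<T^1)$, so $E_z^{s_{\max}}\sim\mathrm{Bin}(s_{\max},p_z)$ and
\[
\P^{\mathcal{E}}(E_z^{s_{\max}}\geq 2)\leq\binom{s_{\max}}{2}p_z^{2}\leq\frac{s_{\max}^{2}}{2}p_z^{2}.
\]
The explicit formula $p_z=1/\sum_{e\leq w\leq z}e^{V(w)}$ coming from \eqref{ProbaTA} entails $p_z\leq e^{-V(w)}$ for \emph{any} ancestor $w$ of $z$; in particular $p_z\leq e^{-V(z)}$.

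Next I restrict to the good environment $\mathcal{V}_n:=\{\min_{|w|=\delta_0^{-1}\log n}V(w)\geq 3\log n\}$, which by Lemma \ref{MinPotential} applied with $\zeta=\log n$ satisfies $\Pb(\mathcal{V}_n^c)\leq n^{-\rho_1}$ with $\rho_1>1/2$. On $\mathcal{V}_n$, every $z$ with $|z|\geq\delta_0^{-1}\log n$ admits an ancestor in generation $\delta_0^{-1}\log n$ of potential at least $3\log n$, hence $p_z\leq n^{-3}$. A union bound together with $p_z^{2}\leq n^{-3}p_z$ gives, on $\mathcal{V}_n$,
\[
\P^{\mathcal{E}}\Big(\bigcup_{\delta_0^{-1}\log n\leq|z|\leq\sqrtBis{n}}\{E_z^{s_{\max}}\geq 2\}\Big)\leq\frac{s_{\max}^{2}}{2}\,n^{-3}\sum_{\delta_0^{-1}\log n\leq|z|\leq\sqrtBis{n}}p_z.
\]
Taking $\Eb$ and invoking the martingale identity $\Eb[\sum_{|z|=m}e^{-V(z)}]=1$ (consequence of $\psi(1)=0$) together with $p_z\leq e^{-V(z)}$, the annealed expectation of the remaining sum is at most $\sqrtBis{n}$. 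Since $s_{\max}^{2}=O(n/\varepsilon_1^{2})$, the contribution from $\mathcal{V}_n$ is $O(n^{-3/2})$. Adding $\Pb(\mathcal{V}_n^c)\leq n^{-\rho_1}$ and passing from $\Pb$ to $\P^*=\Pb(\,\cdot\,\mid\text{non-extinction})$ at the cost of a multiplicative constant yields the lemma with any $\rho_2\in(0,\min(\rho_1,3/2))$ for $n$ large enough.

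The only delicate step is the choice of the good environment. Indeed, the naive estimate $s_{\max}^{2}\Eb[\sum_z p_z^{2}]\leq s_{\max}^{2}\sum_m e^{m\psi(2)}$ (obtained from $\Eb[\sum_{|z|=m}e^{-2V(z)}]=e^{m\psi(2)}$) only decays when $\delta_0<-\psi(2)$, a condition that is not part of the assumptions. Lemma \ref{MinPotential} supplies precisely the uniform polynomial gain $n^{-3}$ needed to replace one factor $p_z$ by a quantitative bound in the environment, after which the surviving $\sum_z p_z$ collapses under the additive-martingale identity and produces genuine decay.
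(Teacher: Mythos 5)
Your proof is correct. Since the paper itself does not supply a proof of this lemma (it only refers the reader to the analogous Lemma 3.5 in \cite{AndDiel3}), your argument is a useful self-contained justification, and it follows exactly the route one would expect: monotonicity of $s\mapsto E_z^s$ collapses the union over $s$ to the single value $s_{\max}=\lceil\sqrtBis{n}/\varepsilon_1\rceil$; under $\P^{\mathcal{E}}$ the strong Markov property at the return times $T^j$ (together with $p^{\mathcal{E}}(e^*,e)=1$) makes $E_z^{s_{\max}}$ a $\mathrm{Bin}(s_{\max},p_z)$ variable with $p_z=\P^{\mathcal{E}}(T_z<T^1)$; the pair bound $\P^{\mathcal{E}}(E_z^{s_{\max}}\geq 2)\leq\binom{s_{\max}}{2}p_z^2$ converts the problem into an estimate on $p_z^2$; and the explicit formula \eqref{ProbaTA} gives $p_z\leq e^{-V(w)}$ for any ancestor $w$, which on the event $\mathcal{V}_n$ from Lemma \ref{MinPotential} (with $\zeta=\log n$) yields the crucial uniform bound $p_z\leq n^{-3}$. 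Trading one factor $p_z$ for $n^{-3}$, summing the survivor $p_z\leq e^{-V(z)}$ with the additive-martingale identity $\Eb[\sum_{|z|=m}e^{-V(z)}]=1$, and adding $\Pb(\mathcal{V}_n^c)\leq n^{-\rho_1}$ gives the claimed polynomial decay with any $\rho_2<\min(\rho_1,3/2)$. Your closing remark correctly identifies why the naive bound $s_{\max}^2\sum_m e^{m\psi(2)}$ is insufficient without a hypothesis linking $\delta_0$ to $\psi(2)$, and why Lemma \ref{MinPotential} is precisely what removes that dependence; this is indeed the point of introducing $\delta_0$ in the paper. One cosmetic remark: when you write ``$p_z\leq e^{-V(w)}$ for any ancestor $w$ of $z$; in particular $p_z\leq e^{-V(z)}$,'' it might read as though $p_z\leq e^{-V(z)}$ is the instance used on $\mathcal{V}_n$; in fact you need the ancestor at generation exactly $\delta_0^{-1}\log n$ for the $n^{-3}$ gain, and the instance $w=z$ separately for the annealed summation, both of which you do use correctly in the sequel.
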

\noindent The proof of Lemma \ref{LemmeExc1} is similar to the one of Lemma 3.5 in \cite{AndDiel3}. \\
Introduce the set $\mathfrak{S}^{k,s}$ of $k$-tuples of vertices visited during a single excursion:
\begin{align}\label{SingleExcur}
    \mathfrak{S}^{k,s}:=\{\Vect{x}=(\VectCoord{x}{1},\ldots,\VectCoord{x}{k})\in\Delta^k;\; \forall\; 1\leq i\leq k,\; E^{s}_{\VectCoord{x}{i}}=1\}.
\end{align}
In other words, Lemma \ref{LemmeExc1} says that we can restrict the study of the range $\mathcal{A}^k(\mathcal{D}_{n,T^s},f\un_{\mathfrak{E}^{k,s}\cap\mathcal{C}_{a_n}^k})$ to the set $\mathfrak{S}^{k,s}$. This restriction allows to get quasi-independence in the trajectory of the random walk $\X$ and the resulting quasi-independent version of the range $\mathcal{A}^k(\mathcal{D}_{n,T^s},f\un_{\mathfrak{E}^{k,s}\cap\mathcal{C}_{a_n}^k})$ is easier to deal with. A similar idea is developed in \cite{AndDiel3} and \cite{AndAKHightPotential}. Let $\Vect{j}\in\llbracket 1,s\rrbracket_k$, $\Vect{p}\in\ProdSet{\{\ell_n,\ldots,\mathfrak{L}_n\}}{k}$ and define  
\begin{align}\label{DefIndepRange}
    \mathcal{A}^{k,n}_{\Vect{p}}(\Vect{j},g):=\underset{|\Vect{x}|=\Vect{p}}{\sum_{\Vect{x}\in\Delta^k}}g(\Vect{x})\prod_{i=1}^k\un_{\{N_{\VectCoord{x}{i}}^{T^{j_i}}-N_{\VectCoord{x}{i}}^{T^{j_i-1}}\geq 1\}}\;\;\textrm{ and }\;\; \mathcal{A}^{k,n}(\Vect{j},g):=\sum_{\Vect{p}\in\ProdSet{\{\ell_n,\ldots,\mathfrak{L}_n\}}{k}}\mathcal{A}^{k,n}_{\Vect{p}}(\Vect{j},g),
\end{align}
where for any $\Vect{x}=(\VectCoord{x}{1},\ldots,\VectCoord{x}{k})$, $|\Vect{x}|=\Vect{p}$ means nothing but $|\VectCoord{x}{i}|=p_i$ for all $i\in\{1,\ldots,k\}$.
In the next lemma, we show that $\mathcal{A}^k(\mathcal{D}_{n,T^s},f\un_{\mathfrak{E}^{k,s}\cap\mathcal{C}_{\cdot}^k})$ and $\sum_{\bm{j}\in\llbracket 1,s\rrbracket_k}\mathcal{A}^{k,n}(\Vect{j},f\un_{\mathcal{C}_{\cdot}^k})$ have the same behavior

\begin{lemm}\label{LemmeSommeExc}
Let $k\geq 2$ be an integer and assume $\kappa>2k$. Under the Assumptions \ref{Assumption1} and \ref{Assumption2}, for all bounded and non-negative function $g$, any $\varepsilon,\varepsilon_1\in(0,1)$, there exists $\rho_4:=\rho_4(\varepsilon,\varepsilon_1)>0$ such that for $n$ large enough
\begin{align*}
    \P^*\Big(\bigcup_{s=\varepsilon_1\sqrtBis{n}}^{\sqrtBis{n}/\varepsilon_1}\Big\{\Big|\mathcal{A}^k(\mathcal{D}_{n,T^s},g\un_{\mathfrak{E}^{k,s}\cap\mathcal{C}^k_{a_n}})-\sum_{\bm{j}\in\llbracket 1,s\rrbracket_k}\mathcal{A}^{k,n}(\Vect{j},g\un_{\mathcal{C}^k_{a_n}})\Big|>\varepsilon (s\bm{L}_n)^k\Big\}\Big)\leq n^{-\rho_4}.
    \end{align*}
\end{lemm}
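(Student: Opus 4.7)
The plan is to reduce the claimed approximation to an exact identity holding on the high-probability event supplied by Lemma \ref{LemmeExc1}. Set
\[
\mathcal{V}_n := \bigcap_{\varepsilon_1\sqrtBis{n} \leq s \leq \sqrtBis{n}/\varepsilon_1}\;\bigcap_{\delta_0^{-1}\log n \leq |z| \leq \sqrtBis{n}} \{E_z^s \leq 1\}.
\]
By Lemma \ref{LemmeExc1}, $\P^*(\mathcal{V}_n^c)\leq n^{-\rho_2(\varepsilon_1)}$. Since $\ell_n\geq\delta_0^{-1}\log n$ by hypothesis and $\mathfrak{L}_n=o(\sqrtBis{n})$ (a consequence of Assumption \ref{AssumptionSmallGenerations}), for $n$ large every vertex appearing in the summations of either side of the lemma lies in the generation range covered by $\mathcal{V}_n$, so on $\mathcal{V}_n$ each such vertex is visited during at most one of the first $s$ excursions.

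The heart of the argument is that on $\mathcal{V}_n$ the two quantities are in fact equal (the slack $\varepsilon(s\bm{L}_n)^k$ in the statement is unused). Fix $\omega\in\mathcal{V}_n$, $s$, and $\Vect{x}\in\Delta^k$ with $\ell_n\leq|\VectCoord{x}{i}|\leq\mathfrak{L}_n$ for every $i$. For each $i$ either $\VectCoord{x}{i}$ is not visited before $T^s$, or there is a unique excursion index $j_i(\Vect{x})\in\{1,\ldots,s\}$ during which it is visited. If some $\VectCoord{x}{i}$ is unvisited then $\Vect{x}\notin\mathfrak{E}^{k,s}$ and simultaneously $\prod_{i}\un_{\{N_{\VectCoord{x}{i}}^{T^{j_i}}-N_{\VectCoord{x}{i}}^{T^{j_i-1}}\geq 1\}}=0$ for every $\bm{j}$. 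Otherwise all $j_i(\Vect{x})$ are defined, the product equals $\un_{\{\bm{j}=(j_1(\Vect{x}),\ldots,j_k(\Vect{x}))\}}$, and summing over $\bm{j}\in\llbracket 1,s\rrbracket_k$ produces a single non-zero contribution precisely when the $j_i(\Vect{x})$ are pairwise distinct, which by definition of $\mathfrak{E}^{k,s}$ is equivalent to $\Vect{x}\in\mathfrak{E}^{k,s}$. In both cases the $\Vect{x}$-contributions to the two sides coincide; multiplying by the common weight $g(\Vect{x})\un_{\mathcal{C}^k_{a_n}}(\Vect{x})$ and summing gives
\[
\mathcal{A}^k(\mathcal{D}_{n,T^s},g\un_{\mathfrak{E}^{k,s}\cap\mathcal{C}^k_{a_n}}) \;=\; \sum_{\bm{j}\in\llbracket 1,s\rrbracket_k}\mathcal{A}^{k,n}(\Vect{j},g\un_{\mathcal{C}^k_{a_n}}) \qquad \text{on } \mathcal{V}_n,
\]
uniformly in $s\in[\varepsilon_1\sqrtBis{n},\sqrtBis{n}/\varepsilon_1]$. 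Consequently the event appearing inside $\P^*(\cdot)$ in the lemma is contained in $\mathcal{V}_n^c$, whose probability is at most $n^{-\rho_2}$, so the conclusion holds with $\rho_4:=\rho_2$. There is no genuine obstacle beyond carefully unpacking the definitions of $\mathfrak{E}^{k,s}$ and of $\mathcal{A}^{k,n}(\bm{j},\cdot)$; all the probabilistic work has already been absorbed into Lemma \ref{LemmeExc1}.
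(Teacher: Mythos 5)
Your proof is correct, and it takes a genuinely more direct route than the paper. The paper splits $\mathcal{A}^k(\mathcal{D}_{n,T^s},g\un_{\mathfrak{E}^{k,s}\cap\mathcal{C}^k_{a_n}})$ into the part supported on $\mathfrak{S}^{k,s}$ and its complement, uses Lemma~\ref{LemmeExc1} to kill the complement, rewrites the $\mathfrak{S}^{k,s}$-part as a sum over $\Vect{j}$ of indicators with ``visited only in excursion $j_i$'', observes that $\sum_{\bm{j}}\mathcal{A}^{k,n}(\Vect{j},\cdot)$ dominates this from above, and then uses Markov's inequality for the resulting nonnegative gap together with a fairly heavy quenched-expectation estimate (restriction to $\mathcal{H}^k_n$, the potential event $\{\min V\geq 3\log n\}$, lower bounds on $\P^{\mathcal{E}}(T_{\VectCoord{x}{i}}>T^1)^{s-k}$, Lemma~\ref{HighDownfalls}). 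You short-circuit all of the second stage: on the very event $\mathcal{V}_n$ supplied by Lemma~\ref{LemmeExc1}, every vertex in the relevant generation window is visited during at most one of the first $s$ excursions, so the counting identity $\un_{\mathfrak{E}^{k,s}}(\Vect{x})\un_{\{T_{\Vect{x}}<T^s\}}=\#\{\Vect{j}\in\llbracket 1,s\rrbracket_k:\forall i,\; N^{T^{j_i}}_{\VectCoord{x}{i}}-N^{T^{j_i-1}}_{\VectCoord{x}{i}}\geq 1\}$ holds for every $\Vect{x}$ in the sum, uniformly in $s$ (using that $E_z^s$ is nondecreasing in $s$ so the intersection over $s$ reduces to the largest $s$). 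This gives exact equality on $\mathcal{V}_n$, hence the deviation event is contained in $\mathcal{V}_n^c$, and $\rho_4=\rho_2$ works. Your route is cleaner and avoids the quenched computation entirely; the paper's longer argument costs some machinery but is closer in spirit to the comparison estimates it needs elsewhere in the section, which is presumably why the author chose it. One small remark: you invoke Assumption~\ref{AssumptionSmallGenerations} to get $\mathfrak{L}_n=o(\sqrtBis{n})$, but for this lemma the weaker standing hypothesis $\mathfrak{L}_n\leq\sqrtBis{n}$ from the setup of Subsection~\ref{SmallGenerationsGR} already suffices (and is what keeps the lemma's stated hypotheses to just \ref{Assumption1} and \ref{Assumption2}); this is cosmetic and does not affect the validity.
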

\begin{proof}
We first decompose as follows
\begin{align*}
    \mathcal{A}^k(\mathcal{D}_{n,T^s},g\un_{\mathfrak{E}^{k,s}\cap\mathcal{C}^k_{a_n}})=\mathcal{A}^k(\mathcal{D}_{n,T^s},\un_{\mathfrak{E}^{k,s}\cap\mathcal{C}^k_{a_n}\cap\mathfrak{S}^{k,s}}) + \mathcal{A}^k(\mathcal{D}_{n,T^s},g\un_{\mathfrak{E}^{k,s}\cap\mathcal{C}^k_{a_n}\cap\Delta^k\setminus\mathfrak{S}^{k,s}}).
\end{align*}
By Lemma \ref{LemmeExc1}, we have that for $n$ large enough
\begin{align*}
    \P^*\Big(\bigcup_{s=\varepsilon_1\sqrtBis{n}}^{\sqrtBis{n}/\varepsilon_1}\mathcal{A}^k(\mathcal{D}_{n,T^s},g\un_{\mathfrak{E}^{k,s}\cap\mathcal{C}^k_{a_n}\cap\Delta^k\setminus\mathfrak{S}^{k,s}})>\varepsilon(s\bm{L}_n)^k/2\Big)\leq n^{-\rho_2},
\end{align*}
so we can focus on $\mathcal{A}^k(\mathcal{D}_{n,T^s},g\un_{\mathfrak{E}^{k,s}\cap\mathfrak{S}^{k,s}\cap\mathcal{C}^k_{a_n}})$. Note that $\Vect{x}\in\mathfrak{E}^{k,s}\cap\mathfrak{S}^{k,s}$ means nothing but there exists $\Vect{j}\in\llbracket 1,s\rrbracket_k$ such that for any $i\in\{1,\ldots,k\}$, $N^{T^{j_i}}_{\VectCoord{x}{i}}-N^{T^{j_i-1}}_{\VectCoord{x}{i}}\geq 1$ and for all $\mathfrak{j}\not=j_i$, $N^{T^{\mathfrak{j}}}_{\VectCoord{x}{i}}-N^{T^{\mathfrak{j}-1}}_{\VectCoord{x}{i}}=0$, thus giving that $\mathcal{A}^k(\mathcal{D}_{n,T^s},g\un_{\mathfrak{E}^{k,s}\cap\mathfrak{S}^{k,s}\cap\mathcal{C}^k_{a_n}})$ is equal to
\begin{align*}
    \sum_{\Vect{j}\in\llbracket 1,s\rrbracket_k}\;\sum_{\Vect{p}\in\ProdSet{\{\ell_n,\ldots,\mathfrak{L}_n\}}{k}}\underset{|\Vect{x}|=\Vect{p}}{\sum_{\Vect{x}\in\Delta^k}}g\un_{\mathcal{C}^k_{a_n}}(\Vect{x})\prod_{i=1}^k\un_{\{N_{\VectCoord{x}{i}}^{T^{j_i}}-N_{\VectCoord{x}{i}}^{T^{j_i-1}}\geq 1;\; \forall \mathfrak{j}\not=j_i, N_{\VectCoord{x}{i}}^{T^{\mathfrak{j}}}-N_{\VectCoord{x}{i}}^{T^{\mathfrak{j}-1}}=0\}}.
\end{align*}
Hence, for any $s\in\{\varepsilon_1\sqrtBis{n},\ldots,\sqrtBis{n}/\varepsilon_1\}$
\begin{align*}
    \sum_{\bm{j}\in\llbracket 1,s\rrbracket_k}\mathcal{A}^{k,n}(\Vect{j},g\un_{\mathcal{C}^k_{a_n}})-\mathcal{A}^k(\mathcal{D}_{n,T^s},g\un_{\mathfrak{E}^{k,s}\cap\mathfrak{S}^{k,s}\cap\mathcal{C}^k_{a_n}})\geq 0,
\end{align*}
and thanks to Markov inequality
\begin{align}\label{UnifBound2}
    &\P^{\mathcal{E}}\Big(\bigcup_{s=\varepsilon_1\sqrtBis{n}}^{\sqrtBis{n}/\varepsilon_1}\Big\{\Big|\mathcal{A}^k(\mathcal{D}_{n,T^s},g\un_{\mathfrak{E}^{k,s}\cap\mathfrak{S}^{k,s}\cap\mathcal{C}^k_{a_n}})-\sum_{\bm{j}\in\llbracket 1,s\rrbracket_k}\mathcal{A}^{k,n}(\Vect{j},g\un_{\mathcal{C}^k_{a_n}})\Big|>\varepsilon (s\bm{L}_n)^k/2\Big\}\Big)\nonumber \\ & \leq\frac{2}{\varepsilon(\varepsilon_1\sqrtBis{n}\bm{L}_n)^k}\E^{\mathcal{E}}\Big[\sum_{\bm{j}\in\llbracket 1,s\rrbracket_k}\mathcal{A}^{k,n}(\Vect{j},g\un_{\mathcal{C}^k_{a_n}})-\mathcal{A}^k(\mathcal{D}_{n,T^{s}},g\un_{\mathfrak{E}^{k,s}\cap\mathfrak{S}^{k,s}\cap\mathcal{C}^k_{a_n}})\Big].
\end{align}
One can see that we can restrict ourselves to the $k$-tuples $\Vect{x}=(\VectCoord{x}{1},\ldots,\VectCoord{x}{k})\in\mathcal{C}^k_{a_n}$, $\Vect{x}=\Vect{p}\in\{\ell_n,\ldots,\mathfrak{L}_n\}$, such that for all $i,\mathfrak{j}\in\{1,\ldots,k\}$ with $i\not=\mathfrak{j}$, $H_{\VectCoord{x}{i}\land\VectCoord{x}{\mathfrak{j}}}\leq e^{\w_0a_n/2}$ and $V_{\VectCoord{x}{i}\land\VectCoord{x}{\mathfrak{j}}}(\VectCoord{x}{\mathfrak{j}})\geq\w_0a_n$ for some $\w_0>0$. Indeed, if this subset of $\Delta^k$ is denoted by $\mathcal{H}^k_n$, then, using similar arguments as the ones we have used several times, it can be proved that for a given $\w_0>0$ and $n$ large enough
\begin{align}\label{EspBound3}
    \E^*\Big[\frac{1}{(\sqrtBis{n}\bm{L}_n)^k}\sup_{s\leq\sqrtBis{n}/\varepsilon_1}\;\sum_{\bm{j}\in\llbracket 1,s\rrbracket_k}\mathcal{A}^{k,n}(\Vect{j},g\un_{\mathcal{C}^k_{a_n}}\un_{\Delta^k\setminus\mathcal{H}^k_n})\Big]\leq n^{\rho_2'},
\end{align}
for some $\rho_2'>0$. \\
We now aim to provide a lower bound for $\E^{\mathcal{E}}[\mathcal{A}^k(\mathcal{D}_{n,T^{s}},g\un_{\mathfrak{E}^{k,s}\cap\mathfrak{S}^{k,s}\cap\mathcal{C}^k_{a_n}\cap\mathcal{H}^k_n})]$. Thanks to the strong Markov property, the random variables $N_z^{T^{l}}-N_z^{T^{l-1}}$, $l\in\N^*$, are i.i.d under $\P^{\mathcal{E}}$ and distributed as $N_z^{T^1}$. It follows that 
\begin{align*}
    &\E^{\mathcal{E}}\Big[\mathcal{A}^k(\mathcal{D}_{n,T^{s}},g\un_{\mathfrak{E}^{k,s}\cap\mathfrak{S}^{k,s}\cap\mathcal{C}^k_{a_n}})\Big] \\ & = \underset{|\Vect{x}|=\bm{p}}{\sum_{\Vect{x}\in\mathcal{H}^k_n}}g\un_{\mathcal{C}^k_{a_n}}(\Vect{x})\prod_{i=1}^k\P^{\mathcal{E}}(\forall\;\mathfrak{j}\not=i,\;T_{\VectCoord{x}{\mathfrak{j}}}>T^1>T_{\VectCoord{x}{i}})\P^{\mathcal{E}}(T_{\VectCoord{x}{i}}>T^1)^{s-k} \\ & \geq \underset{|\Vect{x}|=\bm{p}}{\sum_{\Vect{x}\in\mathcal{H}^k_n}}g\un_{\mathcal{C}^k_{a_n}}(\Vect{x})\prod_{i=1}^k\Big(\P^{\mathcal{E}}\big(T_{\VectCoord{x}{i}}<T^1\big)-\sum_{\mathfrak{j}=1;\; \mathfrak{j}\not=i}^k\P^{\mathcal{E}}\big(T_{\VectCoord{x}{i}}<T^1,\;T_{\VectCoord{x}{\mathfrak{j}}}<T^1 \big)\Big)\P^{\mathcal{E}}(T_{\VectCoord{x}{i}}>T^1)^{s-k}.
\end{align*}
One can see that for any $\Vect{x}\in\mathcal{H}^k_n$, $\sum_{\mathfrak{j}=1;\; \mathfrak{j}\not=i}^k\P^{\mathcal{E}}(T_{\VectCoord{x}{i}}<T^1,\;T_{\VectCoord{x}{\mathfrak{j}}}<T^1)$ is very small with respect to $\P^{\mathcal{E}}(T_{\VectCoord{x}{i}}<T^1)$. Indeed, by the strong Markov property and \eqref{ProbaTA}, we have, for any $\mathfrak{j}\not=i$
\begin{align*}
    \P^{\mathcal{E}}(T_{\VectCoord{x}{i}}<T^1,\;T_{\VectCoord{x}{\mathfrak{j}}}<T^1)&\leq \P^{\mathcal{E}}(T_{\VectCoord{x}{i}}<T^1)\P^{\mathcal{E}}_{\VectCoord{x}{i}\land\VectCoord{x}{\mathfrak{j}}}(T_{\VectCoord{x}{\mathfrak{j}}}<T^1) \\ & +\P^{\mathcal{E}}(T_{\VectCoord{x}{\mathfrak{j}}}<T^1)\P^{\mathcal{E}}_{\VectCoord{x}{i}\land\VectCoord{x}{\mathfrak{j}}}(T_{\VectCoord{x}{i}}<T^1) \\ & \leq 2H_{\VectCoord{x}{i}\land\VectCoord{x}{\mathfrak{j}}}e^{-V_{\VectCoord{x}{i}\land\VectCoord{x}{\mathfrak{j}}}(\VectCoord{x}{\mathfrak{j}})}\P^{\mathcal{E}}(T_{\VectCoord{x}{i}}<T^1) \\ & \leq 2n^{-\w_0(4\delta_0)^{-1}}\P^{\mathcal{E}}(T_{\VectCoord{x}{i}}<T^1),
\end{align*}
recalling that $a_n=(2\delta_0)^{-1}\log n$. Using \eqref{ProbaTA} again, we have, on $\mathcal{V}_n=\{\min_{\delta_0^{-1}\log n\leq|x|\leq\sqrtBis{n}}V(z)\geq 3\log n\}$, that 
\begin{align*}
    \P^{\mathcal{E}}(T_{\VectCoord{x}{i}}>T^1)^{s-k}\geq(1-\P^{\mathcal{E}}(T_{\VectCoord{x}{i}}<T^1))^{s}\geq(1-e^{-V(\VectCoord{x}{i})})^{s}\geq (1-n^{-3})^s\geq (1-n^{-3})^{\sqrtBis{n}/\varepsilon_1}.
\end{align*}
Hence, $\E^{\mathcal{E}}[\mathcal{A}^k(\mathcal{D}_{n,T^{s}},g\un_{\mathfrak{E}^{k,s}\cap\mathfrak{S}^{k,s}\cap\mathcal{C}^k_{a_n}})]$ is larger than
\begin{align*}
    (1-n^{-3})^{\sqrtBis{n}/\varepsilon_1}\big(1-2kn^{-\w_0(4\delta_0)^{-1}}\big)^k\underset{|\Vect{x}|=\bm{p}}{\sum_{\Vect{x}\in\mathcal{H}^k_n}}g\un_{\mathcal{C}^k_{a_n}}(\Vect{x})\prod_{i=1}^k\P^{\mathcal{E}}\big(T_{\VectCoord{x}{i}}<T^1\big).
\end{align*}
It follows that 
\begin{align*}
    &\E\Big[\un_{\mathcal{V}_n}\sum_{\bm{j}\in\llbracket 1,s\rrbracket_k}\mathcal{A}^{k,n}(\Vect{j},g\un_{\mathcal{C}^k_{a_n}\cap\mathcal{H}^k_n})-\mathcal{A}^k(\mathcal{D}_{n,T^{s}},g\un_{\mathfrak{E}^{k,s}\cap\mathfrak{S}^{k,s}\cap\mathcal{C}^k_{a_n}\cap\mathcal{H}^k_n})\Big] \\ & \leq\|g\|_{\infty}(s\bm{L}_n)^k\Big(1-(1-n^{-3})^{\sqrtBis{n}/\varepsilon_1}\big(1-2kn^{-\w_0(4\delta_0)^{-1}}\big)^k\Big)\sup_{\Vect{p}\in\ProdSet{(\N^*)}{k}}\Eb\Big[\underset{|\Vect{x}|=\Vect{p}}{\sum_{\Vect{x}\in\Delta^k_{p}}}\prod_{i=1}^k\P^{\mathcal{E}}\big(T_{\VectCoord{x}{i}}<T^1\big)\Big],
\end{align*}
and by \eqref{UnifBound2} and \eqref{EspBound3}, $\P^*(\bigcup_{s=\varepsilon_1\sqrtBis{n}}^{\sqrtBis{n}/\varepsilon_1}\{|\mathcal{A}^k(\mathcal{D}_{n,T^s},g\un_{\mathfrak{E}^{k,s}\cap\mathfrak{S}^{k,s}\cap\mathcal{C}^k_{a_n}})-\sum_{\bm{j}\in\llbracket 1,s\rrbracket_k}\mathcal{A}^{k,n}(\Vect{j},g\un_{\mathcal{C}^k_{a_n}})|>\varepsilon (s\bm{L}_n)^k/2\})$ is smaller, for $n$ large enough, than
\begin{align*}
    1-\Pb^*(\mathcal{V}_n) + \mathfrak{C}_{\ref{LemmeSommeExc}}n^{-\rho_2'}+\mathfrak{C}_{\ref{LemmeSommeExc},1}\Big(1-(1-n^{-3})^{\sqrtBis{n}/\varepsilon_1}&\big(1-2kn^{-\w_0(4\delta_0)^{-1}}\big)^k\Big) \\ & \times\sup_{\Vect{p}\in\ProdSet{(\N^*)}{k}}\Eb\Big[\underset{|\Vect{x}|=\Vect{p}}{\sum_{\Vect{x}\in\Delta^k_{p}}}e^{-\langle\bm{\beta},V(\Vect{x})\rangle_k}\Big],
\end{align*}
for some constant $\mathfrak{C}_{\ref{LemmeSommeExc}}>0$ and $\mathfrak{C}_{\ref{LemmeSommeExc},1}>0$. Finally, by Lemma \ref{MinPotential}, for $n$ large enough $1-\Pb^*(\mathcal{V}_n)\leq n^{-\rho'_1}$ for some $\rho'_1>0$, $(1-(1-n^{-3})^{\sqrtBis{n}/\varepsilon_1}(1-2kn^{-\w_0(4\delta_0)^{-1}})^k)\leq n^{-\rho_{2,k}}$ for $n$ large enough and some $\rho_{2,k}>0$ and thanks to Lemma \ref{HighDownfalls} \ref{HighDownfalls1} with $\Vect{\beta}=\Vect{1}$, $\sup_{\Vect{p}\in\ProdSet{(\N^*)}{k}}\Eb[\sum_{\Vect{x}\in\Delta^k_{p};|\Vect{x}|=\Vect{p}}e^{-\langle\bm{\beta},V(\Vect{x})\rangle_k}]$ is finite which completes the proof.
\end{proof}
\noindent The next lemma relates $\sum_{\bm{j}\in\llbracket 1,s\rrbracket_k}\mathcal{A}^{k,n}(\Vect{j},f\un_{\mathcal{C}_{a_n}^k})$ with its quenched mean and illustrates why this quasi-independent version of the range is easier to deal with.
\begin{lemm}\label{GenVariance}
Let $k\geq 2$ and $\mathfrak{a}\geq 1$ be two integers and assume $\kappa>2\mathfrak{a}k$. Under the Assumptions \ref{Assumption1}, \ref{AssumptionSmallGenerations} and \ref{Assumption2}, there exits a constant $\mathfrak{C}_{\ref{GenVariance}}>0$ and a non-decreasing sequence of positive numbers $(\Tilde{\mathfrak{q}}(j))_{j\geq 2}$ satisfying $\Tilde{\mathfrak{q}}_2=1$ and $\Tilde{\mathfrak{q}}(j)\to\infty$ when $j\to\infty$ such that for $n$ large enough and any $\varepsilon_1\sqrtBis{n}\leq s\leq\sqrtBis{n}/\varepsilon_1$
\begin{align*}
    \E\Big[\Big(\sum_{\bm{j}\in\llbracket 1,s\rrbracket_k}\mathcal{A}^{k,n}(\Vect{j},f\un_{\mathcal{C}_{a_n}^k})-\E^{\mathcal{E}}\big[\sum_{\bm{j}\in\llbracket 1,s\rrbracket_k}\mathcal{A}^{k,n}(\Vect{j},f\un_{\mathcal{C}_{a_n}^k})\big]\Big)^{2\mathfrak{a}}\Big]\leq \mathfrak{C}_{\ref{GenVariance}}(\bm{L}_n)^{2\mathfrak{a}k}(\mathfrak{L}_n)^{\Tilde{\mathfrak{q}}_{\mathfrak{a}}}s^{2\mathfrak{a}k-\Tilde{\mathfrak{q}}_{\mathfrak{a}}}.
\end{align*}
\end{lemm}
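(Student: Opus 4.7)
The plan is to expand the $2\mathfrak{a}$-th centered moment of $S_{n,s}:=\sum_{\bm{j}\in\llbracket 1,s\rrbracket_k}\mathcal{A}^{k,n}(\Vect{j},f\un_{\mathcal{C}_{a_n}^k})$ and exploit, conditionally on the environment, the independence of the excursions above $e^*$ thanks to the strong Markov property at the return times $T^j$. Writing $\tilde Y_{\Vect{j},\Vect{x}}:=\prod_{i=1}^k\un_{\{N_{\VectCoord{x}{i}}^{T^{j_i}}-N_{\VectCoord{x}{i}}^{T^{j_i-1}}\geq 1\}}-\E^{\mathcal{E}}[\prod_{i=1}^k\un_{\{N_{\VectCoord{x}{i}}^{T^{j_i}}-N_{\VectCoord{x}{i}}^{T^{j_i-1}}\geq 1\}}]$, one expands
\[
\bigl(S_{n,s}-\E^{\mathcal{E}}[S_{n,s}]\bigr)^{2\mathfrak{a}}=\sum_{((\Vect{j}_r,\Vect{x}_r))_{r=1}^{2\mathfrak{a}}}\prod_{r=1}^{2\mathfrak{a}} f(\Vect{x}_r)\un_{\mathcal{C}_{a_n}^k}(\Vect{x}_r)\,\tilde Y_{\Vect{j}_r,\Vect{x}_r}.
\]
Since excursions are i.i.d.\ under $\P^{\mathcal{E}}$, if for some index $r$ the set $\{j_{r,1},\dots,j_{r,k}\}$ is disjoint from $\bigcup_{r'\neq r}\{j_{r',1},\dots,j_{r',k}\}$, then $\tilde Y_{\Vect{j}_r,\Vect{x}_r}$ is $\P^{\mathcal{E}}$-independent of the remaining factors and has zero quenched mean, so that term vanishes after taking the quenched expectation.

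Hence only tuples satisfying the sharing property \emph{every $r\in\{1,\ldots,2\mathfrak{a}\}$ shares at least one excursion index with some $r'\neq r$} contribute. For such a tuple, denote by $E=E_s\sqcup E_u$ the set of distinct excursion indices appearing, split into shared indices ($|R(e)|\geq 2$) and unique ones ($|R(e)|=1$), where $R(e)$ is the set of groups containing $e$. The sharing constraint forces $\sum_{e\in E_s}|R(e)|\geq 2\mathfrak{a}$, while the within-group distinctness of the $j_{r,\cdot}$ gives $|E_u|+\sum_{e\in E_s}|R(e)|=2\mathfrak{a}k$. The number of ways to realise this excursion-sharing structure inside $\{1,\dots,s\}$ is then bounded by $s^{|E|}$, and a careful optimisation over the admissible structures gives $|E|\leq 2\mathfrak{a}k-\tilde{\mathfrak{q}}(\mathfrak{a})$ for an explicit non-decreasing sequence $(\tilde{\mathfrak{q}}(j))_{j\geq 2}$ tending to infinity (with $\tilde{\mathfrak{q}}(2)=1$).

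It then remains to bound, uniformly in the excursion-sharing structure, the annealed sum over spatial configurations $(\Vect{x}_r)_{r=1}^{2\mathfrak{a}}$, each in $\mathcal{C}_{a_n}^k$ with coordinates in generations $\{\ell_n,\dots,\mathfrak{L}_n\}$. For this, one uses the strong Markov property at the hitting times of common ancestors to decompose the quenched probability that a single excursion visits several prescribed vertices, together with the standard estimate $\P^{\mathcal{E}}(T_z<T^1)\leq e^{-V(z)}H_z$. Summing over generations yields the factor $(\bm{L}_n)^{2\mathfrak{a}k}$, while the ancestral coalescence points of vertices that must be visited together in a shared excursion contribute the $(\mathfrak{L}_n)^{\tilde{\mathfrak{q}}(\mathfrak{a})}$ factors. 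The annealed boundedness of the resulting sums follows from Assumption \ref{Assumption2} together with Lemmas \ref{EspProbaTAPartition} and \ref{HighDownfalls}; this is exactly where $\kappa>2\mathfrak{a}k$ is used, to ensure finiteness of all joint branching moments up to order $2\mathfrak{a}k$.

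The main obstacle is the combinatorial bookkeeping in the third step: one must track simultaneously the number $|E|$ of distinct excursion indices (which governs the gain in $s$), the total number of times each shared index is hit (which governs the spatial probability bounds and produces the $\mathfrak{L}_n$ factors through integration over MRCA depths), and the hereditary constraint $\Vect{x}_r\in\mathcal{C}_{a_n}^k$ (which confines within-group MRCAs below generation $a_n=(2\delta_0)^{-1}\log n$). The non-trivial point is to design $\tilde{\mathfrak{q}}(\mathfrak{a})$ so that every admissible sharing pattern is bounded by $\mathfrak{C}_{\ref{GenVariance}}(\bm{L}_n)^{2\mathfrak{a}k}(\mathfrak{L}_n)^{\tilde{\mathfrak{q}}(\mathfrak{a})}s^{2\mathfrak{a}k-\tilde{\mathfrak{q}}(\mathfrak{a})}$, and that $\tilde{\mathfrak{q}}(\mathfrak{a})\to\infty$ — the latter being crucial so that, combined with Assumption \ref{AssumptionSmallGenerations} (giving $\mathfrak{L}_n/\sqrt{n}\to 0$ faster than any iterated logarithm can spoil) and Markov's inequality, Proposition \ref{GENPROPCONV2} follows for $\mathfrak{a}$ large enough.
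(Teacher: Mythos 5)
Your proof takes essentially the same route as the paper's: expand the $2\mathfrak{a}$-th centered moment in centered factors $\tilde Y_{\Vect{j}_r,\Vect{x}_r}$, use the $\P^{\mathcal{E}}$-i.i.d.\ structure of the excursion increments $N^{T^j}_\cdot - N^{T^{j-1}}_\cdot$ (strong Markov property at the return times $T^j$) to discard every term in which some group's excursion indices are disjoint from the rest, and then count the surviving sharing patterns so that each identified index trades one power of $s$ for one power of $\mathfrak{L}_n$, the latter supplied by the $(\max_i p_i)^{q'\wedge(q-q')}$ factor in Lemma~\ref{EspProbaTAPartition}. The paper spells this out only for $\mathfrak{a}=1$ and delegates $\mathfrak{a}\ge 2$ to ``similar arguments'' with the explicit choice $\tilde{\mathfrak{q}}(\mathfrak{a})=2\lfloor\mathfrak{a}/2\rfloor$; your double-counting argument ($\sum_{e\in E_s}|R(e)|\ge 2\mathfrak{a}$ and $|E_s|\le\tfrac12\sum_{e\in E_s}|R(e)|$, giving $|E|\le 2\mathfrak{a}k-\mathfrak{a}$) is exactly the omitted combinatorial step and in fact yields a marginally sharper exponent than the paper's choice, which is harmless since either sequence tends to infinity.
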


\begin{proof}
Recall the definition of $\mathcal{A}^{k,n}(\Vect{j},f\un_{\mathcal{C}_{a_n}^k})$ in \eqref{DefIndepRange}. For $\mathfrak{a}=1$, note that 
\begin{align*}
    &\E^{\mathcal{E}}\Big[\Big(\sum_{\bm{j}\in\llbracket 1,s\rrbracket_k}\mathcal{A}^{k,n}(\Vect{j},f\un_{\mathcal{C}_{a_n}^k})\Big)^2\Big] \\ & =\sum_{\bm{j},\bm{j}'\in\llbracket 1,s\rrbracket_k}\;\sum_{\Vect{p},\Vect{p}'\in\ProdSet{\{\ell_n,\ldots,\mathfrak{L}_n\}}{k}}\;\underset{|\Vect{x}|=\Vect{p},|\Vect{y}|=\Vect{p}'}{\sum_{\Vect{x},\Vect{y}\in\mathcal{C}_{a_n}^k}}f(\Vect{x})f(\Vect{y})\E^{\mathcal{E}}\Big[\prod_{i=1}^k\un_{\{N^{T^{j_i}}_{\VectCoord{x}{i}}-N^{T^{j_i}-1}_{\VectCoord{x}{i}}\geq 1,\; N^{T^{j'_i}}_{\VectCoord{y}{i}}-N^{T^{j'_i}-1}_{\VectCoord{y}{i}}\geq 1\}}\Big],
\end{align*}
with the notation $\bm{j}=(j_1,\ldots,j_k)$ and $\bm{j}'=(j'_1,\ldots,j'_k)$. Thanks to the strong Markov property, the random variables $N_z^{T^{i}}-N_z^{T^{i-1}}$ are i.i.d under $\P^{\mathcal{E}}$ and distributed as $N_z^{T^1}$ for any $z\in\T$. In particular, the term $s^{2k}$ in $\E^{\mathcal{E}}[(\sum_{\bm{j}\in\llbracket 1,s\rrbracket_k}\mathcal{A}^{k,n}(\Vect{j},f\un_{\mathcal{C}_{a_n}^k})-\E^{\mathcal{E}}[\sum_{\bm{j}\in\llbracket 1,s\rrbracket_k}\mathcal{A}^{k,n}(\Vect{j},f\un_{\mathcal{C}_{a_n}^k})\big])^{2}]$ is equal to zero and we actually have
\begin{align*}
    &\E\Big[\Big(\sum_{\bm{j}\in\llbracket 1,s\rrbracket_k}\mathcal{A}^{k,n}(\Vect{j},f\un_{\mathcal{C}_{a_n}^k})-\E^{\mathcal{E}}\big[\sum_{\bm{j}\in\llbracket 1,s\rrbracket_k}\mathcal{A}^{k,n}(\Vect{j},f\un_{\mathcal{C}_{a_n}^k})\big]\Big)^{2}\Big] \\ & \leq\mathfrak{C}_{\ref{GenVariance},1}(\bm{L}_n)^{2k}\big((\mathfrak{L}_n)^2s^{2k-2}+\mathfrak{L}_ns^{2k-1}\big)\leq 2\mathfrak{C}_{\ref{GenVariance},1}(\bm{L}_n)^{2k}\mathfrak{L}_ns^{2k-1}, 
\end{align*}
where the constant $\mathfrak{C}_{\ref{GenVariance},1}>0$ comes from Lemma \ref{EspProbaTAPartition} and the last inequality comes the fact that $\mathfrak{L}_n\leq s$ for $n$ large enough. \\
When $\mathfrak{a}\geq 2$, using similar arguments we have
\begin{align*}
    &\E\Big[\Big(\sum_{\bm{j}\in\llbracket 1,s\rrbracket_k}\mathcal{A}^{k,n}(\Vect{j},f\un_{\mathcal{C}_{a_n}^k})-\E^{\mathcal{E}}\big[\sum_{\bm{j}\in\llbracket 1,s\rrbracket_k}\mathcal{A}^{k,n}(\Vect{j},f\un_{\mathcal{C}_{a_n}^k})\big]\Big)^{2\mathfrak{a}}\Big] \\ & \leq \mathfrak{C}_{\ref{GenVariance},2}(\bm{L}_n)^{2\mathfrak{a}k}(\mathfrak{L}_n)^{2\lfloor\mathfrak{a}/2\rfloor}s^{2\mathfrak{a}k-2\lfloor\mathfrak{a}/2\rfloor}.
\end{align*}
We finally obtain the result by taking $\mathfrak{q}_{\mathfrak{a}}:=\mathfrak{a}\un_{\{\mathfrak{a}=1\}}+2\lfloor\mathfrak{a}/2\rfloor\un_{\{\mathfrak{a}\geq 2\}}$.
\end{proof}

\subsubsection{Convergence of the quenched mean of the range on $\mathcal{C}^k_{a_n}$}

We prove that the quenched mean of the quasi-independent version $\sum_{\bm{j}\in\llbracket 1,s\rrbracket_k}\mathcal{A}^k(\bm{j},f\mathcal{C}^k_{a_n})$ of the range on the set $\mathcal{C}^k_{a_n}$ converges in $\Pb^*$-probability by using the hereditary Assumption \ref{Assumption3}. 
\begin{lemm}\label{ConvL1}
Let $k\geq 2$ be an integer and assume $\kappa>2k$. Under the Assumptions \ref{Assumption1}, \ref{Assumption2} and \ref{AssumptionIncrements}, if $f$ satisfies the hereditary Assumption \ref{Assumption3} then 
\begin{align*}
    \lim_{n\to\infty}\Eb^*\Big[\Big|\frac{1}{(\bm{L}_n)^k }\underset{\ell_n\leq|\Vect{x}|\leq\mathfrak{L}_n}{\sum_{\Vect{x}\in\Delta^k}}f\un_{\mathcal{C}_{a_n}^k}(\Vect{x})\prod_{i=1}^k\frac{e^{-V(\VectCoord{x}{i})}}{H_{\VectCoord{x}{i}}}-(c_{\infty})^k\mathcal{A}^k_{a_n}(f)\Big|\Big]=0
\end{align*}
\end{lemm}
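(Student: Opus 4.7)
The plan is to exploit the hereditary assumption \ref{Assumption3} to reduce the sum to a one-dimensional longitudinal sum above each ancestor at generation $a_n$, and then to identify that longitudinal sum via a time-reversal many-to-one argument.

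First, since $a_n=(2\delta_0)^{-1}\log n\leq\ell_n/2$ and $a_n\geq\mathfrak{g}$ for $n$ large, any $\Vect{x}\in\mathcal{C}^k_{a_n}$ with $\min_i|\VectCoord{x}{i}|\geq\ell_n$ has its generation-$a_n$ ancestors $\mathfrak{u}:=((\VectCoord{x}{1})_{a_n},\ldots,(\VectCoord{x}{k})_{a_n})$ pairwise distinct (so that $\mathfrak{u}\in\Delta^k_{a_n}$), and by assumption \ref{Assumption3}, $f(\Vect{x})=f(\mathfrak{u})$. Setting $\Phi_n(u):=\sum_{\ell_n\leq|x|\leq\mathfrak{L}_n,\,x\geq u}e^{-V(x)}/H_x$ for $|u|=a_n$, grouping by $\mathfrak{u}$ yields the factorisation
$$\underset{\ell_n\leq|\Vect{x}|\leq\mathfrak{L}_n}{\sum_{\Vect{x}\in\Delta^k}}f(\Vect{x})\un_{\mathcal{C}^k_{a_n}}(\Vect{x})\prod_{i=1}^k\frac{e^{-V(\VectCoord{x}{i})}}{H_{\VectCoord{x}{i}}}=\sum_{\mathfrak{u}\in\Delta^k_{a_n}}f(\mathfrak{u})\prod_{i=1}^k\Phi_n(\VectCoord{u}{i}).$$

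Next, I would establish the one-dimensional asymptotics $\bm{L}_n^{-1}\Phi_n(u)\to c_\infty e^{-V(u)}$. Introducing $K^u_x:=\sum_{u<w\leq x}e^{V(w)-V(u)}$, one has $e^{-V(x)}/H_x=e^{-V(u)}/(H_u+K^u_x)$. By the branching property, conditionally on $\mathcal{F}_{a_n}$ the subtree rooted at $u$ has the law of the original tree shifted by $V(u)$; applying the many-to-one Lemma \ref{many-to-one} in this subtree, followed by the time-reversal $\tilde S_l:=S_m-S_{m-l}$ (which is distributed as $(S_l)_{l=0}^m$), gives for each $m\geq 1$
$$\Eb\Big[\underset{|x|=a_n+m,\,x\geq u}{\sum}\frac{1}{H_u+K^u_x}\,\Big|\,\mathcal{F}_{a_n}\Big]=\mathbb{E}_S\Big[\frac{1}{H_u\,e^{-S_m}+\sum_{l=0}^{m-1}e^{-\tilde S_l}}\Big],$$
where $\mathbb{E}_S$ denotes expectation over $(S_j)$ with $H_u$ held fixed. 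Since $\Eb[S_1]=-\psi'(1)>0$, one has $S_m\to\infty$ a.s., and by bounded convergence the right-hand side tends to $c_\infty=\Eb[(\sum_{l\geq 0}e^{-S_l})^{-1}]$, uniformly on $\{H_u\leq M\}$ for any fixed $M$. Averaging over $m\in\{\ell_n-a_n,\ldots,\mathfrak{L}_n-a_n\}$ (each tending to $\infty$), a Ces\`aro argument yields $\bm{L}_n^{-1}\Eb[\Phi_n(u)\mid\mathcal{F}_{a_n}]\to c_\infty e^{-V(u)}$.

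To pass from the conditional mean to $\Phi_n(u)$ itself, I would use a second-moment estimate in the subtree rooted at $u$ — an adapted version of Lemma \ref{GenVariance} for a single excursion, valid since $\kappa>2k\geq 4$ — producing $\mathrm{Var}(\Phi_n(u)\mid\mathcal{F}_{a_n})=o(\bm{L}_n^2)e^{-2V(u)}$ on $\{H_u\leq M\}$. Combined with a tail bound on $\max_i H_{\VectCoord{u}{i}}$ via Lemma \ref{HighDownfalls}\ref{HighDownfalls2} and the uniform bound on $\Eb[\mathcal{A}^k_{a_n}(f)]$ from Lemma \ref{HighDownfalls}\ref{HighDownfalls1}, the triangle inequality produces the announced $L^1(\Pb^*)$ convergence to $(c_\infty)^k\mathcal{A}^k_{a_n}(f)=(c_\infty)^k\sum_{\mathfrak{u}\in\Delta^k_{a_n}}f(\mathfrak{u})e^{-\langle\bm{1},V(\mathfrak{u})\rangle_k}$. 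The main obstacle is to make the $m\to\infty$ limit uniform in the random initial condition $H_u$, whose law at level $a_n$ has heavy tails; I would handle this by truncating $H_u$ at a slowly growing threshold $M_n\to\infty$ and invoking the $\kappa-1-\varepsilon$ moment bound for $H^S_m$ recalled in the proof of Lemma \ref{HighDownfalls}, which forces the truncation error to vanish thanks to $\kappa>2$.
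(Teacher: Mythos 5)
Your outline tracks the paper's proof quite closely: the hereditary assumption is used to factor $f$ onto the level-$a_n$ ancestors, the conditional expectation given $\mathcal{F}_{a_n}$ reduces via the many-to-one lemma (with the time-reversal you describe) to the paper's function $\Tilde{\varphi}_{n}(H_u)$, the uniform convergence $\Tilde{\varphi}_{n}(r)\to c_{\infty}$ for $r$ in a slowly-growing window is established by exactly your upper and lower bounds, and the heavy tail of $\max_i H_{\VectCoord{u}{i}}$ is controlled by Lemma \ref{HighDownfalls}\ref{HighDownfalls2}.

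The one step that would not go through as written is the concentration of $\Phi_n(u)$ around its conditional mean. You invoke ``an adapted version of Lemma \ref{GenVariance}'', but that lemma bounds the quenched variance of $\sum_{\bm{j}\in\llbracket 1,s\rrbracket_k}\mathcal{A}^{k,n}(\bm{j},\cdot)$, a quantity built from the walk's edge local times $N^{T^j}_{\cdot}-N^{T^{j-1}}_{\cdot}$; it has no bearing on the purely environment-dependent quantity $\Phi_n(u)=\sum_{\ell_n\leq|x|\leq\mathfrak{L}_n,\,x\geq u}e^{-V(x)}/H_x$. What is actually needed --- and what the paper carries out in proving \eqref{limUnifCarre'} --- is a direct second-moment computation on the branching random walk itself: expand the square over pairs $(\Vect{x},\Vect{y})$, split according to whether the concatenation $\Vect{x}\Vect{y}$ lies in $\Delta^{2k}\cap\mathcal{C}^{2k}_{a_n}$, and use $\psi(2)<0$ together with $\ell_n-a_n\geq a_n$ to show the remaining pairs contribute $o((\bm{L}_n)^{2k})$, while the ``good'' pairs reproduce exactly the square of the conditional expectation $\sum_{\Vect{z}\in\Delta^k_{a_n}}f(\Vect{z})\prod_{i=1}^ke^{-V(\VectCoord{z}{i})}\Tilde{\varphi}_{n}(H_{\VectCoord{z}{i}})$. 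Once you supply such an estimate your vertex-by-vertex bound on $\Phi_n(u)$ follows, but arguing directly on the $k$-tuple sum, as the paper does, is cleaner because it automatically handles the aggregation over $\mathfrak{u}\in\Delta^k_{a_n}$ that your factorised form would otherwise require.
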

\begin{proof}
Let us first prove that
\begin{align}\label{limUnifCarre'}
    \lim_{n\to\infty}\Eb\Big[\Big(\frac{1}{(\bm{L}_n)^{k}}\underset{\ell_n\leq|\Vect{x}|\leq\mathfrak{L}_n}{\sum_{\Vect{x}\in\mathcal{C}_{a_n}^k}}f(\Vect{x})\prod_{i=1}^k\frac{e^{-V(\VectCoord{x}{i})}}{H_{\VectCoord{x}{i}}}-\;\sum_{\Vect{x}\in\Delta^k_{a_n}}f(\Vect{x})\prod_{i=1}^ke^{-V(\VectCoord{x}{i})}\Tilde{\varphi}_{n}(H_{\VectCoord{x}{i}})\Big)^2\Big]=0,
\end{align}
where, for any $r\geq 1$, $\Tilde{\varphi}_n(r):=\sum_{p=\ell_n}^{\mathfrak{L}n}\varphi_{n,p}(r)/\bm{L}_n$ and 
\begin{align*}
    \varphi_{n,p}(r)=\Eb\Big[\sum_{|x|=p-a_n}e^{-V(x)}\big((r-1)e^{-V(x)}+H_x\big)^{-1}\Big].
\end{align*}
For that, the first step is to decompose $\sum_{\Vect{x}\in\Delta^k;\ell_n\leq|\Vect{x}|\leq\mathfrak{L}_n}f\un_{\mathcal{C}_{a_n}^k}(\Vect{x})\prod_{i=1}^ke^{-V(\VectCoord{x}{i})}/H_{\VectCoord{x}{i}}$:
\begin{align*}
    \underset{\ell_n\leq|\Vect{x}|\leq\mathfrak{L}_n}{\sum_{\Vect{x}\in\Delta^k}}f\un_{\mathcal{C}_{a_n}^k}(\Vect{x})\prod_{i=1}^k\frac{e^{-V(\VectCoord{x}{i})}}{H_{\VectCoord{x}{i}}}&=\sum_{\Vect{p}\in\ProdSet{\{\ell_n,\ldots,\mathfrak{L}_n\}}{k}}\sum_{\bm{z}\in\Delta^k_{a_n}}\underset{|\Vect{x}|=\Vect{p};\;\VectCoord{x}{i}>\VectCoord{z}{i}}{\sum_{\Vect{x}\in\Delta^k}}f\un_{\mathcal{C}_{a_n}^k}(\Vect{x})\prod_{i=1}^k\frac{e^{-V(\VectCoord{x}{i})}}{H_{\VectCoord{x}{i}}} \\ & =\sum_{\Vect{p}\in\ProdSet{\{\ell_n,\ldots,\mathfrak{L}_n\}}{k}}\sum_{\bm{z}\in\Delta^k_{a_n}}f(\Vect{z})\underset{|\Vect{x}|=\Vect{p};\;\VectCoord{x}{i}>\VectCoord{z}{i}}{\sum_{\Vect{x}\in\Delta^k}}\prod_{i=1}^k\frac{e^{-V(\VectCoord{x}{i})}}{H_{\VectCoord{x}{i}}},
\end{align*}
where the last equality comes from the hereditary Assumption \ref{Assumption3}. As we did above, we decompose $H_{\VectCoord{x}{i}}$: $H_{\VectCoord{x}{i}}=(H_{\VectCoord{z}{i}}-1)e^{-V_{\VectCoord{z}{i}}(\VectCoord{x}{i})}+H_{\VectCoord{z}{i},\VectCoord{x}{i}}$. By independence of the increments of the branching random walk $(\T,(V(x),x\in\T))$
\begin{align}\label{GenEspCondi'}
    \Eb\Big[\underset{|\Vect{x}|=\Vect{p}}{\sum_{\Vect{x}\in\Delta^k}}f\un_{\mathcal{C}_{a_n}^k}(\Vect{x})\prod_{i=1}^k\frac{e^{-V(\VectCoord{x}{i})}}{H_{\VectCoord{x}{i}}}\Big|\mathcal{F}_{a_n}\Big]=\sum_{\Vect{z}\in\Delta^k_{a_n}}f(\Vect{z})\prod_{i=1}^ke^{-V(\VectCoord{z}{i})}\varphi_{n,p_i}(H_{\VectCoord{z}{i}}),
\end{align}
where  $\mathcal{F}_{a_n}=\sigma((x,V(x));|x|\leq a_n)$. Thanks to \eqref{GenEspCondi'}, we have that the expectation in equation \eqref{limUnifCarre'} is equal to 
\begin{align*}
    \Eb\Big[\Big(\frac{1}{(\bm{L}_n)^{k}}\underset{\ell_n\leq|\Vect{x}|\leq\mathfrak{L}_n}{\sum_{\Vect{x}\in\mathcal{C}_{a_n}^k}}f(\Vect{x})\prod_{i=1}^k\frac{e^{-V(\VectCoord{x}{i})}}{H_{\VectCoord{x}{i}}}\Big)^2\Big]-\Eb\Big[\Big(\sum_{\Vect{x}\in\Delta^k_{a_n}}f(\Vect{x})\prod_{i=1}^ke^{-V(\VectCoord{x}{i})}\Tilde{\varphi}_{n}(H_{\VectCoord{x}{i}})\Big)^2\Big].
\end{align*}
For $\Vect{x},\Vect{y}\in\Delta^k$, denote by $\Vect{x}\Vect{y}=(\VectCoord{x}{1},\ldots,\VectCoord{x}{k},\VectCoord{y}{1},\ldots,\VectCoord{y}{k})$ the concatenation of $\Vect{x}$ and $\Vect{y}$. Note that
\begin{align*}
    \Big(\underset{\ell_n\leq|\Vect{x}|\leq\mathfrak{L}_n}{\sum_{\Vect{x}\in\mathcal{C}_{a_n}^k}}f(\Vect{x})\prod_{i=1}^k\frac{e^{-V(\VectCoord{x}{i})}}{H_{\VectCoord{x}{i}}}\Big)^2= &\underset{\ell_n\leq|\Vect{x}|,|\Vect{y}|\leq\mathfrak{L}_n}{\sum_{\Vect{x},\Vect{y}\in\mathcal{C}_{a_n}^k;\; \Vect{xy}\not\in\Delta^{2k}}}f(\Vect{x})f(\Vect{y})\prod_{i=1}^k\frac{e^{-V(\VectCoord{x}{i})}}{H_{\VectCoord{x}{i}}}\frac{e^{-V(\VectCoord{y}{i})}}{H_{\VectCoord{y}{i}}} \\ & +\underset{\ell_n\leq|\Vect{x}|,|\Vect{y}|\leq\mathfrak{L}_n}{\sum_{\Vect{x},\Vect{y}\in\mathcal{C}_{a_n}^k;\; \Vect{xy}\in\Delta^{2k}\setminus\mathcal{C}^{2k}_{a_n}}}f(\Vect{x})f(\Vect{y})\prod_{i=1}^k\frac{e^{-V(\VectCoord{x}{i})}}{H_{\VectCoord{x}{i}}}\frac{e^{-V(\VectCoord{y}{i})}}{H_{\VectCoord{y}{i}}} \\ & + \underset{\ell_n\leq|\Vect{x}|,|\Vect{y}|\leq\mathfrak{L}_n}{\sum_{\Vect{x},\Vect{y}\in\Delta^k;\; \Vect{xy}\in\mathcal{C}_{a_n}^{2k}}}f(\Vect{x})f(\Vect{y})\prod_{i=1}^k\frac{e^{-V(\VectCoord{x}{i})}}{H_{\VectCoord{x}{i}}}\frac{e^{-V(\VectCoord{y}{i})}}{H_{\VectCoord{y}{i}}},
\end{align*}
where for any $\Vect{x},\Vect{y}\in\Delta^k$, $\Vect{x}\Vect{y}\not\in\Delta^{2k}$ means that there exists $\alpha\in\{1,\ldots,k\}$ and $i_1,\ldots,i_{\alpha}\in\{1,\ldots,k\}$ distinct such that $\VectCoord{x}{i_j}=\VectCoord{y}{i_j}$ for all $j\in\{1,\ldots,\alpha\}$ and $\ell_n\leq |\Vect{x}|,|\Vect{y}|\leq\mathfrak{L}_n$ means nothing but $\ell_n\leq |\Vect{x}|\leq\mathfrak{L}_n$ and $\ell_n\leq|\Vect{y}|\leq\mathfrak{L}_n$. It follows
\begin{align*}
    \lim_{n\to\infty}\frac{1}{(\bm{L}_n)^{2k}}\Eb\Big[\underset{\ell_n\leq|\Vect{x}|,|\Vect{y}|\leq\mathfrak{L}_n}{\sum_{\Vect{x},\Vect{y}\in\mathcal{C}_{a_n}^k;\; \Vect{xy}\not\in\Delta^{2k}}}f(\Vect{x})f(\Vect{y})\prod_{i=1}^k\frac{e^{-V(\VectCoord{x}{i})}}{H_{\VectCoord{x}{i}}}\frac{e^{-V(\VectCoord{y}{i})}}{H_{\VectCoord{y}{i}}}\Big]=0.
\end{align*}
Indeed, using that $H_z\geq 1$ for any $z\in\T$, we have
\begin{align*}
    &\Eb\Big[\underset{\ell_n\leq|\Vect{x}|,|\Vect{y}|\leq\mathfrak{L}_n}{\sum_{\Vect{x},\Vect{y}\in\mathcal{C}_{a_n}^k;\; \Vect{xy}\not\in\Delta^{2k}}}f(\Vect{x})f(\Vect{y})\prod_{i=1}^k\frac{e^{-V(\VectCoord{x}{i})}}{H_{\VectCoord{x}{i}}}\frac{e^{-V(\VectCoord{y}{i})}}{H_{\VectCoord{y}{i}}}\Big] \\ & \leq\|f\|_{\infty}^2(\bm{L}_n)^{2k}\sum_{\alpha=1}^k\;\sum_{i_1\not=i_2\ldots\not=i_{\alpha}=1}^k\sup_{\ell_n\leq\bm{q}\leq\mathfrak{L}_n}\Eb\Big[\underset{|\Vect{u}|=\bm{q}}{\sum_{\Vect{u}\in\Delta^{2k-\alpha}}}\prod_{j=1}^{\alpha}e^{-2V(\VectCoord{u}{i_j})}\underset{i'\not\in\{i_1,\ldots,i_{\alpha}\}}{\prod_{i'=1}^k}e^{-V(\VectCoord{u}{i'})}\Big]. 
\end{align*}
One can decompose according to the value of $\mathcal{S}^{2k-\alpha}(\Vect{u})$. We have (see the proof of Lemma \ref{CauchyCarre} for example)
\begin{align*}
    \lim_{n\to\infty}\sup_{\ell_n\leq\bm{q}\leq\mathfrak{L}_n}\Eb\Big[\underset{|\Vect{u}|=\bm{q}}{\sum_{\Vect{u}\in\Delta^{2k-\alpha}}}\un_{\{\mathcal{S}^{2k-\alpha}(\Vect{u})>a_n\}}\prod_{j=1}^{\alpha}e^{-2V(\VectCoord{u}{i_j})}\underset{i'\not\in\{i_1,\ldots,i_{\alpha}\}}{\prod_{i'=1}^k}e^{-V(\VectCoord{u}{i'})}\Big]=0,
\end{align*}
and by independence of the increments of the branching random walk $(\T,(V(x),x\in\T))$ and the facts that $\psi(1)=0$ and $\psi(2)<0$
\begin{align*}
    &\sup_{\ell_n\leq\bm{q}\leq\mathfrak{L}_n}\Eb\Big[\underset{|\Vect{u}|=\bm{q}}{\sum_{\Vect{u}\in\mathcal{C}^{2k-\alpha}_{a_n}}}\prod_{j=1}^{\alpha}e^{-2V(\VectCoord{u}{i_j})}\underset{i'\not\in\{i_1,\ldots,i_{\alpha}\}}{\prod_{i'=1}^k}e^{-V(\VectCoord{u}{i'})}\Big] \\ & =\sup_{\ell_n\leq\bm{q}\leq\mathfrak{L}_n}\prod_{j=1}^{\alpha}e^{(q_{i_j}-a_n)\psi(2)}\Eb\Big[\sum_{\Vect{z}\in\Delta^{2k-\alpha}_{a_n}}\prod_{j=1}^{\alpha}e^{-2V(\VectCoord{z}{i_j})}\underset{i'\not\in\{i_1,\ldots,i_{\alpha}\}}{\prod_{i'=1}^k}e^{-V(\VectCoord{z}{i'})}\Big] \\ & \leq \mathfrak{C}_{\ref{ConvL1}}e^{\alpha(\ell_n-a_n)\psi(2)},
\end{align*}
where $\mathfrak{C}_{\ref{ConvL1}}>0$ is a constant coming from Lemma \ref{HighDownfalls}, thus giving the convergence we wanted, recalling that $a_n\leq\ell_n/2$. Similarly, we have
\begin{align*}
    \lim_{n\to\infty}\frac{1}{(\bm{L}_n)^{2k}}\Eb\Big[\underset{\ell_n\leq|\Vect{x}|,|\Vect{y}|\leq\mathfrak{L}_n}{\sum_{\Vect{x},\Vect{y}\in\mathcal{C}_{a_n}^k;\; \Vect{xy}\in\Delta^{2k}\setminus\mathcal{C}^{2k}_{a_n}}}f(\Vect{x})f(\Vect{y})\prod_{i=1}^k\frac{e^{-V(\VectCoord{x}{i})}}{H_{\VectCoord{x}{i}}}\frac{e^{-V(\VectCoord{y}{i})}}{H_{\VectCoord{y}{i}}}\Big]=0,
\end{align*}
thus giving
\begin{align}\label{EspDiffCarre1}
    \lim_{n\to\infty}\frac{1}{(\bm{L}_n)^{2k}}\Eb\Big[\Big(\underset{\ell_n\leq|\Vect{x}|\leq\mathfrak{L}_n}{\sum_{\Vect{x}\in\mathcal{C}_{a_n}^k}}f(\Vect{x})\prod_{i=1}^k&\frac{e^{-V(\VectCoord{x}{i})}}{H_{\VectCoord{x}{i}}}\Big)^2\nonumber \\ & -\underset{\ell_n\leq|\Vect{x}|,|\Vect{y}|\leq\mathfrak{L}_n}{\sum_{\Vect{x},\Vect{y}\in\Delta^k;\; \Vect{xy}\in\mathcal{C}_{a_n}^{2k}}}f(\Vect{x})f(\Vect{y})\prod_{i=1}^k\frac{e^{-V(\VectCoord{x}{i})}}{H_{\VectCoord{x}{i}}}\frac{e^{-V(\VectCoord{y}{i})}}{H_{\VectCoord{y}{i}}}\Big]=0.
\end{align}
Exact same arguments yield
\begin{align}\label{EspDiffCarre2}
    \lim_{n\to\infty}\Eb\Big[\Big(\sum_{\Vect{x}\in\Delta^k_{a_n}}&f(\Vect{x})\prod_{i=1}^ke^{-V(\VectCoord{x}{i})}\Tilde{\varphi}_{n}(H_{\VectCoord{x}{i}})\Big)^2\nonumber \\ & -\sum_{\Vect{x},\Vect{y}\in\Delta^k_{a_n};\; \Vect{xy}\in\Delta^{2k}_{a_n}}f(\Vect{x})f(\Vect{y})\prod_{i=1}^ke^{-V(\VectCoord{x}{i})}\Tilde{\varphi}_{n}(H_{\VectCoord{x}{i}})e^{-V(\VectCoord{y}{i})}\Tilde{\varphi}_{n}(H_{\VectCoord{y}{i}})\Big]=0.
\end{align}
Finally, similarly as equation \eqref{GenEspCondi'}, using again the hereditary Assumption \ref{Assumption3}, we have
\begin{align*}
    &\Eb\Big[\frac{1}{(\bm{L}_n)^{2k}}\underset{\ell_n\leq|\Vect{x}|,|\Vect{y}|\leq\mathfrak{L}_n}{\sum_{\Vect{x},\Vect{y}\in\Delta^k;\; \Vect{xy}\in\mathcal{C}_{a_n}^{2k}}}f(\Vect{x})f(\Vect{y})\prod_{i=1}^k\frac{e^{-V(\VectCoord{x}{i})}}{H_{\VectCoord{x}{i}}}\frac{e^{-V(\VectCoord{y}{i})}}{H_{\VectCoord{y}{i}}}\Big] \\ & =\Eb\Big[\sum_{\Vect{x},\Vect{y}\in\Delta^k_{a_n};\; \Vect{xy}\in\Delta^{2k}_{a_n}}f(\Vect{x})f(\Vect{y})\prod_{i=1}^ke^{-V(\VectCoord{x}{i})}\Tilde{\varphi}_{n}(H_{\VectCoord{x}{i}})e^{-V(\VectCoord{y}{i})}\Tilde{\varphi}_{n}(H_{\VectCoord{y}{i}})\Big],
\end{align*}
so \eqref{EspDiffCarre1} and \eqref{EspDiffCarre2} yield \eqref{limUnifCarre'}. We now prove that 
\begin{align}\label{limEspcInf}
    \lim_{n\to\infty}\Eb\Big[\Big|(c_{\infty})^k\sum_{\Vect{z}\in\Delta^k_{a_n}}f(\Vect{z})\prod_{i=1}^ke^{-V(\VectCoord{z}{i})}-\sum_{\Vect{z}\in\Delta^k_{a_n}}f(\Vect{z})\prod_{i=1}^ke^{-V(\VectCoord{z}{i})}\Tilde{\varphi}_{n}(H_{\VectCoord{z}{i}})\Big|\Big]=0.
\end{align}
Let $h_n=\log n$ (the choice of $h_n$ is almost arbitrary, $h_n\to\infty$ with $h_n=o(n^{\theta})$ for all $\theta>0$ should be enough). Note that $|(c_{\infty})^k-\prod_{i=1}^k\Tilde{\varphi}_{n}(H_{\VectCoord{z}{i}})|\leq 2$ so
\begin{align*}
    &\Eb\Big[\Big|(c_{\infty})^k\sum_{\Vect{z}\in\Delta^k_{a_n}}f(\Vect{z})\prod_{i=1}^ke^{-V(\VectCoord{z}{i})}-\sum_{\Vect{z}\in\Delta^k_{a_n}}f(\Vect{z})\prod_{i=1}^ke^{-V(\VectCoord{z}{i})}\Tilde{\varphi}_{n}(H_{\VectCoord{z}{i}})\Big|\Big] \\ & \leq\fInf\Eb\Big[\sum_{\Vect{z}\in\Delta^k_{a_n}}\Big(\prod_{i=1}^ke^{-V(\VectCoord{z}{i})}\un_{\{H_{\VectCoord{z}{i}}\leq h_n\}}\Big)\Big|(c_{\infty})^k-\prod_{i=1}^k\Tilde{\varphi}_{n}(H_{\VectCoord{z}{i}})\Big|\Big] \\ & +2\fInf\Eb\Big[\sum_{\Vect{z}\in\Delta^k_{a_n}}\un_{\{\max_{1\leq i\leq k}H_{\VectCoord{z}{i}}>h_n\}}e^{-\langle\Vect{1},V(\Vect{z})\rangle_k}\Big].
\end{align*}
We show that $\lim_{n\to\infty}\sup_{1\leq r_1,\ldots,r_k\leq h_n}|(c_{\infty})^k-\prod_{i=1}^k\Tilde{\varphi}_{n}(r_i)|=0$. For that, on the first hand, one can see that $\varphi_{n,p}(r)\leq\Eb[1/H_{\ell_n-a_n}^S]$ where we recall that $H^S_{m}=\sum_{j=0}^me^{S_j-S_m}$ (see \eqref{RW} for the definition of the random walk $S$). On the other, for any $\ell_n\leq p\leq\mathfrak{L}_n$ and $1\leq r\leq h_n$, $ \varphi_{n,p}(r)$ is larger, for any $\Tilde{r}>0$, than 
\begin{align*}
   \Eb\Big[\sum_{|x|=p-a_n}\frac{e^{-V(x)}}{h_ne^{-V(x)}+H_x}\un_{\{V(x)\geq \Tilde{r}\log n\}}\Big]\geq\Eb\Big[\frac{1}{h_nn^{-\Tilde{r}}+H_{\mathfrak{L}_n-a_n}^S}\Big]-\Pb(S_{p-a_n}<\Tilde{r}\log n).
\end{align*}
where we have used the many-to-one Lemma \ref{many-to-one}. \\
Note that $\Pb(S_{p-a_n}<\Tilde{r}\log n)\leq\Pb(\min_{(2\delta_0)^{-1}\log n\leq j\leq\mathfrak{L}_n}S_j<\Tilde{r}\log n)\to 0$ when $n\to\infty$ for some $\Tilde{r}>0$ since $a_n=(2\delta_0)^{-1}\log n$ and $\psi'(1)<0$. Moreover, by definition, both $(\Eb[1/H_{\ell_n-a_n}^S])$ and $(\Eb[1/(h_nn^{-\Tilde{r}}+H_{\mathfrak{L}_n-a_n}^S)])$ goes to $c_{\infty}$ when $n$ goes to $\infty$ and we obtain the convergence. Then
\begin{align*}
    &\Eb\Big[\Big|(c_{\infty})^k\sum_{\Vect{z}\in\Delta^k_{a_n}}f(\Vect{z})\prod_{i=1}^ke^{-V(\VectCoord{z}{i})}-\sum_{\Vect{z}\in\Delta^k_{a_n}}f(\Vect{z})\prod_{i=1}^ke^{-V(\VectCoord{z}{i})}\Tilde{\varphi}_{n}(H_{\VectCoord{z}{i}})\Big|\Big] \\ & \leq\fInf\Eb\Big[\sum_{\Vect{z}\in\Delta^k_{a_n}}e^{-\langle\Vect{1},V(\Vect{z})\rangle_k}\Big]\sup_{1\leq r_1,\ldots,r_k\leq h_n}\Big|(c_{\infty})^k-\prod_{i=1}^k\Tilde{\varphi}_{n}(r_i)\Big| \\ & +2\fInf\Eb\Big[\sum_{\Vect{z}\in\Delta^k_{a_n}}\un_{\{\max_{1\leq i\leq k}H_{\VectCoord{z}{i}}>h_n\}}e^{-\langle\Vect{1},V(\Vect{z})\rangle_k}\Big].
\end{align*}
Using Lemma \ref{HighDownfalls}, first \textit{\ref{HighDownfalls1}}, then \textit{\ref{HighDownfalls2}} with $h=h_n$, $\sup_{n\in\N}\Eb[\sum_{\Vect{z}\in\Delta^k_{a_n}}e^{-\langle\Vect{1},V(\Vect{x})\rangle_k}]<\infty$ and $\lim_{n\to\infty}\Eb[\sum_{\Vect{z}\in\Delta^k_{a_n}}\un_{\{\max_{1\leq i\leq k}H_{\VectCoord{z}{i}}>h_n\}}e^{-\langle\Vect{1},V(\Vect{z})\rangle_k}]=0$ thus giving \eqref{limEspcInf}. \\
Finally, putting together \eqref{limUnifCarre'} and \eqref{limEspcInf} yields the result.
\end{proof}

\subsubsection{Convergence of the quasi-martingale \texorpdfstring{$\mathcal{A}^k_l(f)$}{1}}\label{QuasiMartingale}

Recall that 
$$\mathcal{A}^k_{l}(f,\Vect{\beta})=\sum_{\Vect{x}\in\Delta^k_l}f(\Vect{x})e^{-\langle\Vect{\beta},V(\Vect{x})\rangle_{k}}=\sum_{\Vect{x}\in\Delta^k_l}f(\Vect{x})\prod_{i=1}^ke^{-\beta_iV(\VectCoord{x}{i})}\;\;\textrm{ and }\;\;\mathcal{A}^k_l(f)=\mathcal{A}^k_l(f,\Vect{1}).$$
The aim of this subsection is to prove that $\mathcal{A}^k_{\infty}:=\lim_{l\to\infty}\mathcal{A}^k_l(f)$ exists when $f$ satisfies our hereditary Assumption \ref{Assumption3}. For that, let us define for any $\Vect{p}\in\ProdSet{(\N^*)}{k}$
\begin{align*}
    \mathcal{A}^{k}_{\Vect{p}}(f):=\underset{|\Vect{x}|=\Vect{p}}{\sum_{\Vect{x}\in\Delta^k}}f(\Vect{x})e^{-\langle\Vect{1},V(\Vect{x})\rangle_{k}},
\end{align*}
where we recall that for any $\Vect{x}=(\VectCoord{x}{1},\ldots,\VectCoord{x}{k})\in\Delta^k$, $|\Vect{x}|=\Vect{p}$ if and only if $|\VectCoord{x}{i}|=p_i$ for all $i\in\{1,\ldots,k\}$. One can notice that when $\Vect{p}=(l,\ldots,l)\in\ProdSet{(\N^*)}{k}$, we have $\mathcal{A}^k_l(f)=\mathcal{A}^k_{\Vect{p}}(f)$.
\begin{lemm}\label{CauchyCarre}
Let $k\geq 2$ be an integer and assume $\kappa>2k$. Under the Assumptions \ref{Assumption1}, \ref{Assumption2} and \ref{AssumptionIncrements}, for any bounded function $f:\Delta^k\to\R^+$, there exists two constants $\mathfrak{C}_{\ref{CauchyCarre}}>0$ and $\mathfrak{b}\in(0,1)$ such that for any $\Vect{p}\in\ProdSet{(\N^*)}{k}$ and any integer $m\geq 1$ such that $m\leq\max\Vect{p}:=\max_{1\leq i\leq }p_i$
\begin{align*}
    \Eb^*\big[\big|\mathcal{A}^k_{\Vect{p}}(f\un_{\mathcal{C}^k_m})-\mathcal{A}_{\Vect{p}}^k(f)\big|^2\big]\leq\mathfrak{C}_{\ref{CauchyCarre}}e^{-\mathfrak{b}m}.
\end{align*}
\end{lemm}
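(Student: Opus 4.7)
The plan is to bound the $L^2$-norm of
$$\mathcal{A}^k_{\Vect{p}}(f) - \mathcal{A}^k_{\Vect{p}}(f\un_{\mathcal{C}^k_m}) = \mathcal{A}^k_{\Vect{p}}(f\un_{\Delta^k\setminus\mathcal{C}^k_m}),$$
which sums only over $k$-tuples $\Vect{x}$ with $\mathcal{S}^k(\Vect{x})>m$, i.e.\ at least one pair $(\VectCoord{x}{i_1},\VectCoord{x}{i_2})$ has $|\VectCoord{x}{i_1}\wedge\VectCoord{x}{i_2}|\geq m$. Since $\Eb^*\leq \Pb(\text{non-ext.})^{-1}\Eb$, I will work under $\Eb$. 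First I bound $f$ by $\|f\|_\infty$ and use the union bound over pairs:
$$\un_{\Delta^k\setminus\mathcal{C}^k_m}(\Vect{x})\leq\sum_{i_1<i_2}\un_{\{|\VectCoord{x}{i_1}\wedge\VectCoord{x}{i_2}|\geq m\}}.$$
By Cauchy--Schwarz it is enough to control, for each fixed pair $(i_1,i_2)$ with $\min(p_{i_1},p_{i_2})\geq m$ (otherwise the set is empty), the second moment
$$I_{i_1,i_2}(m):=\Eb\Big[\Big(\underset{|\Vect{x}|=\Vect{p}}{\sum_{\Vect{x}\in\Delta^k}}\un_{\{|\VectCoord{x}{i_1}\wedge\VectCoord{x}{i_2}|\geq m\}}e^{-\langle\Vect{1},V(\Vect{x})\rangle_k}\Big)^2\Big].$$

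Next I expand the square as a double sum over $\Vect{x},\Vect{y}\in\Delta^k$ with $|\Vect{x}|=|\Vect{y}|=\Vect{p}$, and split according to the coincidence pattern $I:=\{i:\VectCoord{x}{i}=\VectCoord{y}{i}\}$ between the two tuples (so the concatenated tuple has $2k-|I|$ distinct coordinates). For each coincidence pattern, I further decompose according to the full genealogy of the surviving vertices, exactly in the style of the proof of Theorem~\ref{GenTh3}: successive conditioning on the sigma-algebras $\mathcal{F}_j$ at levels $j$ where blocks split, followed by the many-to-one Lemma~\ref{many-to-one}. Because $\psi(1)=0$, every edge not forced to lie on a shared ancestor line contributes a factor $1$; every shared line of length $L$ with $j$ branches traversing it contributes exactly $e^{L\psi(j)}$, and by Assumption~\ref{Assumption2} the finitely many vertex-branching factors $c_l(\bm{\beta})$ are uniformly bounded since $2\leq l,\sum\beta_i\leq 2k<\kappa$.

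The key point is that in every coincidence pattern the constraint $|\VectCoord{x}{i_1}\wedge\VectCoord{x}{i_2}|\geq m$ (and its counterpart for $\Vect{y}$) forces at least one shared ancestor line of length $\geq m$ whose multiplicity $j$ satisfies $2\leq j\leq 2k$. Summing the position of this MRCA over $\{m,\ldots,\min(p_{i_1},p_{i_2})\}$ yields a geometric series whose ratio is $e^{\psi(j)}<1$ (since $1<j\leq 2k<\kappa$), producing an overall factor bounded by
$$\frac{e^{m\psi(j)}}{1-e^{\psi(j)}}\leq\mathfrak{C}\,e^{-\mathfrak{b}m},\qquad\mathfrak{b}:=\min\Big(1,\,\min_{2\leq j\leq 2k}(-\psi(j))\Big)\in(0,1).$$
All remaining sums (over the generations of the other splits and over the tree structure outside the forced shared line) are uniformly bounded in $\Vect{p}$ by Lemma~\ref{HighDownfalls}\ref{HighDownfalls1}. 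Summing the finitely many coincidence patterns and pairs $(i_1,i_2)$ gives the claim, with $\mathfrak{C}_{\ref{CauchyCarre}}$ depending only on $k$, $\|f\|_\infty$ and the constants in Assumptions~\ref{Assumption1}--\ref{AssumptionIncrements}.

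The main obstacle is therefore bookkeeping: organizing the enumeration over coincidence patterns between $\Vect{x}$ and $\Vect{y}$ and the induced collection of partitions describing the joint genealogy, in such a way that the forced MRCA at depth $\geq m$ is cleanly extracted before one takes the geometric sum. I expect that tracking only the single pair $(i_1,i_2)$ (plus its copy for $\Vect{y}$) and freezing the ancestor line up to generation $j\geq m$ is enough — the rest of the joint tree can then be estimated crudely by its unconstrained analog handled by Lemma~\ref{HighDownfalls}\ref{HighDownfalls1}, avoiding a full partition-by-partition analysis.
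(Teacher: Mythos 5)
Your overall plan is correct and takes a genuinely different route to the exponential decay than the paper does. The reduction itself agrees: after bounding $f$ by $\fInf$, you and the paper both expand the square into a double sum over $\Vect{x},\Vect{y}$ and split according to coincidence patterns, which (as the paper says, "as in the proof of Lemma~\ref{ConvL1}") reduces everything to first-moment-type sums over $\Delta^q$ with $q\in\{k,\ldots,2k\}$. The difference lies in the mechanism producing the factor $e^{-\mathfrak{b}m}$. The paper decomposes on the last-coalescence generation $\mathfrak{f}-1$, conditions on $\mathcal{F}_{\mathfrak{f}-1}$, and at that single generation peels off one extra factor $e^{-\min_{|w|=\mathfrak{f}-1}V(w)}$ (possible because one block has size $\geq 2$, so $\langle\Tilde{\Vect{\beta}},V(\Vect{z})\rangle\geq\langle\Vect{1},V(\Vect{z})\rangle+\min_{|w|=\mathfrak{f}-1}V(w)$); it then splits on whether $\min_{|w|=\mathfrak{f}-1}V(w)\geq r_0(\mathfrak{f}-1)$, using the linear growth of the minimal potential (from $\psi'(1)<0$) on the good event and Cauchy--Schwarz against Lemma~\ref{HighDownfalls}~\textit{\ref{HighDownfalls1}} on the bad one. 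Your route instead carries the genealogical factorization all the way to the root: every edge of the forced ancestor line $\llbracket e,\VectCoord{x}{i_1}\wedge\VectCoord{x}{i_2}\rrbracket$ has multiplicity $j_e\geq 2$ in the concatenated tuple, so via iterated conditioning and many-to-one each such edge contributes $e^{\psi(j_e)}\leq e^{\max_{2\leq j\leq 2k}\psi(j)}<1$, and the line of length $\geq m$ yields $e^{-\mathfrak{b}m}$ with $\mathfrak{b}=\min_{2\leq j\leq 2k}(-\psi(j))>0$, summable geometrically over the MRCA position. This avoids the good/bad-event split and the Cauchy--Schwarz detour entirely, at the cost of carrying the partition bookkeeping from the root upward, which you rightly flag as the main burden. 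One small precision: the multiplicity $j_e$ generally varies along the forced line as further coalescences occur, so the factor is $\prod_e e^{\psi(j_e)}$ rather than a clean $e^{L\psi(j)}$ for a single $j$; since each $j_e\in\{2,\ldots,2k\}\subset(1,\kappa)$ one always has $\psi(j_e)\leq\max_{2\leq j\leq 2k}\psi(j)<0$, so the stated $\mathfrak{b}$ remains the correct rate.
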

\begin{proof}
In order to avoid unnecessary technical difficulties, we prove it for any $\kappa>4$. First note that $\mathcal{A}_{\Vect{p}}^k(f)-\mathcal{A}^k_{\Vect{p}}(f\un_{\mathcal{C}^k_m})=\sum_{\Vect{x}\in\Delta^k;\; |\Vect{x}|=\Vect{p}}f(\Vect{x})\un_{\{\mathcal{S}^k(\Vect{x})>m\}}e^{-\langle\Vect{1},V(\Vect{x})\rangle_{k}}$ which is smaller than $\fInf\sum_{\Vect{x}\in\Delta^k;\; |\Vect{x}|=\Vect{p}}\un_{\{\mathcal{S}^k(\Vect{x})>m\}}e^{-\langle\Vect{1},V(\Vect{x})\rangle_{k}}$. Using a similar argument as we developed in the proof of Lemma \ref{ConvL1}, it is enough to show the following estimation:
\begin{align}\label{LargeGenerationCA}
    \Eb^*\Big[\underset{|\Vect{x}|=\Vect{p}}{\sum_{\Vect{x}\in\Delta^q}}\un_{\{\mathcal{S}^q(\Vect{x})>m\}}e^{-\langle\Vect{1},V(\Vect{x})\rangle_{q}}\Big]\leq\mathfrak{C}_{\ref{CauchyCarre},1}e^{-\mathfrak{b}m},
\end{align}
for any $q\in\{k,\ldots,2k\}$ and some constant $\mathfrak{C}_{\ref{CauchyCarre},1}>0$. Assume that $\min\Vect{p}<\max\Vect{p}$ (the proof is similar when $\min\Vect{p}=\max\Vect{p}$). Note that if $m<\min\Vect{p}$, then
\begin{align*}
    \Eb\Big[\underset{|\Vect{x}|=\Vect{p}}{\sum_{\Vect{x}\in\Delta^q}}\un_{\{\mathcal{S}^q(\Vect{x})>m\}}e^{-\langle\Vect{1},V(\Vect{x})\rangle_{q}}\Big]=&\Eb\Big[\underset{|\Vect{x}|=\Vect{p}}{\sum_{\Vect{x}\in\Delta^q}}\un_{\{m<\mathcal{S}^q(\Vect{x})\leq\min\Vect{p}\}}e^{-\langle\Vect{1},V(\Vect{x})\rangle_{q}}\Big] \\ & +\Eb\Big[\underset{|\Vect{x}|=\Vect{p}}{\sum_{\Vect{x}\in\Delta^q}}\un_{\{\mathcal{S}^q(\Vect{x})>\min\Vect{p}\}}e^{-\langle\Vect{1},V(\Vect{x})\rangle_{q}}\Big].
\end{align*}
One can notice that, if $|\Vect{x}|=\Vect{p}$ and $\mathcal{S}^q(x)\leq\min\Vect{p}$, then $\mathcal{S}^q(x)=\mathcal{S}^q(u)$ for any $\Vect{u}\in\Delta^q$ such that $\max|\Vect{u}|=\min|\Vect{u}|=\min\Vect{p}$. Hence, as usual
\begin{align*}
    \Eb\Big[\underset{|\Vect{x}|=\Vect{p}}{\sum_{\Vect{x}\in\Delta^q}}\un_{\{m<\mathcal{S}^q(\Vect{x})\leq\min\Vect{p}\}}e^{-\langle\Vect{1},V(\Vect{x})\rangle_{q}}\Big]&=\Eb\Big[\sum_{\Vect{u}\in\Delta^q_{\min\Vect{p}}}\un_{\{\mathcal{S}^q(\Vect{u})>m\}}\sum_{\Vect{x}\in\Delta^q;\; \Vect{x}\geq\Vect{u}}e^{-\langle\Vect{1},V(\Vect{x})\rangle_{q}}\Big] \\ & =\Eb\Big[\sum_{\Vect{u}\in\Delta^q_{\min\Vect{p}}}\un_{\{\mathcal{S}^q(\Vect{u})>m\}}e^{-\langle\Vect{1},V(\Vect{u})\rangle_{q}}\Big],
\end{align*}
thus giving
\begin{align*}
    \Eb\Big[\underset{|\Vect{x}|=\Vect{p}}{\sum_{\Vect{x}\in\Delta^q}}\un_{\{\mathcal{S}^q(\Vect{x})>m\}}e^{-\langle\Vect{1},V(\Vect{x})\rangle_{q}}\Big]=&\Eb\Big[\sum_{\Vect{u}\in\Delta^q_{\min\Vect{p}}}\un_{\{\mathcal{S}^q(\Vect{u})>m\}}e^{-\langle\Vect{1},V(\Vect{u})\rangle_{q}}\Big] \\ & +\Eb\Big[\underset{|\Vect{x}|=\Vect{p}}{\sum_{\Vect{x}\in\Delta^q}}\un_{\{\mathcal{S}^q(\Vect{x})>\min\Vect{p}\}}e^{-\langle\Vect{1},V(\Vect{x})\rangle_{q}}\Big].
\end{align*}
We deduce from this equality that it is enough to prove \eqref{LargeGenerationCA} for any $m\leq\min\Vect{p}$ with $q\geq 3$. Again, we focus on the case $\min\Vect{p}<\max\Vect{p}$. \\
Assume $m\leq\min\Vect{p}$. Let $\Vect{x}\in\Delta^q$ such that $|\Vect{x}|=\Vect{p}$ and $\mathcal{C}^q(\Vect{x})>m$. There exists an integer $\mathfrak{f}\in\{m+1,\ldots,\max\Vect{p}\}$ such that, seen backwards in time, at least two vertices among $\VectCoord{x}{1},\ldots,\VectCoord{x}{q}$ share a common ancestor for the first times in the generation $\mathfrak{f}-1$ and there exits at least one vertex among these vertices in a generation smaller or equal to $\mathfrak{f}-1$. Then, one can notice that 
\begin{align*}
   \underset{|\Vect{x}|=\Vect{p}}{\sum_{\Vect{x}\in\Delta^q}}\un_{\{\mathcal{S}^q(\Vect{x})>m\}}e^{-\langle\Vect{1},V(\Vect{x})\rangle_{q}}=\sum_{\mathfrak{f}=m+1}^{\max\Vect{p}}\;\underset{\{1,\ldots,q\},\;|\bm{\pi}|<q}{\sum_{\bm{\pi}\textrm{ partition of}}}\;\underset{|\Vect{x}|=\Vect{p}}{\sum_{\Vect{x}\in\Delta^q}}\un_{\Upsilon_{\mathfrak{f}-1,\bm{\pi}}\cap\Upsilon_{\mathfrak{f},\bm{\eta}}}(\Vect{x})e^{-\langle\Vect{1},V(\Vect{x})\rangle_{q}},
\end{align*}
where $\bm{\eta}=\{\{1\},\ldots,\{q\}\}$ (recall the definition of $\Upsilon_{p-1,\bm{\pi}}\cap\Upsilon_{p,\bm{\eta}}$ in \eqref{Def_f_partition}). \\
By definition, there exists $\mathfrak{y}\in\{1,\ldots,q-2\}$ and $(i_1,\ldots,i_{\mathfrak{y}},i_{\mathfrak{y}+1},\ldots,i_q)\in\llbracket 1,q\rrbracket_q$ such that $\max_{1\leq\mathfrak{l}\leq\mathfrak{y}}p_{i_{\mathfrak{l}}}\leq\mathfrak{f}-1$ and $\min_{\mathfrak{y}+1\leq\mathfrak{l}\leq q}p_{i_{\mathfrak{l}}}\geq\mathfrak{f}-1$. By definition of the set $\Upsilon_{\cdot,\cdot}$, for all $\mathfrak{l}\in\{1,\ldots,\mathfrak{y}\}$, if $i_{\mathfrak{l}}$ belongs to the block $\bm{B}$ of the partition $\bm{\pi}$, then $\bm{B}=\{i_{\mathfrak{l}}\}$. Let $\Bar{\bm{\pi}}:=\bm{\pi}\setminus\{\{i_1\},\ldots,\{i_{\mathfrak{y}}\}\}$ and for all $j\in\{1,\ldots,|\bm{\pi}|-\mathfrak{y}\}$, denote by $\Bar{\bm{B}}_j$ the $j$-th block (ordered by their least element) of the partition $\Bar{\bm{\pi}}$ of the set $\{i_{\mathfrak{y}+1},\ldots,i_q\}=\{1,\ldots,q\}\setminus\{i_1,\ldots,i_{\mathfrak{y}}\}$. We have
\begin{align*}
    \Eb\Big[\underset{|\Vect{x}|=\Vect{p}}{\sum_{\Vect{x}\in\Delta^q}}\un_{\Upsilon_{\mathfrak{f}-1,\bm{\pi}}\cap\Upsilon_{\mathfrak{f},\bm{\eta}}}(\Vect{x})e^{-\langle\Vect{1},V(\Vect{x})\rangle_{q}}\big|\mathcal{F}_{\mathfrak{f}}\Big]= \underset{|\Vect{u}|=\Vect{p}_{\cdot}}{\sum_{\Vect{u}\in\Delta^{\mathfrak{y}}}}e^{-\langle\Vect{1},V(\Vect{u})\rangle_{\mathfrak{y}}}&\sum_{\Vect{z}\in\Delta_{\mathfrak{f}-1}^{|\bm{\pi}|-\mathfrak{y}}}\prod_{j=1}^{|\bm{\pi}|-\mathfrak{y}}\sum_{\VectCoord{\Vect{v}}{j}\in\Delta_{\mathfrak{f}}^{|\Bar{\bm{B}}_j|}}\prod_{i=1}^{|\Bar{\bm{B}}_j|} \\ & \times\un_{\{(\VectCoord{v}{j,i})^*=\VectCoord{z}{j}\}}e^{-V(\VectCoord{v}{j,i})},
\end{align*}
where $|\Vect{u}|=\Vect{p}_{\cdot}$ means that $\VectCoord{u}{\mathfrak{l}}=p_{i_{\mathfrak{l}}}$ for all $\mathfrak{l}\in\{1,\ldots,\mathfrak{y}\}$, $\VectCoord{\Vect{v}}{j}=(\VectCoord{v}{j,1},\ldots,\VectCoord{v}{j,|\Bar{\bm{B}}_j|})$. Thus
\begin{align*}
    \Eb\Big[\underset{|\Vect{x}|=\Vect{p}}{\sum_{\Vect{x}\in\Delta^q}}\un_{\Upsilon_{\mathfrak{f}-1,\bm{\pi}}\cap\Upsilon_{\mathfrak{f},\bm{\eta}}}(\Vect{x})e^{-\langle\Vect{1},V(\Vect{x})\rangle_{q}}\big|\mathcal{F}_{\mathfrak{f}-1}\Big]&=\underset{|\Vect{u}|=\Vect{p}_{\cdot}}{\sum_{\Vect{u}\in\Delta^{\mathfrak{y}}}}e^{-\langle\Vect{1},V(\Vect{u})\rangle_{\mathfrak{y}}}\sum_{\Vect{z}\in\Delta_{\mathfrak{f}-1}^{|\bm{\pi}|-\mathfrak{y}}}e^{-\langle\Bar{\Vect{\beta}},V(\Vect{z})\rangle_{|\bm{\pi}|-\mathfrak{y}}}\prod_{\Bar{\bm{B}}\in\Bar{\bm{\pi}}}c_{\Bar{\bm{B}}}(\Vect{1}) \\ & = \prod_{\Bar{\bm{B}}\in\Bar{\bm{\pi}}}c_{\Bar{\bm{B}}}(\Vect{1})\underset{|\Vect{u}|=\Tilde{\Vect{p}}}{\sum_{\Vect{u}\in\Delta^{|\bm{\pi}|}}}e^{-\langle\Tilde{\Vect{\beta}},V(\Vect{u})\rangle_{|\bm{\pi}|}},
\end{align*}
where $\Tilde{\Vect{p}}=(p_1,\ldots,p_{\mathfrak{y}},\mathfrak{f}-1,\ldots,\mathfrak{f}-1)\in\ProdSet{(\N^*)}{|\bm{\pi}|}$ and $\Tilde{\bm{\beta}}=(1,\ldots,1,\Bar{\bm{B}}_1,\ldots,\Bar{\bm{B}}_{|\bm{\pi}|-\mathfrak{y}})\in\ProdSet{(\N^*)}{|\bm{\pi}|}$. One can notice that there exists $r_0>0$ such that 
\begin{align}\label{EspSmallPot}
    \Eb\Big[\underset{|\Vect{u}|=\Tilde{\Vect{p}}}{\sum_{\Vect{u}\in\Delta^{|\bm{\pi}|}}}e^{-\langle\Tilde{\Vect{\beta}},V(\Vect{u})\rangle_{|\bm{\pi}|}}\un_{\{\min_{|w|=\mathfrak{f}-1}V(w)<r_0(\mathfrak{f}-1)\}}\Big]\leq\mathfrak{C}_{\ref{CauchyCarre},2}e^{-(\mathfrak{f}-1)},
\end{align}
for some constant $\mathfrak{C}_{\ref{CauchyCarre},2}>0$. Indeed, By the Cauchy–Schwarz inequality,
\begin{align*}
    \Eb\Big[\Big(\underset{|\Vect{u}|=\Tilde{\Vect{p}}}{\sum_{\Vect{u}\in\Delta^{|\bm{\pi}|}}}e^{-\langle\Vect{1},V(\Vect{u})\rangle_{|\bm{\pi}|}}\un_{\{\min_{|w|=\mathfrak{f}-1}V(w)<r_0(\mathfrak{f}-1)\}}\Big]\leq&\Eb\Big[\underset{|\Vect{u}|=\Bar{\Vect{p}}}{\sum_{\Vect{u}\in\Delta^{|\bm{\pi}|}}}e^{-\langle\Vect{1},V(\Vect{u})\rangle_{|\bm{\pi}|}}\Big)^2\Big]^{1/2} \\ & \times\Pb\big(\min_{|w|=\mathfrak{f}-1}V(w)<r_0(\mathfrak{f}-1)\big)^{1/2},
\end{align*}
and thanks to Lemma \ref{HighDownfalls} \textit{\ref{HighDownfalls1}}, $\Eb[(\sum_{\Vect{u}\in\Delta^{|\bm{\pi}|;\; |\Vect{u}|=\Bar{\Vect{p}}}}e^{-\langle\Vect{1},V(\Vect{u})\rangle_{|\bm{\pi}|}})^2]\leq\mathfrak{C}_{\ref{HighDownfalls},1}$, where we recall that $\mathfrak{C}_{\ref{HighDownfalls},1}>0$ is a constant does not depending on $\Vect{p}$ (or $\Tilde{\Vect{p}}$) since $|\bm{\pi}|<q\leq 2k$. Moreover, since $\psi'(1)<0$, we can find $r_0>0$ and a constant $\mathfrak{C}_{\ref{CauchyCarre},3}>0$ such that $\Pb(\min_{|w|=\mathfrak{f}-1}V(w)<r_0(\mathfrak{f}-1))\leq\mathfrak{C}_{\ref{CauchyCarre},3}e^{2(\mathfrak{f}-1)}$. This yields \eqref{EspSmallPot}. \\
Now, note that, since $|\bm{\pi}|<q$, there is at least one block of the partition $\bm{\pi}$ with cardinal larger or equal to 2 so $\langle\Tilde{\Vect{\beta}},V(\Vect{z})\rangle_{|\bm{\pi}|}\geq\langle\Vect{1},V(\Vect{z})\rangle_{|\bm{\pi}|}+\min_{|w|=\mathfrak{f}-1}V(w)$ thus giving that the mean $\Eb[\sum_{\Vect{x}\in\Delta^q;\; |\Vect{x}|=\Vect{p}}\un_{\Upsilon_{\mathfrak{f}-1,\bm{\pi}}\cap\Upsilon_{\mathfrak{f},\bm{\eta}}}(\Vect{x})e^{-\langle\Vect{1},V(\Vect{x})\rangle_{q}}]$ is smaller than
\begin{align*}
    &\prod_{\Bar{\bm{B}}\in\Bar{\bm{\pi}}}c_{\Bar{\bm{B}}}(\Vect{1})\Big(\Eb\Big[\Big(\underset{|\Vect{u}|=\Tilde{\Vect{p}}}{\sum_{\Vect{u}\in\Delta^{|\bm{\pi}|}}}e^{-\langle\Tilde{\Vect{\beta}},V(\Vect{u})\rangle_{|\bm{\pi}|}}\un_{\{\min_{|w|=\mathfrak{f}-1}V(w)<r_0(\mathfrak{f}-1)\}}\Big] \\ & +\Eb\Big[e^{-\min_{|w|=\mathfrak{f}-1}V(w)}\underset{|\Vect{u}|=\Tilde{\Vect{p}}}{\sum_{\Vect{u}\in\Delta^{|\bm{\pi}|}}}e^{-\langle\Vect{1},V(\Vect{u})\rangle_{|\bm{\pi}|}}\un_{\{\min_{|w|=\mathfrak{f}-1}V(w)\geq r_0(\mathfrak{f}-1)\}}\Big]\Big),
\end{align*}
which, thanks to Lemma \ref{HighDownfalls} \textit{\ref{HighDownfalls1}} and \eqref{EspSmallPot}, is smaller than $\mathfrak{C}_{\ref{CauchyCarre},4}e^{-(1\land r_0)(\mathfrak{f}-1)}$ for some constant $\mathfrak{C}_{\ref{CauchyCarre},4}>0$. Finally
\begin{align*}
    \Eb\Big[\underset{|\Vect{x}|=\Vect{p}}{\sum_{\Vect{x}\in\Delta^q}}\un_{\{\mathcal{S}^q(\Vect{x})>m\}}e^{-\langle\Vect{1},V(\Vect{x})\rangle_{q}}\Big]\leq\mathfrak{C}_{\ref{CauchyCarre},5}\sum_{\mathfrak{f}=m+1}^{\max\Vect{p}}e^{-(1\land r_0)(\mathfrak{f}-1)}\leq\mathfrak{C}_{\ref{CauchyCarre},1}e^{-(1\land r_0)m},
\end{align*}
for some constant $\mathfrak{C}_{\ref{CauchyCarre},5}>0$ and \eqref{LargeGenerationCA} is proved.
\end{proof}

\noindent The convergence of the sequence of random variables $(\mathcal{A}^k_l(f))_l$ directly follows from Lemma \ref{CauchyCarre}. Indeed, let $f$ be a bounded function satisfying the hereditary Assumption \ref{Assumption3}. For any $l>l'>m>\mathfrak{g}$
\begin{align*}
    \mathcal{A}^k_l(f\un_{\mathcal{C}^k_m})=\sum_{x\in\Delta^k_l}f\un_{\mathcal{C}^k_m}(\Vect{x})e^{-\langle\Vect{1},V(\Vect{x})\rangle_{k}}=\sum_{u\in\Delta^k_{l'}}f\un_{\mathcal{C}^k_m}(\Vect{u})\underset{\Vect{x}>\Vect{u}}{\sum_{x\in\Delta^k_l}}e^{-\langle\Vect{1},V(\Vect{x})\rangle_{k}}, 
\end{align*}
so $\Eb[\mathcal{A}^k_l(f\un_{\mathcal{C}^k_m})|\mathcal{F}_{l'}]=\mathcal{A}^k_{l'}(f\un_{\mathcal{C}^k_m})$ where $\mathcal{F}_m=\sigma((x,V(x));|x|\leq m)$ and $(\mathcal{A}^k_{l}(f\un_{\mathcal{C}^k_m}))_{l>m}$ is a martingale bounded in $L^2(\Pb)$. In particular, for any integer $m>\mathfrak{g}$, $(\mathcal{A}^k_{l}(f\un_{\mathcal{C}^k_m}))_{l>m}$ converges in $L^2(\Pb^*)$ and $\Pb^*$-almost surely. Hence, thanks to Lemma \ref{CauchyCarre}, $(\mathcal{A}_l(f))_l$ is a Cauchy sequence in $L^2(\Pb^*)$ and therefore, $\mathcal{A}_{\infty}^k(f)$ exists.

\subsubsection{$k$-tuples in the set $\Delta^k\setminus\mathcal{C}^k_{a_n}$}
Before proving Proposition \ref{GENPROPCONV2}, let us show that the contribution of the $k$-tuples in the set $\mathfrak{E}^{k,\cdot}\cap\Delta^k\setminus\mathcal{C}^k_{a_n}$ is not significant. To do that, the following lemma provides an estimation for the quasi-independent version \eqref{DefIndepRange} of the range on the set $\mathcal{C}^k_{a_n}$:
\begin{lemm}\label{CoaRecente}
Let $\varepsilon_1\in(0,1)$, $k\geq 2$ and assume $\kappa>2k$. Under the Assumptions \ref{Assumption1}, \ref{Assumption2} and \ref{AssumptionIncrements}, there exist two constants $C_{\ref{CoaRecente}}>0$ and $\rho_8>0$ such that
\begin{align}
       \E^*\Big[\frac{1}{(\sqrtBis{n}\bm{L}_n)^k}\sum_{\bm{j}\in\llbracket 1,\mathfrak{s}_n\rrbracket_k}\mathcal{A}^{k,n}(\Vect{j},\un_{\Delta^k\setminus\mathcal{C}^k_{a_n}})\Big]\leq C_{\ref{CoaRecente}}n^{-\rho_8},
\end{align}
with $\mathfrak{s}_n=\sqrtBis{n}/\varepsilon_1$.
\end{lemm}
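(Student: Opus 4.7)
The plan is to reduce the estimate to an annealed first-moment bound already produced inside the proof of Lemma~\ref{CauchyCarre}, namely~\eqref{LargeGenerationCA}. The key observation is that, because the indices $\bm{j}=(j_1,\ldots,j_k)\in\llbracket 1,\mathfrak{s}_n\rrbracket_k$ are pairwise distinct, the quenched expectation of $\mathcal{A}^{k,n}(\bm{j},\cdot)$ factorizes completely across $i$, turning the hitting probabilities into an annealed quantity controlled by that lemma.

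First, I fix $\bm{j}$ with pairwise distinct entries. By the strong Markov property at each $T^{j_i-1}$, the indicators $\un_{\{N_{\VectCoord{x}{i}}^{T^{j_i}}-N_{\VectCoord{x}{i}}^{T^{j_i-1}}\geq 1\}}$, for varying $i$, depend on disjoint and hence quenched-independent excursions, each distributed as $\un_{\{T_{\VectCoord{x}{i}}<T^1\}}$ under $\P^{\mathcal{E}}$. Hence
\begin{align*}
\E^{\mathcal{E}}\!\bigl[\mathcal{A}^{k,n}(\bm{j},\un_{\Delta^k\setminus\mathcal{C}^k_{a_n}})\bigr]=\sum_{\Vect{p}\in\ProdSet{\{\ell_n,\ldots,\mathfrak{L}_n\}}{k}}\;\underset{|\Vect{x}|=\Vect{p},\,\mathcal{S}^k(\Vect{x})>a_n}{\sum_{\Vect{x}\in\Delta^k}}\prod_{i=1}^k\P^{\mathcal{E}}(T_{\VectCoord{x}{i}}<T^1).
\end{align*}

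Next, I use the exact formula~\eqref{ProbaTA}, $\P^{\mathcal{E}}(T_x<T^1)=e^{-V(x)}/H_x$, together with the elementary fact $H_x\geq 1$ (the term $w=x$ contributes $1$ to the definition of $H_x$), to get the deterministic upper bound $\prod_{i=1}^k\P^{\mathcal{E}}(T_{\VectCoord{x}{i}}<T^1)\leq e^{-\langle\Vect{1},V(\Vect{x})\rangle_k}$. This is precisely the integrand controlled by~\eqref{LargeGenerationCA}; applied with $q=k$ and $m=a_n$ (which is admissible since $a_n=(2\delta_0)^{-1}\log n\leq\ell_n\leq\min\Vect{p}\leq\max\Vect{p}$), it yields constants $\mathfrak{b}\in(0,1)$ and $\mathfrak{C}>0$, uniform in $\Vect{p}$, such that
\begin{align*}
\Eb^*\Big[\underset{|\Vect{x}|=\Vect{p},\,\mathcal{S}^k(\Vect{x})>a_n}{\sum_{\Vect{x}\in\Delta^k}}e^{-\langle\Vect{1},V(\Vect{x})\rangle_k}\Big]\leq\mathfrak{C}\,e^{-\mathfrak{b}\,a_n}.
\end{align*}

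Finally, I sum over the at most $\bm{L}_n^k$ admissible $\Vect{p}$ and the at most $\mathfrak{s}_n^k=(\sqrtBis{n}/\varepsilon_1)^k$ admissible $\bm{j}$, and divide by $(\sqrtBis{n}\bm{L}_n)^k$. Substituting $a_n=(2\delta_0)^{-1}\log n$ leaves an upper bound of order $\varepsilon_1^{-k}\,n^{-\mathfrak{b}/(2\delta_0)}$, which is the claim with $\rho_8:=\mathfrak{b}/(2\delta_0)>0$. The genuinely delicate step here -- converting the event that some pair of vertices coalesces deep in the remote past into an exponential decay in $m$ -- is already packaged into~\eqref{LargeGenerationCA} via the partition-by-partition decomposition and the Cauchy--Schwarz estimate on $\{\min_{|w|=\mathfrak{f}-1}V(w)\geq r_0(\mathfrak{f}-1)\}$ used there, so no further real obstacle arises in the present argument.
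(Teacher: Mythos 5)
Your proposal is correct and follows essentially the same route as the paper: factorize the quenched expectation via the strong Markov property across the distinct excursion indices, bound each hitting probability by $e^{-V(\VectCoord{x}{i})}$ using $H_{\VectCoord{x}{i}}\geq 1$, and then invoke the exponential estimate from the proof of Lemma~\ref{CauchyCarre} applied with $m=a_n$, before summing over the $\mathfrak{s}_n^k$ index tuples and $\bm{L}_n^k$ generation tuples. The only cosmetic difference is that you cite the intermediate estimate~\eqref{LargeGenerationCA} directly, which is cleaner than the paper's shorthand of writing the constant $\mathfrak{C}_{\ref{CauchyCarre}}$ from the lemma's $L^2$ statement when the first-moment bound is what is actually being used.
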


\begin{proof}
Recall that, thanks to the strong Markov property together with \eqref{ProbaTA}
\begin{align*}
    \E^{\mathcal{E}}\Big[\mathcal{A}^{k,n}(\Vect{j},\un_{\Delta^k\setminus\mathcal{C}^k_{a_n}})\Big]=\underset{\ell_n\leq|\Vect{x}|\leq\mathfrak{L}_n}{\sum_{\Vect{x}\in\Delta^k}}\un_{\Delta^k\setminus\mathcal{C}^k_{a_n}}(\Vect{x})\prod_{i=1}^k\frac{e^{-V(\VectCoord{x}{i})}}{H_{\VectCoord{x}{i}}},
\end{align*}
and since $H_{\VectCoord{x}{i}}\geq 1$
\begin{align*}
    \E^*\Big[\sum_{\bm{j}\in\llbracket 1,\mathfrak{s}_n\rrbracket_k}\mathcal{A}^{k,n}(\Vect{j},\un_{\Delta^k\setminus\mathcal{C}^k_{a_n}})\Big]&\leq(\mathfrak{s}_n)^k\sum_{\Vect{p}\in\ProdSet{\{\ell_n,\ldots,\mathfrak{L}_n\}}{k}}\Eb^*\Big[\underset{|\Vect{x}|=\Vect{p}}{\sum_{\Vect{x}\in\Delta^k}}\un_{\{\mathcal{S}^k(\Vect{x})>a_n\}}e^{-\langle\Vect{1},V(\Vect{x})\rangle_k}\Big] \\ & \leq(\mathfrak{s}_n\bm{L}_n)^k\mathfrak{C}_{\ref{CauchyCarre}}e^{-\mathfrak{b}a_n}=(\mathfrak{s}_n\bm{L}_n)^k\mathfrak{C}_{\ref{CauchyCarre}}n^{-\mathfrak{b}(2\delta_0)^{-1}},
\end{align*}
which ends the proof.
\end{proof}

\noindent We are now ready to prove Proposition \ref{GENPROPCONV2}.
\begin{proof}[Proof of Proposition \ref{GENPROPCONV2}]
We have to prove that for any $\varepsilon_1\sqrtBis{n}\leq s\leq\sqrtBis{n}/\varepsilon_1$, $\varepsilon_1\in(0,1)$
\begin{align}\label{ProbaCoaLoint}
    \P^*\Big(\Big|\frac{1}{(s\bm{L}_n)^k}\mathcal{A}^k(\mathcal{D}_{n,T^s},f\un_{\mathfrak{E}^{k,s}\cap\mathcal{C}^k_{a_n}})-(c_{\infty})^k\mathcal{A}^k_{\infty}(f)\Big|>\varepsilon/2\Big)\underset{n\to\infty}{\longrightarrow} 0.
\end{align}
We deduce from Lemma \ref{GenVariance} with $\mathfrak{a}=1$ that the range $\sum_{\bm{j}\in\llbracket 1,s\rrbracket_k}\mathcal{A}^{k,n}(\Vect{j},f\un_{\mathcal{C}_{a_n}^k})$ concentrates around its quenched mean. Indeed, for any $\varepsilon\in(0,1)$, by Markov inequality
\begin{align*}
    &\P\Big(\Big|\sum_{\bm{j}\in\llbracket 1,s\rrbracket_k}\mathcal{A}^{k,n}(\Vect{j},f\un_{\mathcal{C}_{a_n}^k})-\E^{\mathcal{E}}\big[\sum_{\bm{j}\in\llbracket 1,s\rrbracket_k}\mathcal{A}^{k,n}(\Vect{j},f\un_{\mathcal{C}_{a_n}^k})\big]\Big|>\varepsilon(s\bm{L}_n)^k/16\Big) \\ & \leq\frac{16^2}{\varepsilon^1(s\bm{L}_n)^{2k}}\E\Big[\Big(\sum_{\bm{j}\in\llbracket 1,s\rrbracket_k}\mathcal{A}^{k,n}(\Vect{j},f\un_{\mathcal{C}_{a_n}^k})-\E^{\mathcal{E}}\big[\sum_{\bm{j}\in\llbracket 1,s\rrbracket_k}\mathcal{A}^{k,n}(\Vect{j},f\un_{\mathcal{C}_{a_n}^k})\big]\Big)^2\Big] \\ & \leq 16^2\mathfrak{C}_{\ref{GenVariance}}\frac{\mathfrak{L}_n}{s}\leq\mathfrak{C}_{\ref{GENPROPCONV2}}\frac{\mathfrak{L}_n}{\sqrtBis{n}}\underset{n\to\infty}{\longrightarrow}0,
\end{align*}
where the last inequality comes from the fact that $\mathfrak{L}_n=o(\sqrtBis{n})$. Then, we know, thanks to Lemma \ref{LemmeSommeExc} with $g=f$, that $\mathcal{A}^k(\mathcal{D}_{n,T^s},f\un_{\mathfrak{E}^{k,s}\cap\mathcal{C}^k_{a_n}})$ behaves like its quasi-independent version $\sum_{\bm{j}\in\llbracket 1,s\rrbracket_k}\mathcal{A}^{k,n}(\Vect{j},f\un_{\mathcal{C}^k_{a_n}})$: for $n$ large enough
\begin{align*}
    \P^*\Big(\Big|\mathcal{A}^k(\mathcal{D}_{n,T^s},f\un_{\mathfrak{E}^{k,s}\cap\mathcal{C}^k_{a_n}})-\sum_{\bm{j}\in\llbracket 1,s\rrbracket_k}\mathcal{A}^{k,n}(\Vect{j},f\un_{\mathcal{C}^k_{a_n}})\Big|>\varepsilon (s\bm{L}_n)^k/16\Big)\leq n^{-\rho_4}, 
\end{align*}
hence
\begin{align*}
    \lim_{n\to\infty} \P^*\Big(\Big|\mathcal{A}^k(\mathcal{D}_{n,T^s},f\un_{\mathfrak{E}^{k,s}\cap\mathcal{C}^k_{a_n}})-\E^{\mathcal{E}}\big[\sum_{\bm{j}\in\llbracket 1,s\rrbracket_k}\mathcal{A}^{k,n}(\Vect{j},f\un_{\mathcal{C}^k_{a_n}})\big]\Big|>\varepsilon (s\bm{L}_n)^k/8\Big)=0.
\end{align*}
One can notice that $$\E^{\mathcal{E}}\Big[\sum_{\bm{j}\in\llbracket 1,s\rrbracket_k}\mathcal{A}^{k,n}(\Vect{j},f\un_{\mathcal{C}_{a_n}^k})\Big]=s(s-1)\cdots(s-k+1)\underset{\ell_n\leq|\Vect{x}|\leq\mathfrak{L}_n}{\sum_{\Vect{x}\in\Delta^k}}f\un_{\mathcal{C}_{a_n}^k}(\Vect{x})\prod_{i=1}^k\frac{e^{-V(\VectCoord{x}{i})}}{H_{\VectCoord{x}{i}}}.$$
Finally, Lemma \ref{ConvL1} yields
\begin{align*}
    \P^*\Big(\Big|\frac{1}{(s\bm{L}_n)^k}\mathcal{A}^k(\mathcal{D}_{n,T^s},f\un_{\mathfrak{E}^{k,s}\cap\mathcal{C}^k_{a_n}})-(c_{\infty})^k\mathcal{A}^k_{a_n}(f)\Big|>\varepsilon/4\Big)\underset{n\to\infty}{\longrightarrow} 0,
\end{align*}
and the result of the subsection \ref{QuasiMartingale} leads to the convergence in \eqref{ProbaCoaLoint}.

\noindent Now using Lemma \ref{CoaRecente}, we show that 
\begin{align}\label{ProbaCoaRecente}
    \P^*\Big(\frac{1}{(s\bm{L}_n)^k}\mathcal{A}^k(\mathcal{D}_{n,T^s},f\un_{\mathfrak{E}^{k,s}\cap\Delta^k\setminus\mathcal{C}^k_{a_n}})>\varepsilon/2\Big)\underset{n\to\infty}{\longrightarrow} 0.
\end{align}
Indeed
\begin{align*}
    \frac{1}{(s\bm{L}_n)^k}\mathcal{A}^k(\mathcal{D}_{n,T^s},f\un_{\mathfrak{E}^{k,s}\cap\Delta^k\setminus\mathcal{C}^k_{a_n}})&\leq\frac{1}{(s\bm{L}_n)^k}\sum_{\bm{j}\in\llbracket 1,s\rrbracket_k}\mathcal{A}^{k,n}(\Vect{j},f\un_{\Delta^k\setminus\mathcal{C}^k_{a_n}}) \\ & \leq\frac{1}{(\varepsilon_1\sqrtBis{n}\bm{L}_n)^k}\sum_{\bm{j}\in\llbracket 1,\sqrtBis{n}/\varepsilon_1\rrbracket_k}\mathcal{A}^{k,n}(\Vect{j},f\un_{\Delta^k\setminus\mathcal{C}^k_{a_n}}),
\end{align*}
so Markov inequality together with Lemma \ref{CoaRecente} leads to \eqref{ProbaCoaRecente}. We end the proof putting together \eqref{ProbaCoaLoint} and \eqref{ProbaCoaRecente}.
\end{proof}

\subsection{The range on $\Delta^k\setminus\mathfrak{E}^{k,\cdot}$}

Recall 
\begin{align*}
    \mathcal{A}^k(\mathcal{D}_{n,T^s},g)=\underset{\ell_n\leq|\Vect{x}|\leq\mathfrak{L}_n}{\sum_{\Vect{x}\in\Delta^k}}g(\Vect{x})\un_{\{T_{\Vect{x}}<T^s\}},
\end{align*}
where $T_{\Vect{x}}=\max_{1\leq i\leq k}T_{\VectCoord{x}{i}}$, $T_z=\min\{j\geq 0;\; X_j=z\}$, $T^0=0$ and $T^s=\min\{j>T^{s-1};\; X_j=e\}$ for $s\in\N^*$. Also recall that $(\ell_n)$ and $(\mathfrak{L}_n)$ are two sequences of positive integers such that $\delta_0^{-1}\log n\leq\ell_n\leq\mathfrak{L}_n\leq\sqrtBis{n}$. \\
The last step of our study is to show that the contribution of the $k$-tuples of vertices in small generations (see \eqref{SmallGenerations}) and such that at least two of these vertices are visited during the same excursion is not significant. This section is thus devoted to the proof of Proposition \ref{GENPROPCONV1}, claiming that
\begin{align*}
    \P^*\big(\sup_{s\leq\sqrtBis{n}/\varepsilon_1}\mathcal{A}^k(\mathcal{D}_{n,T^{s}},\un_{\Delta^k\setminus\mathfrak{E}^{k,s}})>\varepsilon(\sqrtBis{n}\bm{L}_n)^k\big)\underset{n\to\infty}{\longrightarrow} 0
\end{align*}
\begin{lemm}\label{LemmMultiExcur}
Let $\varepsilon_1\in(0,1)$, $k\geq 2$, let $\mathfrak{s}_n=\sqrtBis{n}/\varepsilon_1$ and assume $\kappa>2k$. Assume that the Assumptions \ref{Assumption1}, \ref{Assumption2}, \ref{AssumptionIncrements} hold and that $\mathfrak{L}_n=o(\sqrtBis{n})$.
\begin{enumerate}[label=(\roman*)]
    \item \label{SameExcu} If 
        \begin{align*}
            \mathfrak{E}^{k,s}_1:=\bigcup_{j=1}^s\bigcap_{i=1}^k\{\Vect{x}=(\VectCoord{x}{1},\ldots,\VectCoord{x}{k})\in\Delta^k;\; \mathcal{L}^{T^{j}}_{\VectCoord{x}{i}}-\mathcal{L}^{T^{j-1}}_{\VectCoord{x}{i}}\geq 1\}
        \end{align*}
        denotes the set of $k$-tuples of vertices visited during the same excursion before the instant $T^s$, then
        \begin{align*}
            \lim_{n\to\infty}\E^*\Big[\frac{1}{(\sqrtBis{n}\bm{L}_n)^k}\sup_{s\leq\mathfrak{s}_n}\mathcal{A}^k(\mathcal{D}_{n,T^{s}},\un_{\mathfrak{S}^{k,s}\cap\mathfrak{E}^{k,s}_1})\Big]=0.
        \end{align*}
    \item \label{MultiExcu} Let $\mathfrak{E}^{k,s}_2:=\Delta^k\setminus(\mathfrak{E}^{k,s}\cup\mathfrak{E}^{k,s}_1)$. If $k\geq 3$ and the Assumption \ref{AssumptionSmallGenerations} hold, then, for all $B>0$
    \begin{align*}
            \lim_{n\to\infty}\E^*\Big[\frac{1}{(\sqrtBis{n}\bm{L}_n)^k}\sup_{s\leq\mathfrak{s}_n}\mathcal{A}^k(\mathcal{D}_{n,T^{s}},\un_{\mathfrak{S}^{k,s}\cap\mathfrak{E}^{k,s}_2}\un_{\{\underline{V}(\cdot)\geq-B\}})\Big]=0,
        \end{align*}
        with $\underline{V}(\Vect{x})\geq-B$ if and only if $\underline{V}(\VectCoord{x}{i})\geq-B$ for all $i\in\{1,\ldots,k\}$. 
\end{enumerate}
\end{lemm}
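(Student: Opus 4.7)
The first step, common to both items, is to eliminate the supremum over $s$. Given a walk trajectory and a vertex $\Vect{x}\in\Delta^k$ with $T_{\Vect{x}}\le T^{\mathfrak{s}_n}$, the excursion index of each $\VectCoord{x}{i}$ is determined by the trajectory, so $\un_{\{T_{\Vect{x}}<T^s\}}\un_{\mathfrak{E}^{k,s}_j}(\Vect{x})$ is non-decreasing in $s$ for both $j=1$ and $j=2$, and hence the supremum over $s\le\mathfrak{s}_n$ is attained at $s=\mathfrak{s}_n$. The constraint $\Vect{x}\in\mathfrak{S}^{k,\mathfrak{s}_n}$ may be dropped up to an event of $\P^*$-probability at most $n^{-\rho_2}$ by Lemma \ref{LemmeExc1}, reducing the problem in both items to an annealed expectation at $s=\mathfrak{s}_n$ without this constraint.

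For item \ref{SameExcu}, I would enumerate the single excursion hosting all $k$ vertices. The strong Markov property at the return times $T^j$ gives
\[
\E^{\mathcal{E}}\bigl[\mathcal{A}^k(\mathcal{D}_{n,T^{\mathfrak{s}_n}},\un_{\mathfrak{E}^{k,\mathfrak{s}_n}_1})\bigr]\le\mathfrak{s}_n\underset{\ell_n\le|\Vect{x}|\le\mathfrak{L}_n}{\sum_{\Vect{x}\in\Delta^k}}\P^{\mathcal{E}}(T_{\Vect{x}}<T^1).
\]
Decomposing $\Vect{x}$ according to its coalescence structure $(\Pi,\bm{m})$---the partition tree of $(\VectCoord{x}{1},\ldots,\VectCoord{x}{k})$ together with the coalescent depths $m_1<\cdots<m_\ell$, $\ell\le k-1$, $m_\ell\le\mathfrak{L}_n$---Lemma \ref{EspProbaTAPartition} bounds the annealed contribution of each slice $\{|\Vect{x}|=\Vect{p}\}\cap\bigcap_j\Gamma^j_{\bm{m},\Pi}$ by a constant $\mathfrak{C}$ uniform in $(\Vect{p},\bm{m},\Pi)$. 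There are $B_k$ partition shapes and at most $\mathfrak{L}_n^{k-1}$ coalescent-time vectors, and $\bm{L}_n^k$ generation choices $\Vect{p}\in\{\ell_n,\ldots,\mathfrak{L}_n\}^k$, so multiplying by $\mathfrak{s}_n\le\sqrtBis{n}/\varepsilon_1$ yields
\[
\E^*\bigl[\mathcal{A}^k(\mathcal{D}_{n,T^{\mathfrak{s}_n}},\un_{\mathfrak{E}^{k,\mathfrak{s}_n}_1})\bigr]\le C_k\,\sqrtBis{n}\,\mathfrak{L}_n^{k-1}\bm{L}_n^k,
\]
which divided by $(\sqrtBis{n}\bm{L}_n)^k$ leaves $O((\mathfrak{L}_n/\sqrtBis{n})^{k-1})=o(1)$ by Assumption \ref{AssumptionSmallGenerations}.

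For item \ref{MultiExcu}, I would classify $\Vect{x}\in\mathfrak{E}^{k,\mathfrak{s}_n}_2$ by the set partition $\mathcal{Q}$ of $\{1,\ldots,k\}$ recording which indices share an excursion (with $2\le|\mathcal{Q}|\le k-1$) together with an injection $\mathcal{Q}\hookrightarrow\{1,\ldots,\mathfrak{s}_n\}$ assigning a host excursion to each block. The i.i.d.\ structure of the excursions under $\P^{\mathcal{E}}$ factorises the joint probability of the prescribed assignment into $\prod_{B\in\mathcal{Q}}\P^{\mathcal{E}}(T_{\Vect{x}_B}<T^1)$, and each block of size $q_B$ reduces, by the same decomposition as above, to an annealed contribution of order $\mathfrak{L}_n^{q_B-1}\bm{L}_n^{q_B}$. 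Summing over the $\le\mathfrak{s}_n^{|\mathcal{Q}|}$ injections yields
\[
\E^*\bigl[\mathcal{A}^k(\mathcal{D}_{n,T^{\mathfrak{s}_n}},\un_{\mathfrak{E}^{k,\mathfrak{s}_n}_2}\un_{\{\underline{V}(\cdot)\ge-B\}})\bigr]\le C_{k,B}\sum_{j=2}^{k-1}\mathfrak{s}_n^{j}\mathfrak{L}_n^{k-j}\bm{L}_n^k,
\]
whose ratio with $(\sqrtBis{n}\bm{L}_n)^k$ is a finite sum of terms $(\mathfrak{L}_n/\sqrtBis{n})^{k-j}$ with exponent $k-j\ge 1$, hence $o(1)$. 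The indicator $\un_{\{\underline{V}(\cdot)\ge-B\}}$ enters precisely here: it controls the $H$-factors at the branch points $\VectCoord{x}{i}\wedge\VectCoord{x}{i'}$ so that Lemma \ref{EspProbaTAPartition} applies with constants depending on $B$ but uniform in the environment along the spines of the vertices considered.

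The main obstacle is the block-factorisation step in item \ref{MultiExcu}, where one must combine the strong Markov property at the successive $T^j$'s with a careful treatment of the joint event ``each block's vertices are visited in its assigned excursion, while vertices from distinct blocks are visited in distinct excursions''. The restriction $\underline{V}\ge-B$ is the technical device that prevents rare environments with deep downward excursions of $V$---which would otherwise make the joint hitting probabilities poorly controlled---from dominating the expectation; once it is in force, the computation becomes structurally parallel to item \ref{SameExcu}, with $\mathfrak{s}_n^{|\mathcal{Q}|}$ absorbing the free excursion indices and $\mathfrak{L}_n^{k-|\mathcal{Q}|}$ absorbing the coalescent generations within each block.
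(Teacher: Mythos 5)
Your treatment of item \textit{\ref{SameExcu}} matches the paper's: bound the supremum by the sum over the $\mathfrak{s}_n$ candidate excursion indices, reduce to the annealed mean of $\P^{\mathcal{E}}(T_{\Vect{x}}<T^1)$ at $s=1$, and invoke Lemma \ref{EspProbaTAPartition} \eqref{EqLemmaProbaTA} with $m=\mathfrak{L}_n$ to get $\mathfrak{s}_n(\bm{L}_n)^k(\mathfrak{L}_n)^{k-1}$. (A small imprecision: this part needs only $\mathfrak{L}_n=o(\sqrtBis{n})$, not Assumption~\ref{AssumptionSmallGenerations}, which is a hypothesis of item \textit{\ref{MultiExcu}} alone.)

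Item \textit{\ref{MultiExcu}} has a genuine gap. After the strong Markov property at the return times, what must be bounded is
\[
\Eb\Big[\underset{|\Vect{x}|=\Vect{p}}{\sum_{\Vect{x}\in\Delta^k}}\un_{\{\underline{V}(\Vect{x})\geq-B\}}\prod_{p=1}^{\mathfrak{e}}\P^{\mathcal{E}}\big(\max_{i\in I_p}T_{\VectCoord{x}{i}}<T^1\big)\Big],
\]
and this does \emph{not} factor over the blocks $I_1,\ldots,I_{\mathfrak{e}}$: vertices carried by distinct excursion blocks live in the \emph{same} tree and may share ancestors, so the sum over $\Vect{x}$ and the annealed expectation $\Eb$ cannot be split block by block. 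Consequently your sentence ``each block of size $q_B$ reduces, by the same decomposition as above, to an annealed contribution of order $\mathfrak{L}_n^{q_B-1}\bm{L}_n^{q_B}$'' is unjustified as it stands: applying Lemma~\ref{EspProbaTAPartition} to each block in isolation presupposes independence of the trees spanned by the blocks. If instead you only use the crude comparison $\prod_p\P^{\mathcal{E}}(\cdot)\le\P^{\mathcal{E}}(T_{\Vect{x}}<T^1)$ and Lemma~\ref{EspProbaTAPartition}, you get a factor $\mathfrak{L}_n^{k-1}$ (not $\mathfrak{L}_n^{k-\mathfrak{e}}$), and after normalisation the leading case $\mathfrak{e}=k-1$ leaves $\mathfrak{L}_n^{k-1}/\sqrtBis{n}$, which does not tend to zero under the hypothesis $\mathfrak{L}_n\Lambda_{l_0}(\mathfrak{L}_n)=o(\sqrtBis{n})$ once $k\geq 3$.

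What the paper actually does is control the generations of the \emph{inter-block} coalescences directly. It introduces $\Upsilon^{k,s}_m$ (MRCAs between vertices of distinct excursion blocks below generation $m$) and the telescoping sequence $\mathfrak{g}_{l,n}=4k\delta_0^{-1}\Lambda_l(\mathfrak{L}_n)$, splits the sum into the shells $\Upsilon^{k,\mathfrak{s}_n}_{\mathfrak{g}_{l-1,n}}\setminus\Upsilon^{k,\mathfrak{s}_n}_{\mathfrak{g}_{l,n}}$, and, conditioning on the last inter-block coalescence at generation $t_{\tau^{\ell}}-1\geq\mathfrak{g}_{l,n}$, pays a factor $e^{kB}$ from $\underline{V}\geq-B$ while harvesting the decay $e^{-k\Lambda_l(\mathfrak{L}_n)}$ from Lemma~\ref{MinPotential}; the recursion $\Lambda_{l-1}=e^{\Lambda_l}$ cancels $(\mathfrak{g}_{l-1,n})^k$ against this decay, and the innermost shell $\Upsilon^{k,\mathfrak{s}_n}_{\mathfrak{g}_{l_0,n}}$ leaves the residual factor $\Lambda_{l_0+1}(\mathfrak{L}_n)^k$. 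This is exactly where Assumption~\ref{AssumptionSmallGenerations} and the ellipticity constraint enter; your sketch names the indicator $\un_{\{\underline{V}\geq-B\}}$ but does not make this mechanism operative. The conclusion $\mathfrak{L}_n^{k-\mathfrak{e}}$ per generation vector is the right exponent, but it is earned through this coalescence-depth analysis, not through a block-wise factorisation.
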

\begin{proof}
In order to avoid unnecessary technical difficulties, we prove it for any $\kappa>4$. Let us start with the proof of \textit{\ref{SameExcu}}. By definition, $\Vect{x}\in\mathfrak{S}^{k,s}\cap\mathfrak{E}^{k,s}_1$ if and only if there exists $j\in\{1,\ldots,s\}$ such that for all $1\leq i\leq k$, $N^{T^j}_{\VectCoord{x}{i}}-N^{T^{j-1}}_{\VectCoord{x}{i}}\geq 1$ and for all $p\not= j$, $N^{T^p}_{\VectCoord{x}{i}}-N^{T^{p-1}}_{\VectCoord{x}{i}}=0$. Thus, using again the strong Markov property
\begin{align*}
    \E\Big[\sup_{s\leq\mathfrak{s}_n}\mathcal{A}^k(\mathcal{D}_{n,T^{s}},\un_{\mathfrak{S}^{k,s}\cap\mathfrak{E}^{k,s}_1})\Big]&=\Eb\Big[\sup_{s\leq\mathfrak{s}_n}\sum_{j=1}^{s}\underset{\ell_n\leq|\Vect{x}|\leq\mathfrak{L}_n}{\sum_{\Vect{x}\in\Delta^k}}\un_{\cap_{i=1}^k\cap_{p\not=j}\{N^{T^{j}}_{\VectCoord{x}{i}}-N^{T^{j-1}}_{\VectCoord{x}{i}}\geq 1,N^{T^{p}}_{\VectCoord{x}{i}}-N^{T^{p-1}}_{\VectCoord{x}{i}}=0\}}\Big] \\ & \leq\sum_{j=1}^{\mathfrak{s}_n}\E\Big[\underset{\ell_n\leq|\Vect{x}|\leq\mathfrak{L}_n}{\sum_{\Vect{x}\in\Delta^k}}\prod_{i=1}^k\un_{\{N^{T^{j}}_{\VectCoord{x}{i}}-N^{T^{j-1}}_{\VectCoord{x}{i}}\geq 1\}}\Big] \\ & \leq\mathfrak{s_n}\sum_{\Vect{p}\in\ProdSet{\{\ell_n,\ldots,\mathfrak{L}_n\}}{k}}\Eb\Big[\underset{|\Vect{x}|=\Vect{p}}{\sum_{\Vect{x}\in\Delta^k}}\P^{\mathcal{E}}(T_{\Vect{x}}<T^1)\Big] \\ & \leq\mathfrak{C}_{\ref{GENPROPCONV1},1}\mathfrak{s}_n(\bm{L}_n)^k(\mathfrak{L}_n)^{k-1},
\end{align*}
where we have used Lemma \ref{EspProbaTAPartition} \eqref{EqLemmaProbaTA} with $m=\mathfrak{L}_n$ for the last inequality, recalling that the constant $\mathfrak{C}'_{\ref{GENPROPCONV1},1}>0$ does not depend on $\Vect{p}$. By definition of $\mathfrak{s}_n$
\begin{align*}
    \E\Big[\frac{1}{(\sqrtBis{n}\bm{L}_n)^k}\sup_{s\leq\mathfrak{s}_n}\mathcal{A}^k(\mathcal{D}_{n,T^{s}},\un_{\mathfrak{S}^{k,s}\cap\mathfrak{E}^{k,s}_1})\Big]\leq\frac{\mathfrak{C}_{\ref{GENPROPCONV1},1}}{\varepsilon_1}\Big(\frac{\mathfrak{L}_n}{\sqrtBis{n}}\Big)^{k-1},
\end{align*}
which goes to $0$ when $n$ goes to $\infty$ since $\mathfrak{L}_n=o(\sqrtBis{n})$ and this yields \textit{\ref{SameExcu}}.

\vspace{0.2cm}

\noindent We now focus on \textit{\ref{MultiExcu}}. Since $k\geq 3$, $\mathfrak{E}^{k,s}_2$ is nothing but the set of $k$-tuples in $\Delta^k$ of vertices neither visited during $k$ distinct excursions, nor during the same excursion. Therefore, there exists $\mathfrak{e}\in\{2,\ldots,k-1\}$ and $\mathfrak{e}$ disjoint subsets $I_1,\ldots,I_{\mathfrak{e}}$ of $\{1,\ldots,k\}$ such that $\{1,\ldots,k\}=I_1\cup\cdots\cup I_{\mathfrak{e}}$ and for any $j\in\{1,\ldots,\mathfrak{e}\}$, $i,i'\in I_j$ if and only if $\VectCoord{x}{i}$ and $\VectCoord{x}{i'}$ are visited during the same excursion before the instant $T^s$:
\begin{align*}
    \exists\; \mathfrak{j}\in\{1,\ldots,s\}:\; \big(\mathcal{L}^{T^{\mathfrak{j}}}_{\VectCoord{x}{i}}-\mathcal{L}^{T^{\mathfrak{j}-1}}_{\VectCoord{x}{i}}\big)\land\big(\mathcal{L}^{T^{\mathfrak{j}}}_{\VectCoord{x}{i'}}-\mathcal{L}^{T^{\mathfrak{j}-1}}_{\VectCoord{x}{i'}}\big)\geq 1.
\end{align*}
Let $m\in\N^*$ and introduce the following subset of $\Delta^k$
\begin{align*}
    \Upsilon^{k,s}_m:=\{\Vect{x}=(\VectCoord{x}{1},\ldots,\VectCoord{x}{k})\in\Delta^k;\;\forall j\not=j'\in\{1,\ldots,\mathfrak{e}\}, \forall\; i\in I_j,\forall\; i'\in I_{j'}:\; |\VectCoord{x}{i}\land\VectCoord{x}{i'}|<m\},
\end{align*}
where we recall that $u\land v$ is the most recent common ancestor (MRCA) of $u$ and $v$. $\Upsilon^k_m$ is the set of $k$-tuples of vertices such that the MRCA of two vertices visited during two distinct excursions before the instant $T^s$ has to be in a generation smaller than $m$. Note that the MRCA of two vertices visited during the same excursion can be in a generation larger or equal to $m$. \\
Recall that $(\Lambda_l)_{l\in\N}$ is the sequence of functions such that for all $t>0$, $\Lambda_0(t)=t$ and for any $l\in\{1,\ldots,l_0\}$, $\Lambda_{l-1}(t)=e^{\Lambda_l(t)}$ (see the Assumption \ref{AssumptionSmallGenerations}). Introduce $\mathfrak{g}_{l,n}:=4k\delta_0^{-1}\Lambda_l(\mathfrak{L}_n)$. Note that $\mathfrak{g}_{0,n}>\mathfrak{L}_n$ so
\begin{align*}
    \E\Big[\sup_{s\leq\mathfrak{s}_n}\mathcal{A}^k(\mathcal{D}_{n,T^{s}},\un_{\mathfrak{S}^{k,s}\cap\mathfrak{E}^{k,s}_2}\un_{\{\underline{V}(\cdot)\geq-B\}})\Big]=\E\Big[\sup_{s\leq\mathfrak{s}_n}\mathcal{A}^k(\mathcal{D}_{n,T^{s}},\un_{\mathfrak{S}^{k,s}\cap\mathfrak{E}^{k,s}_2}\un_{\{\underline{V}(\cdot)\geq-B\}\cap\mathcal{C}^k_{\mathfrak{g}_{0,n}}})\Big].
\end{align*}
Recall that for any $\Vect{x}=(\VectCoord{x}{1},\ldots,\VectCoord{x}{k})\in\Delta^k$, it belongs to $\mathfrak{S}^{k,s}$ if and only if $\VectCoord{x}{i}$ is visited during a single excursion before the instant $T^{s}$ for all $i\in\{1,\ldots,k\}$. Using what we previously said, we have, for any $s\leq\mathfrak{s}_n$
\begin{align*}
    \un_{\mathfrak{S}^{k,s}\cap\mathfrak{E}_2^{k,s}}(\Vect{x})&\leq\sum_{\mathfrak{e}=2}^{k-1}\;\sum_{\Vect{j}\in\llbracket 1,s\rrbracket_{\mathfrak{e}}}\underset{\cup_{\mathfrak{l}=1}^{\mathfrak{e}}I_{\mathfrak{l}}=\{1,\ldots,k\}}{\sum_{I_1,\ldots,I_{\mathfrak{e}}\textrm{ sets}}}\prod_{p=1}^{\mathfrak{e}}Y_p \\ & \leq\sum_{\mathfrak{e}=2}^{k-1}\;\sum_{\Vect{j}\in\llbracket 1,\mathfrak{s}_n\rrbracket_{\mathfrak{e}}}\;\underset{\cup_{\mathfrak{l}=1}^{\mathfrak{e}}I_{\mathfrak{l}}=\{1,\ldots,k\}}{\sum_{I_1,\ldots,I_{\mathfrak{e}}\textrm{ sets}}}\prod_{p=1}^{\mathfrak{e}}Y_p,
\end{align*}
where, for any $p\in\{1,\ldots,\mathfrak{e}\}$, $Y_p:=\un_{\cap_{i\in I_p}\{\mathcal{L}^{T^{j_p}}_{\VectCoord{x}{i}}-\mathcal{L}^{T^{j_p-1}}_{\VectCoord{x}{i}}\geq 1\}}$. It follows that 
\begin{align}\label{EspSup}
    &\E\big[\sup_{s\leq\mathfrak{s}_n}\mathcal{A}^k(\mathcal{D}_{n,T^{s}},\un_{\mathfrak{S}^{k,s}\cap\mathfrak{E}^{k,s}_2}\un_{\{\underline{V}(\cdot)\geq-B\}\cap\mathcal{C}^k_{\mathfrak{g}_{0,n}}})\big] \nonumber \\ & \leq\sum_{\Vect{p}\in\ProdSet{\{\ell,\ldots,\mathfrak{L}_n\}}{k}}\sum_{l=1}^{l_0}\sum_{\mathfrak{e}=2}^{k-1}\;\sum_{\Vect{j}\in\llbracket 1,\mathfrak{s}_n\rrbracket_{\mathfrak{e}}}\;\underset{\cup_{\mathfrak{l}=1}^{\mathfrak{e}}I_{\mathfrak{l}}=\{1,\ldots,k\}}{\sum_{I_1,\ldots,I_{\mathfrak{e}}\textrm{ sets}}}\E\Big[\underset{|\Vect{x}|=\Vect{p}}{\sum_{\Vect{x}\in\Delta^k}}\un_{\{\underline{V}(\Vect{x})\geq-B\}}\un_{\Upsilon^{k,\mathfrak{s}_n}_{\mathfrak{g}_{l-1,n}}\setminus\Upsilon^{k,\mathfrak{s}_n}_{\mathfrak{g}_{l,n}}}(\Vect{x})\prod_{p=1}^{\mathfrak{e}}Y_p\Big] \nonumber \\ & + \sum_{\Vect{p}\in\ProdSet{\{\ell,\ldots,\mathfrak{L}_n\}}{k}}\sum_{\mathfrak{e}=2}^{k-1}\;\sum_{\Vect{j}\in\llbracket 1,\mathfrak{s}_n\rrbracket_{\mathfrak{e}}}\;\underset{\cup_{\mathfrak{l}=1}^{\mathfrak{e}}I_{\mathfrak{l}}=\{1,\ldots,k\}}{\sum_{I_1,\ldots,I_{\mathfrak{e}}\textrm{ sets}}}\E\Big[\underset{|\Vect{x}|=\Vect{p}}{\sum_{\Vect{x}\in\Delta^k}}\un_{\{\underline{V}(\Vect{x})\geq-B\}}\un_{\Upsilon^{k,\mathfrak{s}_n}_{\mathfrak{g}_{l_0,n}}}(\Vect{x})\prod_{p=1}^{\mathfrak{e}}Y_p\Big].
\end{align}
First, let us prove that for any $\Vect{p}\in\ProdSet{\{\ell_n,\ldots,\mathfrak{L}_n\}}{k}$,
\begin{align}\label{MultiExcuControl}
    \E\Big[\underset{|\Vect{x}|=\Vect{p}}{\sum_{\Vect{x}\in\Delta^k}}\un_{\{\underline{V}(\Vect{x})\geq-B\}}\un_{\Upsilon^{k,\mathfrak{s}_n}_{\mathfrak{g}_{l-1,n}}\setminus\Upsilon^{k,\mathfrak{s}_n}_{\mathfrak{g}_{l,n}}}(\Vect{x})\prod_{p=1}^{\mathfrak{e}}Y_p\Big]\leq\mathfrak{C}_{\ref{LemmMultiExcur},1}(\mathfrak{L}_n)^{k-\mathfrak{e}}.
\end{align}
\noindent The proof of \eqref{MultiExcuControl} is quite technical so in order to keep it as clear as possible, as one can notice in the proof of Lemmas \ref{HighDownfalls} \textit{\ref{HighDownfalls1}} and \ref{EspProbaTAPartition} \eqref{EqLemmaProbaTA} with $m=\mathfrak{L}_n$, we can and shall restrict to the case $\Vect{p}=(m,\ldots,m)\in\ProdSet{\{\ell_n,\ldots,\mathfrak{L}_n\}}{k}$.

\vspace{0.2cm}

\noindent Thanks to the strong Markov property, the random variables $Y_1,\ldots,Y_{\mathfrak{e}}$ are i.i.d under $\P^{\mathcal{E}}$ and
\begin{align*}
    &\E\Big[\underset{|\Vect{x}|=\Vect{p}}{\sum_{\Vect{x}\in\Delta^k}}\un_{\{\underline{V}(\Vect{x})\geq-B\}}\un_{\Upsilon^{k,\mathfrak{s}_n}_{\mathfrak{g}_{l-1,n}}\setminus\Upsilon^{k,\mathfrak{s}_n}_{\mathfrak{g}_{l,n}}}(\Vect{x})\prod_{p=1}^{\mathfrak{e}}Y_p\Big] \\ & =\sum_{\Vect{x}\in\Delta^k_m}\un_{\{\underline{V}(\Vect{x})\geq-B\}}\un_{\Upsilon^{k,\mathfrak{s}_n}_{\mathfrak{g}_{l-1,n}}\setminus\Upsilon^{k,\mathfrak{s}_n}_{\mathfrak{g}_{l,n}}}(\Vect{x})\prod_{p=1}^{\mathfrak{e}}\P^{\mathcal{E}}\big(\max_{i\in I_p}T_{\VectCoord{x}{i}}<T^1\big).
\end{align*}
As usual, $\sum_{\Vect{x}\in\Delta^k_m}\un_{\{\underline{V}(\Vect{x})\geq-B\}}\un_{\Upsilon^{k,\mathfrak{s}_n}_{\mathfrak{g}_{l-1,n}}\setminus\Upsilon^{k,\mathfrak{s}_n}_{\mathfrak{g}_{l,n}}}(\Vect{x})\prod_{p=1}^{\mathfrak{e}}\P^{\mathcal{E}}(\max_{i\in I_p}T_{\VectCoord{x}{i}}<T^1)$ is equal to
\begin{align}\label{DecompoMultiExcu}
    \sum_{\ell=1}^{k-1}\;\sum_{\Pi\textrm{ increasing}}\;\sum_{\bm{t};t_1<\ldots<t_{\ell}<m}\;\sum_{\Vect{x}\in\Delta^k_m}\un_{\{\underline{V}(\Vect{x})\geq-B\}}f^{\ell}_{\bm{t},\Pi}\un_{\Upsilon^{k,\mathfrak{s}_n}_{\mathfrak{g}_{l-1,n}}\setminus\Upsilon^{k,\mathfrak{s}_n}_{\mathfrak{g}_{l,n}}}(\Vect{x})\prod_{p=1}^{\mathfrak{e}}\P^{\mathcal{E}}\big(\max_{i\in I_p}T_{\VectCoord{x}{i}}<T^1\big),
\end{align}
where the genealogical tree function $f^{\ell}_{\Vect{t},\Pi}$ is defined in \eqref{Def_f_partition}. Recall that $t_1-1,\ldots,t_{\ell}-1$ correspond to the consecutive coalescent/split times. We then define
\begin{align*}
    \tau^{\ell}:=\max\{\mathfrak{j}\in\{1,\ldots,\ell\};\; \exists\;p\not=p'\in\{2,\ldots,\mathfrak{e}\},\; \exists\;\bm{B}\in\pi_{\mathfrak{j}-1}:\bm{B}\cap I_p\not=\varnothing\textrm{ and }\bm{B}\cap I_{p'}\not=\varnothing\},
\end{align*}
and the $\Vect{x}$-version $\tau^{\ell}(\Vect{x})$ of $\tau^{\ell}$:

\begin{align*}
    \tau^{\ell}(\Vect{x}):=\max\{\mathfrak{j}\in\{1,\ldots,\ell\};\; \exists\;p\not=p'\in\{2,\ldots,\mathfrak{e}\},\; \exists\; i\in I_p,\; i'\in I_{p'}:\; |\VectCoord{x}{i}\land\VectCoord{x}{i'}|=t_{\mathfrak{j}}-1\}.
\end{align*}
In other words, if the genealogical tree of $\Vect{x}\in\Delta^k$ is given by $f^{\ell}_{\Vect{t},\Pi}$, then $\tau^{\ell}=\tau^{\ell}(\Vect{x})$ and $t_{\tau^{\ell}}-1$ is the last generation at which two or more vertices visited during two distinct excursions share a common ancestor.

\vspace{0.5cm}

\begin{figure*}[h!]
\centering
\begin{tikzpicture}

\fill [BrickRed!5] (-4.2,7)--(3.2,7)--(3.2,10)--(-4.2,10)--(-4.2,7);
\fill [ForestGreen!5] (-4.2,7)--(3.2,7)--(3.2,4.7)--(-4.2,4.7)--(-4.2,7);


\draw[black,fill] (-1,0) circle [radius=0.02];
\filldraw (-1,0) node[anchor=west] {$e$};


\draw (-1,0) -- (-1,2);
\draw[black,fill] (-1,2) circle [radius=0.02];


\draw (-1,2) -- (0,3);
\draw[black,fill] (0,3) circle [radius=0.02];


\draw (-1,2) -- (-2.5,4);
\draw[black,fill] (-2.5,4) circle [radius=0.02];


\draw (-2.5,4) -- (-3,5.7);
\draw[black,fill] (-3,5.7) circle [radius=0.02];


\draw (0,3) -- (2,6.2);


\draw (2,6.2) -- (1.2,7.5);
\draw[black,fill] (1.2,7.5) circle [radius=0.02];


\draw (-3,5.7) -- (-2.5,8.2);
\draw[black,fill] (-2.5,8.2) circle [radius=0.02];


\draw (-2.5,4) -- (-1.6,7.7);
\draw[black,fill] (-1.6,7.7) circle [radius=0.02];

\draw (0,3) -- (0.1,5.7);
\draw[black,fill] (0.1,5.7) circle [radius=0.02];


\draw (-3,5.7) -- (-4,9);
\draw[MidnightBlue,fill] (-4,9) circle [radius=0.02];
\filldraw (-4.2,9.2) node[anchor=west] {\textcolor{MidnightBlue}{$3$}};

\draw (-2.5,8.2) -- (-2.9,9.6);
\draw[YellowOrange,fill] (-2.9,9.6) circle [radius=0.02];
\filldraw (-3.1,9.8) node[anchor=west] {\textcolor{YellowOrange}{$6$}};

\draw (-2.5,8.2) -- (-2.4,9.3);
\draw[YellowOrange,fill] (-2.4,9.3) circle [radius=0.02];
\filldraw (-2.6,9.5) node[anchor=west] {\textcolor{YellowOrange}{$6$}};

\draw (-1.6,7.7) -- (-1.9,8.6);
\draw[BrickRed,fill] (-1.9,8.6) circle [radius=0.02];
\filldraw (-2.1,8.8) node[anchor=west] {\textcolor{BrickRed}{$8$}};

\draw (-1.6,7.7) -- (-1.5,8.2);
\draw[BrickRed,fill] (-1.5,8.2) circle [radius=0.02];
\filldraw (-1.7,8.4) node[anchor=west] {\textcolor{BrickRed}{$8$}};

\draw (0.1,5.7) -- (-0.8,8.3);
\draw[MidnightBlue,fill] (-0.8,8.3) circle [radius=0.02];
\filldraw (-1,8.5) node[anchor=west] {\textcolor{MidnightBlue}{$7$}};

\draw (0.1,5.7) -- (-0.1,8);
\draw[MidnightBlue,fill] (-0.1,8) circle [radius=0.02];
\filldraw (-0.4,8.2) node[anchor=west] {\textcolor{MidnightBlue}{$12$}};

\draw (0.1,5.7) -- (0.5,9.2);
\draw[YellowOrange,fill] (0.5,9.2) circle [radius=0.02];
\filldraw (0.3,9.4) node[anchor=west] {\textcolor{YellowOrange}{$6$}};

\draw (1.2,7.5) -- (1,8.2);
\draw[BrickRed,fill] (1,8.2) circle [radius=0.02];
\filldraw (0.8,8.4) node[anchor=west] {\textcolor{BrickRed}{$8$}};

\draw (1.2,7.5) -- (1.4,8.7);
\draw[BrickRed,fill] (1.4,8.7) circle [radius=0.02];
\filldraw (1.2,8.9) node[anchor=west] {\textcolor{BrickRed}{$8$}};

\draw (2,6.2) -- (2,9);
\draw[MidnightBlue,fill] (2,9) circle [radius=0.02];
\filldraw (1.7,9.2) node[anchor=west] {\textcolor{MidnightBlue}{$15$}};

\draw (2,6.2) -- (3,8.5);
\draw[BrickRed,fill] (3,8.5) circle [radius=0.02];
\filldraw (2.8,8.7) node[anchor=west] {\textcolor{BrickRed}{$8$}};


\draw[latex-] (5.5,10) -- (5.5,0);
\filldraw (5.5,10) node[anchor=west] {\textrm{ generation }};

\filldraw (5.5,0) node[anchor=west] {$0$};

\draw[dotted] (-4.2,4.7) -- (5.5,4.7);
\filldraw (5.5,4.7) node[anchor=west] {$\mathfrak{g}_{l,n}$};

\draw[dotted] (-4.2,7) -- (5.5,7);
\filldraw (5.5,7) node[anchor=west] {$\mathfrak{g}_{l-1,n}$};

\draw[dotted] (-4.2,6.2) -- (5.5,6.2);
\filldraw (5.5,6.2) node[anchor=west] {$t_{\tau^8}-1$};

\draw[ForestGreen,fill] (2,6.2) circle [radius=0.06];


\draw[BrickRed!50] (-7.3,1)--(-6.7,1)--(-6.7,1.6)--(-7.3,1.6)--(-7.3,1);
\fill [BrickRed!5] (-7.3,1)--(-6.7,1)--(-6.7,1.6)--(-7.3,1.6)--(-7.3,1);
\filldraw (-6.7,1.6) node[anchor=west] {coalescences  between vertices};
\filldraw (-6.7,1.3) node[anchor=west] {visited during distinct excursions};
\filldraw (-6.7,1) node[anchor=west] {are not permitted in this zone.};

\draw[ForestGreen!50] (-7.3,-0.3)--(-6.7,-0.3)--(-6.7,0.3)--(-7.3,0.3)--(-7.3,-0.3);
\fill [ForestGreen!5] (-7.3,-0.3)--(-6.7,-0.3)--(-6.7,0.3)--(-7.3,0.3)--(-7.3,-0.3);
\filldraw (-6.7,0.3) node[anchor=west] {A coalescence between vertices};
\filldraw (-6.7,0) node[anchor=west] {visited during distinct excursions};
\filldraw (-6.7,-0.3) node[anchor=west] {has to happen in this zone.};

\draw[ForestGreen,fill] (-7,-1.1) circle [radius=0.06];
\filldraw (-6.9,-1.1) node[anchor=west] {The last common ancestor between};
\filldraw (-6.9,-1.4) node[anchor=west] {vertices visited during distinct excursions.};
\end{tikzpicture}
\end{figure*}

\begin{figure}
    \centering
    \caption{An example of a $12$-tuple belonging to $\Upsilon^{12,\cdot}_{\mathfrak{g}_{l-1,n}}\setminus\Upsilon^{12,\cdot}_{\mathfrak{g}_{l,n}}$ whose genealogical tree is given by $f^{\ell}_{\Vect{t},\Pi}$. \textcolor{YellowOrange}{$6$} means that the corresponding \textcolor{YellowOrange}{vertex} is visited during the \textcolor{YellowOrange}{$6$}-th excursion above $e^*$. In the present example, $\ell=8$ and $\tau^{8}=4$.}
\end{figure}

\newpage

\noindent By definition of $\tau^{\ell}$, for all $\mathfrak{j}\geq\tau^{\ell}$, if $\bm{B}\in\pi_{\mathfrak{j}}$, then $\bm{B}$ is necessarily a subset of $I_{p'}$ for some $p'\in\{1,\ldots,\mathfrak{e}\}$. In other words, each coalescence that occurs between $t_{\tau^{\ell}+1}$ and $t_{\ell}$ involves exclusively two or more vertices visited during the same excursion. As a consequence, for any $i\in\{\tau^{\ell},\ldots,\ell\}$ and $p\in\{1,\ldots,\mathfrak{e}\}$, we can defined the set $I_p^{i}$ as follows: we first set $I_p^{\ell}:=I_p$ so $I_1^{\ell},\ldots,I_{\mathfrak{e}}^{\ell}$ form a partition of $\{1,\ldots,k\}$. As we said before, by definition of $\tau^{\ell}$, coalescences can only happen between two or more vertices which indexes belong to the same $I_p^{\ell}$. Thus, for any $p\in\{1,\ldots,\mathfrak{e}\}$, there exists an integer $\mathfrak{e}_p^{\ell-1}\geq 1$ and $\mathfrak{e}_p^{\ell-1}$ distinct integers $k^{\ell-1}_{p,1},\ldots,k^{\ell-1}_{p,\mathfrak{e}_p^{\ell-1}}$ in $\{1,\ldots,|\bm{\pi}_{\ell-1}|\}$ such that for any $j\in\{k^{\ell-1}_{p,1},\ldots,k^{\ell-1}_{p,\mathfrak{e}_p^{\ell-1}}\}$, the block $\bm{B}_j^{\ell-1}$ of the partition $|\bm{\pi}_{\ell-1}|$ is the union of $b_{\ell-1}(\bm{B}_j)$ block(s) of the partition $\bm{\pi}_{\ell}$ of elements of $F^{\ell}_p$. We set $F^{\ell-1}_p:=\{k^{\ell-1}_{p,1},\ldots,k^{\ell-1}_{p,\mathfrak{e}_p^{\ell-1}}\}$ so $I_1^{\ell-1},\ldots,I_{\mathfrak{e}}^{\ell-1}$ form a partition of $\{1,\ldots,|\bm{\pi}_{\ell-1}|\}$. Now, let $i\in\{\tau^{\ell}+1,\ldots,\ell\}$ and assume that $F^i_p$ has been built. By definition of $\tau^{\ell}$, for any $p\in\{1,\ldots,\mathfrak{e}\}$, there exists an integer $\mathfrak{e}_p^{i-1}\geq 1$ and $\mathfrak{e}_p^{i-1}$ distinct integer $k^{i-1}_{p,1},\ldots,k^{i-1}_{p,\mathfrak{e}_p^{i-1}}$ in $\{1,\ldots,|\bm{\pi}_{i-1}|\}$ such that for any $j\in\{k^{i-1}_{p,1},\ldots,k^{i-1}_{p,\mathfrak{e}_p^{i-1}}\}$, the block $\bm{B}_j^{i-1}$ of the partition $|\bm{\pi}_{i-1}|$ is the union of $b_{i-1}(\bm{B}_j)$ block(s) of the partition $\bm{\pi}_{i}$ of elements of $I^{i}_p$. We set $I^{i-1}_p:=\{k^{i-1}_{p,1},\ldots,k^{\ell-1}_{p,\mathfrak{e}_p^{i-1}}\}$ so $I_1^{i-1},\ldots,I_{\mathfrak{e}}^{i-1}$ form a partition of $\{1,\ldots,|\bm{\pi}_{i-1}|\}$. \\
Hence, noticing that 
$$ f^{\ell}_{\bm{t},\Pi}\un_{\Upsilon^{k,\mathfrak{s}_n}_{\mathfrak{g}_{l-1,n}}\setminus\Upsilon^{k,\mathfrak{s}_n}_{\mathfrak{g}_{l,n}}}(\Vect{x})\leq f^{\ell}_{\bm{t},\Pi}(\Vect{x})\un_{\{\mathfrak{g}_{l,n}\leq t_{\tau^{\ell}(\Vect{x})}-1<\mathfrak{g}_{l-1,n}\}}=f^{\ell}_{\bm{t},\Pi}(\Vect{x})\un_{\{\mathfrak{g}_{l,n}\leq t_{\tau^{\ell}}-1<\mathfrak{g}_{l-1,n}\}}, $$
it is enough to show \eqref{MultiExcuControl} for $\mathfrak{g}_{l,n}\leq t_{\tau^{\ell}}-1<\mathfrak{g}_{l-1,n}$. We then have
\begin{align*}
    &\Eb\Big[\sum_{\Vect{x}\in\Delta^k_m}\un_{\{\underline{V}(\Vect{x})\geq-B\}}f^{\ell}_{\bm{t},\Pi}\un_{\Upsilon^{k,\mathfrak{s}_n}_{\mathfrak{g}_{l-1,n}}\setminus\Upsilon^{k,\mathfrak{s}_n}_{\mathfrak{g}_{l,n}}}(\Vect{x})\prod_{p=1}^{\mathfrak{e}}\P^{\mathcal{E}}\big(\max_{i\in I_p}T_{\VectCoord{x}{i}}<T^1\big)\big|\mathcal{F}_{t_{\tau^{\ell}}}\Big] \\ & \leq\mathfrak{C}_{\ref{LemmMultiExcur},2}\sum_{\Vect{u}\in\Delta^{|\bm{\pi}_{\tau^{\ell}}|}_{t_{\tau^{\ell}}}}\un_{\{\underline{V}(\Vect{u})\geq-B\}}f^{\tau^{\ell}}_{\bm{t}^{\tau^{\ell}},\Pi^{\tau^{\ell}}}(\Vect{u})\prod_{p=1}^{\mathfrak{e}}\P^{\mathcal{E}}\big(\max_{i\in I^{\tau^{\ell}}_p}T_{\VectCoord{u}{i}}<T^1\big)\prod_{j=1}^{|\bm{\pi}_{\tau^{\ell}}|}(H_{\VectCoord{u}{j}})^{|\bm{B}^{\tau^{\ell}}_j|},
\end{align*}
for some constant $\mathfrak{C}_{\ref{LemmMultiExcur},2}>0$ where $\bm{t}^{\tau^{\ell}}$ and $\Pi^{\tau^{\ell}}$ are defined in Example \ref{ProcedurePartition}. \\
Note that $t_{\tau^{\ell}}-1$ is the first generation (backwards in time) at which a coalescence between two or more vertices visited during distinct excursions occurs so there exists a subset $J_{\ell}$ of $\{1,\ldots,|\bm{\pi}_{t_{\tau^{\ell}-1}}|\}$ and a collection $\{\alpha_i;\; i\in J_{\ell}\}$ of $|J_{\ell}|$ integers satisfying $\alpha_i\geq 1$ for all $i\in J_{\ell}$ and $\sum_{i\in J_{\ell}}\alpha_i\leq k$ such that 
\begin{align*}
    &\Eb\Big[\sum_{\Vect{x}\in\Delta^k_m}\un_{\{\underline{V}(\Vect{x})\geq-B\}}f^{\ell}_{\bm{t},\Pi}\un_{\Upsilon^{k,\mathfrak{s}_n}_{\mathfrak{g}_{l-1,n}}\setminus\Upsilon^{k,\mathfrak{s}_n}_{\mathfrak{g}_{l,n}}}(\Vect{x})\prod_{p=1}^{\mathfrak{e}}\P^{\mathcal{E}}\big(\max_{i\in I_p}T_{\VectCoord{x}{i}}<T^1\big)\big|\mathcal{F}_{t_{\tau^{\ell}}-1}\Big] \\ & \leq\mathfrak{C}_{\ref{LemmMultiExcur},3}\sum_{\Vect{z}\in\Delta^{|\bm{\pi}_{\tau^{\ell}-1}|}_{t_{\tau^{\ell}}-1}}f^{\tau^{\ell}-1}_{\bm{t}^{\tau^{\ell}-1},\Pi^{\tau^{\ell}-1}}(\Vect{z})\P^{\mathcal{E}}(T_{\Vect{z}}<T^1)\prod_{j=1}^{|\bm{\pi}_{\tau^{\ell}-1}|}(H_{\VectCoord{u}{j}})^{|\bm{B}^{\tau^{\ell}-1}_j|}\prod_{i\in J_{\ell}}e^{-\alpha_iV(\VectCoord{z}{i})}\un_{\{\underline{V}(\Vect{z})\geq-B\}}.
\end{align*}
Note that 
\begin{align*}
    \prod_{i\in J_{\ell}}e^{-\alpha_iV(\VectCoord{z}{i})}\un_{\{\underline{V}(\Vect{z})\geq-B\}}\leq&\prod_{i\in J_{\ell}}e^{-\alpha_iV(\VectCoord{z}{i})}\un_{\{\min_{i\in J_{\ell}}V(\VectCoord{z}{i})\geq-B,\;\min_{|z|=t_{\tau^{\ell}}-1}V(z)<\delta_0(t_{\tau_{\ell}}-1)\}} \\ & + e^{-\min_{|z|=t_{\tau^{\ell}-1}}V(z)}\un_{\{\min_{|z|=t_{\tau^{\ell}}-1}V(z)\geq\delta_0(t_{\tau_{\ell}}-1)\}},
\end{align*}
so $\Eb[\sum_{\Vect{x}\in\Delta^k_m}\un_{\{\underline{V}(\Vect{x})\geq-B\}}f^{\ell}_{\bm{t},\Pi}\un_{\Upsilon^{k,\mathfrak{s}_n}_{\mathfrak{g}_{l-1,n}}\setminus\Upsilon^{k,\mathfrak{s}_n}_{\mathfrak{g}_{l,n}}}(\Vect{x})\prod_{p=1}^{\mathfrak{e}}\P^{\mathcal{E}}(\max_{i\in I_p}T_{\VectCoord{x}{i}}<T^1)]$ is smaller than
\begin{align*}
    \Eb\Big[\mathfrak{C}_{\ref{LemmMultiExcur},3}\sum_{\Vect{z}\in\Delta^{|\bm{\pi}_{\tau^{\ell}-1}|}_{t_{\tau^{\ell}}-1}}f^{\tau^{\ell}-1}_{\bm{t}^{\tau^{\ell}-1},\Pi^{\tau^{\ell}-1}}(\Vect{z})&\P^{\mathcal{E}}(T_{\Vect{z}}<T^1)\prod_{j=1}^{|\bm{\pi}_{\tau^{\ell}-1}|}(H_{\VectCoord{u}{j}})^{|\bm{B}^{\tau^{\ell}-1}_j|} \\ & \times\big(e^{kB}\un_{\{\min_{|z|=t_{\tau^{\ell}}-1}V(z)<\delta_0(t_{\tau_{\ell}}-1)\}}+e^{-3\delta_0(t_{\tau_{\ell}}-1)}\big)\Big].
\end{align*}
Using the same argument as the one we used in the proof of Lemma \ref{EspProbaTAPartition} together with the Cauchy–Schwarz inequality, we obtain that the previous mean is smaller than
\begin{align*}
    &\mathfrak{C}_{\ref{LemmMultiExcur},3}\sup_{d\in\N^*}\Eb[(H_{d-1}^S)^{4k-1}]\Big(e^{kB}\P\big(\min_{|z|=t_{\tau^{\ell}-1}}V(z)<\delta_0(t_{\tau_{\ell}}-1)\big)^{1/2}+e^{-3\delta_0(t_{\tau_{\ell}}-1)}\Big) \\ & \leq\mathfrak{C}_{\ref{LemmMultiExcur},3}\sup_{d\in\N^*}\Eb[(H_{d-1}^S)^{4k-1}](e^{kB}+1)e^{-k\Lambda_{l}(\mathfrak{L}_n)},
\end{align*}
where we have used Lemma \ref{MinPotential} with $\zeta=\delta_0t_{\tau_{\ell}}$ and the fact that $t_{\tau_{\ell}}-1\geq\mathfrak{g}_{l,n}$. \\
Back to \eqref{DecompoMultiExcu} together with what we have just obtained and the fact that for all $\mathfrak{j}\in\{1,\ldots,\tau_{\ell}\}$, $t_{\mathfrak{j}}\leq\mathfrak{g}_{l-1,n}$, $\E[\sum_{\Vect{x}\in\Delta^k,\; |\Vect{x}|=\Vect{p}}\un_{\{\underline{V}(\Vect{x})\geq-B\}}\un_{\Upsilon^{k,\mathfrak{s}_n}_{\mathfrak{g}_{l-1,n}}\setminus\Upsilon^{k,\mathfrak{s}_n}_{\mathfrak{g}_{l,n}}}(\Vect{x})\un_{\mathfrak{S}^{k,\mathfrak{s}_n}\cap\mathfrak{E}_2^{k,\mathfrak{s}_n}}(\Vect{x})]$ is smaller than
\begin{align*}
    \mathfrak{C}_{\ref{LemmMultiExcur},3}\sup_{d\in\N^*}\Eb[(H_{d-1}^S)^{4k-1}](e^{kB}+1)e^{-k\Lambda_{l}(\mathfrak{L}_n)}\sum_{\ell=1}^{k-1}\;\sum_{\Pi\textrm{ increasing}}(\mathfrak{g}_{l-1,n})^{\tau_{\ell}}(\mathfrak{L}_n)^{\ell-\tau_{\ell}}.
\end{align*}
Note that $\tau_{\ell}\leq\ell<k$. Moreover, by definition, $\ell-\tau_{\ell}$ is smaller than the total number of coalescences occurring between two or more vertices which indexes belong to the same set $I_p^{\ell}$ and this number is smaller than $\sum_{p=1}^{\mathfrak{e}}(|I_p^{\ell}|-1)=k-\mathfrak{e}$ thus giving
\begin{align*}
    \E\Big[\underset{|\Vect{x}|=\Vect{p}}{\sum_{\Vect{x}\in\Delta^k}}\un_{\{\underline{V}(\Vect{x})\geq-B\}}\un_{\Upsilon^{k,\mathfrak{s}_n}_{\mathfrak{g}_{l-1,n}}\setminus\Upsilon^{k,\mathfrak{s}_n}_{\mathfrak{g}_{l,n}}}(\Vect{x})\prod_{p=1}^{\mathfrak{e}}Y_p\Big]\leq\mathfrak{C}_{\ref{LemmMultiExcur},1}\big(\Lambda_{l-1}(\mathfrak{L}_n)e^{-\Lambda_l(\mathfrak{L}_n)}\big)^k(\mathfrak{L}_n)^{k-\mathfrak{e}},
\end{align*}
which, by definition of $\Lambda_l(\mathfrak{L}_n)$, is equal to $\mathfrak{C}_{\ref{LemmMultiExcur},1}(\mathfrak{L}_n)^{k-\mathfrak{e}}$ and it yields \eqref{MultiExcuControl}. \\
In the same way, we can prove that 
\begin{align}\label{MultiExcuControlBis}
    \E\Big[\underset{|\Vect{x}|=\Vect{p}}{\sum_{\Vect{x}\in\Delta^k}}\un_{\{\underline{V}(\Vect{x})\geq-B\}}\un_{\Upsilon^{k,\mathfrak{s}_n}_{\mathfrak{g}_{l,n}}}(\Vect{x})\prod_{p=1}^{\mathfrak{e}}Y_p\Big]\leq\mathfrak{C}'_{\ref{LemmMultiExcur},1}\big(1+\Lambda_{l+1}(\mathfrak{L}_n)^k\big)(\mathfrak{L}_n)^{k-\mathfrak{e}},
\end{align}
for some constant $\mathfrak{C}'_{\ref{LemmMultiExcur},1}>0$. Putting together \eqref{EspSup}, \eqref{MultiExcuControl} and \eqref{MultiExcuControlBis}, we obtain, for some constant $\mathfrak{C}_{\ref{LemmMultiExcur},4}>0$
\begin{align*}
    &\E\big[\frac{1}{(\sqrtBis{n}\bm{L}_n)^k}\sup_{s\leq\mathfrak{s}_n}\mathcal{A}^k(\mathcal{D}_{n,T^{s}},\un_{\mathfrak{S}^{k,s}\cap\mathfrak{E}^{k,s}_2}\un_{\{\underline{V}(\cdot)\geq-B\}\cap\mathcal{C}^k_{\mathfrak{g}_{0,n}}})\big] \\ & \leq \mathfrak{C}_{\ref{LemmMultiExcur},4}\sum_{\mathfrak{e}=2}^{k-1}\Big(\frac{\mathfrak{L}_n}{\sqrtBis{n}}\Big)^{k-\mathfrak{e}}\big(2+\Lambda_{l_0+1}(\mathfrak{L}_n)^k\big).
\end{align*}
Using the fact that $\Lambda_{l_0+1}(\mathfrak{L}_n)^k=(\log \Lambda_{l_0}(\mathfrak{L}_n))^k$, we obtain \textit{\ref{MultiExcu}} thanks to the Assumption \ref{AssumptionSmallGenerations}.
\end{proof}

\noindent We are now ready to prove Proposition \ref{GENPROPCONV1}:

\begin{proof}[Proof of Proposition \ref{GENPROPCONV1}]
Let $\varepsilon'>0$. First, note that thanks to Lemma \ref{LemmeExc1} and \textbf{Fact 1} \eqref{Fact1} there exists $a_{\varepsilon'}>0$ such that we can restrict our study to the $k$-tuples of vertices in the set $\mathfrak{S}^{k,s}\cap\{\underline{V}(\cdot)\geq-a_{\varepsilon'}\}$
\begin{align*}
    \lim_{\varepsilon'\to 0}\limsup_{n\to\infty}\P^*\Big(\sup_{s\leq\mathfrak{s}_n}\mathcal{A}^k\big(\mathcal{D}_{n,T^{s}},\un_{\Delta^k\setminus\mathfrak{E}^{k,s}}(1-\un_{\mathfrak{S}^{k,s}\cap\{\underline{V}(\cdot)\geq-a_{\varepsilon'}\}})\big)>\varepsilon(\sqrtBis{n}\bm{L}_n)^k\Big)=0,
\end{align*}
where we recall that $\mathfrak{s}_n=\sqrtBis{n}/\varepsilon_1$. Then, note that $\mathcal{A}^k(\mathcal{D}_{n,T^{s}},\un_{\Delta^k\setminus\mathfrak{E}^{k,s}}\un_{\mathfrak{S}^{k,s}\cap\{\underline{V}(\cdot)\geq-a_{\varepsilon'}\}})$ is smaller than
\begin{align*}
    \mathcal{A}^k(\mathcal{D}_{n,T^{s}},\un_{\mathfrak{S}^{k,s}\cap\mathfrak{E}^{k,s}_1})+\mathcal{A}^k(\mathcal{D}_{n,T^{s}},\un_{\mathfrak{S}^{k,s}\cap\mathfrak{E}^{k,s}_2}\un_{\{\underline{V}(\cdot)\geq-a_{\varepsilon'}\}}).
\end{align*}
Hence, by Markov inequality, the result follows using Lemma \ref{LemmMultiExcur} with $B=a_{\varepsilon'}$.
\end{proof}

\vspace{0.5cm}

\noindent\begin{merci}
    I would like to express my sincere thanks to an anonymous referee for her/his very careful reading of the paper, her/his relevant and precise remarks that were very useful to help improve the presentation of the paper.
\end{merci}

\bibliographystyle{alpha}
\bibliography{thbiblio}

\end{document}